\newcommand{\splitspace}{\hspace{4em}}
\newcommand{\figscale}{.775}
\definecolor{newdarkblue}{RGB}{24,64,184}
\definecolor{newgreen}{RGB}{80,170,55}
\numberwithin{equation}{section}
\newtheorem{theorem}{Theorem}[section]
    \newtheorem{proposition}[theorem]{Proposition}
    \newtheorem{lemma}[theorem]{Lemma}
    \newtheorem{thmx}{Theorem}
    \newtheorem{thmzero}{Theorem}
\theoremstyle{definition}
    \newtheorem{definition}[theorem]{Definition}
    \newtheorem{remark}[theorem]{Remark}
    \newtheorem{example}[theorem]{Example}
\def\namedlabel#1#2{\begingroup
   \def\@currentlabel{#2}%
   \label{#1}\endgroup
}
\newcommand{\assref}[1]{{\ref{ass:#1}}}
\newcommand{\secref}[1]{Section~{\ref{#1}}}
\newcommand{\appref}[1]{Appendix~{\ref{#1}}}
\def\rr{{\mathbb R}}
\def\zz{{\mathbb Z}}
\def\nn{{\mathbb N}}
\def\cA{{\cal A}}
\def\cF{{\cal F}}
\def\cL{{\cal L}}
\def\cP{{\cal P}}
\def\P{{\mathbb P}}
\def\Q{{\mathbb Q}}
\def\ilpq{I_{\Q}}
\def\qlpq{q_{\Q}}
\def\ilpp{I_{\P}}
\def\qlpp{q_{\P}}
\def\ir{I_R}
\def\uir{\underline I_R}
\def\oir{\overline I_R}
\def\qr{q_R}
\def\oqr{\overline q_R}
\def\iv{I_V}
\def\uiv{\underline I_V}
\def\oiv{\overline I_V}
\def\qv{q_V}
\def\oqv{\overline q_V}
\def\iw{I_W}
\def\uiw{\underline I_W}
\def\oiw{\overline I_W}
\def\qw{q_W}
\def\uqw{\underline q_W}
\def\oqw{\overline q_W}
	\providecommand*{\diff}%
	{\@ifnextchar^{\DIfF}{\DIfF^{}}}
	\def\DIfF^#1{%
	\mathop{\mathrm{\mathstrut d}}%
	\nolimits^{#1}\gobblespace}
	\def\gobblespace{%
	\futurelet\diffarg\opspace}
	\def\opspace{%
	\let\DiffSpace\!%
	\ifx\diffarg(%
	\let\DiffSpace\relax
	\else
	\ifx\diffarg%
	\let\DiffSpace\relax
	\else
	\ifx\diffarg\{%
	\let\DiffSpace\relax
	\fi\fi\fi\DiffSpace}
    \def\dd{\diff}
\DeclareMathOperator{\supp}{supp}
\newcommand{\Exp}[1]{\mathrm{e}^{#1}}
\def\complement{\mathsf{c}}
\def\shift{T}
\def\Sc{h_{\textnormal{c}}}
\def\Sr{h_{\textnormal{r}}}
\def\htop{h_{\textnormal{top}}}
\def\ptop{p_{\textnormal{top}}}
\def\potone{\varphi}
\def\pottwo{\psi}
\title{Large deviations of return times and related\\ entropy estimators on shift spaces}
\author{
    No\'e Cuneo\textsuperscript{1} 
    and 
    Renaud Raqu\'epas\textsuperscript{2}
    }
\date{}
\begin{document}

\maketitle

\begin{center}
\small
\begin{tabular}{c c c}
   1. Universit\'e Paris Cit\'e and Sorbonne Universit\'e, && 2. New York University\\
   CNRS, Laboratoire de Probabilit\'es, && Courant Institute of Mathematical Sciences \\
   Statistique et Mod\'elisation && 251 Mercer Street \\
   F-75013 Paris, France &&  New York, NY 10012, United States \\
\end{tabular}
\end{center}

\begin{abstract}
    We prove the large deviation principle for several entropy and cross entropy estimators based on return times and waiting times on shift spaces over finite alphabets. We consider shift-invariant probability measures satisfying some decoupling conditions which imply no form of mixing nor ergodicity. We establish precise relations between the rate functions of the different estimators, and between these rate functions and the corresponding pressures, one of which is the R\'enyi entropy function. For the most commonly used definition of return times, the large-deviation rate function is proved to be nonconvex, except in marginal cases.  The results apply in particular to irreducible Markov chains, equilibrium measures for Bowen-regular potentials, $g$-measures, invariant Gibbs states for absolutely summable interactions in statistical mechanics, and also to probability measures which may be far from Gibbsian, including some hidden Markov models and repeated quantum measurement processes.

    \smallskip

    \noindent \textit{MSC2020}\quad 
        37B20, % Recurrence in symb. dyn. syst. 
        60F10 (Primary); % Large dev.
        37A35, % Entropy in ergodic theory
        37D35, % Thermodynamic formalism, [...] equilibrium states
        94A17 (Secondary)  % Measures of information, entropy

    \smallskip

    \noindent \textit{Keywords}\quad 
    return times,
    waiting times,
    entropy estimators, 
    cross entropy estimators, 
    large deviation theory,
    nonconvex rate function
\end{abstract}

\setcounter{tocdepth}{2}
\tableofcontents

\linespread{1.1}

\section{Introduction}\label{sec:intro}

Return times play a fundamental role in the theory of dynamical systems. In the specific context of a one-sided shift space over a finite alphabet, there is a vast literature on the connection between return times and entropy. 
One of the landmark results in this direction is the following theorem, due to Wyner and Ziv~\cite{WZ89} and to Ornstein and Weiss~\cite{OW93}: if~$\P$ is an ergodic probability measure, then for $\P$-almost every sequence~$x$, the time~$R_n(x)$ it takes for the first~$n$ letters to reappear down the sequence~$x$ (in the same order) grows exponentially with~$n$ at a rate equal to the Kolmogorov--Sinai entropy~$h(\P)$.
In other words, return times, once properly rescaled, provide a sequence $(\tfrac 1n \ln R_n)_{n\in\nn}$ of universal entropy estimators. Here, ``universal'' means that the definition of the random variable~$R_n$ itself makes no reference to the measure~$\P$;  computing $R_n(x)$ requires no explicit information about the marginals of the measure~$\P$ whose entropy is being estimated. Refinements of this behavior in the form of central limit theorems, laws of the iterated logarithm, large deviation principles (LDPs) and multifractal analysis have been an active area of research since the 1990s; see e.g.~\cite{Ko98,CGS99,FW01,O03,Cu05,Jo06,CFMVY19,AACG22}. 

In the present paper, we will prove a full LDP for the sequence $(\tfrac 1n \ln R_n)_{n\in\nn}$, and give an expression of the rate function and pressure in terms of those of the LDP accompanying the celebrated Shannon--McMillan--Breiman (SMB) theorem. We will also carry this analysis for a nonoverlapping notion of return times along a sequence (see the definition of~$V_n$ below) and for waiting times (see the definition of $W_n$ below) involving a pair of sequences as in~\cite{WZ89,Sh93,MS95,Ko98,CDEJR22}. The latter will be related to recent results on the large deviations of $(-\tfrac 1n \ln \Q_n)_{n\in\nn}$ with respect to~$\P$, where $\Q_n$ is the $n$-th marginal of a second shift-invariant probability measure~$\Q$. For each of these results, the assumptions on the measures involve the notions of decoupling of~\cite{CJPS19,CDEJR22}, in turn inspired by~\cite{LPS95,Pf02}.  Roughly speaking, these decoupling assumptions take the form of bounds on how strongly different parts of the sequences may depend on each other. However, unlike mixing conditions, they do not require distant symbols to be asymptotically independent. As we shall see, the decoupling assumptions cover many standard classes of probability measures, including irreducible Markov chains, equilibrium measures for sufficiently regular potentials, $g$-measures, invariant Gibbs states for summable interactions in statistical mechanics, as well as some less standard measures which may be far from Gibbsian in any sense.

To the best of the authors' knowledge, the large deviations of the sequence~$(\tfrac 1n \ln R_n)_{n\in\nn}$ were previously only understood locally (i.e.\ in a bounded interval containing the entropy $h(\P)$), and only for measures satisfying the Bowen--Gibbs property~\cite{CGS99,AACG22}.
The improvements we provide are three-fold. First, we extend the LDP to the whole real line, and describe the possible lack of convexity of the rate function which prevents the G\"artner--Ellis method used in~\cite{AACG22} from producing a global result. Second, we go past the Bowen--Gibbs property by focussing on the class of \emph{decoupled measures} for which the LDP accompanying the SMB theorem and its analogue for cross entropy can be derived from results recently established in~\cite{CJPS19}. Third, we extend this analysis to the related estimators $(\frac 1n \ln V_n)_{n\in\nn}$ and $(\frac 1n \ln W_n)_{n\in\nn}$ to be introduced below.

Our main results also provide a sharp version of the large-deviation upper bounds on $(\tfrac 1n \ln R_n)_{n\in\nn}$ that were obtained in \cite{JB13} for mixing measures on shift spaces, and in \cite{CRS18} for more general dynamical systems.

\paragraph*{Organization of the paper.} In the remainder of \secref{sec:intro}, we discuss our setup and main results. Theorem~\ref{thm:CJPS}, which collects some large-deviation properties of $(-\tfrac 1n \ln \Q_n)_{n\in\nn}$ with respect to $\P$, is viewed as a starting point. Our main new results, Theorems~\ref{thm:mainthmA},~\ref{thm:mainthmB} and \ref{thm:mainthmC}, deal with the large deviations of $(\tfrac 1n \ln W_n)_{n\in\nn}$, $(\tfrac 1n \ln V_n)_{n\in\nn}$ and $(\tfrac 1n \ln R_n)_{n\in\nn}$ respectively. In \secref{sec:toymodel}, we outline the proof on the basis of a  toy model consisting of a mixture of geometric random variables, which we connect to the widely studied problem of exponential approximations of hitting and return times.

In \secref{sec:examplesdisc}, we discuss a selection of applications and provide concrete formulae for the rate functions and pressures whenever possible. It seems that even in the simplest examples (Bernoulli and Markov measures), some of our results are actually new. We also compare our results to those of \cite{CGS99,AACG22} in \secref{sec:equilbowen}, and explain how Theorems~\ref{thm:mainthmA} and~\ref{thm:mainthmC} prove a conjecture stated in \cite{AACG22}.

In \secref{sec:key-set}, we first establish some sharp (at the exponential scale) estimates on the waiting times $W_n$ (\secref{sec:Wn-estimates}), which we then extend to~$R_n$ and~$V_n$ in \secref{sec:Wn-to-Rn}.

\secref{sec:RLfuncts} starts with some reminders about (weak) LDPs, and a brief review of the notion of Ruelle--Lanford functions. We then identify the Ruelle--Lanford functions of $(\tfrac 1n \ln W_n)_{n\in\nn}$, $(\tfrac 1n \ln V_n)_{n\in\nn}$ and $(\tfrac 1n \ln R_n)_{n\in\nn}$ in order to obtain the corresponding weak LDPs. 

In \secref{sec:full}, we complete the proofs of Theorems~\ref{thm:mainthmA}--\ref{thm:mainthmC}: the weak LDPs of the previous section are promoted to full LDPs, the Legendre--Fenchel duality relations between the rate functions and the associated pressures are established, and the (lack of) convexity of the rate function of $(\tfrac 1n \ln R_n)_{n\in\nn}$ is characterized.

In \secref{sec:LLN-v2}, we gain some insight into the set where the rate functions vanish by combining our LDPs and the corresponding almost sure convergence results (law of large numbers) in the literature.

In order to make the paper accessible to readers who may not be familiar with the tools that we will require from both large-deviation theory and dynamical systems,
we include several technical, yet rather standard results in the appendices.
We provide in \appref{app:subshiftsupp} some definitions regarding subshifts and measures of maximal entropy, which are useful for some of the examples in \secref{sec:examplesdisc} as well as for characterizing the convexity of the rate function of $(\tfrac 1n \ln R_n)_{n\in\nn}$.
In \appref{app:decoupling}, we present technical results about our decoupling assumptions, and in particular some sufficient conditions for them to hold. Finally, in \appref{app:weak-to-full}, we give tools to promote weak LDPs to full ones, and we prove Theorem~\ref{thm:CJPS}.

\paragraph*{Notational conventions.} We adopt the convention that~$\nn = \{1,2,3,4,\dotsc\}$. Unless otherwise stated, measures are probability measures. We use the conventions $\ln 0 = -\infty$ and $0 \cdot (\pm \infty) = 0$ so that, in particular, $0 \cdot \ln 0 = 0$. Given a function $f:\rr \to  \rr \cup \{\infty\}$, we denote by $f^*: \rr \to \rr\cup\{\infty\}$ the Legendre--Fenchel transform (convex conjugate) of $f$ defined by $f^*(\alpha) = \sup_{s\in \rr}(\alpha s - f(s))$. The ball of radius~$\epsilon>0$ around the point~$s$ in a metric space is denoted $B(s,\epsilon)$.

\paragraph*{Acknowledgements.} The authors would like to thank Tristan Benoist, Jean-René Chazottes, Vaughn Climenhaga, Bastien Fernandez, Jérôme Rousseau
and Charles-\'Edouard Pfister
for stimulating discussions, relevant references and valuable advice about the manuscript. They are specifically grateful to Vojkan Jak\v{s}i\'{c} for introducing them to this problem in the framework of entropy information theory lectures at McGill University in 2020, for which RR was a course assistant.  
RR acknowledges financial support from the {Natural Sciences and Engineering Research Council of Canada} and from the {Fonds de recherche du Qu\'ebec\,---\,Nature et technologies}. 
Part of this work was done while RR was a post-doctoral researcher at CY Cergy Paris Universit\'e and supported by LabEx MME-DII ({Investissements d'Avenir} program of the French government). 
Part of this work was done during a stay of both authors at the {Centre de recherches math\'ematiques} and McGill University
supported by the {Agence Nationale de la Recherche} of the French government through the grant NONSTOPS (ANR-17-CE40-0006) and by the Centre de recherches math\'ematiques through a CNRS-CRM ``Appel \`a mobilit\'e Qu\'ebec--France'' grant.

\subsection{Setup and hypotheses}
\label{ssec:setup}

We consider the one-sided shift space $\Omega :=\cA^{\nn} = \{x = (x_k)_{k\in\nn} : x_k \in \cA \text{ for all } k \in \nn \}$ for some finite alphabet~$\cA$. The shift map $\shift : \Omega \to\Omega$ is defined by $(\shift x)_{n} := x_{n+1}$. As usual,~$\Omega$ is equipped with the product topology constructed from the discrete topology on~$\cA$, and $\cF$ denotes the corresponding Borel $\sigma$-algebra.

We use common notations such as: $x_k^n$ for the letters $x_k, x_{k+1}, \dots, x_n$ of the sequence~$x\in\Omega$; $[u]$ for the \emph{cylinder} consisting in the sequences~$x$ with $x_1^n=u$; $\P_n$ for the marginal of the probability measure $\P$ on~$\cA^n$, i.e.\ $\P_n(u) = \P([u])$ for all $u\in \cA^n$. We let $\Omega_{\textnormal{fin}} := \bigcup_{n\in\nn}\cA^n$ be the set of words of finite length. The length of a word $u\in \cA^{n}$ is denoted by $|u|=n$. The concatenation of $u,v\in \Omega_{\textnormal{fin}}$ is denoted by $uv$, and the concatenation of $m$ copies of~$u$ is denoted by $u^m$. We use $\cF_n$ for the $\sigma$-algebra generated by the cylinders sets $[u]$ with $u\in \cA^n$, and $\cF_{{\textnormal{fin}}} := \bigcup_{n\in\nn}\cF_n \subset \cF$ for the set of events involving only finitely many coordinates in~$\Omega$. 

We denote by $\cP_{\textnormal{inv}}(\Omega)$ the set of shift-invariant Borel probability measures on $\Omega$. For $\P\in \cP_{\textnormal{inv}}(\Omega)$ and $n\in \nn$, the support of $\P_n$ is $\supp \P_n := \{u\in \cA^n: \P_n(u)>0\}$. Moreover, $\supp \P := \{x\in \Omega: \P_n(x)>0\text{ for all }n\in\nn\}$ is the support of~$\P$, which is a subshift of~$\Omega$. We denote by $\htop(\supp\P)$ the topological entropy of $\supp \P$, which satisfies $\htop(\supp \P)\leq \ln |\cA|$; see \appref{app:subshiftsupp}.%
    \footnote{We use ``$|\,\cdot\,|$'' both for the length of words and the cardinality of discrete sets.}

We now define the three sequences of interest, namely $(R_n)_{n\in \nn}$, $(V_n)_{n\in \nn}$ and $(W_n)_{n\in \nn}$.
The {\em return times} $R_n:\Omega \to \nn$ are defined as
\[
    R_n(x) := \inf\{k\in\nn : \shift^k x \in [x_1^n] \}= \inf\{k\in\nn : x_{k+1}^{k+n}  = x_1^n \}
\]
for all $x\in \Omega$ and $n\in \nn$, and their nonoverlapping counterparts $V_n:\Omega \to \nn$ are defined as
$$
    V_n(x) := \inf\{k\in\nn : \shift^{k+n-1}x \in [x_1^n] \} =  \inf\{k\in\nn : x_{n+k}^{2n+k-1} = x_1^n \}
$$
instead.
The {\em waiting times} $W_n: \Omega\times \Omega \to \nn$ are defined as
$$
    W_n(x,y) := \inf\{k\in\nn : \shift^{k-1}y \in [x_1^n] \}=  \inf\{k\in\nn : y_{k}^{k+n-1} = x_1^n \}
$$
for all $(x,y)\in \Omega\times \Omega$ and $n\in\nn$. In the literature, the function $W_n(x, \cdot\,)$ is often called {\em hitting time} of the set $[x_1^n]$.

We next briefly recall some results about almost sure convergence of the sequences $(\frac 1n \ln R_n)_{n\in \nn}$, $(\frac 1n \ln V_n)_{n\in \nn}$ and $(\frac 1n \ln W_n)_{n\in \nn}$, which justify their role of as (cross) entropy estimators. The LDPs we prove in the present paper complement these almost sure convergence results, without relying on them, nor implying them in general; see \secref{sec:LLN-v2} for an extended discussion.

First, if $\P \in \cP_{\textnormal{inv}}(\Omega)$, then
\begin{equation}\label{eq:asconvergencern}
    \lim_{n\to\infty}\frac 1n \ln R_n(x) = \lim_{n\to\infty}\frac 1n \ln V_n(x) = h_\P(x),
\end{equation}
for $\P$-almost every $x\in \Omega$, where $h_\P$ is the local entropy function defined by
\begin{equation}
    \label{eq:SMB}  
    h_\P(x) := \lim_{n\to\infty} -\frac{1}{n} \ln \P_n(x_1^n).
\end{equation}
By virtue of the SMB theorem, the last limit exists and satisfies $h_\P(x) = h_\P(\shift x)$ for $\P$-almost every~$x$, and its integral with respect to $\P$ is the \emph{Kolmogorov--Sinai entropy} (also commonly called \emph{specific entropy} or simply \emph{entropy}), defined as
$$
h(\P) := \lim_{n\to\infty}-\frac{1}{n} \sum_{u\in\cA^n} \P_n(u) \ln \P_n(u).$$
When~$\P$ is ergodic, we have $h_\P(x) = h(\P)$ for $\P$-almost every $x\in \Omega$, and in this case \eqref{eq:asconvergencern} can be traced back to~\cite{WZ89,OW93}. The relations \eqref{eq:asconvergencern} are less standard when $\P$ is merely assumed to be shift invariant; see~\secref{ssec:as-remix} for references.

Consider now a pair $(\P,\Q$) of shift-invariant probability measures, possibly with $\P=\Q$. 
As we will discuss in \secref{ssec:as-remix}, under some assumptions more general than those of Theorem~\ref{thm:mainthmA} below, it has recently been proved in \cite{CDEJR22} that
\begin{equation*}
    \lim_{n\to\infty} \frac 1n \ln W_n(x,y) = h_\Q(x)
\end{equation*}
for~$(\P\otimes\Q)$-almost all pairs $(x,y)\in \Omega\times\Omega$, where $h_\Q$ is defined as in \eqref{eq:SMB} and integrates with respect to~$\P$ to the specific \emph{cross entropy}
of~$\P$ relative to~$\Q$, i.e.\
\begin{equation}
\label{eq:def-cross}
    \Sc(\P|\Q) := \lim_{n\to\infty} -\frac{1}{n} \sum_{u \in {\cal A}^n} \P_n(u) \ln \Q_n(u).
\end{equation}
If $\P$ is, in addition, assumed to be ergodic, then $h_\Q(x) =  \Sc(\P|\Q)$ for $\P$-almost every $x\in \Omega$. We stress that the $\P$-almost sure existence of~$h_\Q$ and the existence of the limit in~\eqref{eq:def-cross} do not follow from mere shift invariance when $\P\neq\Q$. 
Note that, with $\Sr(\P|\Q)$ the specific \emph{relative entropy}, we have $\Sc(\P|\Q) = h(\P) + \Sr(\P|\Q)$ whenever both sides are well defined, and that $\Sc(\P|\P) = h(\P)$ is always well defined. The numbers $\Sr(\P|\Q)$ and $\Sc(\P|\Q)$ are both relevant in information theory (see e.g.~\cite[\S 5.4]{CoTh06}\footnote{Unfortunately, there is a lack of consensus on terminology. In particular, in \cite{CoTh06}, ``cross entropy'' is only mentioned as a synonym for relative entropy, and what we call cross entropy here plays a central role in Theorem~5.4.3 of \cite{CoTh06} but is not given any specific name therein. As far as the authors are aware, the terminology adopted in the present paper reflects the most common usage in modern applications.}) and are commonly used in classification tasks (see e.g.~\cite[\S{3.13}]{GBC16} or~\cite[\S{5.1.6}]{Mur}).

The \emph{pressure} $\qr$ associated with the sequence $(\tfrac 1n \ln R_n)_{n\in\nn}$ is the function of~$\alpha \in \rr$ defined by
\begin{equation}\label{eq:defqr}
    \qr(\alpha) := \lim_{n\to\infty}\frac 1n \ln \int \Exp{\alpha \ln R_n(x)}\dd\P(x)
\end{equation}
when the limit exists. The function $\qr$ is also referred to as the ``rescaled cumulant-generating function'' or ``$L^\alpha$-spectrum'' in the literature. Similarly, we define
\begin{equation}\label{eq:defqv}
    \qv(\alpha) := \lim_{n\to\infty}\frac 1n \ln \int \Exp{\alpha \ln V_n(x)}\dd\P(x)
\end{equation}
and
\begin{equation}\label{eq:defqw}
    \qw(\alpha) := \lim_{n\to\infty}\frac 1n \ln \iint \Exp{\alpha \ln W_n(x,y)}\dd\P(x)\dd\Q(y)
\end{equation}
when the limits exist.

The above pressures will be expressed in terms of the pressure of the sequence $(-\frac 1n \ln \Q_n)_{n\in\nn}$, which we define as
\begin{equation}\label{eq:limitqpq}
\begin{split}
    \qlpq(\alpha) &:=  \lim_{n\to\infty}\frac 1n \ln \int \Exp{-\alpha \ln \Q_n(x_1^n)} \dd\P(x) \\
    &\phantom{:}=  \lim_{n\to\infty} \frac 1n \ln\sum_{u\in \supp \P_n} \Q_n(u)^{-\alpha}\P_n(u)
\end{split}
\end{equation}
when the limit exists. It will be part of our assumptions below that $\P_n \ll \Q_n$ for every $n\in \nn$, i.e.\ that $\P_n$ is absolutely continuous with respect to $\Q_n$, so the integral and sum in \eqref{eq:limitqpq} are well defined and finite for each $n$. 
When $\Q = \P$, the summand in the rightmost expression of~\eqref{eq:limitqpq} is $\P_n(u)^{1-\alpha}$; up to some sign and normalization convention, the pressure $\qlpp$ thus coincides with the R\'enyi entropy function of $\P$.

A common route to establishing the LDP, which is followed in particular in \cite{AACG22}, is to first study the pressure in detail, and then derive the LDP using an adequate version of the G\"artner--Ellis theorem. Intrinsic to this approach are the differentiability of the pressure and the convexity of the rate function, both of which fail in our setup; see \secref{sec:equilbowen} for further discussion of the results in \cite{AACG22}. Our approach goes in the opposite direction: first the LDP is established, and only then is some version of Varadhan's lemma used to describe the pressure. This method allows to consider significantly more general measures, and to obtain the full LDP with a possibly nonconvex rate function. This path was already followed in \cite{CJPS19} in order to establish, in particular, the LDP for $(-\tfrac 1n \ln \Q_n)_{n\in\nn}$ with respect to $\P$ and the properties of $\qlpq$.

\paragraph{Assumptions.} In order to state the decoupling assumptions below, we require a sequence $(\tau_n)_{n\in\nn}$ in $\nn\cup\{0\}$ and a sequence $(C_n)_{n\in\nn}$ in $[1,\infty)$, assumed to be fixed and to satisfy
\begin{equation}\label{eq:tauncn}
    \lim_{n\to\infty} \frac{\tau_n}{n} = \lim_{n\to\infty} \frac{\ln C_n}{n} = 0.
\end{equation}
We will freely write  $\tau_n = o(n)$ and $C_n = \Exp{o(n)}$ or speak of an ``$o(n)$-sequence'' and an ``$\Exp{o(n)}$-sequence'' when referring to the conditions \eqref{eq:tauncn}.

\begin{definition}[UD, SLD, JSLD, admissible pair]
    \namedlabel{ass:ud}{UD}\namedlabel{ass:sld}{SLD}\namedlabel{ass:jsld}{JSLD}
    Let $\P\in \cP_{\textnormal{inv}}(\Omega)$. We say that $\P$ satisfies the {\em upper decoupling} assumption ({UD}) if for all $n,m\in\nn$, $u\in \cA^n$, $v\in \cA^m$ and $\xi\in \Omega^{\tau_n}$,
    \begin{equation}\label{eq:defUD}
		\P_{n+\tau_n+m}\left(u\xi v\right) \leq {C_n} \P_n(u)\P_m(v).        
    \end{equation}
    We say that $\P$ satisfies the {\em selective lower decoupling} assumption ({SLD}) if for all $n,m\in\nn$, $u\in \cA^n$ and $v\in \cA^m$, there exist  $0\leq \ell \leq \tau_n$ and $\xi\in\cA^\ell$ such that 
    \begin{equation}\label{eq:SLDeq}
		\P_{n+\ell+m}\left(u\xi v\right) \geq C_n^{-1} \P_n(u)\P_m(v).    
    \end{equation}
	A pair of measures $(\P, \Q)$ with $\P, \Q\in \cP_{\textnormal{inv}}(\Omega)$ is said satisfy the {\em joint selective lower decoupling} assumption ({JSLD}) if for all $n,m\in\nn$, $u\in \cA^n$ and $v\in \cA^m$, there exist  $0\leq \ell \leq \tau_n$ and $\xi\in\cA^\ell$ such that 
	\begin{equation}\label{eq:jslddef}
		\P_{n+\ell+m}\left(u\xi v\right) \geq C_n^{-1} \P_n(u)\P_m(v)
        \qquad\text{and}\qquad
        \Q_{n+\ell+m}\left(u\xi v\right) \geq C_n^{-1} \Q_n(u)\Q_m(v).		
	\end{equation}
	Finally, a pair of measures $(\P, \Q)$ with $\P, \Q\in \cP_{\textnormal{inv}}(\Omega)$ is said to be {\em admissible} if $\P_n \ll \Q_n$ for all $n\in \nn$, if $\P$ and $\Q$ satisfy~\assref{ud}, and if the pair $(\P, \Q)$ satisfies~\assref{jsld}.
\end{definition}

\begin{remark}\label{rem:tau0sld}
	If the pair $(\P, \Q)$ satisfies~\assref{jsld}, then obviously both $\P$ and $\Q$ satisfy~\assref{sld}, but the converse need not hold, since in~\eqref{eq:jslddef} both inequalities are required to hold for the {\em same} $\xi$. If $\tau_n =0$ for all $n$, then~\assref{jsld} becomes equivalent to~$\P$ and~$\Q$ both satisfying~\assref{sld}.
\end{remark}

\begin{remark}\label{rem:changetaun}
    We can always increase the constants $C_n$ and $\tau_n$ (see Lemma~\ref{lem:increasecntaun}), so there is no loss of generality in taking the same sequences $(\tau_n)_{n\in \nn}$ and $(C_n)_{n\in \nn}$ for~\assref{jsld} (resp. \assref{sld}) and~\assref{ud}, as well as for both measures $\P$ and $\Q$.
\end{remark}

Next, the following numbers will play an important role:
\begin{equation}
\label{eq:defgammaplus}
        \gamma_+ := \limsup_{n\to\infty}\frac 1n \sup_{u\in \cA^n}\ln \P_n(u)
        \qquad\text{and}\qquad
        \ \gamma_- := \liminf_{n\to\infty}\frac 1n \inf_{u\in \supp \P_n}\ln \P_n(u).
\end{equation}
One easily shows that 
\begin{equation}\label{eq:ineqgammapm}
    0 \leq -\gamma_+ \leq h(\P) \leq  \htop(\supp\P) \leq -\gamma_- \leq \infty;
\end{equation}
see \appref{app:subshiftsupp} for a proof and a definition of the topological entropy $\htop$.
For some results, $\gamma_+$ will be required to be well approximated by periodic sequences:
\begin{definition}[PA]
\label{def:pa}
    \namedlabel{ass:pa}{PA}  
        A measure $\P\in \cP_{\textnormal{inv}}(\Omega)$ satisfies the {\em periodic approximation} assumption ({PA})
        if for every $\epsilon > 0$, there exists $p \in \nn$ and $u\in \cA^p$ such that
        \begin{equation*}
            \liminf_{n\to\infty} \frac 1{np} \ln  \P_{np }\left(u^n\right) \geq \gamma_+ - \epsilon.
        \end{equation*}
\end{definition}

\begin{remark}\label{rem:tau0pa}
	If $\P$ satisfies~\assref{sld} with $\tau_n=0$, then automatically~\assref{pa} holds; a more general sufficient condition for \assref{pa} to hold is given in Lemma~\ref{lem:pafromarray}. 
\end{remark}

We discuss at the end of \secref{sec:atorigin} possible ways to weaken~\assref{pa}.

\paragraph{Starting point.} Our analysis is built on top of the LDP for the sequence $(-\frac 1n \ln \Q_n)_{n\in\nn}$ viewed as a family of random variables on $(\Omega,\P)$.\footnote{While $\Q_n$ was defined as a measure on~$\cA^n$, it is reinterpreted here in the straightforward way as a measurable function on~$\Omega$.}
The following theorem summarizes the large-deviation properties of this sequence and essentially follows from the results of \cite{CJPS19}, where the corresponding weak LDP is proved. Taking this weak LDP for granted, the proof of the full theorem only requires some minor adjustments to the arguments in \cite{CJPS19}; for completeness we provide the details in  \appref{sec:proof-CJPS}.
In some important applications, the conclusions of the theorem are well known and follow from standard methods; see Sections~\ref{sec:exiid}--\ref{sec:equilbowen} below. The terminology regarding (weak and full) LDPs, rate functions and exponential tightness is summarized in \secref{sec:RLfuncts}.

\begin{thmzero}[LDP for $-\frac 1n \ln \Q_n$]\label{thm:CJPS}
    If $(\P, \Q)$ is an admissible pair, then the following hold:
    \begin{enumerate}[i.]
        \item The sequence $(-\tfrac 1n \ln \Q_n)_{n\in\nn}$ on $(\Omega, \P)$ satisfies the LDP with a convex rate function $\ilpq : \rr \to [0,\infty]$ satisfying $\ilpq(s) = \infty$ for all $s<0$. 
        \item The limit defining $\qlpq$ in~\eqref{eq:limitqpq}
        exists in $(-\infty, \infty]$ for all $\alpha\in\rr$ and defines a nondecreasing, convex, lower semicontinuous function $\qlpq:\rr \to (-\infty,\infty]$ satisfying $\qlpq(0)=0$.
        Moreover, the following Legendre--Fenchel duality relations hold:
        \begin{equation}\label{eq:LdualIpq}
            \qlpq = \ilpq^* 
            \qquad\text{and}\qquad
            \ilpq = \qlpq^*.
        \end{equation}
        \item Assume now that $\Q=\P$. Then, in addition to the above, the following hold:
        \begin{enumerate}[a.]
            \item The sequence $(-\frac 1n \ln \P_n)_{n\in \nn}$ on $(\Omega, \P)$ is exponentially tight and $\ilpp$ is a good rate function satisfying
            \begin{equation}\label{eq:Isinterval}
                \begin{alignedat}{3}
                    \ilpp(s) &\in [ s-\htop(\supp \P), s] \qquad && \text{if } s\in [-\gamma_+, -\gamma_-],\\
                    \ilpp(s) & =\infty && \text{otherwise,}                    
                \end{alignedat}
            \end{equation}
            where $[-\gamma_+, -\gamma_-]$ is understood to be $[-\gamma_+, \infty)$ if $\gamma_- = -\infty$.
            \item The limit superior (resp. inferior) defining $\gamma_+$ (resp. $\gamma_-$) is actually a limit.
            \item  For every $\alpha \leq 1$,
            \begin{equation}\label{eq:qppbdd}
                \qlpp(\alpha)\leq \qlpp(1) = \htop(\supp\P).
            \end{equation}
            \item Either $\gamma_- > -\infty$ and $\qlpp(\alpha)<\infty$ for all $\alpha\in \rr$, or $\gamma_- = -\infty$ and $\qlpp(\alpha) = \infty$ for all $\alpha > 1$. 
        \end{enumerate}
    \end{enumerate}
\end{thmzero}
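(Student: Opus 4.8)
The plan is to take the \emph{weak} LDP, together with the companion statements about the pressure $\qlpq$, from~\cite{CJPS19} as the starting point, and then to add the comparatively soft arguments needed to (a)~upgrade the weak LDP to a full one, (b)~close the Legendre--Fenchel loop, and (c)~extract the quantitative information in~(iii) when $\Q=\P$. Concretely, \cite{CJPS19} provides, for any admissible pair, a weak LDP for $(-\tfrac1n\ln\Q_n)_{n\in\nn}$ on $(\Omega,\P)$ with a convex rate function $\ilpq$ --- convexity coming from the approximate superadditivity of the probabilities $\P(-\tfrac1n\ln\Q_n(x_1^n)\in\,\cdot\,)$ afforded by~\assref{jsld} through the Ruelle--Lanford formalism --- as well as the existence of the limit defining $\qlpq$ in~\eqref{eq:limitqpq} in $(-\infty,\infty]$ and the identity $\qlpq=\ilpq^*$. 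Since $\Q_n(u)\le1$, the random variables $-\tfrac1n\ln\Q_n(x_1^n)$ are nonnegative, so $\ilpq(s)=\infty$ for $s<0$; nonnegativity also makes $\alpha\mapsto\int\Exp{-\alpha\ln\Q_n(x_1^n)}\dd\P$ nondecreasing, whence $\qlpq$ is nondecreasing, while $\qlpq(0)=0$ is immediate and convexity follows from Hölder's inequality. Lower semicontinuity of $\qlpq$ is then automatic from $\qlpq=\ilpq^*$, and the companion relation $\ilpq=\qlpq^*$ follows by Fenchel--Moreau, using that the rate function $\ilpq$ is convex and (being a rate function) lower semicontinuous. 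This gives~(i) and~(ii) modulo promoting the weak LDP to a full one.

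For the promotion I would invoke the weak-to-full toolkit of \appref{app:weak-to-full}: since the random variables are nonnegative, only the behaviour near $+\infty$ must be controlled. When $\Q=\P$ this is elementary --- there are at most $|\cA|^n$ words, each of $\P_n$-mass less than $\Exp{-nM}$ on the event $\{-\tfrac1n\ln\P_n(x_1^n)>M\}$, so $\P(-\tfrac1n\ln\P_n(x_1^n)>M)\le|\cA|^n\Exp{-nM}$ and the sequence is exponentially tight; combined with the weak LDP this yields the full LDP of~(iii.a) with a \emph{good} rate function $\ilpp$. For a general admissible pair one uses instead the corresponding mechanism from \cite{CJPS19}/\appref{app:weak-to-full}, which exploits that $\ilpq$ is infinite on $(-\infty,0)$ together with the control on $\qlpq$ near the origin.

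It remains to treat~(iii) when $\Q=\P$. The identity $\htop(\supp\P)=\lim_n\tfrac1n\ln|\supp\P_n|$, the chain~\eqref{eq:ineqgammapm}, and the fact that $\tfrac1n a_n$ and $\tfrac1n b_n$ converge --- where $a_n$ and $b_n$ are the quantities inside $\gamma_+$ and $\gamma_-$ in~\eqref{eq:defgammaplus} --- are handled by Fekete-type arguments based on the almost-sub/superadditivity supplied by~\assref{ud} and~\assref{sld} (for instance $a_{n+\tau_n+m}\le\ln C_n+a_n+a_m$ from~\assref{ud}); these are collected in \appref{app:subshiftsupp} and yield~(iii.b) as well as the fact that $\ilpp(s)=\infty$ outside $[-\gamma_+,-\gamma_-]$. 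For the bounds in~\eqref{eq:Isinterval}: evaluating $\qlpq=\ilpq^*$ at $\alpha=1$ with $\Q=\P$ gives $\qlpp(1)=\lim_n\tfrac1n\ln|\supp\P_n|=\htop(\supp\P)$, so $\ilpp(s)=\qlpp^*(s)\ge s-\qlpp(1)=s-\htop(\supp\P)$; for the matching upper bound $\ilpp(s)\le s$ I would exhibit, for each $s\in[-\gamma_+,-\gamma_-]$ and $\epsilon>0$, a word $u\in\supp\P_n$ with $-\tfrac1n\ln\P_n(u)\in B(s,\epsilon)$ --- obtained by an intermediate-value argument, splicing via~\assref{sld} words of near-maximal and near-minimal probability --- and then $\P(-\tfrac1n\ln\P_n(x_1^n)\in B(s,\epsilon))\ge\P_n(u)\ge\Exp{-n(s+\epsilon)}$, whence $\ilpp(s)\le s$ by the weak-LDP upper bound on $\bar B(s,\epsilon)$ and lower semicontinuity. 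Statement~(iii.c) is then immediate since $\qlpp$ is nondecreasing and $\qlpp(1)=\htop(\supp\P)$. Finally, \eqref{eq:qppbdd} already gives $\qlpp(\alpha)<\infty$ for $\alpha\le1$; if $\gamma_->-\infty$ then $\P_n(u)\ge\Exp{n(\gamma_--\epsilon)}$ for all $u\in\supp\P_n$ and $n$ large, so $\sum_{u\in\supp\P_n}\P_n(u)^{1-\alpha}\le|\supp\P_n|\,\Exp{n(\gamma_--\epsilon)(1-\alpha)}$ has a finite exponential growth rate for every $\alpha$ and $\qlpp<\infty$ on $\rr$; whereas if $\gamma_-=-\infty$ then, for $\alpha>1$, the single term of the least-likely word already forces $\sum_{u\in\supp\P_n}\P_n(u)^{1-\alpha}\ge\Exp{-b_n(\alpha-1)}$ with $-b_n/n\to\infty$, so $\qlpp(\alpha)=\infty$. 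This proves~(iii.d).

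The main obstacle is the passage from the weak to the full LDP for a general admissible pair: one must rule out a subexponential escape of mass of $-\tfrac1n\ln\Q_n$ to $+\infty$ when its effective domain may be unbounded (which, in the case $\Q=\P$, corresponds to $\gamma_-=-\infty$), and this is precisely why the dedicated weak-to-full arguments of \appref{app:weak-to-full} are needed rather than a one-line exponential-tightness estimate. A secondary technical point is the construction of words with prescribed $\tfrac1n\ln\P_n$-value used for the upper bound in~\eqref{eq:Isinterval}, which requires the splicing of~\assref{sld} together with some bookkeeping of the variable gap lengths $\ell\le\tau_n$.
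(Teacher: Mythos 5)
Your skeleton matches the paper's: take the weak LDP from~\cite{CJPS19}, run it through Proposition~\ref{prop:abstractZn}, and add Fekete-type and counting arguments for Part~iii. But there is a genuine gap at the centre. You write that \cite{CJPS19} already supplies the identity $\qlpq=\ilpq^*$; it does not. Lemma~3.12 of \cite{CJPS19} gives existence, convexity and lower semicontinuity of $\qlpq$, and the weak LDP gives the Varadhan lower bound $\underline q\geq I^*$ (Proposition~\ref{prop:abstractZn}.\ref*{part:qgeqistar}) and equality for $\alpha<0$ (Proposition~\ref{prop:abstractZn}.\ref*{part:qleqistarneg}) for free --- but the matching inequality $\overline q(\alpha)\leq \ilpq^*(\alpha)$ for $\alpha\geq 0$ is exactly what Appendix~\ref{sec:proof-CJPS} has to work for. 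It is nontrivial precisely because $-\tfrac1n\ln\Q_n$ need not be exponentially tight under~$\P$ when $\Q\neq\P$ (Example~\ref{ex:HMM1}), so the tail condition~\eqref{eq:limK} cannot be waved away. The paper proves it by a two-regime argument in $\alpha_+:=\lim_{s\to\infty}\ilpq(s)/s$: on $[0,\alpha_+)$, superlinear growth of $\ilpq$ plus the quantitative Ruelle--Lanford bound~\eqref{eq:unifcontrolnx} from Proposition~3.11.2 of \cite{CJPS19} deliver~\eqref{eq:limK}; at $\alpha=\alpha_+$ the identity extends by left-continuity (this is where the separately established lower semicontinuity of $\qlpq$ earns its keep --- see the footnote in the paper); for $\alpha>\alpha_+$ both sides are~$\infty$. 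Your phrase ``the control on $\qlpq$ near the origin'' does not identify any of this, and your weak-to-full promotion for general pairs then rests on an input you have not supplied: Proposition~\ref{prop:abstractZn}.\ref*{part:qIfullcases} requires $q=I^*$ on all of $\rr$, while Proposition~\ref{prop:abstractZn}.\ref*{part:qIfullcases2} is unavailable in the hard case where $\qlpq$ is infinite on $(0,\infty)$.

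On Part~iii, where you are on firmer ground, your word-splicing argument for the upper bound $\ilpp(s)\leq s$ on $[-\gamma_+,-\gamma_-]$ is a legitimate alternative. The paper instead derives the combinatorial representation~\eqref{eq:representationRL}, reads off $\ilpp(-\gamma_\pm)\leq -\gamma_\pm$ at the endpoints directly from the definitions (using the uniform bound~\eqref{eq:unifcontrolnx} again when $\gamma_-=-\infty$), and interpolates by convexity --- avoiding the bookkeeping of variable gaps and $C_n$ factors your intermediate-value construction would need. Your direct computation of $\qlpp(1)=\htop(\supp\P)$ from the rightmost expression in~\eqref{eq:limitqpq}, your Fekete argument for iii.b (with care taken, as the paper does, to make the minimal probabilities almost-superadditive via \assref{sld}), and your iii.d argument all match the paper's reasoning.
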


\begin{remark}
    As we shall see in \secref{sec:LLN-v2}, under the assumptions of Theorem~\ref{thm:CJPS}, the limit defining $\Sc(\P|\Q)$ exists in $[0, \infty]$, and we have $\ilpq(\Sc(\P|\Q))=0$ when $\Sc(\P|\Q)<\infty$ and $\lim_{s\to\infty}\ilpq(s)=0$ when $\Sc(\P|\Q)=\infty$. In particular,  $\ilpp(h(\P))=0$ when $\Q = \P$. These conclusions will extend to the other rate functions to be introduced below; see again \secref{sec:LLN-v2}.
\end{remark}

\begin{remark}
    In the course of the proof, we shall establish and use the relation
    \begin{equation}\label{eq:energyentropy}    
        \ilpp(s) = s - \lim_{\epsilon\to 0} \limsup_{n\to\infty}\frac 1n \ln \left|\left\{u\in \cA^n:-\frac 1n \ln \P_n(u)\in B(s,\epsilon)\right\}\right|,
    \end{equation}   
    which can be seen as a classical ``energy-entropy competition''. Note that the logarithm is either nonnegative or $-\infty$, which explains why we have either $\ilpp\leq s$ or $\ilpp=\infty$ in~\eqref{eq:Isinterval}.
\end{remark}

\begin{remark}
	A standard consequence of~\eqref{eq:LdualIpq} and \eqref{eq:Isinterval}, which can also be derived from the definition of $\qlpp$ directly, is that if $\P$ satisfies~\assref{ud} and~\assref{sld}, then
        \begin{equation}\label{eq:gammapmlimit}
            -\gamma_\mp = \lim_{\alpha \to \pm\infty}\frac{\qlpp(\alpha)}\alpha.
        \end{equation}
    Readers who are unfamiliar with such properties of Legendre--Fenchel transforms may benefit from reading the introductions in Section~2.3 of \cite{DZ} and Chapter~VI of \cite{Ell}, or the in-depth exposition in \cite{Roc}.
\end{remark}

\begin{remark}
	The assumption that the alphabet $\cA$ is finite is important, as many of our estimates rely on the constant $|\cA|$ in a way which does not seem easy to circumvent. We do not expect the results to remain true on countably infinite alphabets without further assumptions on the measures at hand. This is a matter we would like to investigate in future research.
\end{remark}

\subsection{Main results}\label{sec:mainresults}

    We shall establish the LDP and express the rate function and pressure for the sequence $(\tfrac 1n \ln W_n)_{n\in\nn}$ in terms of $\ilpq$ and $\qlpq$.
    Similarly, the rate functions and pressures for $(\tfrac 1n \ln V_n)_{n\in\nn}$ and $(\tfrac 1n \ln R_n)_{n\in\nn}$ will be expressed in terms of $\ilpp$ and $\qlpp$.
       
    \begin{thmx}[LDP for $\frac 1n \ln W_n$]\label{thm:mainthmA}
    If $(\P, \Q)$ is an admissible pair, then the following hold:
    \begin{enumerate}[i.]
        \item  The sequence $(\tfrac 1n \ln W_n)_{n\in\nn}$ satisfies the LDP with respect to~$\P\otimes\Q$ with the convex rate function $\iw$ given by
        \begin{equation}\label{eq:defiwexplicit}
            \iw(s) :=
            \begin{cases}
                \infty &\text{if }s<0,\\
                \inf_{r\geq s}(r-s+\ilpq(r)) &\text{if }s\geq 0.
            \end{cases}
        \end{equation}
        \item For all~$\alpha\in\rr$, the limit in~\eqref{eq:defqw} exists in $(0, \infty]$,
        \begin{equation}\label{eq:formqW}
            \qw(\alpha) = \max\{\qlpq(\alpha), \qlpq(-1)\},
        \end{equation}
        and the Legendre--Fenchel duality relations $\qw = \iw^*$ and $\iw = \qw^*$ hold.
        \item If $\Q=\P$, then $\iw$ is a good rate function and $(\tfrac 1n \ln W_n)_{n\in\nn}$ is an exponentially tight family of random variables.
    \end{enumerate}
\end{thmx}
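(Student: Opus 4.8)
The plan is to relate $W_n(x,y)$ to $\Q_n(x_1^n)$ at the exponential scale and then transport the LDP of Theorem~\ref{thm:CJPS} for $(-\tfrac1n\ln\Q_n)_{n\in\nn}$ on $(\Omega,\P)$ through that relation. The guiding heuristic is that, conditionally on~$x$, the hitting time $W_n(x,\cdot)$ of the cylinder $[x_1^n]$ is, under~$\Q$, roughly geometric with parameter $\Q_n(x_1^n)$, so $\tfrac1n\ln W_n(x,y)\approx -\tfrac1n\ln\Q_n(x_1^n)$, with the only appreciable discrepancy being a super-exponentially unlikely deviation of $W_n$ \emph{above} its conditional typical value; a contraction of $\ilpq$ then produces the formula~\eqref{eq:defiwexplicit}. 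Since $W_n\ge 1$ this forces $\iw(s)=\infty$ for $s<0$; and $\iw$ is convex because on $s\ge0$ it is the infimum over the convex set $\{r\ge s\}$ of the jointly convex function $(s,r)\mapsto r-s+\ilpq(r)$.

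First I would establish sharp, $x$-uniform estimates on $W_n(x,\cdot)$ under~$\Q$, valid at the exponential scale. The easy inequality $\Q(W_n(x,\cdot)\le k)\le k\,\Q_n(x_1^n)$ is just shift-invariance and a union bound. For a matching lower bound $\Q(W_n(x,\cdot)\le k)\gtrsim k\,\Q_n(x_1^n)$ in the relevant regime $k\,\Q_n(x_1^n)C_n\lesssim 1$, I would use a second-moment / inclusion–exclusion argument over the $\approx k/(n+\tau_n)$ occurrence positions spaced by $n+\tau_n$, controlling the pairwise-coincidence probabilities with~\assref{ud} (and using~\assref{jsld} to guarantee enough such positions are ``live'', or directly to exhibit a likely word realizing an early occurrence). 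Finally, and this is the technical heart, I would prove a tail estimate of the form $\Q(W_n(x,\cdot)>k)\le \Exp{-n\,\omega(1)}$ whenever $k\,\Q_n(x_1^n)$ is exponentially large in~$n$; this is obtained by peeling off successive blocks, using~\assref{jsld} inside each block to get a not-too-small chance of an occurrence and~\assref{ud} to (approximately) decouple the blocks, with careful bookkeeping to keep the subexponential constants $C_n,\tau_n$ from accumulating and to handle words $x_1^n$ with small periods, whose occurrences cluster.

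Next I would combine these estimates with Theorem~\ref{thm:CJPS} to identify the Ruelle–Lanford functions of $(\tfrac1n\ln W_n)_{n\in\nn}$ on $(\Omega\times\Omega,\P\otimes\Q)$. Writing $(\P\otimes\Q)(\tfrac1n\ln W_n\in B(s,\epsilon))=\int\Q(W_n(x,\cdot)\in B(s,\epsilon))\,\dd\P(x)$ and splitting according to the value of $r_n:=-\tfrac1n\ln\Q_n(x_1^n)$: on $\{r_n>s-\epsilon\}$ the bound $\Q(W_n\le\cdot)\le k\Q_n$ combined with a Laplace estimate against the LDP for $r_n$ contributes $\inf_{r\ge s}(r-s+\ilpq(r))$ in the limit $\epsilon\to0$; on $\{r_n\le s-\epsilon\}$ the tail estimate makes the contribution $\Exp{-n\,\omega(1)}$, hence negligible. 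The second-moment lower bound, applied at a near-minimizer $r\ge s$ of $r-s+\ilpq(r)$ together with the LDP lower bound for $r_n$, yields the matching lower estimate. This gives the weak LDP with rate function $\iw$ as in~\eqref{eq:defiwexplicit}. I would then promote the weak LDP to a full one with the tools of \appref{app:weak-to-full}: when $\Q=\P$, the deterministic bound $-\tfrac1n\ln\P_n(x_1^n)\le-\gamma_-+o(1)$ on $\supp\P$ when $\gamma_->-\infty$, and the lower bound $\ilpp(r)\ge r-\htop(\supp\P)$ on its domain when $\gamma_-=-\infty$, combined with the tail estimate, give exponential tightness, hence the full LDP and the goodness of $\iw$ (this is part~iii, and the same two facts show $\iw$ has compact sublevel sets); for a general admissible pair one instead argues directly that mass escaping to $+\infty$ is consistent with the value of $\iw(s)$ for large~$s$.

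It remains to obtain the pressure. The identities $\qw=\iw^*$ and $\iw=\qw^*$ follow from the LDP by a version of Varadhan's lemma adapted to lower-bounded, possibly unbounded-above sequences (controlling, for $\alpha>0$, the complement of a truncation $\{W_n\le\Exp{nM}\}$ with the tail estimate, and obtaining the matching lower bound directly from the LDP lower bound when $\qw(\alpha)=\infty$), together with the observation that $\iw$ is convex and lower semicontinuous, so that $\iw=\iw^{**}$. A short convex-analytic computation then evaluates $\iw^*$: from $\iw^*(\alpha)=\sup_{s\ge0}\sup_{r\ge s}((\alpha+1)s-r-\ilpq(r))$, swapping the two suprema gives $\sup_{r\ge0}\bigl(\sup_{0\le s\le r}(\alpha+1)s-r-\ilpq(r)\bigr)$, which equals $\sup_r(\alpha r-\ilpq(r))=\qlpq(\alpha)$ for $\alpha\ge-1$ and $\sup_r(-r-\ilpq(r))=\qlpq(-1)$ for $\alpha<-1$, using $\ilpq=\qlpq^*$ from Theorem~\ref{thm:CJPS}; since $\qlpq$ is nondecreasing this is exactly $\max\{\qlpq(\alpha),\qlpq(-1)\}$, as claimed in~\eqref{eq:formqW}. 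The main obstacle is the uniform super-exponential tail estimate on $W_n$ from the second of the three steps above: in the absence of any mixing the only leverage is the decoupling inequalities, whose constants $C_n$ may themselves grow subexponentially, so one must avoid incurring a factor of $C_n$ per block when iterating over exponentially many blocks, and must separately treat the clustering of occurrences of periodic words $x_1^n$.
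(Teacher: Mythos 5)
Your proposal is correct and its overall architecture coincides with the paper's: approximate $W_n(x,\cdot)$ under $\Q$ by a geometric distribution with parameter $\Q_n(x_1^n)$, split the event $\{\frac1n\ln W_n\in B(s,\epsilon)\}$ according to the value of $r_n:=-\frac1n\ln\Q_n(x_1^n)$ and invoke the LDP of Theorem~\ref{thm:CJPS} through a Varadhan-type contraction, then pass from the weak LDP to the full one and to the pressure identity using convexity of $\iw$. A few variations within this architecture are worth flagging. For the matching lower bound in the geometric approximation, you propose a second-moment / Paley--Zygmund argument over occurrence positions spaced $n+\tau_n$ apart, with pairwise coincidences controlled by \assref{ud}; the paper instead proves a pointwise lower bound on $\Q\{W_n(u,\cdot)=m\}$ by a Bonferroni-plus-block-peeling argument (Lemma~\ref{lem:LB-Q-Wn-a-y}), likewise driven by \assref{ud}. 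Both land at the same exponential-scale conclusion. On the other hand, your worry about treating periodic words $x_1^n$ separately is misplaced here: for $W_n$, $x$ and $y$ are independent, so the block-peeling (or second-moment) estimates only see positions spaced $n+\tau_n$ apart and never touch short-range self-overlaps of $x_1^n$; periodicity matters for $R_n$ (whence \assref{pa}), not $W_n$.

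You also reverse the order of the last two steps: you propose to prove the full LDP first (via exponential tightness when $\Q=\P$, and via the observation that $\iw(s)\to0$ as $s\to\infty$ in the non-tight case) and then obtain $\qw=\iw^*$ by Varadhan's lemma. The paper instead establishes $\qw=\iw^*$ directly from the weak LDP (the easy half from Proposition~\ref{prop:abstractZn}.\ref*{part:qgeqistar} and~\ref*{part:qleqistarneg}, the $\alpha>0$ half from a moment bound using Lemma~\ref{lem:ubexpWna} and $\iw\leq\ilpq$) and then invokes Proposition~\ref{prop:abstractZn}.\ref*{part:qIfullcases} to promote the weak LDP to a full one, a route that handles the tight and non-tight cases uniformly with a single mechanism. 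Your ordering can also be made rigorous — the key fact that $\iw(s)\to0$ precisely when $\ilpq$ is not good follows directly from the formula~\eqref{eq:defiwexplicit} and convexity, and is exactly what makes the upper bound on unbounded closed sets trivial in the non-tight case — but the proposal as written leaves that to the reader, whereas the paper's Proposition~\ref{prop:abstractZn}.\ref*{part:qIfullcases} makes it explicit. Your convex-analytic evaluation of $\iw^*$ by swapping suprema is exactly the paper's computation.
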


\begin{remark}\label{rem:sstar} 
    The relations \eqref{eq:defiwexplicit} and \eqref{eq:formqW} can be written as
    \begin{equation}\label{ex:formiwqwm1}
        \iw(s)=\begin{cases}
            s_0-s + \ilpq(s_0)&\text{if } s< s_0,\\
            \ilpq(s)&\text{if } s\geq s_0,
        \end{cases}
        \qquad \text{and}\qquad \qw(\alpha) = \begin{cases}
            \qlpq(-1)&\text{if } \alpha < -1,\\
            \qlpq(\alpha)&\text{if } \alpha \geq -1,
        \end{cases}
    \end{equation}
    where $s_0$ is any point such that $-1$ belongs to the subdifferential of $\ilpq$ at $s_0$, or equivalently, such that $s_0$ belongs to the subdifferential of $\qlpq$ at $-1$. In nontrivial cases, $q_W$ is not differentiable at~$\alpha = -1$. The situation is depicted on the basis of an example in Figure~\ref{fig:BernoulliPQ}.
\end{remark}

\begin{figure}[htb]

    \centering 

    \begin{tabular}{p{0.45\textwidth} p{0.45\textwidth}}
        \vspace{0pt} \includegraphics[scale=\figscale]{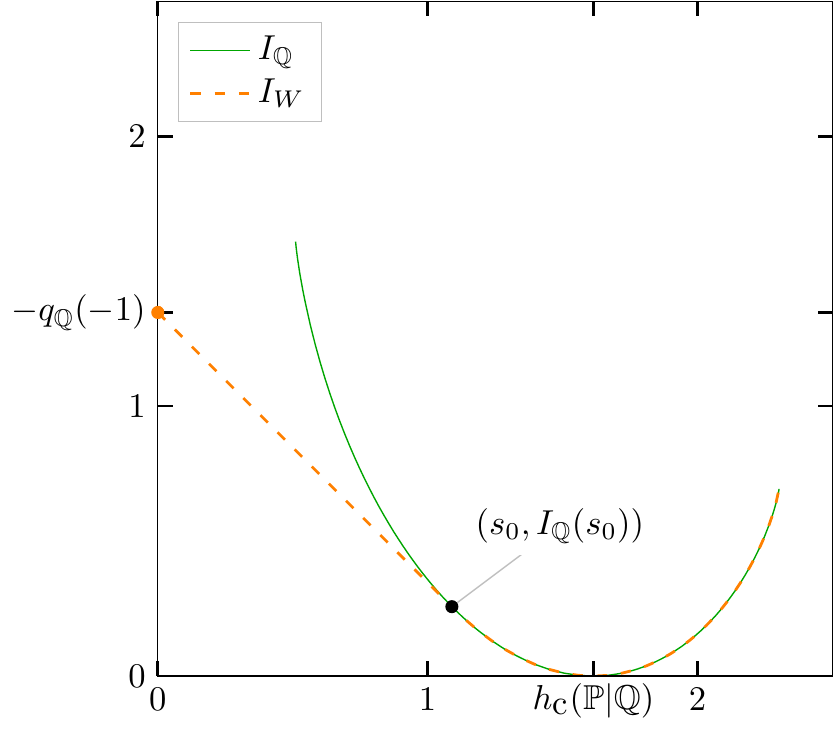} &
        \vspace{0pt} \includegraphics[scale=\figscale]{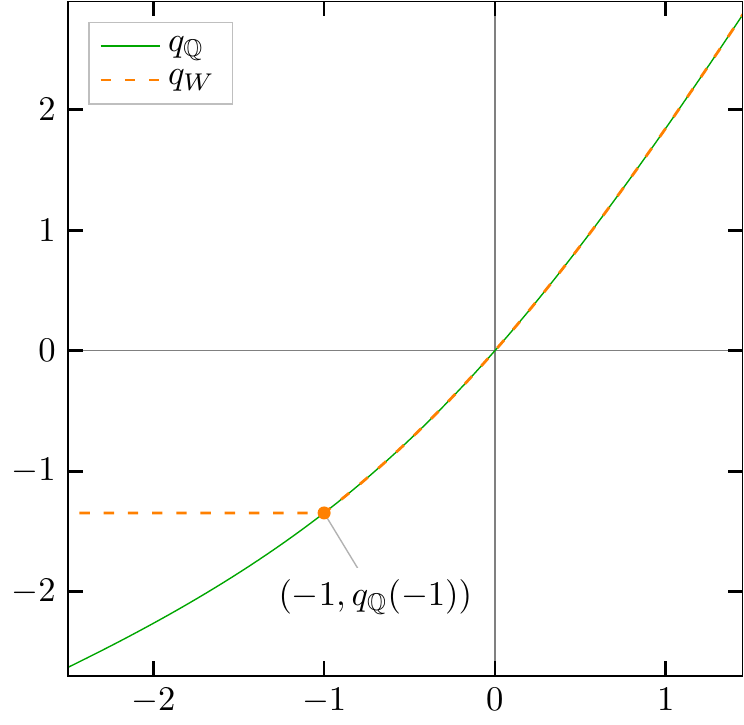}
    \end{tabular}
      
    \caption{The rate functions and pressures of Theorems~\ref{thm:CJPS} and~\ref{thm:mainthmA} for some Bernoulli measures $\P$ and~$\Q$; see \secref{sec:exiid}. The rate functions are infinite wherever not drawn.}
    \label{fig:BernoulliPQ}
\end{figure}

The next two theorems involve only one measure~$\P$. We remark that the pair $(\P, \P)$ is admissible if and only if $\P$ satisfies both~\assref{ud} and~\assref{sld}. As a consequence, the conclusions of Theorem~\ref{thm:CJPS} hold under the assumptions of Theorems~\ref{thm:mainthmB} and~\ref{thm:mainthmC}; in particular $\ilpp$ and $\qlpp$ are well defined, and the numbers $\gamma_+\in (-\infty, 0]$ and $\gamma_-\in [-\infty,0]$ defined in \eqref{eq:defgammaplus} are actual limits.

\begin{thmx}[LDP for $\frac 1n \ln V_n$]\label{thm:mainthmB}
    If $\P\in \cP_{\textnormal{inv}}(\Omega)$ satisfies~\assref{ud} and~\assref{sld}, then the following hold:
    \begin{enumerate}[i.]
        \item The sequence $(\tfrac 1n \ln V_n)_{n\in\nn}$ is exponentially tight and satisfies the LDP with respect to~$\P$ with the good, convex rate function $\iv$ given by
        \begin{equation}\label{eq:defivexplicit}
             \iv(s) := 
             \begin{cases}
                 \infty &\text{if }s<0,\\
                 \inf_{r\geq s}(r-s+\ilpp(r)) &\text{if }s\geq 0.
             \end{cases}
         \end{equation}
         \item For all $\alpha\in\rr$, the limit in~\eqref{eq:defqv} exists in $(0, \infty]$,
        \begin{equation}\label{eq:formqV}
            \qv(\alpha) = \max\{\qlpp(\alpha), \qlpp(-1)\},
        \end{equation}
        and the Legendre--Fenchel duality relations $\qv = \iv^*$ and $\iv = \qv^*$ hold. Moreover, 
        \begin{equation}\label{eq:reliv0qlppm1}
            \iv(0) = -\qlpp(-1) \geq - \gamma_+.
        \end{equation}
    \end{enumerate}
\end{thmx}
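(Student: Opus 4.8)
The plan is to run the same program as for Theorem~\ref{thm:mainthmA}, now with the admissible pair $(\P,\P)$: recall that $(\P,\P)$ is admissible precisely when $\P$ satisfies \assref{ud} and \assref{sld}, so Theorem~\ref{thm:CJPS} is available with $\Q=\P$, giving in particular that $\ilpp$ is a good rate function obeying \eqref{eq:Isinterval} and the duality $\ilpp=\qlpp^*$, $\qlpp=\ilpp^*$. The guiding heuristic, made precise through the toy model of \secref{sec:toymodel}, is that \emph{conditionally on the first block $x_1^n=u$} the nonoverlapping return time $V_n$ is comparable to a geometric random variable of success probability $\P_n(u)$, with all multiplicative discrepancies of subexponential size $\Exp{o(n)}$ absorbed by $C_n$ and by the fillers of length $\le\tau_n$. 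Granting this, $(\tfrac1n\ln V_n)$ should satisfy the LDP obtained by superposing onto the LDP for $-\tfrac1n\ln\P_n(x_1^n)$ (Theorem~\ref{thm:CJPS}(iii)) the fluctuations of a geometric variable, and this superposition is exactly the inf-convolution \eqref{eq:defivexplicit}.

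The technical heart is a pair of sharp two-sided bounds, uniform in $k$, on $\P(V_n<k)$ and $\P(V_n\ge k)$, which I would derive much as the $W_n$-estimates of \secref{sec:key-set} and then transfer to $V_n$. For small values: conditionally on $x_1^n=u$, each of the disjoint windows in which $x_1^n$ could reoccur has probability at most $C_n\P_n(u)$ by \assref{ud}, so a union bound gives $\P(V_n<k\mid x_1^n=u)\le kC_n\P_n(u)$, hence $\P(V_n<k)\le kC_n\sum_u\P_n(u)^2$. For large values: the disjointness of successive windows together with the lower bounds of \assref{sld} — which also provide the filler word of length $\le\tau_n$ between consecutive occurrences — yields an almost-independent lower bound per trial, so the probability of missing $x_1^n$ over the first $\asymp k$ windows decays like $(1-C_n^{-1}\P_n(u))^{\asymp k/n}$. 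It is exactly because the nonoverlapping definition of $V_n$ makes these windows disjoint, unlike the overlapping definition of $R_n$, that \assref{pa} is needed in Theorem~\ref{thm:mainthmC} but not here. Feeding these estimates into the Ruelle--Lanford formalism of \secref{sec:RLfuncts} identifies the Ruelle--Lanford function of $(\tfrac1n\ln V_n)$ with $\iv$ and yields the weak LDP; exponential tightness — $\tfrac1n\ln V_n\ge0$ for the lower tail, and a split of the upper tail over $x_1^n=u$ according to whether $-\tfrac1n\ln\P_n(u)$ is large (negligible mass by the exponential tightness from Theorem~\ref{thm:CJPS}(iii.a) and $\ilpp(s)\ge s-\htop(\supp\P)$) or moderate (superexponentially small miss probability) — then upgrades it to a full LDP with good rate function via the tools of \appref{app:weak-to-full}, as for Theorem~\ref{thm:mainthmA}.

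The statements on $\qv$ are then soft. A Varadhan-type argument together with the exponential tightness gives the existence of the limit in \eqref{eq:defqv} and the duality $\qv=\iv^*$, $\iv=\qv^*$. The explicit formula \eqref{eq:formqV} is a routine convex-analysis computation: since $\ilpp$ is convex, equals $\infty$ on $(-\infty,-\gamma_+)$, and admits $-1$ as a subgradient at some $s_0\ge-\gamma_+\ge0$, the inf-convolution \eqref{eq:defivexplicit} equals $\ilpp(s)$ for $s\ge s_0$ and $s_0-s+\ilpp(s_0)$ for $0\le s<s_0$, and taking the Legendre--Fenchel transform gives $\qlpp(\alpha)$ for $\alpha\ge-1$ and $\qlpp(-1)$ for $\alpha<-1$, i.e.\ $\max\{\qlpp(\alpha),\qlpp(-1)\}$ because $\qlpp$ is nondecreasing. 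Evaluating $\iv$ at $0$ gives $\iv(0)=s_0+\ilpp(s_0)=\min_{s\ge0}(s+\ilpp(s))=-\qlpp(-1)$, and $-\qlpp(-1)\ge-\gamma_+$ follows from $\qlpp(-1)=\lim\tfrac1n\ln\sum_{u\in\supp\P_n}\P_n(u)^2\le\lim\tfrac1n\ln\max_u\P_n(u)=\gamma_+$, which is also consistent with \eqref{eq:gammapmlimit}.

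The step I expect to be the main obstacle is the uniform two-sided control of $V_n$ conditionally on the first block: one must make the disjoint-window trials quantitatively almost independent — robustly under the $\Exp{o(n)}$ slack required by the Ruelle--Lanford method — using only \assref{ud} and \assref{sld} rather than any genuine mixing, and uniformly over all blocks $u$, including those of exponentially small probability for which the geometric parameter is exponentially small while the relevant number of windows is exponentially large.
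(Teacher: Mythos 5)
Your proposal is correct and follows essentially the same route as the paper: establish geometric-type estimates on $\P\{y : W_n(u,y)=m\}$ using \assref{sld} (for the upper bound) and \assref{ud} (for the lower bound), transfer these to $V_n$ via the gap of size $\tau_n$ supplied by the decoupling assumptions, identify the Ruelle--Lanford function as $\iv$ using Lemma~\ref{lem:modVaradhan}, and then promote the weak LDP using the apparatus of \appref{app:weak-to-full}. Your analysis of the convex-duality step and the origin estimate $\iv(0)=-\qlpp(-1)\ge-\gamma_+$ matches Propositions~\ref{prop:LDPtildeRns0} and Lemma~\ref{lem:propzgammaplus}.

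Two small remarks. First, the paper obtains exponential tightness indirectly, by proving the moment bound $\oqv(\alpha)\le\qlpp(\alpha)$ via Lemma~\ref{lem:ubexpWna} and then invoking $\qv(1)=\htop(\supp\P)<\infty$ through Proposition~\ref{prop:abstractZn}.\ref*{part:qIfullcases2}; your direct tail-splitting argument is explicitly acknowledged as an equivalent alternative in Remark~\ref{rem:direxpotight}. Second, the stated union bound $\P(V_n<k\mid x_1^n=u)\le kC_n\P_n(u)$ is not quite valid as written: for $1\le j\le\tau_n$ the windows $[1,n]$ and $[n+j,2n+j-1]$ are separated by a gap $<\tau_n$, so~\assref{ud} does not apply directly. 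The paper handles this in Proposition~\ref{prop:LDPtildeRns0} by shrinking the first block to length $n-\tau_n$ and assuming $(\tau_n)$ nondecreasing, which recovers the same exponential rate since $(n-\tau_n)/n\to1$. This is a patchable technicality, not a conceptual gap.
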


\begin{remark}\label{eq:iveqiw}
    The rate function $\iv$ corresponds to $\iw$ in the special case $\Q = \P$. While one can, of course, choose $\Q = \P$ in Theorem~\ref{thm:mainthmA} (in this case it suffices that $\P$ satisfies~\assref{ud} and~\assref{sld}), this special case is not equivalent to Theorem~\ref{thm:mainthmB}. Indeed, $W_n$ and~$V_n$ are still distinct in their definition and underlying probability space. It is known that the range of applicability of almost sure entropy estimation via $W_n$ is strictly smaller than that via~$V_n$ (or $R_n$); see~\cite{OW93} and~\cite[\S{4}]{Sh93}.
\end{remark}

\begin{thmx}[LDP for $\frac 1n \ln R_n$]\label{thm:mainthmC} 
    If $\P\in \cP_{\textnormal{inv}}(\Omega)$ satisfies~\assref{ud},~\assref{sld} and~\assref{pa}, then the following hold:
    \begin{enumerate}[i.]
        \item The sequence $(\tfrac 1n \ln R_n)_{n\in\nn}$ is exponentially tight and satisfies the LDP with respect to~$\P$ with the good (possibly nonconvex) rate function $\ir$ given by
        \begin{equation}\label{eq:defirexplicit}
            \ir(s) := 
            \begin{cases}
            \infty &\text{if }s<0,\\
            -\gamma_+ &\text{if }s = 0,\\
            \inf_{r\geq s}(r-s+\ilpp(r)) &\text{if }s>0.
            \end{cases}
        \end{equation}
        
        \item For all $\alpha \in \rr$, the limit in~\eqref{eq:defqr} exists in $(0, \infty]$ and
            \begin{equation}\label{eq:formqR}
                \qr(\alpha) = \max\{\qlpp(\alpha), \gamma_+\}.
            \end{equation}
        Moreover, $\qr = \ir^*$.
        \item We have the following relations:
        $\ir$ is convex $\iff$ $\ir = \iv$ $\iff$ $\qr = \qv$ $\iff$ $\gamma_+ = \qlpp(-1)$ $\iff$ $\ilpp(-\gamma_+) = 0$ $\iff$ $\qlpp(\alpha) = -\gamma_+\alpha$ for all $\alpha \leq 0$ $\impliedby$ $\gamma_+= \gamma_-$ $\iff$ $\ilpp(s) = \infty$ for all $s \in \rr\setminus\{-\gamma_+\}$$\iff$ $h(\P) = \htop(\supp\P)$. 
        Moreover, if $\tau_n=O(1)$ and $C_n=O(1)$, then all the above properties are actually equivalent.
    \end{enumerate}
\end{thmx}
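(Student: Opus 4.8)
The plan is to follow the three-step scheme already used for Theorems~\ref{thm:mainthmA} and~\ref{thm:mainthmB}, building on Theorem~\ref{thm:CJPS}, which applies because $(\P,\P)$ is admissible whenever $\P$ satisfies~\assref{ud} and~\assref{sld}. First I would identify the Ruelle--Lanford functions of $(\tfrac1n\ln R_n)_{n\in\nn}$ with $\ir$, obtaining the weak LDP; then promote it to a full LDP by exponential tightness; and finally extract the pressure from Varadhan's lemma and deduce the convexity dichotomy from the explicit formulas. The only genuinely new feature compared with $W_n$ and $V_n$ is the behaviour at $s=0$: for $V_n$ and $W_n$ a ``return'' must occur at distance at least~$n$, whereas $R_n$ can be as small as~$1$, and the probability that $R_n$ is sub-exponential is dictated by near-periodic sequences---this is what produces $\ir(0)=-\gamma_+$ and the possible loss of convexity. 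For $s<0$ there is nothing to prove, since $R_n\geq1$ forces $\tfrac1n\ln R_n\geq 0$ and hence both RL functions equal~$+\infty$. For $s>0$ I would use the sharp (exponential-scale) hitting-time estimates shared with Theorems~\ref{thm:mainthmA}--\ref{thm:mainthmB}: conditionally on $x_1^n=u$, the first return to~$[u]$ behaves like a geometric random variable of parameter~$\P_n(u)$, an upper bound via~\assref{ud} giving $\P(\exists k\leq \Exp{ns}\colon x_{k+1}^{k+n}=u)\leq \Exp{ns}C_n\P_n(u)^2\,\Exp{o(n)}$ and a Bonferroni/\assref{sld} lower bound of matching order, while the ``no earlier return'' factor is $\Exp{o(n)}$ because $\Exp{ns}\P_n(u)\to 0$ once $-\tfrac1n\ln\P_n(u)>s$; feeding in the level-$r$ large-deviation weight $\Exp{-n\ilpp(r)}$ for $-\tfrac1n\ln\P_n(x_1^n)$ from Theorem~\ref{thm:CJPS} and optimising over $r\geq s$ shows both RL functions equal $\inf_{r\geq s}(r-s+\ilpp(r))=\iv(s)$ there.

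For the new point $s=0$ I would prove $\ir(0)\geq-\gamma_+$ by writing $\P(R_n<\Exp{n\epsilon})\leq\sum_{k<\Exp{n\epsilon}}\P(x_{k+1}^{k+n}=x_1^n)$ and splitting the sum. When $k\geq n$ the two length-$n$ blocks are disjoint; shrinking the first by $\tau_n$ letters to open a gap, applying~\assref{ud}, and then summing the common block over its extensions bounds each term by $C\sum_w\P_{n-\tau_n}(w)^2\leq C\max_w\P_{n-\tau_n}(w)=\Exp{n\gamma_++o(n)}$, so this range contributes $\Exp{n(\gamma_++\epsilon+o(1))}$. When $k<n$ the identity $x_{k+1}^{k+n}=x_1^n$ forces $x_1^{n+k}$ to be $k$-periodic; for the finitely many periods $k\leq K$ one bounds each such word crudely by $\max_w\P_n(w)=\Exp{n\gamma_++o(n)}$, while for $K<k<n$ one peels the $(n+k)/k$ periodic blocks off one by one with gaps (again~\assref{ud}) and \emph{then} sums over the period word~$v\in\cA^k$, the sum telescoping by marginalisation to $\sum_w\P_{k-\tau_k}(w)^{(n+k)/k}\leq(\max_w\P_{k-\tau_k}(w))^{n/k}$; since $\tau_k/k\to0$ and $\ln C_k/k\to0$ this is $\Exp{n(\gamma_++\epsilon_K)}$ with $\epsilon_K\to0$. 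Letting $K\to\infty$ then $\epsilon\to0$ gives $\ir(0)\geq-\gamma_+$. For the matching lower bound I would use~\assref{pa}: given $\epsilon>0$, pick $p$ and $u\in\cA^p$ with $\P_{np}(u^n)\geq\Exp{np(\gamma_+-\epsilon)}$; on the cylinder $[u^n]$ one has $R_m(x)\leq p$ whenever $m\leq np-p$, so choosing, for each $m$, the least $n$ with $m\leq np-p$ gives $\P(\tfrac1m\ln R_m\in B(0,\delta))\geq\P_{np}(u^n)\geq\Exp{m(\gamma_+-\epsilon)+O(1)}$, whence $\ir(0)\leq-\gamma_+$.

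Exponential tightness is then immediate from Kac's lemma: since $R_n$ restricted to $[u]$ is the first-return time to $[u]$, one has $\E_\P[R_n]=\sum_{u\in\supp\P_n}\int_{[u]}R_n\,\dd\P\leq|\supp\P_n|\leq|\cA|^n$, so $\P(\tfrac1n\ln R_n\geq M)\leq|\cA|^n\Exp{-nM}$; this promotes the weak LDP to a full LDP with $\ir$, whose goodness follows because its sublevel sets are those of the good rate function $\iv$ with at most the point~$0$ adjoined, and whose lower semicontinuity at~$0$ follows from $\liminf_{s\to0^+}\ir(s)=\iv(0^+)=\iv(0)=-\qlpp(-1)\geq-\gamma_+=\ir(0)$, using that $\iv$ is convex and finite near~$0$ and the inequality in~\eqref{eq:reliv0qlppm1}. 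The same Kac estimate (and its higher-moment version) validates Varadhan's lemma, which yields the existence of the limit in~\eqref{eq:defqr} and $\qr=\ir^*$; computing $\ir^*$ from~\eqref{eq:defirexplicit} gives $\ir^*(\alpha)=\max\{\sup_{s>0}(\alpha s-\iv(s)),\gamma_+\}=\max\{\qv(\alpha),\gamma_+\}=\max\{\qlpp(\alpha),\qlpp(-1),\gamma_+\}$, and since $\sum_u\P_n(u)^2\leq\max_u\P_n(u)$ forces $\qlpp(-1)\leq\gamma_+$ this simplifies to~\eqref{eq:formqR}.

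Part~(iii) is then a formal matter. As $\ir$ agrees with $\iv$ off~$\{0\}$ and $\ir(0)=-\gamma_+\leq-\qlpp(-1)=\iv(0)$, convexity on $[0,\infty)$ fails precisely when $-\gamma_+<-\qlpp(-1)$, so $\ir$ convex $\iff\gamma_+=\qlpp(-1)\iff\ir=\iv$; then $\qr=\ir^*=\iv^*=\qv$, and conversely $\qr=\qv$ forces $\ir^{**}=\iv^{**}=\iv$, whence $\ir\geq\iv$ and, combined with $\ir(0)\leq\iv(0)$, again $\ir=\iv$. The identity $-\qlpp(-1)=\inf_s(s+\ilpp(s))$, together with $s+\ilpp(s)\geq s\geq-\gamma_+$ for $s\geq-\gamma_+$ and $\ilpp\equiv\infty$ below~$-\gamma_+$ (by~\eqref{eq:Isinterval}), gives $\gamma_+=\qlpp(-1)\iff\ilpp(-\gamma_+)=0$, and evaluating $\ilpp^*$ on $(-\infty,0]$ gives the equivalence with $\qlpp(\alpha)=-\gamma_+\alpha$ for all $\alpha\leq0$. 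If $\gamma_+=\gamma_-$ then the interval $[-\gamma_+,-\gamma_-]$ is the single point~$-\gamma_+$, so $\ilpp\equiv\infty$ off~$\{-\gamma_+\}$ and (as $\inf\ilpp=0$) $\ilpp(-\gamma_+)=0$; and~\eqref{eq:ineqgammapm} gives $\gamma_+=\gamma_-\iff h(\P)=\htop(\supp\P)$, the remaining statements about $\htop$ being handled with the topological-entropy facts in~\appref{app:subshiftsupp}; the final refinement (that $\tau_n=O(1)$, $C_n=O(1)$ upgrade $\ilpp(-\gamma_+)=0$ back to $\gamma_+=\gamma_-$) uses that under bounded decoupling admissible words can be concatenated with $O(1)$ overhead. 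The main obstacle is the $s=0$ upper bound: the split of the range of periods is essential, since the $o(k)$-terms carried by $C_k$ and $\tau_k$ only become $o(n)$ once the period has been taken large, and the sum over period words must be telescoped by marginalisation rather than bounded term by term; the hitting-time machinery governing $s>0$ (common to Theorems~\ref{thm:mainthmA}--\ref{thm:mainthmB}) is the other substantial input.
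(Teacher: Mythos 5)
Your architecture mirrors the paper's: Ruelle--Lanford identification for $s>0$ through the hitting-time estimates, a separate treatment at $s=0$ via periodicity (upper bound) and~\assref{pa} (lower bound), promotion to the full LDP, and deduction of the pressure and convexity structure. Your exponential-tightness route via Kac's lemma is a legitimate and attractive shortcut: Kac requires no ergodicity and gives $\int_{[u]}R_n\,\dd\P\leq 1$ for every $u\in\supp\P_n$, so $\int R_n\,\dd\P\leq|\supp\P_n|\leq|\cA|^n$ and $\P\{\tfrac1n\ln R_n\geq M\}\leq|\cA|^n\Exp{-nM}$. The paper instead derives exponential tightness from $\qr(1)=\htop(\supp\P)<\infty$ via Proposition~\ref{prop:abstractZn}.\ref{part:qIfullcases2} (and notes in Remark~\ref{rem:direxpotight} that a direct route through~\eqref{eq:UB2only1bound} is also possible). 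Your $s=0$ upper bound also differs: you split the period range at a threshold $K$, bound the finitely many $k\leq K$ crudely, and use a peel-and-marginalize scheme for $K<k<n$ before sending $K\to\infty$. The paper avoids the split entirely: using the injection $u\mapsto u_{n-k+1}^n$ on words of period $\leq k$ together with a \emph{single} application of~\assref{ud} (with gap inside the \emph{first} block, not between periodic copies), one gets $\P\{x:R_n(x)=k\}\leq C_{n-\tau_n}\sup_w\P_{n-\tau_n}(w)$ uniformly over $1\leq k<n$; the $o(n)$ corrections are then carried by $C_{n-\tau_n}$ rather than $C_k^{n/k}$, so the difficulty you call ``essential'' is an artifact of your peeling scheme, not of the problem.

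The genuine gap concerns Part~ii. You write that ``the same Kac estimate (and its higher-moment version) validates Varadhan's lemma.'' There is no higher-moment version of Kac's lemma. Kac combined with conditional Jensen on each cylinder gives $\int_{[u]}R_n^\alpha\,\dd\P\leq\P_n(u)^{1-\alpha}$ only for $\alpha\in(0,1]$, whence $\oqr(\alpha)\leq\qlpp(\alpha)$ on that range; for $\alpha>1$ Jensen points the wrong way and Kac controls nothing. The moment bound $\int R_n^\alpha\,\dd\P\leq C_n(n+\tau_n)^\alpha\kappa_{\alpha,n}\sum_u\P_n(u)^{1-\alpha}$ that the paper uses for all $\alpha>0$ comes from Lemma~\ref{lem:ubexpWna}, a genuinely decoupling-driven estimate built on the geometric tail~\eqref{eq:UB2only1bound}, which is where~\assref{sld} enters. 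Without this (or an equivalent), the tail condition~\eqref{eq:limK} for Varadhan's lemma cannot be verified for $\alpha>1$, and the full LDP alone does not close the hole: the upper bound controls only the exponential rate of $\P(R_n\geq\Exp{nK})$, not the moments $\int R_n^\alpha\,1_{R_n\geq\Exp{nK}}\,\dd\P$. The gap is consequential precisely when $\gamma_->-\infty$ (otherwise $\qlpp(\alpha)=\infty$ for $\alpha>1$ and there is nothing to prove), but it is a real gap in establishing the existence of the limit $\qr(\alpha)$ and the identity $\qr=\ir^*$ on $(1,\infty)$. Finally, in Part~iii, the implication $h(\P)=\htop(\supp\P)\Rightarrow\gamma_+=\gamma_-$ is not a consequence of~\eqref{eq:ineqgammapm} alone; it requires the weak Gibbs property established in Proposition~\ref{prop:MMEprops}.i, and the refinement under $\tau_n,C_n=O(1)$ is the non-trivial contradiction argument of Proposition~\ref{prop:sconvtau}, not a routine $O(1)$-overhead concatenation.
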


By their definition, the rate functions $\ir$ and $\iv$ may differ only at~$0$, and by~\eqref{eq:reliv0qlppm1} we have $\ir(0) = -\gamma_+\leq \iv(0)$; see Figure~\ref{fig:Bernoullisimple} for an illustration. 
Part~iii gives many technical conditions equivalent to $\ir = \iv$, the most notable of which is the convexity of $\ir$. The situation where $\ir = \iv$ should be seen as quite degenerate; in the generic case, we have $\ir(0)<\iv(0)$, as strikingly illustrated by the examples in Sections~\ref{sec:exiid} and \ref{sec:markov}. The generic inequality $\ir(0)<\iv(0)$ is due to fact that the definition of $R_n$ allows for overlaps (characterized by $R_n < n$) which are excluded in $V_n$. These overlaps, which make the LDP for $(\frac 1n \ln R_n)_{n\in \nn}$ highly interesting, were widely studied in other contexts; see the discussion and references in \secref{sec:toymodel}. With help of \assref{pa}, we will show in \secref{sec:atorigin} that $\P\{x:R_n(x) < n\}$ asymptotically decreases like $\Exp{n\gamma_+}$, which explains the equality $\ir(0)=-\gamma_+$.

\begin{figure}[htb]
    \centering 

    \begin{tabular}{p{0.45\textwidth} p{0.45\textwidth}}
        \vspace{0pt} \includegraphics[scale=\figscale]{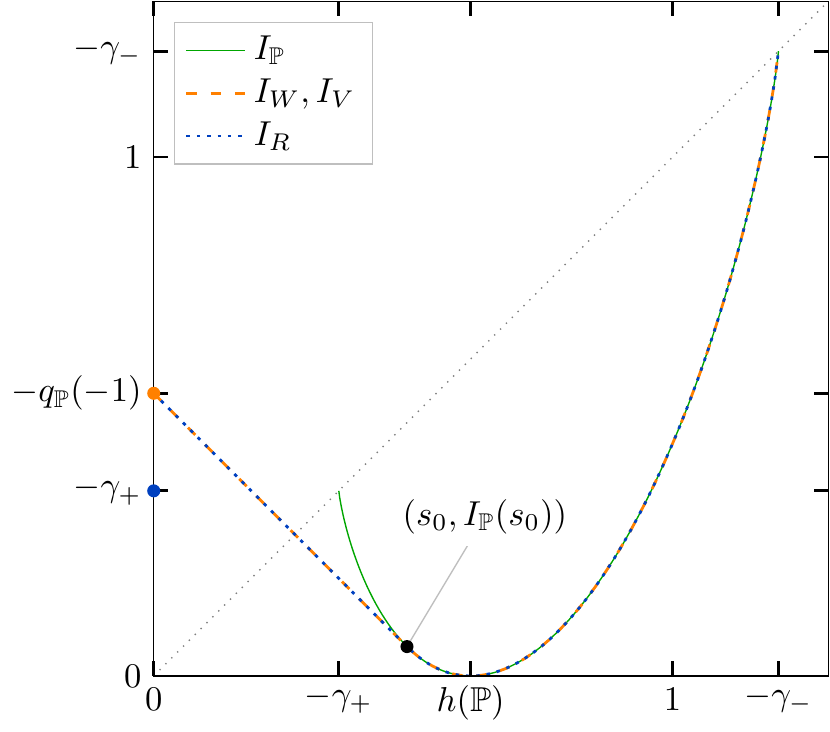} &
        \vspace{0pt} \includegraphics[scale=\figscale]{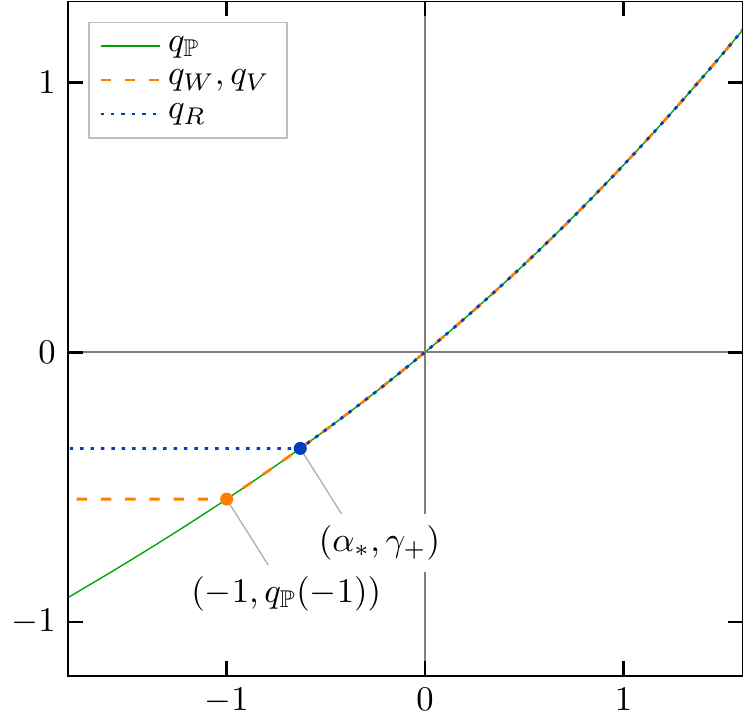}
    \end{tabular}
      
    \caption{The rate functions and pressures of Theorems~\ref{thm:CJPS} and~\ref{thm:mainthmA}--\ref{thm:mainthmC} for some Bernoulli measure~$\P$; see \secref{sec:exiid}.  The rate functions are infinite wherever not drawn.}
    \label{fig:Bernoullisimple}
\end{figure}

\begin{remark}\label{rem:gammap0}
    If $\gamma_+ = 0$, then $\ilpp(0) = 0$ by~\eqref{eq:Isinterval}, and thus $\qlpp(\alpha) = \ilpp^*(\alpha) = 0$ for all $\alpha \leq 0$. In this case, we easily see that $\iv = \ir = \ilpp$ and $\qv = \qr = \qlpp$. 
\end{remark}

\begin{remark}\label{rem:alphastar}The relation \eqref{eq:formqR} can be written as 
    \begin{equation}\label{eq:formirqrm1}
        \qr(\alpha) = \begin{cases}
        \gamma_+&\text{if } \alpha < \alpha_*,\\
        \qlpp(\alpha)&\text{if } \alpha \geq \alpha_*,
    \end{cases}
    \end{equation}
    where $\alpha_*\leq 0$ is such that $\qlpp(\alpha_*)=\gamma_+$. If $\gamma_+ = 0$, then we can take any $\alpha_*\leq 0$ in view of Remark~\ref{rem:gammap0}. 
    If $\gamma_+<0$, then $\qlpp(-1)\leq \gamma_+ < 0 = \qlpp(0)$ by~\eqref{eq:reliv0qlppm1}, so the point $\alpha_*$ is unique and satisfies $\alpha_*\in [-1, 0)$, with $\alpha_* = -1$ if and only if $\iv = \ir$.
\end{remark}

\subsection{Outline of the proof for a toy model}\label{sec:toymodel}

We present a natural toy model, consisting of a mixture of geometric distributions, which not only provides the correct rate functions for $W_n$ and $V_n$, but also illustrates the method of our proof. We then modify the toy model in order to take overlaps (corresponding to $R_n < n$) into account and guess the correct rate function for $R_n$.

Approximations of waiting times and return times by geometric random variables (or exponential random variables in the scaling limit) 
have been widely studied, typically under mixing assumptions; see e.g.~\cite{GS97,CGS99,HSV1999,AG01,HV10} and \cite{AAG21} for a recent overview and more exhaustive references. As often emphasized in the literature, possible overlaps, which are tightly related to Poincar\'e recurrence times (see \cite{AV08,AC15,AAG21}), play a crucial role in such approximations. We shall further comment on Poincar\'e recurrence times at the end of \secref{sec:atorigin}.

Our decoupling assumptions do not seem to imply any of the very sharp exponential approximations that are available in the literature. In comparison, the geometric approximation that we prove in \secref{sec:Wn-estimates} are quite loose in the sense that scaling factors and error terms may grow subexponentially. Yet, it suffices to establish the LDPs of interest. 

\paragraph*{Geometric approximation.} 
We start with the following interpretation of $W_n(x,y)$: first $x$ is drawn at random according to the law $\P$ and $y$ is drawn at random according to the law $\Q$, independently of~$x$. Then, for each $k\in\nn$, we check whether $y_{k}^{k+n-1} = x_1^n$ or not. Once~$x_1^n$ is given, shift invariance implies that $\Q\{y\in \Omega: y_{k}^{k+n-1} = x_1^n\} = \Q_n(x_1^n)$ for each $k\in\nn$, and we can view $W_n$ as the time of the first ``success'' in a series of attempts (indexed by~$k$). 
If the attempts were mutually independent, $W_n$ would be a geometric random variable with random parameter $p_n = \Q_n(x_1^n)$.
Of course, these attempts are not independent: even if~$\Q$ is a Bernoulli measure, the attempts $k$ and $k'$ are only independent for $|k'-k| \geq n$. 
However, it turns out, due to our decoupling assumptions, that the asymptotic behavior of $W_n$ at the scale that is relevant to our LDP is accurately captured by this simplified geometric model. 

We now define the toy model properly. Let $\widetilde W_n$ the random variable whose law $\nu_n$ on $\nn$ is given by
\begin{equation}\label{eq:sumanun}
    \nu_n(k) 
        := \sum_{u\in \cA^n} \P_n(u)\Q_n(u)(1-\Q_n(u))^{k-1},
\end{equation}
for every $k\in \nn$. This is a simple mixture of geometric distributions, motivated by the above discussion. The next proposition describes the large deviations of $\widetilde W_n$.
Since it is introduced only for illustration purposes, and since the actual proof of the proposition relies on estimates which are similar to\,---\,but simpler than\,---\,those we provide for $W_n$ in the main body of the paper, we limit ourselves to sketching the proof. The interested reader will easily be able to fill in the details.

\begin{proposition}\label{prop:ptoymodel}
    If the pair $(\P, \Q)$ is admissible, then the sequence $(\frac 1n \ln \widetilde W_n)_{n\in\nn}$ satisfies the LDP with the rate function~$\iw$ defined in \eqref{eq:defiwexplicit}.
\end{proposition}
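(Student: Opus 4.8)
The plan is to establish the large deviation principle for $(\tfrac 1n \ln \widetilde W_n)_{n\in\nn}$ by a direct Ruelle--Lanford-type analysis of the probabilities $\nu_n(\{k : \tfrac 1n \ln k \in B(s,\epsilon)\})$, using the admissibility of $(\P,\Q)$ to control the mixture \eqref{eq:sumanun} via Theorem~\ref{thm:CJPS}, and then reading off the rate function. The key observation is that the quantity $\nu_n(k)$ is governed by two competing effects: the probability mass $\P_n(u)$ put on a word $u$, and the geometric factor $\Q_n(u)(1-\Q_n(u))^{k-1}$, which is concentrated (as a function of $k$) around $k \approx \Q_n(u)^{-1}$, i.e.\ around $\tfrac 1n \ln k \approx -\tfrac 1n \ln \Q_n(u)$. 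Writing $r = -\tfrac 1n \ln \Q_n(u)$, a word with $\tfrac 1n\ln\P_n(u)$-value contributing to the LDP at level $r$ (rate $\ilpq(r)$ by Theorem~\ref{thm:CJPS}) contributes to $\widetilde W_n$ at level $s \le r$ with the geometric penalty: $\Q_n(u)(1-\Q_n(u))^{k-1} \approx \Exp{-nr}\Exp{-k\Exp{-nr}}$, and for $\tfrac1n\ln k = s < r$ this factor is $\approx \Exp{-nr}$ (the exponential term being negligible), while for $s > r$ the term $\Exp{-k\Q_n(u)}$ forces super-exponential decay. Summing over $u$ and optimizing over $r \ge s$ then yields exactly $\inf_{r\ge s}(r - s + \ilpq(r))$ for $s \ge 0$, and $\infty$ for $s < 0$ since $\widetilde W_n \ge 1$.

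Concretely, I would proceed in the following steps. \emph{Step 1 (upper bound).} Fix $s \ge 0$ and $\epsilon>0$. Split the sum defining $\nu_n(\{k: \tfrac1n\ln k \in B(s,\epsilon)\})$ according to dyadic-type blocks in the value of $r_u := -\tfrac1n\ln\Q_n(u)$, using the admissibility bound $\P_n \ll \Q_n$ and the bound $\Q_n(u)(1-\Q_n(u))^{k-1} \le \Q_n(u) = \Exp{-nr_u}$ to get, for the portion of the sum with $r_u$ near a value $r$, a contribution bounded by $\Exp{n(\epsilon')}\,\Exp{-nr}\cdot\#\{u : r_u \approx r\}\cdot(\text{mass})$; invoking the weak LDP for $(-\tfrac1n\ln\Q_n)$ on $(\Omega,\P)$ controls $\sum_{u: r_u\approx r}\P_n(u)$ by $\Exp{-n(\ilpq(r)-\epsilon')}$, and one also checks the portion with $r_u < s - \epsilon$ is super-exponentially small because then $k\Q_n(u) \ge \Exp{n\epsilon/2}$. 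Taking $\limsup$, then $\epsilon\to0$, gives the upper bound $-\inf_{|s'-s|\le\epsilon}\iw(s')$ after a standard Laplace-type argument over the (finitely many, up to $o(n)$) values of $r$. \emph{Step 2 (lower bound).} For the matching lower bound at a point $s\ge0$ with $\iw(s)<\infty$, pick a near-optimal $r \ge s$ in \eqref{eq:defiwexplicit}; by the lower bound in the weak LDP for $(-\tfrac1n\ln\Q_n)$ there is exponentially large ($\Exp{-n(\ilpq(r)+\epsilon)}$) $\P$-mass on words $u$ with $r_u \in B(r,\epsilon)$, and for such $u$ and $k$ with $\tfrac1n\ln k \in B(s,\epsilon)$ one has $\Q_n(u)(1-\Q_n(u))^{k-1} \ge \Exp{-nr}\Exp{-k\Exp{-n(r-\epsilon)}} \ge \Exp{-n(r+\epsilon')}$ provided $s < r$ (so $k\Exp{-nr}\to0$), and a similar but more careful estimate at $s = r$ summing over a full range of $k$; multiplying and summing over the admissible $k$-range (which has $\Exp{o(n)}$ many terms, or one uses the integral $\int \Exp{-t}\,dt$ over the block) yields $\nu_n(B \text{ in } \tfrac1n\ln k) \gtrsim \Exp{-n(r - s + \ilpq(r) + \epsilon'')}$. \emph{Step 3.} Combining the two bounds identifies the Ruelle--Lanford function of $(\tfrac1n\ln\widetilde W_n)_{n\in\nn}$ with $\iw$; since $\iw$ is convex (as the infimal convolution of the convex $\ilpq$ with the convex map $r\mapsto r_+ $ shifted, restricted to $s\ge0$) and $\widetilde W_n\ge1$ confines mass to $[0,\infty)$ which is where exponential tightness is automatic given $\widetilde W_n\le\widetilde W_n$ has at most geometric-type tails controlled by $\qlpq$, one upgrades the weak LDP to a full LDP exactly as in Section~\ref{sec:full}; when $\Q=\P$ goodness follows from the goodness in Theorem~\ref{thm:CJPS}(iii.a).

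The main obstacle is the careful bookkeeping in the regime $s = r$ (the ``diagonal'' where the geometric distribution is genuinely spread out rather than sharply concentrated): here the factor $\Exp{-k\Q_n(u)}$ is neither negligible nor dominant, and one must sum $\Q_n(u)\sum_{k:\,\tfrac1n\ln k\in B(s,\epsilon)}(1-\Q_n(u))^{k-1} \approx \Q_n(u)^{0} = 1$ over the block, which is precisely what makes the infimum in \eqref{eq:defiwexplicit} attained at $r = s$ contribute $\ilpq(s)$ with no extra penalty — getting the constants right here, uniformly in $u$ with $r_u \approx s$, is the delicate point, but it is entirely analogous to (and, as the statement notes, simpler than) the corresponding estimate for the true waiting times $W_n$ carried out in \secref{sec:Wn-estimates}, so I would import that machinery and only indicate the simplifications.
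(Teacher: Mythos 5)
Your proposal follows essentially the same route as the paper's sketch: write $\nu_n$ as a mixture of geometrics indexed by $r_u := -\tfrac1n\ln\Q_n(u)$, exploit the cut-off behaviour of $(1-\Q_n(u))^{k-1}$ at $r_u \approx s$, control the $\P_n$-mass over $r$-blocks via the weak LDP of Theorem~\ref{thm:CJPS}, and close with a Laplace-type optimization over $r\ge s$; this is precisely the ``saddle-point in the mixture'' argument the paper outlines. One caveat in Step~3: the claim that exponential tightness of $(\tfrac1n\ln\widetilde W_n)_{n\in\nn}$ is automatic is false in general — the tail $\nu_n\{k>\Exp{nM}\}=\sum_u\P_n(u)(1-\Q_n(u))^{\lfloor\Exp{nM}\rfloor}$ is comparable to $\P\{-\tfrac1n\ln\Q_n>M\}$, which by Example~\ref{ex:HMM1} need not decay super-exponentially in $M$ when $\P\neq\Q$, so the rate function can fail to be good (exactly as for $W_n$ itself). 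Your fall-back, ``upgrade as in Section~\ref{sec:full}'', is the correct and indeed necessary move: there the promotion from weak to full LDP goes through the pressure $\qw$ and Proposition~\ref{prop:abstractZn}, not through exponential tightness.
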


\begin{proof}[Sketch of the proof.]
    For each $s\in M_n :=\{\frac 1n \ln k: k\in \nn\}$, the probability that $\frac 1n \ln \widetilde W_n$ equals $s$ is given by
    \begin{equation}\label{eq:distrhon}
        \nu_n(\Exp{ns}) = \sum_{u\in \cA^n} \P_n(u)\Q_n(u)(1-\Q_n(u))^{\Exp{ns}-1} = \int_{(0,\infty)} \Exp{-nr}(1-\Exp{-nr})^{\Exp{ns}-1} \dd\mu_n(r),
    \end{equation}
    where we denote by $\mu_n$ the distribution of $-\frac 1n \ln \Q_n$ with respect to $\P$.
    Now using that $\Exp{-nr}$ is very small for~$n$ large, a formal first order Taylor expansion yields 
    \begin{align}\label{eq:cutoffexp}
        (1-\Exp{-nr})^{\Exp{ns}-1} 
        %&= \exp((\Exp{ns}-1)\ln (1-\Exp{-nr})) 
        \sim  \exp(-(\Exp{ns}-1)\Exp{-nr})  \sim  \exp(-\Exp{n(s-r)}).
    \end{align}
    We thus have a cut-off phenomenon: the above vanishes superexponentially when $r<s$, and is very close to $1$ when $r>s$. Formal substitution into~\eqref{eq:distrhon} yields 
    $$
        \nu_n(\Exp{ns}) \sim  \int_{(s,\infty )} \Exp{-nr} \dd\mu_n(r).
    $$
    We remark that this cut-off argument corresponds to retaining, in the sum in~\eqref{eq:sumanun}, only the $u\in \cA^n$ such that $\Q_n(u) \lesssim \Exp{-ns}$. 
    
    Now, for $s>0$ and $\epsilon$ small, the set $B(s,\epsilon)\cap M_n$ contains approximately $\Exp{n(s+\epsilon)}-\Exp{n(s-\epsilon)}\sim \Exp{ns}$ points, so
    \begin{equation}\label{eq:nuenbeforesup}
        \nu_n\left\{k\in \nn: \frac 1n\ln k \in B(s,\epsilon)\right\} \sim  \Exp{ns}\nu_n(\Exp{ns}) \sim  \int_{(s,\infty )} \Exp{n(s-r)} \dd\mu_n(r).
    \end{equation}
    Formally, Theorem~\ref{thm:CJPS} says that $\dd\mu_n(r)\sim \Exp{-n\ilpq(r)}\dd r$, and a saddle-point approximation in~\eqref{eq:nuenbeforesup} then yields
    $$
        \frac 1n \ln \nu_n\left\{k\in \nn: \frac 1n\ln k \in B(s,\epsilon)\right\} \sim \sup_{r\geq s}(s-r-\ilpq(r)) = -\iw(s).
    $$
    The same conclusion applies to the case $s=0$ without the need for any cut-off argument. This local formulation of the LDP (i.e.\ concerning only small balls) implies the weak LDP (see Section~\ref{sec:RLfuncts}), which can in turn be promoted to a full LDP using the arguments of Section~\ref{sec:full}.
\end{proof}

The fact that we consider $W_n(x,y)$ for $x$ and $y$ that are mutually independent was an important ingredient of the above argument. When moving on to~$V_n$ and~$R_n$ (and replacing $\Q$ with~$\P$), we look for occurrences of~$x_1^n$, not in an independent sample~$y$, but in~$x$ itself, which introduces more dependence. 
Conditioned on the event $[u]$ for some $u\in \cA^n$, the random variable~$V_n$ corresponds to the first success time of a series of attempts, where the $k$-th attempt is successful if $x_{n+k}^{2n+k-1} = u$. 
Contrary to the case of~$W_n$, the conditional success probability given $[u]$ of the $k$-th attempt is not simply given by $\P_n(u)$ since the coordinates $x_1^n$ and $x_{n+k}^{2n+k-1}$ are not independent in general.\footnote{They are, for example, if $\P$ is a Bernoulli measure; see Remark~\ref{rem:VW-IID}.} However, by our decoupling assumptions, and since the intervals $[1,n]$ and $[n+k, 2k+k-1]$ do not overlap, this added dependence will not actually alter the asymptotics. The arguments of Proposition~\ref{prop:ptoymodel} then suggest that $V_n$ obeys the LDP with the rate function $\iv$ of \eqref{eq:defivexplicit}.

\paragraph*{Return time and overlaps.} The picture for $R_n$ is more complicated: while the dependence between $x_{k+1}^{k+n}$ and $x_1^n$ will not significantly alter our estimates for~$k$ very large, this dependence will play a major role when $k< n$, due to overlap; see the examples in \secref{sec:examplesdisc}.
We shall prove in \secref{sec:atorigin} that, under the assumptions of Theorem~\ref{thm:mainthmC},
\begin{equation}\label{eq:limgammapl}
\lim_{n\to\infty}\frac 1n \ln\P\{x:R_n(x) < n\} = \gamma_+.
\end{equation}
This suggests the following picture: with probability close to $\Exp{n\gamma_+}$ we have ``very quick return'' ($R_n < n$), and with probability close to $1-\Exp{n\gamma_+}$ we start a series of trials as in the case of $V_n$. As before, let us pretend these trials are independent, and that their probability of success is exactly $\P_n(x_1^n)$. Noting that the asymptotics of $\frac 1n \ln R_n$
is not affected by adding to~$R_n$ quantities of order~$n$, we are led to introduce a toy model $\widetilde R_n$ for $R_n$ whose law~$\rho_n$ on~$\nn$ is given by
\begin{equation}\label{eq:defrhon}
    \rho_n(k) 
        := \Exp{n\gamma_+}\delta_{k,1}+ (1-\Exp{n\gamma_+})\nu_n(k)
\end{equation}
for every~$k\in\nn$, where $\nu_n$ is as in~\eqref{eq:sumanun} with $\Q =\P$. A straightforward adaptation of Proposition~\ref{prop:ptoymodel}, taking the first term in~\eqref{eq:defrhon} into account at $s=0$, and also using that $\gamma_+ \geq \sup_{r\geq 0}(-r-\ilpp(r))$ by Theorem~\ref{thm:CJPS}.iii.a., shows that $\widetilde R_n$ satisfies the LDP with the rate function $\ir$ of~\eqref{eq:defirexplicit}.

\paragraph*{Turning the sketch into a proof.}  We now briefly comment on how the above sketch will be made rigorous in the main body of the paper. In \secref{sec:Wn-estimates}, we establish that for fixed $x$, the random variable~$W_n(x, \cdot\,)$ on $(\Omega,\Q)$ is indeed well approximated by a geometric random variable at the exponential scale. The above cut-off argument is made rigorous in Proposition~\ref{prop:main-Wn-bound}. The corresponding estimates for $R_n$ and~$V_n$ are presented in \secref{sec:Wn-to-Rn}.

The saddle-point approximation mentioned above is made rigorous by the variation of Varadhan's lemma provided in Lemma~\ref{lem:modVaradhan}, which then leads to the weak LDP for the sequences of interest. As with the above toy model, we need to treat the cases~$s > 0$ (\secref{sec:posvalues}) and $s = 0$ (\secref{sec:atorigin}) separately\,---\,the latter is particularly subtle for~$R_n$ and~\assref{pa} will be needed to establish~\eqref{eq:limgammapl}. With the weak LDPs at hand, the main results are proved in \secref{sec:full}.

\section{Examples}
\label{sec:examplesdisc}

Decoupling assumptions are satisfied by many important classes of examples, and have allowed to simplify and unify the proofs of various large deviation principles that existed in many, sometimes rather technical, forms in the literature. The range of applicability of~\assref{ud} and~\assref{sld} has already been discussed in~\cite{CJPS19,BCJPPExamples,CDEJR22,CDEJR23}.

We describe in this section some classes of examples that serve as illustrations of different features of our main results. Sections~\ref{sec:exiid}--\ref{sec:lackexpotight} each assume some level of familiarity with the specific examples on the reader's part, and may be skipped entirely without affecting the reader's ability to understand the proofs of the main theorems. We now briefly summarize the role of each of these examples.

\begin{enumerate}
    \item In the Bernoulli (IID) case (\secref{sec:exiid}), formulae for the different pressures and rate functions can be quickly derived, and are easy to understand. To the best of our knowledge, the global aspect (i.e.\ without any restriction to a strict subinterval of $\rr$) of the LDPs in Theorems~\ref{thm:mainthmA}--\ref{thm:mainthmC} as well as the ability to consider distinct measures~$\P$ and~$\Q$ in Theorem~\ref{thm:mainthmA} are new even in this most basic class of examples.
    
    \item By going from Bernoulli measures to Markov measures (\secref{sec:markov}), we start seeing the benefit of inserting the words~$\xi$ in the formulation of~\assref{sld}, as they allow to deal with irreducible Markov chains whose transition probabilities are not all positive. Markov measures also make the role of periodicity in~\assref{pa} very clear.
    
    \item Discussing our assumptions and results in the setup of equilibrium measures for potentials enjoying Bowen's regularity condition (\secref{sec:equilbowen}) allows us to compare our results to existing ones, most notably to those of~\cite{AACG22} where the pressures $\qw$ and $\qr$ were studied. 

	\item We then discuss two situations in which Bowen's regularity condition can be lifted. While carrying distinct history and intuition, they both reveal the same two aspects of our decoupling assumptions. First, they  make a case that allowing a certain amount of growth for the sequence~$(C_n)_{n\in\nn}$ in~\assref{ud} and~\assref{sld} is beneficial. Second, they show that our assumptions apply in phase-transition situations, and in particular do not imply ergodicity. These generalizations are:
	\begin{enumerate}[i.]
        \item equilibrium measures for absolutely summable interactions in statistical mechanics (\secref{sec:spinchains});
        \item equilibrium measures for $g$-functions, i.e.\ $g$-measures (\secref{sec:gmeasures}).
    \end{enumerate}
    While our results do not seem to apply to the class of {\em weak Gibbs} measures in full generality (see \secref{sec:new-g-sm}), the measures discussed in Sections~\ref{sec:spinchains} and \ref{sec:gmeasures} are weak Gibbs.

    \item Finally, the so-called class of \emph{hidden Markov models} (\secref{sec:lackexpotight}) shows that our assumptions apply to measures which are far from Gibbsian. Hidden Markov models also provide examples of pairs of measures $(\P, \Q)$ where the distributions of $(\frac 1n \ln W_n)_{n\in \nn}$ lack exponential tightness, showing that $\iw$ (see Theorem~\ref{thm:mainthmA}) need not be a good rate function when $\Q\neq \P$. To the authors' knowledge, even the conclusions of Theorem~\ref{thm:CJPS} in this setup are new. 
\end{enumerate}

As abundantly discussed in \cite{BJPP1,CJPS19,BCJPPExamples,CDEJR22,CDEJR23}, repeated quantum measurement processes give rise to a very rich class of measures satisfying our decoupling assumptions, yet displaying remarkable singularities\,---\,some being, once again, far from Gibbsian. For reasons of space, we do not repeat such a discussion in the present paper.

\subsection{Bernoulli measures}\label{sec:exiid}

Consider the simple case $\P = \P_1^{\otimes \nn}$ and $\Q = \Q_1^{\otimes \nn}$, where $\P_1$ and $\Q_1$ are measures on $\cA$, with $\P_1 \ll \Q_1$. \assref{sld}, \assref{ud} and~\assref{pa} obviously hold with $C_n = 1$ and $\tau_n = 0$ for all $n$.
Note that, as a random variable on $(\Omega, \P)$, the map $x\mapsto -\tfrac 1n \ln\Q_n(x_1^n) = \frac 1n\sum_{i=1}^n (-\ln \Q_1(x_i))$ is simply the average of IID random variables supported on the finite set $\{-\ln \Q_1(a): a\in \supp \P_1\} \subset \rr$. By independence, $\qlpq(\alpha)$ is easily seen to coincide with the cumulant-generating function:
\begin{equation}\label{eq:qlpqcgf}
    \qlpq(\alpha) = 
    \ln \int_{\cA}  \Exp{-\alpha \ln \Q_1 (a)}\dd \P_1(a)=\ln \sum_{a\in\supp\P_1} \P_1(a) \Q_1 (a)^{-\alpha}.
\end{equation}

The LDP proved in Theorem~\ref{thm:CJPS} then follows from standard results. We mention two methods which lead to different expressions of the rate function $\ilpq$.

\medskip

\noindent\textit{Method 1.} Since $\ln \Q_n(x_1^n)$ is a sum of IID random variables, Cram\'er's theorem~\cite[\S{2.2.1}]{DZ} yields the stated LDP with a rate function given by the Legendre--Fenchel transform of the cumulant-generating function~\eqref{eq:qlpqcgf}, i.e.\ with a rate function $\ilpq = \qlpq^*$. This can be seen as a special case of the G\"artner--Ellis theorem~\cite[\S{2.3}]{DZ}, which applies here since by~\eqref{eq:qlpqcgf} the function $\qlpq$ is differentiable (and actually real-analytic).

\medskip

\noindent\textit{Method 2.} 
One can instead appeal to a combination of Sanov's theorem and the contraction principle~\cite[\S{2.1.1--2.1.2}]{DZ} to obtain the LDP with rate function
\begin{equation}
\label{eq:cont-Sanov}   
    \ilpq(s) = \inf_{\mu \in L_s} H_{\mathrm{r}}(\mu|\P_1),
\end{equation}
where~$H_{\mathrm{r}}$ denotes the relative entropy and $L_s$ is the set of probability measures~$\mu\ll \P_1$ (hence also satisfying $\mu \ll \Q_1$) on~$\cA$ subject to the constraint
\begin{equation}
\label{eq:constraint-Sanov}  
    -\sum_{a\in\cA} \mu(a) \ln \Q_1(a) = s.
\end{equation}
When $s \notin [-\ln\max_{a\in \cA} \Q_1(a), -\ln \min_{a\in \supp \Q_1} \Q_1(a)]$, the set $L_s$ is empty and the infimum is set to~$\infty$ by convention.
Note that $\ilpq$ vanishes at the unique point $s = -\sum_{a\in\cA} \P_1(a) \ln \Q_1(a)= \Sc(\P|\Q)$, where the infimum in~\eqref{eq:cont-Sanov} is attained at $\mu = \P_1$.

\medskip

We now turn to Theorems~\ref{thm:mainthmB} and~\ref{thm:mainthmC}, and we discuss in particular the convexity of $\ir$. The relevant quantities are easily expressed in terms of~$\P_1$:
\begin{align*}
    \gamma_+ &= \ln \max_{a\in\supp\P_1} \P_1(a),   
        & \htop(\supp\P) &= \ln |\supp \P_1|, \\
    \gamma_- &= \ln \min_{a\in\supp \P_1} \P_1(a),   
        & \qlpp(-1) &= \ln \sum_{a\in\supp\P_1} \P_1(a)^2.
  \end{align*}
  In view of these expressions, $\gamma_- \leq
\qlpp(-1) \leq \gamma_+$ with strict inequalities \emph{unless}~$\P_1$ is constant on its support. To see this, notice that $ \sum_{a\in\cA} \P_1(a)^2$ is the expectation of the function $a\mapsto \P_1(a)$ with respect to the measure $\P_1$ on $\cA$. To discuss the convexity of $\ir$ we distinguish two cases.

\medskip 

\noindent {\em Singular case}. If $\P_1$ is constant on its support, then $\gamma_- =
\qlpp(-1) = \gamma_+$, so $\ir$ is convex by Theorem~\ref{thm:mainthmC}.iii.
Moreover, we readily obtain 
$
    h(\P) = \htop(\supp \P),
$ 
so $\P$ is indeed the measure of maximal entropy on its support, in accordance with Theorem~\ref{thm:mainthmC}.iii. Next, since $-\frac 1n \ln \P_n(x_1^n) = h(\P)$ almost surely, the rate function $\ilpp$ vanishes at~$h(\P)$ and is infinite everywhere else. Dual to this, $\qlpp(\alpha) = h(\P)\alpha$ for all $\alpha \in \rr$. In the quite extreme case where $|\supp \P_1| = 1$, i.e.\ if $\P$ is a Dirac measure on an orbit of period 1, we find $h(\P) = 0$ and $\qlpp(\alpha) = 0$ for all $\alpha\in \rr$. 

\medskip 

\noindent {\em Generic case}. If $\P_1$ is not constant on its support, then $\gamma_- < \qlpp(-1) < \gamma_+$, and thus the rate function $\ir$ is nonconvex by Theorem~\ref{thm:mainthmC}.iii. In the present setup, it is easy to understand why $\ir(0) < \iv(0)$, as we now discuss. Let $\epsilon$ be small and $n$ be large. On the one hand, if $\hat a\in \cA$ is such that $\ln \P_1(\hat a) = \gamma_+$, we find $R_n(x) = 1$ for all $x\in [\hat a^{n+1}]$, so
\begin{equation}\label{eq:Pberlefthand}   
\begin{split}
    \P\{x : R_n(x) \leq \Exp{\epsilon n}\} 
        &\geq \P\left\{x : R_n(x) = 1 \right\} \\
        &\geq \P_{n+1}(\hat a^{n+1}) \\
        &=  \exp\left((n+1) \gamma_+\right).
\end{split}
\end{equation}
On the other hand, $V_n(x) = k$ implies that $x\in [u]\cap \shift^{-n-k+1}[u]$ for some $u\in \cA^n$, so
\begin{align*}
    \P\{x : V_n(x) = k\}  &\leq \sum_{u\in\cA^n} \P([u] \cap \shift^{-n-k+1}[u]) = \sum_{u\in\cA^n} \P_n(u)^2 \\
    & = \Big(\sum_{a\in \cA}\P_1(a)^2\Big)^n = \exp\left(n \qlpp(-1)\right),
\end{align*}
where we have used~\eqref{eq:qlpqcgf} with $\Q=\P$.
Thus,
\begin{equation}\label{eq:Pberotherhand}
    \P\{x : V_n(x) \leq \Exp{\epsilon n}\} \leq   \sum_{k=1}^{\lfloor\Exp{\epsilon n}\rfloor} \P\{x : V_n(x) = k\} \leq  \exp\left(n\epsilon + n \qlpp(-1)\right).
\end{equation}
Therefore, for $\epsilon$ small enough, the right-hand side of~\eqref{eq:Pberlefthand} decays exponentially faster than the right-hand side of~\eqref{eq:Pberotherhand} since $\qlpp(-1)<\gamma_+$. 
The estimates~\eqref{eq:Pberlefthand} and~\eqref{eq:Pberotherhand}, despite the rather crude inequalities in~\eqref{eq:Pberlefthand}, turn out to be sharp at the exponential scale. Indeed, we find $\iv(0) = -\qlpp(-1)$ and $\ir(0) = -\gamma_+$ in Theorems~\ref{thm:mainthmB} and~\ref{thm:mainthmC}.

\begin{remark}
\label{rem:VW-IID}
    For Bernoulli measures, in the case $\Q=\P$, $W_n$ and $V_n$ have the same law.  
\end{remark}

We provide three figures corresponding to Bernoulli measures, the first two of which were displayed in \secref{sec:mainresults}:
\begin{itemize}
    \item Figure~\ref{fig:BernoulliPQ}: $\cA=\{\textsf{0}, \textsf{1}, \textsf{2}\}$ and $\P_1 = 0.2\,\delta_{\textsf{0}}+0.3\,\delta_{\textsf{1}}+0.5\,\delta_{\textsf{2}}$ and $\Q_1 = 0.6\,\delta_{\textsf{0}}+0.3\,\delta_{\textsf{1}}+0.1\,\delta_{\textsf{2}}$. 
    \item Figure~\ref{fig:Bernoullisimple}:  $\cA=\{\textsf{0}, \textsf{1}\}$ and $\P_1 = \Q_1 = 0.3\,\delta_{\textsf{0}}+0.7\,\delta_{\textsf{1}}$.
    \item Figure~\ref{fig:Bernoullidegenerate}: $\cA=\{\textsf{0}, \textsf{1}, \textsf{2}\}$ and  $\P_1 = \Q_1 = 0.15\,\delta_{\textsf{0}}+0.15\,\delta_{\textsf{1}}+0.7\,\delta_{\textsf{2}}$. Notice that $\ln \P_1(\textsf{0}) = \ln \P_1(\textsf{1}) = -\gamma_-$. Such degeneracy implies that the second term in~\eqref{eq:energyentropy} takes the value $\ln 2$  for $s=-\gamma_-$. Indeed, when $\epsilon$ is very small, the cardinality in~\eqref{eq:energyentropy} is essentially $2^n$. As a consequence, $\ilpp(-\gamma_-) = -\gamma_--\ln 2$. A similar phenomenon occurs at~$-\gamma_+$ if several letters in $\cA$ have maximum probability.
\end{itemize}

\begin{figure}[h]
    \centering
    \begin{tabular}{p{0.45\textwidth} p{0.45\textwidth}}
        \vspace{0pt} \includegraphics[scale=\figscale]{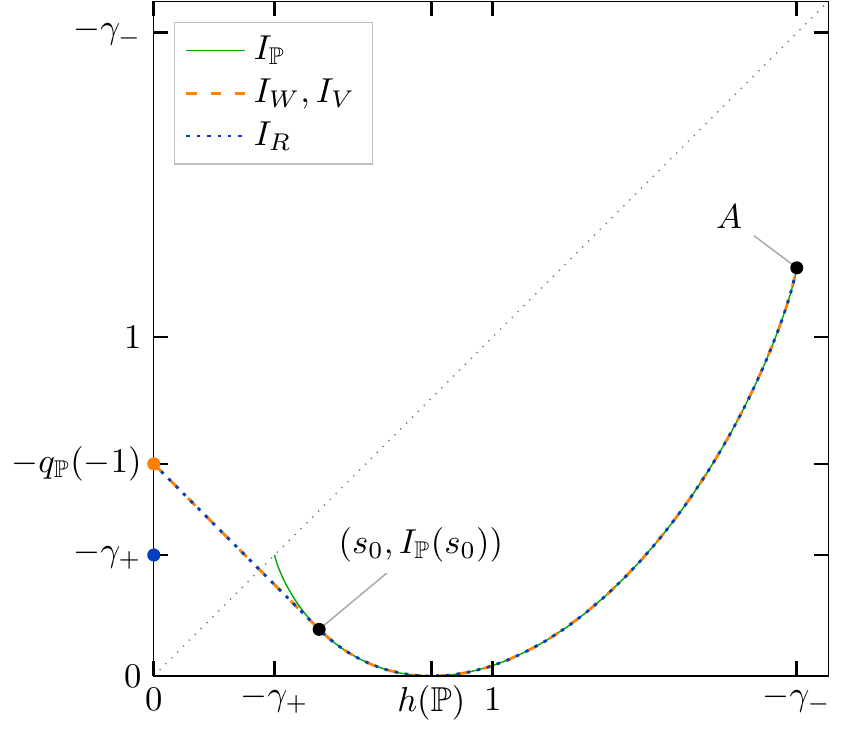} &
        \vspace{0pt} \includegraphics[scale=\figscale]{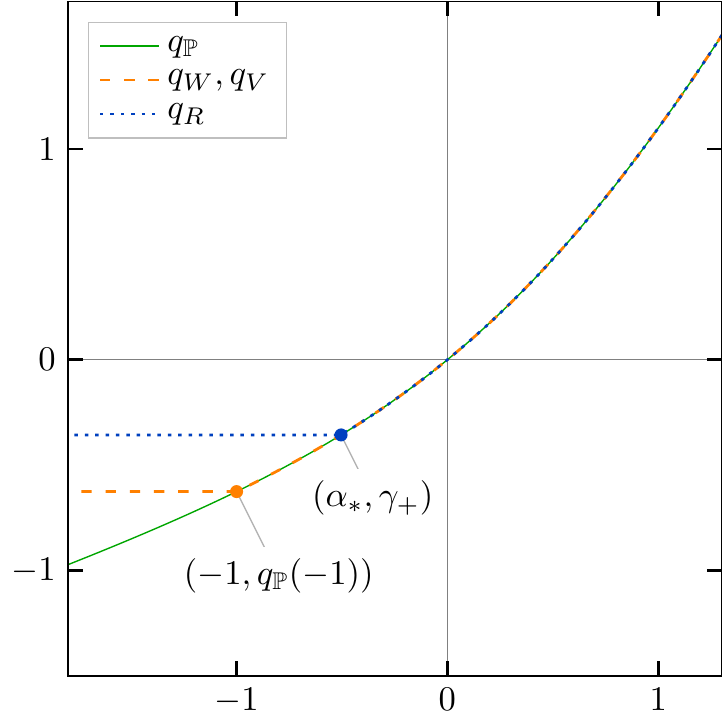}
    \end{tabular}
    \caption{Rate functions and pressures for a Bernoulli measure with degenerate least probable letter. The coordinates of the point $A$ are $(-\gamma_-, -\gamma_--\ln2)$. Rate functions are infinite wherever not drawn.}
    \label{fig:Bernoullidegenerate}
\end{figure}

\subsection{Irreducible Markov measures}
\label{sec:markov}

Let $\P$ and~$\Q$ be two stationary Markov measures on $\cA$, with transition matrices $P = [P_{i,j}]_{i,j\in \cA}$ and $Q = [Q_{i,j}]_{i,j\in \cA}$ respectively. We assume that the matrices $P$ and $Q$ are {\em irreducible} in the sense that there exists $M\in \nn$ such that all entries in the matrices $\sum_{i=1}^M P^i$ and $\sum_{i=1}^M Q^i$ are positive. Then,
\begin{equation}\label{eq:markovdefpn}
\P_n(x_1^n) = \P_1(x_1)\prod_{k=1}^{n-1}P_{x_k, x_{k+1}}
\qquad\text{and}\qquad
\Q_n(x_1^n) = \Q_1(x_1)\prod_{k=1}^{n-1}Q_{x_k, x_{k+1}},
\end{equation} 
where~$\P_1$ and~$\Q_1$ are the (unique and fully supported) invariant probability vectors for the matrices~$P$ and~$Q$ respectively. We assume, furthermore, that $Q_{i,j} = 0$ implies $P_{i,j} = 0$, which ensures that $\P_n \ll \Q_n$ for all~$n\in\nn$. It follows from Lemma~A.3 in~\cite{CJPS19} that~\assref{jsld} and~\assref{ud} hold with $C_n=O(1)$ and $\tau_n=M-1$, so the pair $(\P, \Q)$ is admissible. Similar arguments also yield~\assref{pa}. It is worth noting that if~$P$ is further assumed to be {\em aperiodic}, i.e.~if there exists $M\in \nn$ such that all entries in the matrix $P^M$ are positive, then we can fix $\ell  =\tau_n=M-1$ in~\eqref{eq:SLDeq}, whereas if~$P$ is merely irreducible, then~$\ell$ must be allowed depend on~$u$ and~$v$. This illustrates the importance of the condition $\ell \leq \tau_n$ in~\assref{sld} instead of $\ell = \tau_n$; see the discussion in~\cite[\S\S{2.5;A.1}]{CJPS19}. 

Our results thus apply to the setup described here. As in the previous example, the conclusions of Theorem~\ref{thm:CJPS} can be derived using classical methods, which yield natural expressions for $\ilpq$ and $\qlpq$, as we now show. 

\medskip

\noindent\textit{Method 1.} In view of~\eqref{eq:markovdefpn}, the pressure is easily seen to be given by
$$
    \qlpq (\alpha) = \ln \operatorname{spr}(M(\alpha)),
$$
where the spectral radius is computed for the deformation of the stochastic matrix~$P$ defined by $M_{i,j}(\alpha) := P_{i,j}Q_{i,j}^{-\alpha}$. By the Perron--Frobenius theorem and analytic perturbation theory, $\alpha \mapsto \qlpq (\alpha)$ is real-analytic, so the LDP for $(-\frac 1n\ln \Q_n)_{n\in\nn}$ with the rate function $\ilpq = \qlpq^*$ follows from the G\"artner--Ellis theorem~\cite[\S{2.3}]{DZ}. 

\medskip

\noindent\textit{Method 2.} From~\eqref{eq:markovdefpn}, we see that 
\begin{equation*}
    \ln \Q_n(x_1^n) =\sum_{k = 1}^{n-1} \ln Q_{x_k, x_{k+1}} + O(1)
\end{equation*}
for all $x\in \supp \P \subseteq \supp \Q$, so the LDP for $(-\frac 1n\ln \Q_n)_{n\in\nn}$ reduces to that of a sequence of Birkhoff sums. 
By Sanov's theorem applied to the pair empirical measures
${\cal E}_n(x) := \frac 1n \sum_{k=1}^n \delta_{(x_k, x_{k+1})}\in \cP(\cA^2)$ and the contraction principle, one derives the expression
\begin{equation}\label{eq:infilpqmarkov}
    \ilpq(s) = \inf_{\mu \in L_s^{(2)}}  H_{\mathrm{r}}(\mu|\mu_1 \otimes P),
\end{equation}
where $L_s^{(2)}$ is the set of probability measures $\mu$ on~$\cA\times\cA$ such that $\mu \ll \mu_1\otimes P$, $\mu_1 = \mu_2$, and
$$
    \sum_{i,j\in \cA}\mu (i,j)\ln Q_{i,j} = -s;
$$ 
see \cite[\S{3.1.3}]{DZ}\footnote{There it is assumed that $P_{i,j}>0$ for all $i,j\in \cA$.} or Lemma~4.49 in~\cite{DS1989}.
Here, $\mu_1$ (resp.~$\mu_2$) denotes the first (resp.~second) marginal of~$\mu$, and $\mu_1\otimes P$ denotes the measure on $\cA\times\cA$ defined by $(\mu_1\otimes P) (i,j) = \mu_1(i)P_{i,j}$. Note that $\ilpq$ vanishes at the single point $s = -\sum_{i,j\in \cA}\P_2(i,j)\ln Q_{i,j} =  \Sc(\P|\Q)$, where the infimum in~\eqref{eq:infilpqmarkov} is attained at $\mu = \P_2$.

\medskip

We now discuss $\ir$ and its convexity, with $\P=\Q$ a Markov measure as above. It is well known that 
\begin{equation}\label{eq:optimizecycle}
    \gamma_+ = \max_{p\leq |\cA|} \max_{u\in\cA^p} \frac 1p \ln \prod_{k=1}^{p} P_{u_k,u_{k+1}},
\end{equation}
with cyclic identification $u_{p+1} = u_1$; see e.g.\ Remark~1.ii in~\cite{Sz93} or~\cite[\S{3.4}]{AACG22}. 
The heuristic interpretation is the following: while in the generic IID case discussed in Section~\ref{sec:exiid} the probability $\P\{x: R_n(x) \leq \Exp{\epsilon n}\}$ was, at exponential scale, captured by the subset $\{x: x_1^{n+1} = {\hat{a} \hat{a} \hat{a}}\dotsb{\hat{a} \hat{a}}\}$ of the event~$\{x: R_n(x) = 1\}$ for $n$ large and $\epsilon$ small, now $\P\{x: R_n(x) \leq \Exp{\epsilon n}\}$ is essentially accounted for by the subset $\{x: x_1^{n+p} = u u u \dotsb u u_1^{r}\}$ of the event~$\{x: R_n(x) = p\}$, for any $p$ and $u$ that saturate~\eqref{eq:optimizecycle}, and $r = n - \lfloor \frac np \rfloor p$. In other words, the key scenario for small return times consists of a periodic orbit repeating some optimal cycle. 

By the last part of Theorem~\ref{thm:mainthmC}, the function $\ir$ is convex if and only if $\P$ is the measure of maximal entropy on $\supp \P$, that is if and only if $\P$ is the Parry measure of $\supp \P$, which is characterized by the Perron--Frobenius data of the adjacency matrix of the chain~\cite{Pa64}; see also~\cite[\S{2}]{CGS99}. In particular, if $P_{i,j}>0$ for all $i,j\in \cA$, then $\ir$ is convex if and only if $\P = \pi^{\otimes\nn}$ with $\pi$ uniform on~$\cA$.

\begin{remark}
    One can show that irreducible multi-step Markov measures also satisfy the assumptions in the present paper. In fact, Markov measures and multi-step Markov measures are merely special cases of the equilibrium measures discussed in \secref{sec:equilbowen}. For example, in the notation of \secref{sec:equilbowen}, one takes $\potone(x) = \ln P_{x_1, x_2}$ in the case of Markov measures. An explicit computation of $\qlpp$ for a specific Markov chain is provided in \cite{AACG22}.
\end{remark}

\subsection{Equilibrium measures for Bowen potentials}\label{sec:equilbowen}

As mentioned above, our assumptions cover the setups of~\cite{CGS99} and~\cite{AACG22}, where the large-deviation results are local in the sense that there is some strict subinterval $A \subset \rr$ such that the large-deviation lower bound and upper bound (see~\eqref{eq:LD-LB} and~\eqref{eq:LD-UB} below) are only shown to be valid respectively for open sets~$O$ contained in~$A$ and closed sets~$\Gamma$ contained in~$A$. 
The former work considers a single measure~$\P$ that is the equilibrium measure for a H\"older-continuous potential on a topologically mixing Markov subshift; the latter work, for a potential of summable variations on the full shift. 

In particular, Theorem~\ref{thm:mainthmA}.i and Theorem~\ref{thm:mainthmC}.i prove the following conjecture stated in \cite[\S{3.3}]{{AACG22}}:
\begin{quote}
    \textit{We believe that there exists a non-trivial rate function describing the large deviation asymptotic} [on the whole real line] \textit{for both return and waiting times, but this has to be proven using another method.}
\end{quote}
Moreover, the results of~\cite[\S{3.2}]{AACG22} are recovered as special cases of Theorem~\ref{thm:mainthmA}.ii and Theorem~\ref{thm:mainthmC}.ii. In order to facilitate the translation, we compare notations in Table~\ref{tab:aacg-dict}. Note also that what we refer to as pressure is called ``$L^q$-spectrum'' in \cite{AACG22}.
\begin{table}[h]
    \centering
    \begin{tabular}{l c c c c c c c}
        \toprule
        & \multicolumn{3}{c}{Pressures} & Variable & \multicolumn{3}{c}{Important points} \\
        \midrule
        Present paper & $\qlpp$ & $\qw$ & $\qr$ & $\alpha$ & $\alpha_*$ & $\gamma_+$ & $\gamma_-$ \\
        \cite{AACG22} & ${\cal M}_\varphi$ & ${\cal W}_\varphi$ & ${\cal R}_\varphi$ & $q$ & $q^*_\varphi$ & $\gamma_\varphi^+$ & $-v_\varphi^+$  \\
        \bottomrule
    \end{tabular}
    \caption{A summary of the notational changes between our results and those of~\cite{AACG22}. }
    \label{tab:aacg-dict}
\end{table}

We first consider the class of \emph{Bowen-regular} potentials on the full shift, and then discuss the extension to some subshifts (including those of~\cite{CGS99}) in Remark~\ref{rem:specification}. We recall that a potential $\potone$, i.e.\ a continuous function~$\potone:\Omega\to \rr$, is called Bowen regular if 
\begin{equation}\label{eq:bownreg}
    \sup \left\{ \left|\sum_{i=0}^{n-1}\potone(\shift^{i}x)- \sum_{i=0}^{n-1} \potone(\shift^{i}y)\right|: x \in \Omega, n\in\nn, y \in [x_1^n] \right\} < \infty,
\end{equation}
and that this class strictly contains the class of potentials with summable variations, which in turn strictly contains the class of H\"older-continuous potentials; we refer the reader to~\cite{Bo74} and~\cite[\S{4}]{Wa01} for a thorough discussion. 
We also recall that the topological pressure of any potential $\varphi$ is given by
\begin{equation}\label{eq:defptop}
    \ptop(\varphi) := \lim_{n\to\infty}\frac 1n \ln \sum_{u \in \cA^n}\Exp{\sup_{x \in [u]} \sum_{i=0}^{n-1}  \varphi(\shift^{i}x)}.
\end{equation}
Suppose that $\P$ and~$\Q$ are the (necessarily unique \cite{Bo74}) equilibrium measures for the Bowen-regular potentials $\potone$ and $\pottwo$ on~$\Omega$ in the sense that they belong to~$\cP_{\textnormal{inv}}(\Omega)$ and satisfy
\begin{equation}\label{eq:defequilibrium}
    h(\P) + \int \potone \dd\P = \ptop(\potone)
    \qquad\text{and}\qquad
    h(\Q) + \int \pottwo \dd\Q = \ptop(\pottwo),
\end{equation}
respectively.
The measure~$\P$ then satisfies the \emph{Bowen--Gibbs property} with respect to~$\potone$, i.e.\ there exists a constant $K\geq 1$ such that
\begin{equation}
\label{eq:Gibbsprop}
    K^{-1} \Exp{\sum_{i=0}^{n-1} \potone(\shift^{i}x)-n\ptop(\potone)}\leq \P([x_1^n]) \leq K \Exp{\sum_{i=0}^{n-1} \potone(\shift^{i}x)-n\ptop(\potone)},
\end{equation}
for every~$x \in \Omega$~\cite[\S{4}]{Wa01}, and one then deduces that it satisfies~\assref{ud} and~\assref{sld} with $\tau_n = 0$ and $C_n = K^3$. The same is true for~$\Q$ with~$\pottwo$, so~\assref{jsld} and~\assref{pa} follow from Remarks~\ref{rem:tau0sld} and~\ref{rem:tau0pa} respectively, and thus the pair $(\P, \Q)$ is admissible.  We note that \eqref{eq:Gibbsprop} is central in the analysis in~\cite[\S{2.1}]{AACG22}.
In order to simplify some formulae, we assume for the remainder of this subsection that
\begin{equation}\label{eq:ptopzero}
    \ptop(\potone) = \ptop(\pottwo) = 0,
\end{equation}
which results in no loss of generality since adding constants to~$\potone$ and~$\pottwo$ does not alter the set of equilibrium measures.

In the setup of the present subsection, the conclusions of Theorem~\ref{thm:CJPS} are well known and can be obtained more directly as follows.

\medskip

\noindent\textit{Method 1.} The bounds~\eqref{eq:Gibbsprop} imply that
\begin{equation}\label{eq:qlppbowen}
\qlpq(\alpha) = \ptop(\potone-\alpha \pottwo).
\end{equation}
In particular, we remark that $\qlpp(-1) = \ptop(2\potone)$; this quantity plays an important role in the formula for $\qw$, and is equal to $-\iw(0)$. In this setup, $\qlpq$ is differentiable and, for all $\alpha \in \rr$,
$$
    \qlpq'(\alpha) = -\int \pottwo \dd\mu_\alpha,
$$
where $\mu_\alpha$ is the equilibrium measure for the Bowen-regular potential $\potone-\alpha \pottwo$; see e.g.~Theorems~4.3.3 and 4.3.5 in \cite{Ke98}. The LDP of Theorem~\ref{thm:CJPS} then follows from the G\"artner--Ellis theorem.

\medskip

\noindent\textit{Method 2.} The same LDP can be obtained by noticing that, in view of \eqref{eq:Gibbsprop}, the large deviations of $(-\frac 1n \ln \Q_n)_{n\in\nn}$ are the same as those of the ergodic averages of~$-\pottwo$ with respect to $\P$, which is a well-studied problem; see e.g.~\cite{Yo90,Ki90,Co09,PS-2018}.
The rate function is then given, for all $s\in \rr$, by
\begin{equation}\label{eq:varilpqgibbs}
    \ilpq(s) =  -\sup\left\{ \int \potone \dd\eta +h(\eta) : \eta\in \cP_{\textnormal{inv}}, \int \pottwo \dd\eta = -s \right\}.
\end{equation}
The rate function $\ilpq$ vanishes at a single point $s = -\int \pottwo \dd \P =  \Sc(\P|\Q)$; the supremum in~\eqref{eq:varilpqgibbs} is then reached at $\eta = \P$.

\medskip 

For all $s\geq0$, the relations~\eqref{eq:defiwexplicit} and~\eqref{eq:defivexplicit} give
\begin{align*}
    \iw(s) & = -s-\sup\left\{\int( \potone+\pottwo )\dd\eta  + h(\eta):  \eta\in \cP_{\textnormal{inv}},\int \pottwo \dd\eta  \leq  -s \right\},\\
    \iv(s) 
    &=  -s-\sup\left\{2 \int \potone \dd\eta+h(\eta): \eta\in \cP_{\textnormal{inv}},  \int \potone \dd\eta \leq -s \right\}.
\end{align*}
When $\Q=\P$, we obtain $\iw=\iv$ and, for all $s\in \rr$,
$$\ilpp(s) =  s-\sup\left\{h(\eta):  \eta\in \cP_{\textnormal{inv}}, \int \potone \dd\eta = -s \right\}.$$

Next, by~\eqref{eq:defirexplicit}, $\ir$ differs from $\iv$ only at 0, with
\begin{equation*}
    \ir(0) = -\gamma_+ = - \sup_{\eta \in \cP_{\textnormal{inv}}}\int \potone \dd \eta;
\end{equation*}
the second identity is proved~\cite[\S{4.2}]{AACG22}. 

The very last assertion of Theorem~\ref{thm:mainthmC} applies, and $\ir$ is convex if and only if~$\P$ is the measure of maximal entropy on~$\Omega$ (note that $\supp \P = \Omega$ by \eqref{eq:Gibbsprop}), i.e.\ if $\P$ is the uniform measure. Equivalently, in terms of potentials, $\ir$ is convex if and only if~$\potone$ is cohomologous to a constant~\cite[\S{3.2}]{AACG22}.

The authors of \cite{AACG22} first derive the expression~\eqref{eq:formqW} for~$\qw$ and the expression~\eqref{eq:formqV} for~$\qr$. Then, since under the assumptions at hand, the pressure~$\qw$ is differentiable on the domain where it is equal to $\qlpp$, a version of the G\"artner--Ellis theorem implies the LDP for $(\frac 1n \ln W_n)_{n\in\nn}$ restricted to the interval $[\qlpp'(-1), -\gamma_-]$, i.e.\ where $\ilpp$ and $\iw$ coincide and are finite.  
In the same way, $(\frac 1n \ln R_n)_{n\in\nn}$ is shown to obey the LDP restricted to the interval
$[\qlpp'(\alpha_*), -\gamma_-]$, i.e.\ where $\ilpp$ and the convex envelope of~$\ir$ coincide and are finite; see e.g.\ Figure~\ref{fig:Bernoullisimple}. By construction, the approach of~\cite{AACG22} cannot describe the LDP on the interval $(0, \qlpp'(-1))$ (resp.\ $(0, \qlpp'(\alpha_*))$), and in particular cannot capture the nonconvexity of $\ir$, since $\ir$ is not the Legendre--Fenchel transform of $\qr$ in general.

As mentioned in the introduction, our method is very different in spirit. In addition to the explicit singularities in \eqref{ex:formiwqwm1} and \eqref{eq:formirqrm1}, the pressures suffer from the fact that even $\qlpq$ may fail to be differentiable under our decoupling assumptions. As a consequence, the  G\"artner--Ellis theorem cannot be used to obtain the LDP, even limited to the above-mentioned intervals.  We are able to circumvent these limitations by going in the opposite direction: we first establish the LDPs directly, using the Ruelle--Lanford method, and then we obtain the properties of the pressures as corollaries. 

While~\cite{CGS99} does not rely on any version of the G\"artner--Ellis theorem, the LDP there is still restricted to a nonexplicit interval, which is contained in that of~\cite{AACG22}. More precisely, the LDP is restricted to an interval of values of $s$ on which $\ilpp(s)$ is small enough so that some exponentially decaying error terms that arise in the proof decay faster than $\Exp{-n\ilpp(s)}$.

\begin{remark}\label{rem:specification}
    The above discussion and the results in~\cite{AACG22} are limited to the full shift~$\Omega$, while~\cite{CGS99} discusses Markov subshifts~$\Omega'$ that are topologically mixing. However, the above conclusions extend in a straightforward way to any subshift~$\Omega' \subseteq \Omega$ satisfying the \emph{flexible specification property} of Definition~\ref{def:specif} with $\tau_n=O(1)$, and in particular to any transitive subshift of finite type; see Remark~\ref{rem:specification-sld}. 
    Indeed, in this context, one can show that if~$\P$ and~$\Q$ are equilibrium measures for two potentials~$\potone$ and~$\pottwo$ on~$\Omega'$ satisfying Bowen's condition, i.e.~\eqref{eq:bownreg} with $x$ and $y$ restricted to $\Omega'$ in the supremum, then the Bowen--Gibbs property~\eqref{eq:Gibbsprop} holds for both measures and for all $x\in \Omega'$; see Remark 2.2 in \cite{climenhaga_equilibrium_2013} and \cite[\S{6.5}]{climenhaga_unique_2016} (with ${\cal G}=\cL$ in the notation therein). This in turn implies~\assref{jsld} and~\assref{pa} through Lemmas~\ref{lem:bded-spec-implies-per-spec}, \ref{lem:specud} and~\ref{lem:specsld}, once we have extended~$\P$ and~$\Q$ to measures on $\Omega$ by setting $\P(\Omega\setminus \Omega')=\Q(\Omega\setminus \Omega') = 0$.
    Thus, our results apply, and the above expressions for the rate functions and pressures remain valid. In particular, $\ir$ is convex if and only if $\P$ is the measure of maximal entropy on $\Omega'$.

    We emphasize that the restriction to $\tau_n = O(1)$ is important in two ways in the above argument: first, in order to apply Lemma~\ref{lem:bded-spec-implies-per-spec}, and second, in order to derive \eqref{eq:Gibbsprop}, on which the hypotheses~\eqref{eq:pnpmxud} and~\eqref{eq:pnpmx} of Lemmas~\ref{lem:specud} and~\ref{lem:specsld} rely in the present setup.
\end{remark}

\subsection{Beyond Bowen potentials: statistical mechanics and \textit{g}-measures}
\label{sec:new-g-sm}

The analysis in \secref{sec:equilbowen} relies heavily on the Bowen--Gibbs property~\eqref{eq:Gibbsprop}, which is  obtained as a consequence of the (quite restrictive) Bowen condition \eqref{eq:bownreg} imposed on the potentials~$\potone$ and~$\pottwo$. 

For the coming discussion, we introduce a weaker version of~\eqref{eq:Gibbsprop}. We say that $\P$ is {\em weak Gibbs} for the potential $\potone$ if there exists an $\Exp{o(n)}$-sequence $(K_n)_{n\in \nn}$ such that for all $x\in \Omega$,
\begin{equation}
\label{eq:weakGibbsprop}
    K^{-1}_n \Exp{\sum_{i=0}^{n-1} \potone(\shift^{i}x)-n\ptop(\potone)}\leq \P_n(x_1^n) \leq K_n \Exp{\sum_{i=0}^{n-1} \potone(\shift^{i}x)-n\ptop(\potone)}.
\end{equation}
The notion of weak Gibbs measure was introduced in \cite{Yu02}, and in many interesting situations, equilibrium measures for non-Bowen potentials can still be shown to be weak Gibbs, see for example \cite{PS20}. 

The conclusions of Theorem~\ref{thm:CJPS} remain valid if $\P$ and $\Q$ are assumed to be weak Gibbs for $\potone$ and $\pottwo$ respectively; indeed, in view of~\eqref{eq:weakGibbsprop}, the LDP of Theorem~\ref{thm:CJPS} again boils down to the LDP for the ergodic averages of $\pottwo$ with respect to $\P$, which is a well-studied problem; see e.g.~\cite[\S{5}]{Co09}, 
\cite{Va12}, \cite{PS-2018}, and \cite[\S{A.3}]{CJPS19}, as well as~\cite{EKW-1994} in the specific setup of \secref{sec:spinchains} and~\cite{CO00} in the specific setup of \secref{sec:gmeasures} below.

However, the weak Gibbs condition does not seem to imply~\assref{ud} and~\assref{sld}; see Remark~\ref{rem:weakGibbsdeco}.
To the best of the authors' knowledge, the LDP for $(\tfrac 1n \ln W_n)_{n\in\nn}$, $(\tfrac 1n \ln V_n)_{n\in\nn}$ and $(\tfrac 1n \ln R_n)_{n\in\nn}$ as well as the validity of~\eqref{eq:formqW},~\eqref{eq:formqV} and~\eqref{eq:formqR} are open problems for weak Gibbs measures.

We discuss in Sections~\ref{sec:spinchains} and~\ref{sec:gmeasures} below two important classes of measures enjoying the weak Gibbs property, which are shown to satisfy our decoupling assumptions, using specific arguments distinct from those in \secref{sec:equilbowen}. Although they have a large intersection, these two classes are distinct; see~\cite{FGM11,BEEL18}. 

Contrary to the regularity conditions in \secref{sec:equilbowen}, the setups of Sections~\ref{sec:spinchains} and~\ref{sec:gmeasures} allow for phase transitions: the equilibrium measures may fail to be unique and ergodic. Also, the pressure~$\qlpq$ may fail to be differentiable, so the conclusions of Theorem~\ref{thm:CJPS} cannot be obtained using the G\"artner--Ellis theorem anymore.

\subsubsection{Absolutely summable interactions in statistical mechanics}
\label{sec:spinchains}

An important situation where less regular potentials arise is the statistical mechanics of one-dimensional, translation-invariant systems; see e.g.~\cite[Ch.\,3--5]{Rue} or~\cite[Ch.\,II--III]{Sim}. Indeed, if $\Phi := \{\Phi_X\}_{X \Subset \zz}$ is a translation-invariant collection of functions~$\Phi_X : \cA^X \to \rr$ (interchangeably considered as functions on~$\cA^{\zz}$), called \emph{interactions}, satisfying the absolute summability condition
\begin{equation}\label{eq:summableint}
    \sum_{\substack{X \Subset \zz \\ 0 \in X}}\|\Phi_X\|_\infty < \infty,
\end{equation}
then there is a well-known correspondence between translation-invariant Gibbs states\,---\,either defined using the \emph{Dobrushin--Lanford--Ruelle equations}, or using convex combinations of weak limits of finite-volume Gibbs measures\,---\,and equilibrium measure (on $\cA^{\zz}$) for the ``energy per site'' potential
\[
  \potone = \sum_{\substack{X\Subset \zz \\ \min X =1}} \Phi_X,
\]
which we see both as a function on $\cA^\zz$ and on $\Omega = \cA^\nn$.\footnote{By our choice of summation along $X \Subset \zz$ such that $\min X = 1$, the function $\potone$ only depends on the positive coordinates of $x\in \cA^\zz$. This convention is equivalent to the more common choice of summing over $X \Subset \zz$ such that $0 \in X$, taking care to divide each term by the cardinality of~$X$; see e.g.~\cite[\S{3.3}]{Rue}.}

Let now $\P$ (resp.\ $\Q$) be the marginal on~$\Omega$ of some translation-invariant Gibbs state on $\cA^\zz$ for the absolutely summable interaction~$\Phi$ (resp.\ $\Psi$). Equivalently, $\P$ (resp.\ $\Q$) is an equilibrium measure for the potential $\potone$ (resp.\ $\pottwo$) on $\Omega$.

Since $\potone$ and $\pottwo$ may fail to satisfy Bowen's regularity condition, we cannot use the Bowen--Gibbs property~\eqref{eq:Gibbsprop}. However, the Dobrushin--Lanford--Ruelle equations allow to prove that~$\P$ and~$\Q$ satisfy~\assref{ud} and~\assref{sld} with $\tau_n = 0$, but with a possibly unbounded\footnote{A notable case where $C_n = O(1)$ is when the interactions have finite range; in this case also Bowen's regularity condition and~\eqref{eq:Gibbsprop} hold. We refer the reader to~\cite[Ch.\,5]{Rue}.}  sequence $(C_n)_{n\in\nn}$;
see Lemma~9.2 in \cite{LPS95}. \assref{jsld} and~\assref{pa} then follow from Remarks~\ref{rem:tau0sld} and \ref{rem:tau0pa}. Our main results thus apply, and we now identify some of the quantities at play in physical terms.

The associated topological pressure $\ptop(\potone)$ can be thought of as a free energy density: with 
\[
    U_n := \sum_{X \subseteq [1,n]} \Phi_X
\] 
the Hamiltonian (up to a factor of minus the inverse temperature) corresponding to $\Phi$ in the finite volume $[1,n]$ with free boundary conditions, we have
\begin{equation}
\label{eq:Ham-to-pot}
    U_n = \sum_{i=0}^{n-1} \potone \circ \shift^i + o(n), 
\end{equation}
so
\[
    F(\Phi) := \lim_{n\to\infty} \frac 1n \ln \sum_{u\in \cA^n}\Exp{U_n(u)} = \ptop(\potone).
\]
Using the Dobrushin--Lanford--Ruelle equations, one can show that 
\begin{equation}\label{eq:gibbsnZ}
    \P_n(u) = \Exp{U_n(u)-nF(\Phi) + o(n)},
\end{equation}
which, combined with~\eqref{eq:Ham-to-pot}, shows that~$\P$ is weak Gibbs in the sense of~\eqref{eq:weakGibbsprop}. The same is true of~$\Q$.
To simplify the discussion, we shall assume going forward that
\begin{equation}\label{eq:vanishfreenergy}
F(\Phi) = F(\Psi) = 0,
\end{equation}
which can be achieved by adding suitable constants to $\Phi_{\{i\}}$ and $\Psi_{\{i\}}$ for every $i \in \zz$.

The expressions for $\qlpq$ and the rate functions obtained in \secref{sec:equilbowen} remain valid, as replacing~\eqref{eq:Gibbsprop} with~\eqref{eq:weakGibbsprop} does not affect these computations (note that~\eqref{eq:vanishfreenergy} implies~\eqref{eq:ptopzero}). There, $\int \potone \dd\eta$ and $\int \pottwo \dd\eta$ are simply understood as the specific energies of the state $\eta$. 

We then obtain, either by~\eqref{eq:qlppbowen} or by direct computation using~\eqref{eq:gibbsnZ}, the relations
$$
    \qlpq(\alpha) = F(\Phi-\alpha \Psi)
    \qquad\text{and}\qquad 
    \qlpp(\alpha) = F((1-\alpha)\Phi),
$$
valid for all $\alpha \in \rr$. We remark that $\qlpp(\alpha)$ is related to the free energy density at (minus the) inverse temperature $1-\alpha$.
Moreover, in this setup, $\gamma_+$ takes the form of (minus) the asymptotic ground-state energy per unit volume:
\begin{equation}\label{eq:gammaplusground}
    \gamma_+ = \lim_{n\to\infty}\frac 1n \sup_{u\in \cA^n}U_n(u).
\end{equation}

The summability condition \eqref{eq:summableint} allows for phase transitions, i.e.\ the coexistence of several equilibrium measures, and thus the measures $\P$ and $\Q$ may fail to be ergodic, while still satisfying our decoupling assumptions. As a consequence, $\qlpq$ is not differentiable in general.

\begin{remark}\label{rem:statmech}
        Again, we have limited the above discussion to the full shift $\Omega$, but the conclusions remain true on many subshifts. In particular, they hold if the subshift satisfies the flexible specification property of Definition~\ref{def:specif} with $\tau_n=O(1)$. Indeed, although the technical details are slightly involved, one can then adapt the arguments of Lemma~9.2 in~\cite[\S{9}]{LPS95} in order to obtain \eqref{eq:pnpmxud} and \eqref{eq:pnpmx}, so that Lemmas~\ref{lem:specud} and~\ref{lem:specsld}, together with Lemma~\ref{lem:bded-spec-implies-per-spec}, establish \assref{ud}, \assref{sld}, \assref{jsld} and \assref{pa}. In particular, this applies to any transitive subshift of finite type.
\end{remark}

\subsubsection{$g$-measures}\label{sec:gmeasures}

A continuous function~$g: \Omega \to (0,1]$ is called a $g$-function on~$\Omega$ if 
\begin{equation*}
    \sum_{y\in \shift^{-1}\{x\}} g(y) = 1 
\end{equation*}
for all $x \in \Omega$. In this case, the potential $\potone = \ln g$ has vanishing topological pressure ($\ptop(\potone)=0$), and any equilibrium measure (recall~\eqref{eq:defequilibrium}) for the potential $\potone$ is a called a \emph{g-measure} on~$\Omega$; see e.g.~\cite{Wa75,PPW78,Wa05}. Let $\P$ be such an equilibrium measure. It is then well known that $\supp \P = \Omega$ and that
\begin{equation}\label{eq:defgnfullshift}
    g_n(x) := \frac{\P_n(x_1^n)}{\P_{n-1}(x_2^n)}
\end{equation}
defines a sequence of continuous functions that converges uniformly to~$g$ on~$\Omega$; see e.g.~\cite[\S{4}]{PPW78}. 
As shown in Lemma~\ref{lem:app-g-1}, this uniform convergence has two important consequences. First, it implies that~$\P$ satisfies the weak Gibbs condition.
Second, it implies that there is an $\Exp{o(n)}$-sequence $(D_n)_{n\in \nn}$ such that
\begin{equation*}
    D_n^{-1} \P_n(x_1^n)\P_m(x_{n+1}^{n+m})
    \leq \P_{n+m}(x_1^{n+m}) 
    \leq D_n \P_n(x_1^n)\P_m(x_{n+1}^{n+m})
\end{equation*}
for all $x\in \Omega$, and all $n,m\in \nn$,
which implies~\assref{ud} and~\assref{sld} with $\tau_n = 0$, but with a possibly unbounded sequence~$(C_n)_{n\in\nn}$. By Remark~\ref{rem:tau0pa},~\assref{pa} holds as well. If~$\Q$ is a second $g$-measure on~$\Omega$ (for a possibly different $g$-function $g'$), then~\assref{jsld} holds as well by Remark~\ref{rem:tau0sld}. Our results thus apply, and once again, the expressions obtained in \secref{sec:equilbowen} remain valid thanks to~\eqref{eq:Gibbsprop}, with $\potone = \ln g$ and $\pottwo = \ln g'$. 

\begin{remark}The above discussion can easily be generalized to transitive Markov subshifts $\Omega'\subseteq \Omega$. However, developing a theory of $g$-measures on more general subshifts seems to be more delicate, and goes beyond the scope of the present paper. Still, without any reference to the general theory of $g$-measures, one obtains by combining Lemmas~\ref{lem:app-g-1}, \ref{lem:specud} and \ref{lem:specsld} that our decoupling assumptions hold if $\Omega'$ satisfies the {\em flexible specification property} and the {\em abundant periodic orbit property} of Definition~\ref{def:specif}, and if one {\em assumes} that the sequence $(g_n)_{n\in \nn}$ defined by \eqref{eq:defgnfullshift} converges uniformly to some continuous function $g:\Omega'\to (0,1]$; see Lemma~\ref{lem:app-g-1} for a precise statement.
\end{remark}

We conclude this subsection by noting two interesting references. First, Hulse constructed in~\cite{H06} an example of a $g$-measure that is not ergodic, showing once again that~\assref{ud},~\assref{sld} and~\assref{pa} do not imply ergodicity.\footnote{Ergodicity follows from~\assref{sld} if we further assume that $\tau_n=O(1)$ and $C_n=O(1)$; see Lemma~A.2 in~\cite{CJPS19} for a slightly more general sufficient condition.} Second, in the framework of $g$-measures, the large deviations of empirical entropies (which are also entropy estimators) were studied in~\cite{CG2005}.

\subsection{Hidden Markov models and lack of exponential tightness}
\label{sec:lackexpotight}

\newcommand{\cAh}{\mathcal{A}^\textnormal{H}}
\newcommand{\Omh}{\Omega^\textnormal{H}}
\newcommand{\Ph}{\mathbb{P}^\textnormal{H}}
\newcommand{\Qh}{\mathbb{Q}^\textnormal{H}}
\newcommand{\f}{f}

A hidden Markov measure~$\P$ on~$\Omega$ with finite hidden alphabet~$\cAh$ is obtained from a shift-invariant Markov measure $\Ph$ on~$\Omh := (\cAh)^{\nn}$ and a surjective map~$\f : \cAh \to \cA$ by prescribing the marginals $\P_n := \Ph_n \circ (\f^{\otimes n})^{-1}$ for all~$n \in \nn$. The name ``hidden Markov model'' refers to the pair $(\Ph,\P)$.
There exist several different characterizations of hidden Markov measures; one which is particularly useful from the point of view of decoupling properties is the representation in terms of products of matrices discussed in Proposition~2.25 in~\cite{BCJPPExamples}. The reader is also encouraged to consult Example~2.25 in~\cite{CJPS19}. 
Using that $|\cAh|<\infty$, it is straightforward to show that, if~$\Ph$ and~$\Qh$ are irreducible, stationary Markov measures on~$\Omh$ that satisfy $\Ph_n \ll \Qh_n$ for all~$n\in\nn$, then $\P$ and~$\Q$ defined using the same function~$\f$ satisfy~\assref{ud},~\assref{jsld} and~\assref{pa}, and Theorems~\ref{thm:CJPS} and~\ref{thm:mainthmA}--\ref{thm:mainthmC} apply.

Obviously, this is a generalization of the setup of \secref{sec:markov}, but in a completely different spirit from that of \secref{sec:equilbowen}: hidden Markov measures can be far from Gibbsian; see e.g.\ Theorem~2.10 in~\cite{BCJPPExamples}.%
    \footnote{Historical details and references are given in the discussion of Blackwell--Furstenberg--Walters--van den Berg measures in~\cite[\S{2.2}]{BCJPPExamples}. Important references on the topic of Gibbsianity of hidden Markov measures (or lack thereof) include~\cite[\S{3}]{LMV98},~\cite{CU11} and~\cite{Ve11}.}
    The assumption $|\cAh|<\infty$ ensures the existence of a constant $c\geq 0$ such that 
\begin{equation}
\label{eq:pnexbdbl}
    \inf_{x\in \supp \P}\P_n(x_1^n)\geq \Exp{-cn} 
    \qquad\text{and}\qquad  
    \inf_{x\in \supp \Q}\Q_n(x_1^n)\geq \Exp{-cn},
\end{equation}
so $0\leq -\frac 1n \ln \Q_1(x_1^n)\leq c$ for $\P$-almost every $x\in \Omega$.
These bounds imply exponential tightness and thus guarantee goodness of the rate function~$\ilpq$. 
In fact, $\ilpq$ is infinite on $(c,\infty)$, and the same is then automatically true of $\iw$, $\iv$ and~$\ir$. By the same token, the bounds~\eqref{eq:pnexbdbl} imply that $\qlpq \leq c\alpha$ for all $\alpha \geq 0$, so $\qlpq$, $\qw$, $\qr$ and $\qv$ are finite everywhere. The bounds~\eqref{eq:pnexbdbl} and these consequences are a common feature of all the examples in Sections~\ref{sec:exiid}--\ref{sec:new-g-sm}.

Dropping the assumption that~$\cAh$ is finite, the class of hidden Markov measures allows for examples where~\eqref{eq:pnexbdbl} fails, but one then needs to verify on a case-by-case basis whether~\assref{ud},~\assref{jsld} and~\assref{pa} are satisfied in order for our results to apply. 
Using $\cAh = \{0\} \cup \nn$ as in \cite[\S{A.2}]{CJPS19}, one easily constructs fully supported, admissible pairs $(\P, \Q)$ on $\cA = \{\mathsf{a},\mathsf{b}\}$, satisfying also~\assref{pa}, but violating either or both inequalities in~\eqref{eq:pnexbdbl}. We limit ourselves to providing two sets of parameters following the notation in~\cite[\S{A.2}]{CJPS19}: Examples~\ref{ex:HMM1} and~\ref{ex:HMM2} are illustrated in Figures~\ref{fig:HMC1} and~\ref{fig:HMC2}, respectively.

\begin{example}\label{ex:HMM1}
    Define $\P$ using~$\gamma(n)=n\ln 2$, and $\Q$ using~$\hat\gamma(n)=2n+0.15n^2$. Then, the measure~$\P$ is uniform and $\Q(\mathsf{b}^n) \sim \Exp{-\hat \gamma(n)}$. Note that the quadratic term in the definition of~$\hat{\gamma}(n)$ makes the second bound in~\eqref{eq:pnexbdbl} fail. Here, 
    \begin{align*}
        \qlpq(\alpha) 
            &\geq  \liminf_{n\to\infty}\frac 1n \ln ( \P_{n}(\mathsf{b}^n)\Q_n(\mathsf{b}^n)^{-\alpha}) \\
            &= -\ln 2 + \alpha \liminf_{n\to\infty} \frac {\hat \gamma(n)}{n},
    \end{align*}
    so $\qlpq(\alpha)=\infty$ for all $\alpha>0$. One can show that $\qlpq'(0^-)<\infty$ and that $\ilpq(s)=\iw(s) = 0$ for all $s\geq \qlpq'(0^-)$.    
    Thus, the sequence~$(-\tfrac 1n \ln \Q_n)_{n\in\nn}$ is not exponentially tight with respect to~$\P$, and the rate functions~$\ilpq$ and~$\iw$ are not good; see Figure~\ref{fig:HMC1}. 

    \begin{figure}[htb]
        \centering
    \begin{tabular}{p{0.45\textwidth} p{0.45\textwidth}}
        \vspace{0pt} \includegraphics[scale=\figscale]{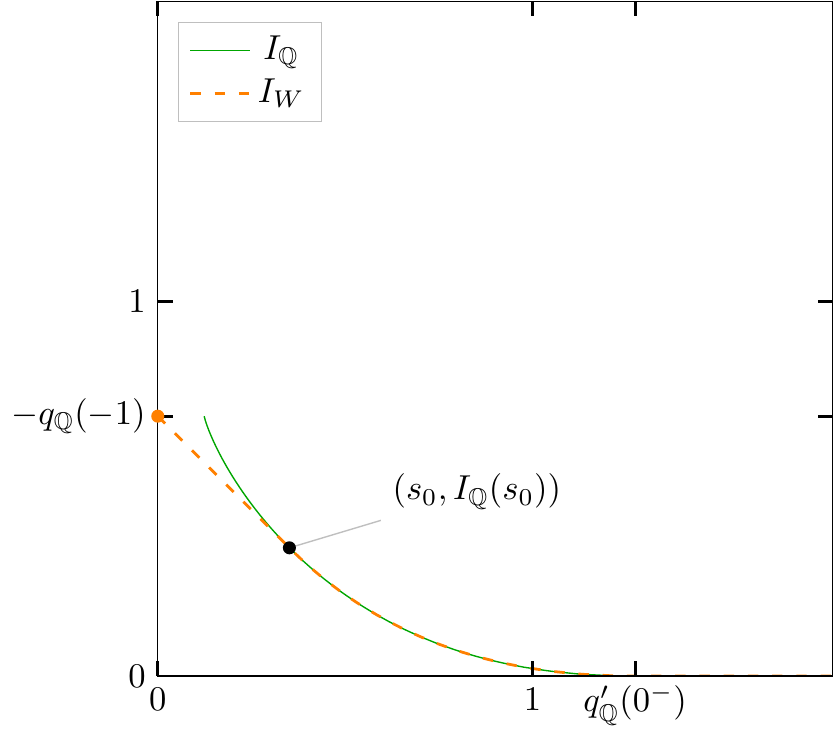} &
        \vspace{0pt} \includegraphics[scale=\figscale]{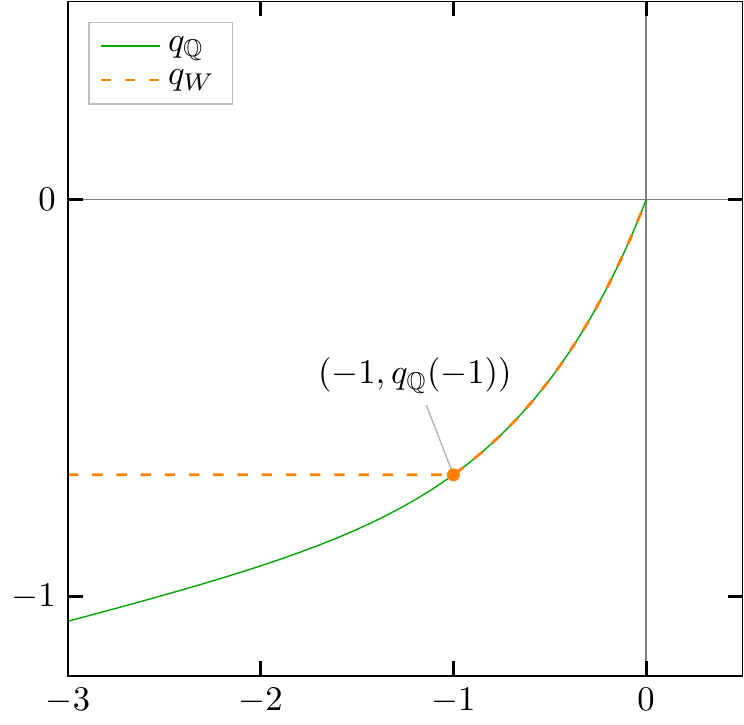}
      \end{tabular}
    \caption{Illustration of Example~\ref{ex:HMM1}. In the right picture, $\qlpq(\alpha)=\qw(\alpha)=\infty$ for all $\alpha>0$.}
    \label{fig:HMC1}
    \end{figure}
\end{example}

\begin{example}\label{ex:HMM2} Define~$\P=\Q$ using~$\gamma(0)=\hat \gamma(0)=0$, $\gamma(n) = \hat \gamma(n)= 0.6\,\Exp{n}$ for $n\in \nn$. Then, we have $\P(\mathsf{b}^n) = \Q(\mathsf{b}^n)\sim \Exp{-\gamma(n)}$ and both bounds in~\eqref{eq:pnexbdbl} fail. Here,~$\qlpp$ is infinite for all $\alpha > 1$, and $\qlpp'(1^-)=\infty$. 
    Each sequence of interest is exponentially tight and has a rate function that is good, but remains finite on an unbounded interval; in other words $\gamma_- = -\infty$. 

    \begin{figure}[htb]
        \centering
    \begin{tabular}{p{0.45\textwidth} p{0.45\textwidth}}
        \vspace{0pt} \includegraphics[scale=\figscale]{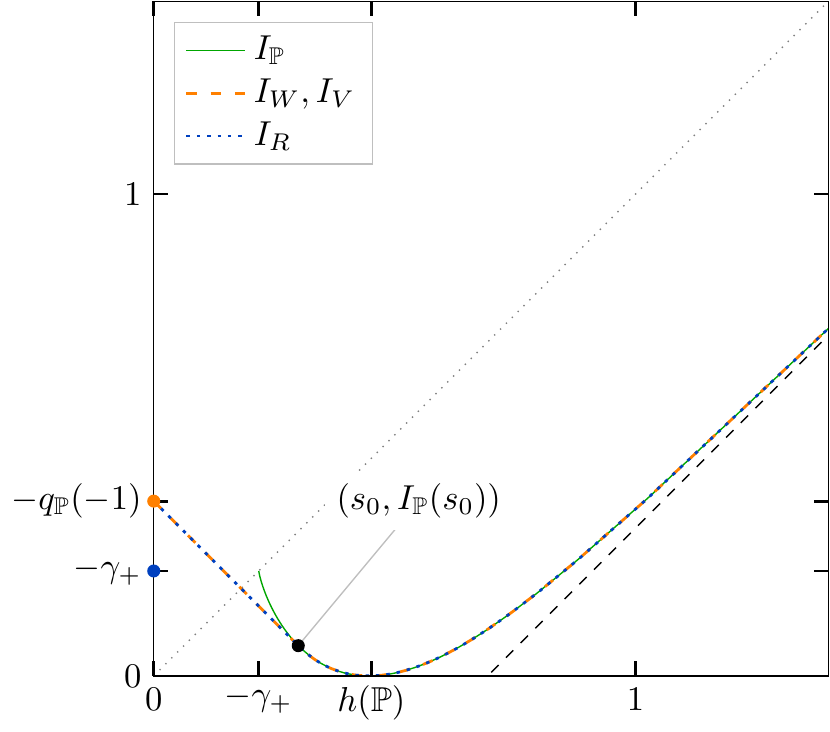} &
        \vspace{0pt} \includegraphics[scale=\figscale]{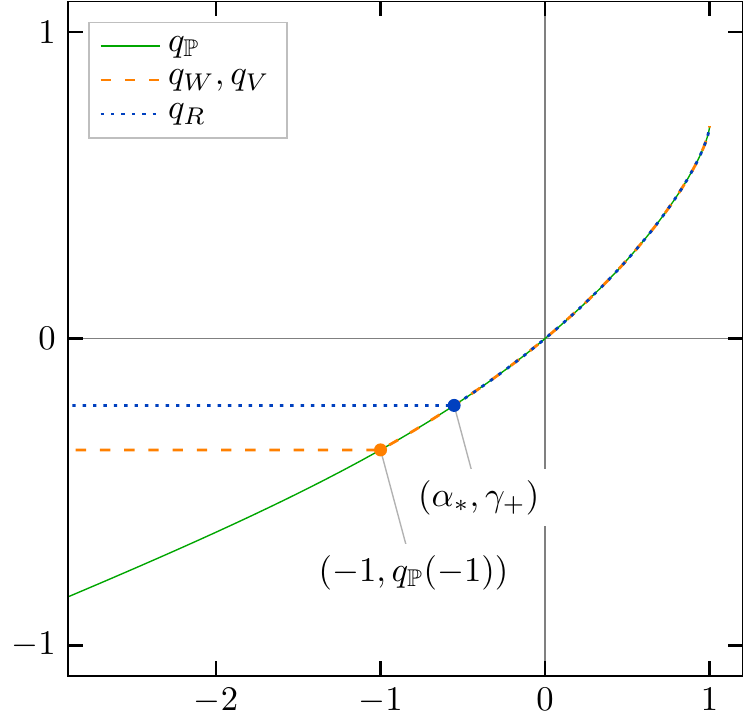}
      \end{tabular}
    \caption{Illustration of Example~\ref{ex:HMM2}. In the left picture, the dashed black asymptote is given by the equation $y = x-\qlpp(1)$, and in the right picture, $\qlpp(\alpha)=\qw(\alpha)=\qv(\alpha)=\qr(\alpha)=\infty$ for all $\alpha >1$.}
    \label{fig:HMC2}
    \end{figure}
\end{example}

\begin{remark}
\label{rem:expotight-1}
    The possible lack of exponential tightness is specific to the case where $\P\neq \Q$; when $\P=\Q$, our assumptions imply that all the random variables discussed in the paper are exponentially tight and that the corresponding rate functions are good. At the root of this fact is the exponential tightness of $(-\frac 1n\ln \P_n)_{n\in \nn}$ with respect to~$\P$ (Theorem~\ref{thm:CJPS}.iii). This exponential tightness will be derived as a consequence of~\eqref{eq:qppbdd}, but it can also be seen directly: for any $M\geq 0$,
    \begin{equation}
    \label{eq:expotightPn}
        \P\left\{x: -\frac 1n \ln \P_n(x_1^n) \geq M\right\} = \P\{x: \P_n(x_1^n) \leq \Exp{-nM}\}\leq |\cA|^{n}\Exp{-Mn}.
    \end{equation}
    We will come back to this point in Remark~\ref{rem:direxpotight}.
\end{remark}

\begin{remark}
    This class of examples also allows for cases where the pressure~$\qlpq$ is not differentiable\,---\,and hence where the G\"artner--Ellis route cannot provide the conclusions of Theorem~\ref{thm:CJPS}. Indeed, using the parameters $\gamma(n) = \hat \gamma(n)=\frac n{10}+4\ln(1+\frac n4)$ yields a situation where $\qlpq$ is not differentiable at $\alpha_0 \approx -0.1769$ and the rate functions are affine on an interval corresponding to the subdifferential of $\qlpq$ at $\alpha_0$.
\end{remark}

\section{Key estimates}
\label{sec:key-set} 

This section is devoted to technical estimates on the distribution of~$W_n$, $R_n$ and~$V_n$ at large but finite~$n \in \nn$. We start with estimates for~$W_n$ and then use those for our analysis of~$R_n$ and~$V_n$.
In order to do so, we first need a convenient reformulation of our decoupling assumptions.

We show in Lemma~\ref{lem:udevents} that, at the cost of replacing $C_n$ with $C_n|\cA|^{\tau_n}$ (which also satisfies~\eqref{eq:tauncn}), \assref{ud} implies that for every $n\in\nn$, $A\in \cF_n$ and $B\in \cF$,
\begin{equation}\label{eq:udeventseq}
    \P\left(A \cap \shift^{-n - \tau_n}B\right) \leq C_n \P(A)\P(B).
\end{equation}
In the same way, we show in Lemma~\ref{lem:sldevents} that at the cost of replacing $C_n$ with $(\tau_n+1)C_n$,~\assref{sld} implies that 
for every $n\in\nn$, $A\in \cF_n$ and $B\in \cF$, 
\begin{equation}\label{eq:sldeventseq}
    \max_{0\leq \ell \leq \tau_n}\P\left(A \cap \shift^{-n -\ell}B\right) \geq  C_n^{-1} \P(A)\P(B).
\end{equation}
We shall freely use the form~\eqref{eq:udeventseq} of~\assref{ud} and~\eqref{eq:sldeventseq} of~\assref{sld} throughout the paper.

\subsection{Waiting times}
\label{sec:Wn-estimates}

Because $W_n : \Omega \times \Omega \to \nn$ is $(\cF_n \otimes \cF)$-measurable, we will sometimes identify it to a function $\cA^n \times \Omega \to \nn$ denoted by the same symbol.

\begin{lemma}
\label{lem:UB-Q-Wn-a-y}
    Assume $\Q$ satisfies~\assref{sld}. Then, for all $m,n\in \nn$ and  $u\in\cA^n$,
    \begin{equation}\label{eq:UB2bounds}
           \Q\{ y : W_n(u,y) = m \} \leq \Q_n(u)  (1 - C_n^{-1}\Q_n(u))^{\left\lfloor\frac{m-1}{n + \tau_n}\right\rfloor}
    \end{equation}
    and
    \begin{equation}\label{eq:UB2only1bound}
        \Q\{ y :  W_n(u,y)\geq m\} \leq  (1 - C_n^{-1}\Q_n(u))^{\left\lfloor\frac{m-1}{n + \tau_n}\right\rfloor}.
    \end{equation}

\end{lemma}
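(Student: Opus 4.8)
Both inequalities track the first occurrence of the word $u$ along $y$. For $N\geq1$ put $G_N:=\{y\in\Omega:y_j^{j+n-1}\neq u\text{ for all }1\leq j\leq N\}$ (no occurrence of $u$ among the first $N$ positions) and $G_0:=\Omega$, so that $\{y:W_n(u,y)\geq m\}=G_{m-1}$ and $\{y:W_n(u,y)=m\}=G_{m-1}\cap\shift^{-(m-1)}[u]$. Throughout I would use \assref{sld} via its complementary reformulation: rearranging \eqref{eq:sldeventseq} with $A=[u]$ gives, for every $B\in\cF$, $\min_{0\leq\ell\leq\tau_n}\Q\!\bigl([u]^\complement\cap\shift^{-(n+\ell)}B\bigr)\leq\bigl(1-C_n^{-1}\Q_n(u)\bigr)\Q(B)$.

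\emph{Inequality \eqref{eq:UB2only1bound}.} The core is the one-step bound $\Q(G_{N+n+\tau_n})\leq\bigl(1-C_n^{-1}\Q_n(u)\bigr)\Q(G_N)$ for all $N\geq0$. Indeed, for every $0\leq\ell\leq\tau_n$ the event $G_{N+n+\tau_n}$ forbids $u$ at position $1$ (so $y\in[u]^\complement$) and at positions $n+\ell+1,\dots,n+\ell+N$, which is a sub-window of $\{1,\dots,N+n+\tau_n\}$ since $\ell\leq\tau_n$ and which, after discarding the first $n+\ell$ symbols, is exactly $G_N$; hence $G_{N+n+\tau_n}\subseteq[u]^\complement\cap\shift^{-(n+\ell)}G_N$ for every such $\ell$, and the complementary form of \assref{sld} with $B=G_N$ concludes. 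Iterating this $\lfloor(m-1)/(n+\tau_n)\rfloor$ times, using that $G$ is decreasing, $G_{m-1}\subseteq G_{\lfloor(m-1)/(n+\tau_n)\rfloor\,(n+\tau_n)}$, and $\Q(G_0)=1$, gives \eqref{eq:UB2only1bound}.

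\emph{Inequality \eqref{eq:UB2bounds}.} I would induct on $k:=\lfloor(m-1)/(n+\tau_n)\rfloor$. If $k=0$, i.e.\ $m\leq n+\tau_n$, shift-invariance gives $\Q\{y:W_n(u,y)=m\}\leq\Q\{y:y_m^{m+n-1}=u\}=\Q_n(u)$, which is the claim since the exponent vanishes. If $k\geq1$, then $m-1\geq k(n+\tau_n)\geq n+\ell$ for all $\ell\leq\tau_n$, so $\{W_n(u,y)=m\}\subseteq[u]^\complement\cap\shift^{-(n+\ell)}\{W_n(u,\cdot)=m-n-\ell\}$ for each such $\ell$ (no occurrence at position $1$; after discarding the first $n+\ell$ symbols the first occurrence lands at $m-n-\ell$). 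Enlarging each $\{W_n(u,\cdot)=m-n-\ell\}$ to $B:=\{W_n(u,\cdot)\in[m-n-\tau_n,\,m-n]\}$ and applying the complementary form of \assref{sld} with this $B$ yields $\Q\{W_n(u,\cdot)=m\}\leq\bigl(1-C_n^{-1}\Q_n(u)\bigr)\Q(B)$; one then invokes the inductive hypothesis on the window $[m-n-\tau_n,\,m-n]$, each of whose values $m'$ satisfies $\lfloor(m'-1)/(n+\tau_n)\rfloor\geq k-1$, to close.

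The step I expect to cause the most trouble is closing this last induction \emph{cleanly}, i.e.\ recovering exactly the stated bound $\Q_n(u)\bigl(1-C_n^{-1}\Q_n(u)\bigr)^k$ rather than one carrying a spurious prefactor of order $\tau_n$: bounding $\Q(B)$ by a crude union bound over the $\tau_n+1$ values in the window would incur such a factor. To avoid it one must treat the $\tau_n+1$ pairwise-disjoint events $\{W_n(u,\cdot)=m-n-\ell\}$ jointly, keeping careful track along the iteration of which offset $\ell\in\{0,\dots,\tau_n\}$ the lower-decoupling inequality actually supplies and how it interlocks with the position of the first occurrence. The asymmetry of \assref{sld} — it controls $\Q([u]^\complement\cap\,\cdot\,)$ from above but not $\Q([u]\cap\,\cdot\,)$ — is precisely why one cannot just detach the factor $\Q_n(u)$ (occurrence at $m$) from the geometric factor (no occurrence before) in one stroke; everything else is elementary manipulation of the events $G_N$ together with shift-invariance of $\Q$.
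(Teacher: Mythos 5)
Your argument for \eqref{eq:UB2only1bound} is correct and, if anything, a little slicker than the paper's. You define the $\ell$-independent sets $G_N$ and exploit the fact that the containment $G_{N+n+\tau_n}\subseteq[u]^\complement\cap\shift^{-(n+\ell)}G_N$ holds \emph{simultaneously for every} $0\leq\ell\leq\tau_n$; this lets you apply the complementary form of~\assref{sld} with $B=G_N$ without needing to know which $\ell^*$ it actually supplies. The paper instead builds the sets $A_j:=[u]^\complement\cap\shift^{-n-\ell_j}A_{j-1}$ dynamically, letting $\ell_j$ be whatever \assref{sld} gives at step $j$, and then shows the event of interest sits inside a shift of $A_k$ by tracking the partial sums $\sum_i\ell_i$. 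Both routes deliver the same one-step contraction; yours avoids bookkeeping, the paper's generalizes.

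And that generalization is exactly what you are missing for \eqref{eq:UB2bounds}. The gap you flag at the end is genuine, and your proposed induction cannot be repaired along the lines you sketch. The obstruction is structural: the containment $\{W_n(u,\cdot)=m\}\subseteq[u]^\complement\cap\shift^{-(n+\ell)}\{W_n(u,\cdot)=m-n-\ell\}$ holds for every $\ell$, but each $\ell$ leads to a \emph{different} target event, whereas \assref{sld} supplies a single $\ell^*$ whose value depends on the set $B$ you feed in. You cannot choose $B=\{W_n(u,\cdot)=m-n-\ell^*\}$ before knowing $\ell^*$, so you are forced to enlarge to the $(\tau_n+1)$-window $B=\{W_n(u,\cdot)\in[m-n-\tau_n,m-n]\}$. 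Even though the $\tau_n+1$ constituent events are disjoint, their probabilities must then be bounded by the inductive hypothesis individually, and the factor $\tau_n+1$ re-enters at every level of the induction. After $k$ steps this accumulates a prefactor that grows (at least) polynomially in $k$ with degree of order $\tau_n$; since the lemma is later applied with $k\sim \Exp{ns}/n$, any such prefactor is $\Exp{\Theta(\tau_n\, ns)}$, which is \emph{not} $\Exp{o(n)}$ under the mere assumption $\tau_n=o(n)$, and the estimate in Proposition~\ref{prop:main-Wn-bound} would be destroyed. There is no "careful tracking of the $\ell$'s" that rescues the downward induction on the exact first-occurrence time $m$, because the recursion is anchored to an event ($\{W_n(u,\cdot)=m'\}$) that pins down the first occurrence precisely, and that precision is incompatible with the $\ell$-ambiguity in \assref{sld}.

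The paper's fix is to abandon the induction on $m$ altogether and instead build a single set $A_k$ top-down from $A_0:=[u]$, via $A_j:=[u]^\complement\cap\shift^{-n-\ell_j}A_{j-1}$, taking $\ell_j$ to be whatever \assref{sld} provides at step $j$. The set $A_k$ only encodes "$u$ occurs at one far position and does \emph{not} occur at the $k$ thinned positions determined by the $\ell_j$'s," which is a strictly weaker constraint than "$W_n(u,\cdot)=m$." Consequently $\{W_n(u,\cdot)=m\}\subseteq\shift^{1-m+nk+\sum_j\ell_j}A_k$ regardless of which $\ell_j$'s were chosen, and the bound $\Q(A_k)\leq\Q_n(u)(1-C_n^{-1}\Q_n(u))^k$ transfers with no spurious prefactor. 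The key idea you are missing is therefore: do not try to recurse on events that fix the exact first-occurrence position; instead, iterate \assref{sld} freely to produce a coarser event containing all of them at once.
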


\begin{proof}
    Let us fix $n, m$ and $u$ as in the statement, and let $k:=\lfloor\tfrac{m-1}{n + \tau_n}\rfloor$.
    First, if $k=0$, then \eqref{eq:UB2only1bound} is trivial and \eqref{eq:UB2bounds} holds since $\{ y : W_n(u,y) = m \}\subseteq \shift^{1-m}[u]$ and $\Q$ is shift invariant.
    Consider now the case $k\in\nn$.
    In view of~\assref{sld} (recall \eqref{eq:sldeventseq}) and shift invariance, we may inductively pick $k$ integers $0\leq \ell_1, \ell_2, \dots, \ell_k\leq \tau_n$ such that the intersections inductively defined by
    \[
        A_0 := [u] 
        \qquad \text{and} \qquad
        A_j := [u]^\complement \cap \shift^{-n - \ell_j}A_{j{-1}}
    \]
	for $j = 1, \dotsc, k$ satisfy $\Q\left([u] \cap \shift^{-n - \ell_j}A_{j-1}\right) \geq C_n^{-1} \Q_n(u)\Q(A_{j-1})$, and thus also
    \begin{equation}\label{eq:recverQaj}
        \Q(A_j) = \Q(A_{j-1})-\Q([u]\cap \shift^{-n-\ell_j} A_{j-1}) \leq \Q(A_{j-1})(1-C_n^{-1}\Q_n(u)).
    \end{equation}
    Iterating \eqref{eq:recverQaj} starting from $A_0$ yields
    \begin{equation}\label{eq:Qakub}
        \Q(A_k) \leq \Q_n(u)(1 - C_n^{-1} \Q_n(u))^{k}.
    \end{equation}
    Let  $M := \{m- jn - \sum_{i=1}^j \ell_i\}_{j=1}^{k}$. By construction, $kn + \sum_{i=1}^k\ell_j \leq k({n + \tau_n})<m$, so $M \subseteq  [1, m-n] \subseteq [1,m-1]$. 
    As a consequence,
    \begin{align*}
        \{ y :  W_n(u,y) = m \}  &= \shift^{1-m}[u] \cap \bigcap_{m'=1}^{m-1} \shift^{1-m'}[u]^\complement\\
        & \subseteq \shift^{1-m}[u] \cap \bigcap_{m'\in M} \shift^{1-m'}[u]^\complement\\
        & =  \shift^{1-m+nk + \sum_{j=1}^{k}\ell_j } A_{k}.
    \end{align*}
    Thus, by shift invariance and~\eqref{eq:Qakub}, we readily obtain~\eqref{eq:UB2bounds}.
    The proof of~\eqref{eq:UB2only1bound} is exactly the same with~$A_0$ replaced by~$\Omega$.     
\end{proof}

\begin{remark}\label{rem:telescop}
    The bounds~\eqref{eq:recverQaj} and \eqref{eq:Qakub} are inspired by~\cite[\S{2}]{Ko98}, and were already adapted to selective decoupling conditions in~\cite{CDEJR22}. It might be surprising that the  \emph{upper} bound~\eqref{eq:UB2bounds} relies on the \emph{lower} decoupling assumption \assref{sld}. In fact, even if $\tau_n = 0$ for all $n$, using~\assref{ud} would yield $\Q(A_k)\leq  C_n^k\Q_n(u)(1 -\Q_n(u))^{k}$ instead of \eqref{eq:Qakub}; the extra factor of~$C_n^k$ is too crude since we will be interested in the case where~$k \gg n$. The opposite will happen in Lemma~\ref{lem:LB-Q-Wn-a-y}, where a lower bound will be proved using~\assref{ud}; see \eqref{eq:Qfklb}.
\end{remark}

If $\Q_n(u)=0$, then $\Q\{y:W_n(u,y)=m\} \leq \Q(\shift^{1-m}[u])=\Q_n(u) = 0$ for all $m\in \nn$, and thus $W_n(u, \cdot\, )$ is almost surely infinite. Conversely, if $\Q$ satisfies~\assref{sld}, the bound~\eqref{eq:UB2only1bound} ensures that $W_n(u, \cdot\, )$ is almost surely finite whenever $\Q_n(u)>0$. This last observation is at the heart of the next lemma.

\begin{lemma}
    Let $n\in \nn$, let $\Q$ satisfy~\assref{sld}, and assume that $\P_n \ll \Q_n$. Then, the random variable~$W_n$ is $(\P\otimes\Q)$-almost surely finite.
\end{lemma}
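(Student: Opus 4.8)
The plan is to reduce the claim to the two cases determined by whether the marginal $\Q_n(u)$ vanishes, exactly as foreshadowed in the paragraph preceding the lemma, and then integrate over $u$. First I would observe that since $W_n$ is $(\cF_n\otimes\cF)$-measurable, we may write
\[
    (\P\otimes\Q)\{(x,y): W_n(x,y)=\infty\}
    = \sum_{u\in\cA^n} \P_n(u)\,\Q\{y: W_n(u,y)=\infty\},
\]
so it suffices to show that $\Q\{y: W_n(u,y)=\infty\}=0$ for every $u\in\cA^n$ with $\P_n(u)>0$.

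Next I would split according to $\Q_n(u)$. If $\Q_n(u)=0$, then the hypothesis $\P_n\ll\Q_n$ forces $\P_n(u)=0$, so such $u$ contribute nothing to the sum; alternatively, as already noted in the text, $\{y:W_n(u,y)=m\}\subseteq\shift^{1-m}[u]$ and shift invariance give $\Q\{y:W_n(u,y)=m\}\le\Q_n(u)=0$ for all $m$, hence $W_n(u,\cdot)=\infty$ $\Q$-almost surely — but this case is irrelevant since $\P_n(u)=0$. If instead $\Q_n(u)>0$, I would invoke the bound \eqref{eq:UB2only1bound} from Lemma~\ref{lem:UB-Q-Wn-a-y}: for every $m\in\nn$,
\[
    \Q\{y: W_n(u,y)\ge m\}\le \bigl(1-C_n^{-1}\Q_n(u)\bigr)^{\lfloor (m-1)/(n+\tau_n)\rfloor}.
\]
Since $C_n\ge 1$ and $0<\Q_n(u)\le 1$, the base $1-C_n^{-1}\Q_n(u)$ lies in $[0,1)$, and the exponent $\lfloor(m-1)/(n+\tau_n)\rfloor\to\infty$ as $m\to\infty$ (here $n$ and $\tau_n$ are fixed), so the right-hand side tends to $0$. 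Because $\{y:W_n(u,y)=\infty\}=\bigcap_{m\in\nn}\{y:W_n(u,y)\ge m\}$ is a decreasing intersection, continuity from above yields $\Q\{y:W_n(u,y)=\infty\}=0$.

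Combining the two cases, $\Q\{y:W_n(u,y)=\infty\}=0$ for every $u$ with $\P_n(u)>0$, and therefore the displayed sum vanishes, i.e.\ $W_n<\infty$ $(\P\otimes\Q)$-almost surely. There is essentially no obstacle here: the only point requiring a moment's care is that $1-C_n^{-1}\Q_n(u)$ could equal $0$ (when $C_n=1$ and $\Q_n(u)=1$), in which case the bound already gives $\Q\{y:W_n(u,y)\ge n+\tau_n+1\}=0$ and the conclusion is immediate; otherwise the geometric decay argument applies verbatim. The whole proof is a short corollary of the estimate \eqref{eq:UB2only1bound} already established.
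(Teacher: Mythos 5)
Your proof is correct and is essentially the same as the paper's one-line argument, which likewise rests on decomposing $W_n$ over cylinders $[u]$, invoking the bound \eqref{eq:UB2only1bound}, and using $\P_n \ll \Q_n$ to discard the $u$ with $\Q_n(u)=0$. You have simply written out the details that the paper leaves implicit.
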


\begin{proof}
    This is a consequence of the decomposition 
    $$
        W_n(x,y) = \sum_{u\in \cA^n}1_{[u]}(x)W_n(u,y),
    $$
    the bound~\eqref{eq:UB2only1bound}, and the absolute continuity assumption.
\end{proof}

We now turn to lower bounds on the distribution of $W_n(u,\cdot\,)$. The following lemma will be useful when $(1-C_n\Q_n(u))^{\left\lceil\frac{m-1}{n + \tau_n}\right\rceil}$ is close to~$1$.
\begin{lemma}
\label{lem:LB-Q-Wn-a-y}
    Assume $\Q$ satisfies~\assref{ud}. Then, for all $n,m\in\nn$ and $u\in\cA^n$ such that $C_n\Q_n(u)\leq 1$,
    \begin{equation}\label{eq:Qwnrupper}
    \begin{split}
        &\Q\{ y : W_n(u,y) = m \} \\
        &\splitspace \geq \frac{1}{n + \tau_n}\Q_n(u) \left(1 - ({n + \tau_n}) \left(1 - (1-C_n\Q_n(u))^{\left\lceil\frac{m-1}{n + \tau_n}\right\rceil}\right)\right).
    \end{split}
    \end{equation}
\end{lemma}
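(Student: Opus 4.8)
The plan is to fix $n,m,u$ as in the statement, set $N := n+\tau_n$ and $k := \lceil \frac{m-1}{N}\rceil$, and produce a collection of disjoint "first-hit-at-an-allowed-position" events whose probabilities we can bound from below using~\assref{ud} in the form~\eqref{eq:udeventseq}. The key point is that $W_n(u,\cdot)=m$ means $y$ hits $[u]$ exactly at position $m$ and nowhere before; to get a lower bound we will instead \emph{first} cover the positions $1,\dots,m-1$ by a sparse sublattice $\{1, 1+N, 1+2N, \dots\}$ of at most $k$ points, require $y$ to avoid $[u]$ at those $k$ sparse positions, and additionally require $y\in\shift^{1-m'}[u]$ for exactly one position $m'$ in a suitable residue class. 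Averaging over the $N$ possible offsets of this sparse lattice (together with shift invariance) is what produces the prefactor $\tfrac1N$.

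Concretely, first I would write, for each offset $j\in\{0,1,\dots,N-1\}$, $\Lambda_j := \{\, m' \in [1,m-1] : m'\equiv j \pmod N\,\}$, so that $|\Lambda_j|\le k$ and the $\Lambda_j$ partition $[1,m-1]$. For a fixed $j$, consider the event $E_j$ that $y$ lies in $[u]$ at \emph{some} position in $\{m'\}\cup$ (the position just after), hitting it for the first time among the sparse positions $\{j, j+N, \dots\}\cup\{m\}$; the cleanest route is to decompose $\{y : W_n(u,y)=m\}$ and its sparse relaxations via a telescoping/inclusion argument. Using~\eqref{eq:udeventseq} and the hypothesis $C_n\Q_n(u)\le 1$, I would show that conditioning on not hitting $[u]$ at each of the $\le k$ sparse positions costs at most a factor $(1-C_n^{-1}\cdot\text{something})$ — but the correct bound here goes the other way: we must \emph{upper} bound the probability of hitting $[u]$ at one of the $k$ sparse positions by $k\,C_n\Q_n(u)$ via a union bound and~\eqref{eq:udeventseq}, so the probability of avoiding all of them is at least $1 - k\,C_n\Q_n(u)$. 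Then, on that avoidance event, shift invariance gives that $y$ lands in $\shift^{1-m}[u]$ with probability $\Q_n(u)$, and a further application of~\eqref{eq:udeventseq} lets us intersect these two (near-)independent constraints with only a multiplicative error absorbed into the $(1-\cdots)$ term. Summing over the $N$ offsets $j$ (the events for distinct $j$ being disjoint since they pin down the first hit to distinct residue classes) and using $\lceil\frac{m-1}{N}\rceil = k$ yields
\[
    \Q\{y : W_n(u,y)=m\} \ \ge\ \frac{1}{N}\,\Q_n(u)\Bigl(1 - N\bigl(1-(1-C_n\Q_n(u))^{k}\bigr)\Bigr),
\]
which, unwinding $N=n+\tau_n$ and $k=\lceil\frac{m-1}{n+\tau_n}\rceil$, is exactly~\eqref{eq:Qwnrupper} once one notes $1-(1-C_n\Q_n(u))^k \ge k\,(\text{stuff})$ is replaced by the sharper Bernoulli-type inequality $(1-p)^k \ge 1-kp$ to rewrite $k\,C_n\Q_n(u)$ as $1-(1-C_n\Q_n(u))^k$ up to the correct direction.

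The main obstacle I anticipate is the bookkeeping that converts "first hit at a sparse position equals $m$" into a clean union-bound-friendly form while keeping the events for different offsets $j$ genuinely disjoint, and simultaneously making sure that the single application of~\assref{ud} used to decouple the "avoid the sparse positions before $m$" event from the "hit at $m$" event is legitimate — i.e.\ that the avoidance event lies in $\cF_{m-1}$ (or can be arranged to, after the telescoping) so that~\eqref{eq:udeventseq} with the gap $\tau_n$ applies, which is precisely why the lattice spacing must be $n+\tau_n$ rather than $n$. This is the dual of the subtlety flagged in Remark~\ref{rem:telescop}: here the \emph{lower} bound must rely on~\assref{ud}, and the factor $(1-C_n\Q_n(u))$ (with $C_n$, not $C_n^{-1}$) appears precisely because upper decoupling is being invoked $k$ times. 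Everything else — shift invariance, the elementary inequality $(1-p)^k\ge 1-kp$ for $p\in[0,1]$, and summing a geometric-flavored expression over $N$ offsets — is routine.
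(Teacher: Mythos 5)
Your high-level reading is right on two important points: it is \assref{ud} (not \assref{sld}) that drives the lower bound, and the lattice spacing must be $n+\tau_n$ so that the cylinder and the decoupled tail are separated by a gap of exactly $\tau_n$. But the architecture of your argument does not match what is needed, and two steps would fail. First, the prefactor $\tfrac{1}{n+\tau_n}$ does not come from ``averaging over offsets''. The paper first proves the interval estimate
\begin{equation*}
\Q\{y : k(n+\tau_n) < W_n(u,y) \le (k+1)(n+\tau_n)\} \ge \Q_n(u)\bigl(1 - (n+\tau_n)(1-(1-C_n\Q_n(u))^{k})\bigr),
\end{equation*}
and then converts it to a pointwise bound using the observation that $t \mapsto \Q\{y : W_n(u,y)=t\}$ is nonincreasing (because $\{W_n(u,\cdot)=t+1\} \subseteq \shift^{-1}\{W_n(u,\cdot)=t\}$ and $\Q$ is shift-invariant): a mass spread over an interval of $n+\tau_n$ consecutive integers forces at least a $\tfrac{1}{n+\tau_n}$ fraction of it at the left endpoint, and then nonincreasingness propagates this to all smaller $m$. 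Your plan never introduces this monotonicity, and without it the family of ``offset events'' you describe is neither pairwise disjoint nor contained in $\{W_n(u,\cdot)=m\}$, so summing over offsets does not give a bound on $\Q\{W_n(u,\cdot)=m\}$.

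Second, your plan conflates two different mechanisms for the power $(1-C_n\Q_n(u))^{k}$. A one-shot union bound over the $\le k$ sparse positions in a single residue class gives at best $1 - kC_n\Q_n(u)$, and Bernoulli's inequality $(1-p)^{k} \ge 1-kp$ runs in the \emph{wrong} direction to upgrade this to $(1-C_n\Q_n(u))^{k}$: from $\Q(\cdot) \ge 1 - kC_n\Q_n(u)$ you cannot conclude $\Q(\cdot) \ge (1-C_n\Q_n(u))^{k}$. What the paper actually does is iterate~\eqref{eq:udeventseq} $k$ times along the telescoping nested sets $F_i := [u]^\complement \cap \shift^{-n-\tau_n}F_{i-1}$; each step peels off a factor $(1-C_n\Q_n(u))$ from $\Q(F_{i-1})$, yielding the true geometric decay~\eqref{eq:Qfklb}. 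The union bound in the paper's proof appears at a different place: it is applied over the $n+\tau_n$ residue classes $B_1,\dots,B_{n+\tau_n}$ \emph{after} the telescoping, not over the positions inside one class. So the structure you want is (telescoping within a class) $\to$ (union bound over classes) $\to$ (nonincreasingness), not (union bound within a class) $\to$ (averaging over offsets).
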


\begin{proof}
    Let us fix $n,m$ and $u$ as in the statement. We first prove that it suffices to establish the bound
    \begin{equation}\label{eq:toshowwninter}
        \begin{split}
            &\Q\{ y : k({n + \tau_n}) < W_n(u,y) \leq (k+1)({n + \tau_n}) \} \\
            &\splitspace \geq \Q_n(u) \left( 1 - ({n + \tau_n})\left(1 - (1-C_n\Q_n(u))^k\right)\right)
        \end{split}
    \end{equation}
    for all $k\in \nn\cup\{0\}$. 
    Since 
    \begin{equation}\label{eq:Qwnnondec}
        \{y:W_n(u,y) = t + 1 \} \subseteq \shift^{-1}\{y:W_n(u,y) = t\}
    \end{equation}
    for all $t\in\nn$, the probability $\Q\{y:W_n(u,y) = t\}$ is nonincreasing in~$t$.
    As a consequence, the left-hand side of~\eqref{eq:toshowwninter} is bounded above by $({n + \tau_n})\Q\{y : W_n(u,y) = k({n + \tau_n})+1\}$, so \eqref{eq:toshowwninter} implies that
    \begin{equation*}
    \begin{split}
        &\Q\{ y :  W_n(u,y) = 1+k({n + \tau_n}) \}\\
    &\splitspace\geq \frac{1}{n + \tau_n}\Q_n(u) \left(1 - ({n + \tau_n}) \left(1 - (1-C_n\Q_n(u))^{k}\right)\right).
    \end{split}
    \end{equation*}
    By nonincreasingness, the same lower bound applies to $\Q\{ y :  W_n(u,y) =m\}$ for all $k$ such that $1+k({n + \tau_n})\geq m$, and thus~\eqref{eq:toshowwninter} indeed implies~\eqref{eq:Qwnrupper}.

    We now establish \eqref{eq:toshowwninter}.
    For every $y\in\shift^{1-(k+1)({n + \tau_n})}[u] =:A$ we have $W_n(u,y)\leq(k+1)({n + \tau_n})$, and for every $y\in \bigcap_{r = 1}^{k({n + \tau_n})} \shift^{1-r}[u]^\complement$  we have $W_n(u,y)> k({n + \tau_n})$. As a consequence,
    \begin{equation}\label{eq:Qykntn}
        \Q\{y : k({n + \tau_n}) < W_n(u,y) \leq (k+1)({n + \tau_n})\} \geq
        \Q\left( A\cap \bigcap_{r = 1}^{k({n + \tau_n})} \shift^{1-r}[u]^\complement \right).
    \end{equation}
    We now write $
    \bigcap_{r = 1}^{k({n + \tau_n})} \shift^{1-r}[u]^\complement =  \bigcap_{j=1}^{n + \tau_n}B_j
    $, where for $1\leq j \leq {n + \tau_n}$,
    $$
        B_j := \bigcap_{d = 0}^{k-1} \shift^{1-d({n + \tau_n})-j}[u]^\complement.
    $$
    Notice that for each $j$, the events whose intersection defines $B_j$ are separated by ``gaps'' of size $\tau_n$, which will allow to use~\assref{ud} below. By a union bound, \eqref{eq:Qykntn} implies that
    \begin{align*}
        \Q\{y : k({n + \tau_n}) < W_n(u,y) \leq (k+1)({n + \tau_n})\}
        &\geq
        \Q\left( A\cap \bigcap_{j=1}^{n + \tau_n}B_j \right) 
        \\
        &=\Q(A)-\Q\left(\bigcup_{j=1}^{n + \tau_n}(A\cap B_j^\complement)  \right) 
        \\
        &\geq \Q(A)-\sum_{j=1}^{n + \tau_n}\Q(A\cap B_j^\complement) 
        \\
        &= \Q(A) -\sum_{j=1}^{n + \tau_n}(\Q(A)-\Q(A\cap B_j)).
    \end{align*}
    By shift invariance, $\Q(A)=\Q_n(u)$, and thus the proof of \eqref{eq:toshowwninter} will be complete once we have shown that
    \begin{equation}\label{eq:toshowBjj}
        \Q(A\cap B_j) \geq  \Q_n(u)(1-C_n\Q_n(u))^k.
    \end{equation}
    Fix $1\leq j \leq {n + \tau_n}$. We have 
    \begin{equation}\label{eq:AcapBj}
    A\cap B_j = \shift^{1-j}\left(\shift^{j-(k+1)({n + \tau_n})}[u]\cap  B_1\right) = \shift^{1-j} F_k,
    \end{equation}
    with $F_0, \dots, F_k$ inductively defined by
    $$
        F_0 := \shift^{j-n-\tau_n}[u] 
        \qquad \text{and} \qquad 
        F_i := [u]^\complement \cap \shift^{-n-\tau_n} F_{i-1}
    $$
    for $i=1,\dotsc,k$. By~\assref{ud} (recall \eqref{eq:udeventseq}),
    $$\Q(F_i) =  \Q(F_{i-1})-\Q([u]\cap \shift^{-n-\tau_n} F_{i-1}) \geq \Q(F_{i-1})(1-C_n\Q_n(u)).$$
    Iterating this bound starting with $F_0$ yields
    \begin{equation}\label{eq:Qfklb}
    \Q(F_k)\geq  \Q_n(u)(1-C_n\Q_n(u))^k.
    \end{equation}
    Combining this with \eqref{eq:AcapBj} and using shift invariance establishes \eqref{eq:toshowBjj}, as claimed.
\end{proof}

The next proposition makes precise the ``cut-off'' phenomenon sketched in \eqref{eq:cutoffexp} and uses the notation
\begin{equation}\label{eq:defUns}
    U_{n}(s) := \left\{ u \in \cA^n : \Q_n(u) \leq {\Exp{-ns}} \right\}
\end{equation}        
for $s>0$.

\begin{proposition}
    \label{prop:main-Wn-bound}Suppose that~$\Q$ satisfies~\assref{ud} and~\assref{sld}, and let $\P\in \cP_{\textnormal{inv}}(\Omega)$ satisfy $\P_n \ll \Q_n$ for all~$n\in\nn$. 
        Then, for all $s>0$ and all $0 < \delta \leq \epsilon < \tfrac s2$, we have, for all large enough $n$,
        \begin{equation}\label{eq:toshowUB}
            \begin{split}
                  &\P\otimes\Q\{(x,y) : \tfrac 1n \ln W_n(x,y) \in B(s,\epsilon)\}  \\
                  &\splitspace\leq   \Exp{n(s + \epsilon)} \sum_{u \in U_{n}(s-\epsilon-\delta)}\Q_n(u)\P_n(u) + \exp(-\Exp{\frac {n\delta}2})
            \end{split}
        \end{equation}
        and 
    \begin{equation}\label{eq:toshowLB}
        \P\otimes\Q\{(x,y) : \tfrac 1n \ln W_n(x,y) \in B(s,\epsilon)\}\geq \Exp{ n(s  - \delta)}\sum_{u \in U_{n}(s+\delta)}\Q_n(u)\P_n(u).
    \end{equation}
\end{proposition}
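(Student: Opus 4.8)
The plan is to condition on the first block $u = x_1^n$, writing
\[
    \P\otimes\Q\{(x,y):\tfrac1n\ln W_n(x,y)\in B(s,\epsilon)\}=\sum_{u\in\cA^n}\P_n(u)\,\Q\bigl\{y:\tfrac1n\ln W_n(u,y)\in B(s,\epsilon)\bigr\},
\]
so that everything reduces to estimating, for a fixed word $u$, the probability that $W_n(u,\cdot)\in(\Exp{n(s-\epsilon)},\Exp{n(s+\epsilon)})$, and then splitting the sum over $u$ according to whether $\Q_n(u)$ lies below or above the relevant cut-off (i.e.\ whether $u$ belongs to the set $U_n(\cdot)$ of~\eqref{eq:defUns}). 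The geometric-type bounds of Lemmas~\ref{lem:UB-Q-Wn-a-y} and~\ref{lem:LB-Q-Wn-a-y} do the rest.

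\textbf{Upper bound~\eqref{eq:toshowUB}.} Split $\cA^n$ into $U_{n}(s-\epsilon-\delta)$ and its complement (note $s-\epsilon-\delta\ge s-2\epsilon>0$ by hypothesis). For $u\in U_{n}(s-\epsilon-\delta)$, use only that the event forces $W_n(u,y)<\Exp{n(s+\epsilon)}$, together with a union bound over the at most $\Exp{n(s+\epsilon)}$ possible values and $\Q\{y:W_n(u,y)=m\}\le\Q(\shift^{1-m}[u])=\Q_n(u)$ by shift invariance; summing against $\P_n(u)$ produces the first term of~\eqref{eq:toshowUB}. For $u$ in the complement, $\Q_n(u)>\Exp{-n(s-\epsilon-\delta)}$, and on the event we have $W_n(u,y)\ge m_0:=\lfloor\Exp{n(s-\epsilon)}\rfloor+1$; then~\eqref{eq:UB2only1bound} and $(1-p)^k\le e^{-pk}$ give $\Q\{y:W_n(u,y)\ge m_0\}\le\exp\bigl(-C_n^{-1}\Q_n(u)\lfloor\tfrac{m_0-1}{n+\tau_n}\rfloor\bigr)$. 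Since $\lfloor\tfrac{m_0-1}{n+\tau_n}\rfloor\ge\tfrac{\Exp{n(s-\epsilon)}}{2(n+\tau_n)}$ for $n$ large (using $s-\epsilon>s/2>0$), the exponent is at most $-\tfrac{C_n^{-1}\Exp{n\delta}}{2(n+\tau_n)}\le-\Exp{n\delta/2}$ because $\tfrac{C_n^{-1}}{2(n+\tau_n)}=\Exp{-o(n)}\ge\Exp{-n\delta/2}$ eventually; summing against $\P_n(u)$ with $\sum_u\P_n(u)=1$ gives the $\exp(-\Exp{n\delta/2})$ term. As highlighted in Remark~\ref{rem:telescop}, it is crucial to invoke~\eqref{eq:UB2only1bound} (hence~\assref{sld}) rather than~\assref{ud} here, since the latter would contribute an unaffordable factor $C_n^{k}$ with $k\gg n$.

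\textbf{Lower bound~\eqref{eq:toshowLB}.} Keep only the terms with $u\in U_{n}(s+\delta)$, for which $C_n\Q_n(u)\le C_n\Exp{-n(s+\delta)}\le 1$ for $n$ large, so Lemma~\ref{lem:LB-Q-Wn-a-y} applies. For integers $m$ with $\Exp{n(s-\epsilon)}<m\le\Exp{ns}$, put $K:=\lceil\tfrac{m-1}{n+\tau_n}\rceil\le\tfrac{2\Exp{ns}}{n+\tau_n}$ (for $n$ large); then $1-(1-p)^K\le pK$ yields $(n+\tau_n)\bigl(1-(1-C_n\Q_n(u))^{K}\bigr)\le(n+\tau_n)C_n\Q_n(u)K\le 2C_n\Exp{-n\delta}\le\tfrac12$ for $n$ large, so~\eqref{eq:Qwnrupper} gives $\Q\{y:W_n(u,y)=m\}\ge\tfrac{\Q_n(u)}{2(n+\tau_n)}$ uniformly over all such $m$ and all $u\in U_{n}(s+\delta)$. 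Every such $m$ satisfies $\tfrac1n\ln m\in B(s,\epsilon)$ (as $ns<n(s+\epsilon)$), and there are at least $\tfrac12\Exp{ns}$ of them for $n$ large, whence $\Q\{y:\tfrac1n\ln W_n(u,y)\in B(s,\epsilon)\}\ge\tfrac12\Exp{ns}\cdot\tfrac{\Q_n(u)}{2(n+\tau_n)}\ge\Exp{n(s-\delta)}\Q_n(u)$, the last inequality because $\Exp{n\delta}\ge 4(n+\tau_n)$ eventually. Summing against $\P_n(u)$ over $u\in U_{n}(s+\delta)$ gives~\eqref{eq:toshowLB}.

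\textbf{Main obstacle.} The delicate point is the lower bound: one must choose the sub-range of $m$ (kept away from $\Exp{n(s+\epsilon)}$, e.g.\ $m\le\Exp{ns}$) so that the geometric-approximation error in Lemma~\ref{lem:LB-Q-Wn-a-y}, namely $(n+\tau_n)\bigl(1-(1-C_n\Q_n(u))^{K}\bigr)$, stays below a fixed constant \emph{uniformly} in $u\in U_{n}(s+\delta)$ and in all admissible $n$; this works precisely because the gain $\Exp{-n\delta}$ built into the definition of $U_{n}(s+\delta)$ dominates the $\Exp{o(n)}$ growth of $C_n$ and of $n+\tau_n$ allowed by~\eqref{eq:tauncn}. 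The same interplay of a genuine exponential gain against subexponential error terms is what, on the upper side, keeps the tail bound superexponentially small.
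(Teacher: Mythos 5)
Your proof is correct and follows essentially the same strategy as the paper's: condition on the first block $u$, split the sum according to whether $u$ belongs to the cut-off set $U_n(\cdot)$, and control the two pieces via Lemmas~\ref{lem:UB-Q-Wn-a-y} and~\ref{lem:LB-Q-Wn-a-y}. The only differences are cosmetic — you apply the union bound only on the small-$\Q_n$ part and invoke~\eqref{eq:UB2only1bound} for the tail, whereas the paper first multiplies by the cardinality $\Exp{n(s+\epsilon)}$, uses the nonincreasingness of $t\mapsto\Q\{y:W_n(u,y)=t\}$, and then applies~\eqref{eq:UB2bounds}; for the lower bound you establish the Bernoulli estimate uniformly over $m$ instead of evaluating at a single $m_n$ and invoking monotonicity.
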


\begin{proof}
    The key idea in this proof is to show that, when $\frac 1n \ln m \in B(s,\epsilon)$, the $\lfloor\tfrac{m-1}{n + \tau_n}\rfloor$-th power of $(1 - C_n^{-1}\Q_n(u))$ in \eqref{eq:UB2bounds} vanishes superexponentially for all  $u \notin U_{n}(s-\epsilon-\delta)$ as $n\to\infty$, and that the $\lceil\tfrac{m-1}{n + \tau_n}\rceil$-th power of $(1-C_n\Q_n(u))$ in \eqref{eq:Qwnrupper} is very close to~$1$ for all $u\in  U_{n}(s+\delta)$ as $n\to\infty$.
    We fix $s$, $\epsilon$ and $\delta$ as in the statement and first note that
    \begin{equation}\label{eq:rewritecentral}
    \begin{split}
        & \P\otimes\Q\{(x,y) : \tfrac 1n \ln W_n(x,y) \in B(s,\epsilon)\} \\
        &\splitspace =  \sum_{u\in \cA^n}\P_n(u)\Q\{y: \tfrac 1n \ln W_n(u,y) \in B(s,\epsilon)\}.
    \end{split}
    \end{equation}

    \noindent{\it Proof of \eqref{eq:toshowUB}}. Notice that $|\{m\in \nn: \tfrac 1n  \ln m \in B(s,\epsilon)\}|\leq \Exp{n(s+\epsilon)}$. 
    Thus, using \eqref{eq:rewritecentral}, the nonincreasingness of $t\mapsto \Q\{y:W_n(u,y) = t\}$ (recall \eqref{eq:Qwnnondec}), and then Lemma~\ref{lem:UB-Q-Wn-a-y}, we find
    \begin{equation}\label{eq:rewritePotimesQ}
        \begin{split}
        &\P\otimes\Q\{(x,y) : \tfrac 1n \ln W_n(x,y) \in B(s,\epsilon)\} \\
                &\splitspace \leq \Exp{n(s+\epsilon)}\sum_{u\in \cA^n}\P_n(u)\Q\{y:  W_n(u,y) =  m_n\}\\
                &\splitspace \leq \Exp{n(s+\epsilon)}\sum_{u\in \cA^n}\P_n(u)\Q_n(u)(1 - C_n^{-1}\Q_n(u))^{k_n},
        \end{split}
        \end{equation}
        where  $m_n :=\lceil \Exp{n(s-\epsilon)}\rceil$ and $k_n := \big\lfloor\frac{m_n-1}{n + \tau_n}\big\rfloor$.
    We now split the above sum into a sum over $ U_{n}(s-\epsilon-\delta)$ and a sum over $ U_{n}^\complement(s-\epsilon-\delta)$. Clearly, 
    \begin{equation}
    \label{eq:toshowUB-i}
        \sum_{u\in  U_{n}(s-\epsilon-\delta)}\P_n(u)\Q_n(u)(1 - C_n^{-1}\Q_n(u))^{k_n}\leq \sum_{u \in U_{n}(s-\epsilon-\delta)}\P_n(u)\Q_n(u),
    \end{equation}
    while on the other hand, 
    \begin{equation}
    \label{eq:toshowUB-ii}
        \sum_{u\in  U_{n}^\complement(s-\epsilon-\delta)}\P_n(u)\Q_n(u)(1 - C_n^{-1}\Q_n(u))^{k_n}
        \leq (1 - C_n^{-1}\Exp{-n(s-\epsilon-\delta)})^{k_n}.
    \end{equation}
    But using the inequality $(1-x)^{k_n} \leq \Exp{-xk_n}$, we find
    \begin{align*}
        \Exp{n(s+\epsilon)}(1 - C_n^{-1}\Exp{-n(s-\epsilon-\delta)})^{k_n} & \leq \exp\left(n(s+\epsilon)-C_n^{-1}\Exp{-n(s-\epsilon-\delta)}k_n\right) \\
        &\leq \exp(-\Exp{\frac {n\delta}2})
    \end{align*}
    for $n$ large enough, thanks to the fact that $k_n \geq \Exp{n(s-\epsilon) 
    - o(n)}$ and $C_n = \Exp{o(n)}$ as $n\to\infty$. 
    Therefore, using the estimates~\eqref{eq:toshowUB-i} and \eqref{eq:toshowUB-ii} in~\eqref{eq:rewritePotimesQ} indeed yields \eqref{eq:toshowUB}.

    \medskip
    \noindent{\it Proof of \eqref{eq:toshowLB}}. This time, note that $|\{m\in \nn: \tfrac 1n  \ln m \in (s-\epsilon,s] \}|\geq \tfrac 12\Exp{ns}$ for $n$ large enough, and set $m_n :=\lfloor \Exp{ns}\rfloor$ and $k_n := \big\lceil\frac{m_n-1}{n + \tau_n}\big\rceil$. 
    Then, by  \eqref{eq:rewritecentral} and the nonincreasingness of $t\mapsto \Q\{y:W_n(u,y) = t\}$, we obtain
    \begin{align*}
        &\P\otimes\Q\{(x,y) : \tfrac 1n \ln W_n(x,y) \in B(s,\epsilon)\} \\
        &\splitspace \geq \P\otimes\Q\{(x,y) : \tfrac 1n \ln W_n(x,y) \in (s-\epsilon,s]\}\\
        &\splitspace \geq \frac{ \Exp{ns}}{2} \sum_{u\in \cA^n}\P_n(u)\Q\{y:  W_n(u,y) =  m_n\}\\
        &\splitspace \geq \frac{ \Exp{ns}}{2} \sum_{u\in U_{n}(s+\delta)}\P_n(u)\Q\{y:  W_n(u,y) =  m_n\}.
    \end{align*}
    Then, for $n$ large enough so that $C_n \Exp{-n(s+\delta)}\leq 1$, we can apply Lemma~\ref{lem:LB-Q-Wn-a-y} to every $u\in  U_{n}(s+\delta)$, and we obtain that
    \begin{equation}\label{eq:rewritePotimesQLB}
        \begin{split}
        &\P\otimes\Q\{(x,y) : \tfrac 1n \ln W_n(x,y) \in B(s,\epsilon)\} \\[2mm]
        &\splitspace \geq  \frac{  \Exp{ns}}{2}  \sum_{u\in U_{n}(s+\delta)}\P_n(u)\Q_n(u) \left(({n + \tau_n})^{-1} - \left(1 - (1-C_n\Q_n(u))^{k_n}\right)\right) \\
        &\splitspace \geq  \frac{  \Exp{ns}}{2}  \sum_{u\in U_{n}(s+\delta)}\P_n(u)\Q_n(u) \left(({n + \tau_n})^{-1} - \left(1 - (1-C_n\Exp{-n(s+\delta)})^{k_n}\right)\right). 
        \end{split}
        \end{equation}
    Notice that by Bernoulli's inequality, $(1-x)^{k_n}\geq 1-xk_n$ for all $x\leq 1$. As a consequence, for all $n$ large enough,
    \begin{equation*}
        (1-C_n\Exp{-n(s+\delta)})^{k_n}
            \geq 1-C_n\Exp{-n(s+\delta)}k_n
            \geq 1- \Exp{-\frac {n\delta}2},
    \end{equation*}
    where we have used that $k_n \leq \Exp{ns + o(n)}$ and $C_n = \Exp{o(n)}$.
    Substitution into \eqref{eq:rewritePotimesQLB} yields
    \begin{align*}
        &\P\otimes\Q\{(x,y) : \tfrac 1n \ln W_n(x,y) \in B(s,\epsilon)\} \\
        &\splitspace \geq \frac{\Exp{ns}}{2} \sum_{u\in U_{n}(s+\delta)}\P_n(u)\Q_n(u) \left(({n + \tau_n})^{-1} - \Exp{-\frac {n\delta}2}\right) ,
    \end{align*}
    from which the lower bound \eqref{eq:toshowLB} readily follows when $n$ is large enough.
\end{proof}

\subsection{From waiting times to return times}
\label{sec:Wn-to-Rn}

\begin{lemma}
\label{lem:UB-Wn-to-Rn}
    Assume $\P$ satisfies~\assref{ud}. Then, for all~$s > 0$ and all $0 < \epsilon < s$, we have, for all large enough $n$,
    \begin{equation*}
        \P\{x : \tfrac 1n \ln R_n(x) \in B(s,\epsilon)\} \leq C_n \sum_{u \in \cA^n} \P_n(u) \P\{ x : \tfrac 1n \ln W_n(u,x) \in B(s,2\epsilon)\}
    \end{equation*}
    and
    \begin{equation*}
        \P\{x : \tfrac 1n \ln V_n(x) \in B(s,\epsilon)\} \leq C_n \sum_{u \in \cA^n} \P_n(u) \P\{ x : \tfrac 1n \ln W_n(u,x) \in B(s,2\epsilon)\}.
    \end{equation*}
\end{lemma}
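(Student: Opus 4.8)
The plan is to compare, inside each cylinder $[u]$ with $u\in\cA^n$, the return time $R_n$ (resp.\ its nonoverlapping counterpart $V_n$) with the waiting time $y\mapsto W_n(u,y)$ evaluated at the shifted point $\shift^{n+\tau_n}x$; the extra shift by $\tau_n$ on top of the obvious shift by $n$ is precisely what creates the gap needed to invoke the event form~\eqref{eq:udeventseq} of~\assref{ud}. Since $\tau_n=o(n)$ and $s>\epsilon$, for all large enough $n$ we have $n+\tau_n\le\Exp{n(s-\epsilon)}$ and $\Exp{n(s-\epsilon)}-(n+\tau_n)\ge\Exp{n(s-2\epsilon)}$; I fix such an $n$. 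The heart of the argument is the pointwise identity
\[
    W_n\big(u,\shift^{n+\tau_n}x\big)=R_n(x)-(n+\tau_n-1)
    \qquad\text{whenever }x\in[u]\text{ and }\tfrac1n\ln R_n(x)\in B(s,\epsilon),
\]
together with the analogous identity $W_n(u,\shift^{n+\tau_n}x)=V_n(x)-\tau_n$ when $x\in[u]$ and $\tfrac1n\ln V_n(x)\in B(s,\epsilon)$.

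To establish the identity for $R_n$, set $k:=R_n(x)$, so that $x_{k+1}^{k+n}=u$ while $x_{m+1}^{m+n}\ne u$ for $1\le m<k$, and note $k\ge n+\tau_n$ by the choice of $n$. For $j\ge1$ one has $(\shift^{n+\tau_n}x)_j^{j+n-1}=x_{m+1}^{m+n}$ with $m=n+\tau_n+j-1$, which ranges over the integers $\ge n+\tau_n$ as $j$ ranges over $\nn$; since $k=\min\{m\ge1:x_{m+1}^{m+n}=u\}$ and $k\ge n+\tau_n$, the smallest $j$ with $(\shift^{n+\tau_n}x)_j^{j+n-1}=u$ is $j=k-(n+\tau_n-1)$, which is the claim. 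The identity for $V_n$ follows in exactly the same way after replacing the blocks $x_{m+1}^{m+n}$ by $x_{n+m}^{2n+m-1}$. Combining the identity with the two inequalities fixed above, $W_n(u,\shift^{n+\tau_n}x)$ is at least $\Exp{n(s-\epsilon)}-(n+\tau_n)\ge\Exp{n(s-2\epsilon)}$ and at most $R_n(x)<\Exp{n(s+\epsilon)}$ (resp.\ at most $V_n(x)<\Exp{n(s+\epsilon)}$), hence $\tfrac1n\ln W_n(u,\shift^{n+\tau_n}x)\in B(s,2\epsilon)$.

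Having this, I would conclude as follows. Since the cylinders $[u]$, $u\in\cA^n$, partition $\Omega$, the identity gives the inclusion
\[
    \big\{x:\tfrac1n\ln R_n(x)\in B(s,\epsilon)\big\}\subseteq\bigcup_{u\in\cA^n}\Big([u]\cap\shift^{-(n+\tau_n)}\big\{y:\tfrac1n\ln W_n(u,y)\in B(s,2\epsilon)\big\}\Big),
\]
and the same with $V_n$ in place of $R_n$. As $[u]\in\cF_n$ and $\{y:\tfrac1n\ln W_n(u,y)\in B(s,2\epsilon)\}\in\cF$ (by measurability of $W_n(u,\cdot\,)$), a union bound followed by~\eqref{eq:udeventseq} applied to each summand yields
\[
    \P\big\{x:\tfrac1n\ln R_n(x)\in B(s,\epsilon)\big\}\le C_n\sum_{u\in\cA^n}\P_n(u)\,\P\big\{y:\tfrac1n\ln W_n(u,y)\in B(s,2\epsilon)\big\},
\]
which is the desired bound, and identically for $V_n$. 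I expect the only genuinely delicate point to be the bookkeeping behind the pointwise identity — the upper bound on $W_n(u,\shift^{n+\tau_n}x)$ coming from an explicit witness index, the matching lower bound from minimality in the definitions of $R_n$ and $V_n$ — together with the attendant verification that the resulting value stays in $B(s,2\epsilon)$, which is what forces the loss from $\epsilon$ to $2\epsilon$ and the restriction to large $n$ through $\tau_n=o(n)$ and $s>\epsilon$.
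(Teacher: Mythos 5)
Your proposal is correct and follows essentially the same route as the paper: for $x\in[u]$ you identify $W_n(u,\shift^{n+\tau_n}x)$ with $R_n(x)-(n+\tau_n-1)$ (resp.\ $V_n(x)-\tau_n$), observe that for large $n$ the conditions $n+\tau_n\le\Exp{n(s-\epsilon)}$ and $\Exp{n(s-\epsilon)}-(n+\tau_n)\ge\Exp{n(s-2\epsilon)}$ force this shifted waiting time into $B(s,2\epsilon)$, and then apply the event form~\eqref{eq:udeventseq} of~\assref{ud} to the gap of length $\tau_n$ and sum over cylinders. The paper's proof does exactly this, only phrased via the intermediate identity $R_n(x)=W_n(u,\shift x)$ and $V_n(x)=W_n(u,\shift^n x)$ before shifting by the extra $\tau_n$.
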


\begin{proof}
    Fix~$s > \epsilon > 0$, fix $n\in \nn$ large enough so that
    \begin{equation}\label{eq:conditionsnlarge}
        \Exp{n(s-\epsilon)} > {n + \tau_n} - 1   \qquad \text{and} \qquad \Exp{n(s-2\epsilon)} \leq \Exp{n(s-\epsilon)} +1-n-\tau_n,
    \end{equation}
    and fix~$u \in \cA^n$.
    Then, for all $x\in [u]$ such that $\frac 1n \ln R_n(x) \in B(s,\epsilon)$, we have $R_n(x) = W_n(u,\shift x) \in (\Exp{n(s-\epsilon)},\Exp{n(s+\epsilon)})$. The first condition in \eqref{eq:conditionsnlarge} implies that $W_n(u,\shift^{n + \tau_n}x) = W_n(u, T x)+1-n-\tau_n$ for all such $x$, so that, also using the second condition in \eqref{eq:conditionsnlarge},
    \begin{equation*}
    W_n(u,\shift^{n + \tau_n}x)\in (\Exp{n(s-\epsilon)}+1-n-\tau_n,\Exp{n(s+\epsilon)}+1-n-\tau_n)\subset(\Exp{n(s-2\epsilon)},\Exp{n(s+2\epsilon)}).
    \end{equation*}
    As a consequence, by~\assref{ud},
    \begin{align*}
        \P([u] \cap \{x : \tfrac 1n \ln {R}_n(x) \in B(s,\epsilon)\})
        &\leq \P([u] \cap \{x : \tfrac 1n \ln W_{n}(u,\shift^{n+\tau_n} x) \in B(s,2\epsilon)\}) \\
            &\leq C_n\P_n(u) \P\{x : \tfrac 1n \ln W_{n}(u,x) \in B(s,2\epsilon)\}.
    \end{align*}
     Taking the sum over $u\in \cA^n$ proves the desired bound for~$R_n$. The proof of the bound for~$V_n$ is almost identical: for all $x\in [u]$ such that $\frac 1n \ln V_n(x) \in B(s,\epsilon)$, we have this time $W_n(u,\shift^n x)\in (\Exp{n(s-\epsilon)},\Exp{n(s+\epsilon)})$, so that $W_n(u,\shift^{n + \tau_n}x)\in (\Exp{n(s-\epsilon)}-\tau_n,\Exp{n(s+\epsilon)}-\tau_n)\subset(\Exp{n(s-2\epsilon)},\Exp{n(s+2\epsilon)})$. The remainder of the argument is unchanged.
\end{proof}

\begin{lemma}
\label{lem:LB-Wn-to-Rn}
    Assume $\P$ satisfies~\assref{sld}. Then, for all~$s > 0$ and all $0 < \epsilon < s$, we have, for all large enough $n$,
    \begin{equation*}
        \P\{x : \tfrac 1n \ln R_n(x) \in B(s,\epsilon)\} \geq \frac{C_n^{-1}}{n + \tau_n} \sum_{u \in \cA^n} \P_n(u) \P\{ x : \tfrac 1n \ln W_n(u,x) \in B(s,\tfrac 12 \epsilon)\}
    \end{equation*}
    and
    \begin{equation*}
        \P\{x : \tfrac 1n \ln V_n(x) \in B(s,\epsilon)\} \geq \frac{C_n^{-1}}{1+\tau_n} \sum_{u \in \cA^n} \P_n(u) \P\{ x : \tfrac 1n \ln W_n(u,x) \in B(s,\tfrac 12 \epsilon)\}.
    \end{equation*}
\end{lemma}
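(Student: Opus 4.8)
The idea is to transfer the waiting-time estimates of Section~\ref{sec:Wn-estimates} to $V_n$ and $R_n$ by means of the identities
\[
    V_n(x) = W_n(x_1^n,\shift^n x), \qquad R_n(x) = W_n(x_1^n,\shift x) \qquad (x\in[x_1^n]),
\]
using~\assref{sld} to prepend the block $x_1^n$ in front of a tail displaying the desired waiting-time behaviour. I would treat $V_n$ first, since $R_n$ only adds one layer of bookkeeping for overlaps.

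Decomposing over $u = x_1^n$ gives $\P\{x:\tfrac1n\ln V_n(x)\in B(s,\epsilon)\} = \sum_{u\in\cA^n}\P([u]\cap\shift^{-n}G_u^{(\epsilon)})$, where $G_u^{(\delta)}:=\{z:\tfrac1n\ln W_n(u,z)\in B(s,\delta)\}$. The elementary observation is that, for $n$ large (depending only on $s,\epsilon$), $z\in G_u^{(\epsilon/2)}$ forces $W_n(u,z)\ge\Exp{n(s-\epsilon/2)}>n+\tau_n$, so $u$ does not occur in $z$ within its first $n+\tau_n$ positions; hence $W_n(u,\shift^j z) = W_n(u,z)-j$ for all $0\le j\le n+\tau_n$, an additive shift invisible at the exponential scale. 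Consequently, if $x_1^n = u$, if $\shift^{n+\ell}x\in G_u^{(\epsilon/2)}$ for some $0\le\ell\le\tau_n$, \emph{and} if $u$ does not occur in $\shift^n x$ at any of the first $\ell$ positions, then $V_n(x) = \ell + W_n(u,\shift^{n+\ell}x)$ and $\tfrac1n\ln V_n(x)\in B(s,\epsilon)$. I would therefore apply the event form~\eqref{eq:sldeventseq} of~\assref{sld} with $A=[u]$ and $B$ an appropriate sub-event of $G_u^{(\epsilon/2)}$, and sum the resulting inequality over the $\tau_n+1$ admissible relative shifts $\ell\in\{0,\dots,\tau_n\}$; this averaging is precisely what produces the prefactor $\frac{C_n^{-1}}{1+\tau_n}$. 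For $R_n$ the same scheme applies with $\shift^n$ replaced by $\shift$: the shift relating $\shift x$ to $\shift^{n+\ell}x$ is now $n+\ell-1$, which forces an average over a window of $\sim n+\tau_n$ offsets and hence the weaker prefactor $\frac{C_n^{-1}}{n+\tau_n}$, while the analogue of the ``no early occurrence'' condition becomes the absence of overlaps, $R_n(x)\ge n$.

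The delicate point — and the reason the lower bound is harder than Lemma~\ref{lem:UB-Wn-to-Rn}, where the requirement $\tfrac1n\ln R_n\in B(s,\epsilon)$ with $s>0$ \emph{automatically} excludes short returns — is that inserting the decoupling word $\xi$ between $u$ and the tail can manufacture a spurious occurrence of $u$ at one of the first $\ell\le\tau_n$ positions of $\shift^n x$ (a ``seam'' occurrence), which would give $V_n(x)\le\tau_n$ and eject $x$ from $\{\tfrac1n\ln V_n\in B(s,\epsilon)\}$. Since only~\assref{sld} is available here (no a posteriori \emph{upper} bound on $\P\{V_n\le\tau_n\}$), such sequences must be excluded already in the event $B$ fed to~\assref{sld}. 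A seam occurrence at position $j\le\ell$ requires the tail to begin with the suffix $u_{\ell-j+2}^{n}$ of $u$, which has length $\ge n-\tau_n$; so it suffices to forbid the tail from beginning with a length-$\tau_n$ block equal to one of the $\le\tau_n$ relevant subwords $u_p^{p+\tau_n-1}$, $2\le p\le\tau_n+1$, of $u$. Since $|\cA|^{\tau_n}>\tau_n$, this can always be arranged by fixing the first $\tau_n$ letters of the tail to a ``safe'' word $\beta_u\in\cA^{\tau_n}$ (in the degenerate cases where no safe $\beta_u\in\supp\P_{\tau_n}$ exists, the right-hand side of the lemma already vanishes), at the cost of only the subexponential factor $\P_{\tau_n}(\beta_u)^{\pm1}=\Exp{o(n)}$. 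The clean inclusion $[u]\cap\shift^{-n-\ell}\big(\{z:z\text{ begins with }\beta_u\}\cap\shift^{-\tau_n}G_u^{(\epsilon/2)}\big)\subseteq\{x\in[u]:\tfrac1n\ln V_n(x)\in B(s,\epsilon)\}$ then holds, and a further application of~\assref{sld} detaches $\beta_u$ from the rest of the tail to recover a bound in terms of $\P_n(u)\,\P(G_u^{(\epsilon/2)})$.

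I expect the bulk of the effort to go into making this suffix-avoidance step uniform in $u$ and into checking that the accumulated $\Exp{o(n)}$ losses can be absorbed into $C_n$ (using Remark~\ref{rem:changetaun}), so that the chain of inequalities lands exactly on the stated constants; the remainder is a routine repackaging of Lemmas~\ref{lem:UB-Q-Wn-a-y}--\ref{lem:LB-Q-Wn-a-y}.
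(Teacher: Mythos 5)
Your high-level plan\,---\,decompose over $u=x_1^n$, use~\assref{sld} to glue $[u]$ to a tail in $G_u^{(\epsilon/2)}$, and observe that the extra shift by $\le n+\tau_n$ positions is invisible at exponential scale\,---\,is the right starting point, and you correctly identify the central difficulty: the decoupling word $\xi$ can create a spurious occurrence of $u$ near the seam, which would spoil the inclusion into $\{\tfrac1n\ln V_n\in B(s,\epsilon)\}$.

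Where your proposal diverges from the paper is in how this seam problem is handled, and here your route has a genuine gap. You propose to \emph{exclude} seam occurrences a priori by fixing a ``safe'' word $\beta_u\in\cA^{\tau_n}$ avoiding the $\le\tau_n$ dangerous prefixes, and to pay for this with a second application of~\assref{sld} plus absorbing $\Exp{o(n)}$ losses into $C_n$. But the claim that this only costs a factor $\P_{\tau_n}(\beta_u)^{\pm1}=\Exp{o(n)}$ is not justified when $\gamma_-=-\infty$ (which is allowed under~\assref{ud} and~\assref{sld}; see Example~\ref{ex:HMM2}): a safe $\beta_u$ may well exist in $\cA^{\tau_n}$ while every safe word in $\supp\P_{\tau_n}$ has probability decaying super\-exponentially in $\tau_n$, which need not be $\Exp{o(n)}$. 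Your parenthetical claim that ``no safe $\beta_u\in\supp\P_{\tau_n}$ implies the RHS vanishes'' is also unjustified: the non-existence of a safe word in the support constrains $\supp\P_{\tau_n}$ but has no obvious implication for $\P_n(u)$ or for $\P(G_u^{(\epsilon/2)})$. Finally, the extra application of~\assref{sld} to ``detach $\beta_u$'' means your chain of inequalities cannot land on the prefactors $\frac{C_n^{-1}}{n+\tau_n}$ and $\frac{C_n^{-1}}{1+\tau_n}$ as stated without further renormalisation of the constants.

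The paper's proof avoids the problem entirely by a cleaner device: rather than forbid seam occurrences, it \emph{shifts to the last one}. Given $x\in A_u:=[u]\cap\shift^{-n-\ell}G_u^{(\epsilon/2)}$, set $I:=\{i:x_i^{i+n-1}=u\}$ and $j:=\max\{i\in I: i\le n+\ell\}$ (for $R_n$; for $V_n$ take $j:=\max\{i\in I:i\le 1+\ell\}$). Because $I$ contains $1$ and $n+\ell+m$ with $m:=W_n(u,\shift^{n+\ell}x)$, and contains nothing in between after position $j$, one gets exactly $R_n(\shift^{j-1}x)=n+\ell+m-j$ (resp.\ $V_n(\shift^{j-1}x)=1+\ell+m-j$). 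Seam occurrences do no harm\,---\,they only change which $j$ is picked\,---\,and the sets $A_u$ are disjoint, so $\sum_u\P(A_u)=\P(\bigcup_u A_u)\le(n+\tau_n)\,\P\{\tfrac1n\ln R_n\in B(s,\epsilon)\}$ follows from a union bound over the $n+\tau_n$ (resp.\ $1+\tau_n$) possible values of $j$ together with shift invariance. This is where the prefactor comes from, not from averaging over $\ell$ as your sketch suggests; each $u$ uses a single $\ell$ from~\assref{sld}. I recommend learning this ``last occurrence'' trick: it eliminates the entire safe-word machinery.
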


\begin{proof}
    We first prove the statement concerning $R_n$. Fix~$s > \epsilon > 0$, fix $n$ large enough so that 
    \begin{equation}\label{eq:condlargeneps}
    \Exp{n(s + \frac 12 \epsilon)} + {n + \tau_n}-1 < \Exp{n(s+\epsilon)},
    \end{equation}
    and fix $u\in \cA^n$. By~\assref{sld}, there exists $0\leq \ell \leq \tau_n$ such that the set $$A_u := [u]\cap \shift^{-n-\ell} \{x:\tfrac 1n \ln W_n(u,x)\in B(s,\tfrac 12\epsilon)\}$$
    satisfies
    \begin{equation}\label{eq:ucaptlbl}
    \P(A_u)\geq  C_n^{-1}\P_n(u) \P\{x:\tfrac 1n \ln W_n(u,x)\in B(s,\tfrac 12\epsilon)\}.
    \end{equation}
    We now claim that
    \begin{equation}\label{eq:ucaptlnclaim}
    A_u\subseteq \bigcup_{j=1}^{n+\tau_n} \shift^{1-j}\{x:\tfrac 1n \ln R_n(x)\in B(s,\epsilon)\}.
    \end{equation}
    To prove \eqref{eq:ucaptlnclaim}, take an arbitrary $x\in A_u$, and let  $m:=W_n(u,\shift^{n+\ell}x)$, which, by the definition of $A_u$, satisfies  $\frac 1n \ln m \in B(s,\tfrac 12 \epsilon)$. Then, the set $I := \{i\in\nn : x_i^{i+n-1} = u\}$ contains $1$ and $n+\ell+m$, but excludes~$\{n+\ell+1, n+\ell+2, \dotsc, n+\ell+m-1\}$. Hence, with $j := \max\{ i\in I : i \leq n+\ell\}$ we find $R_n(\shift^{j-1} x) =  n + \ell +m -j \in [m, m+n+\tau_n-1]$, so by \eqref{eq:condlargeneps} we obtain \eqref{eq:ucaptlnclaim}.

    By taking the union over $u\in \cA^n$ in the left-hand side of \eqref{eq:ucaptlnclaim}, and using shift invariance to bound the probability of the right-hand side, we obtain
    \begin{equation}\label{eq:boundpunion}
    \sum_{u\in \cA^n}\P(A_u) = \P\left(\bigcup_{u\in \cA^n} A_u\right)  \leq (n+\tau_n)\P\{x:\tfrac 1n \ln R_n(x)\in B(s,\epsilon)\}.
    \end{equation}
    In view of \eqref{eq:ucaptlbl}, we have completed the proof of the statement about $R_n$.

    To adapt the proof for $V_n$, it suffices to replace the definition of $j$ with $j := \max\{ i\in I : i \leq 1+\ell \}$; then $V_n(\shift^{j-1}x) = 1+\ell+m-j\in [m,m+\tau_n]$ and the same arguments apply, with the factor $(n+\tau_n)$ replaced by $(1+\tau_n)$ in \eqref{eq:boundpunion}. 
\end{proof}

\begin{lemma}
    Assume $\P$ satisfies~\assref{sld}. Then, for every~$n\in\nn$, the random variables~$R_n$ and~$V_n$ are~$\P$-almost surely finite. 
\end{lemma}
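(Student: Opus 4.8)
The plan is to reduce everything to the almost sure finiteness of $W_n(u,\cdot\,)$ for a \emph{fixed} word $u$, which was already recorded in the discussion following Lemma~\ref{lem:UB-Q-Wn-a-y}. The starting point is the pair of pointwise identities
\[
    R_n(x) = W_n(x_1^n,\shift x)
    \qquad\text{and}\qquad
    V_n(x) = W_n(x_1^n,\shift^n x),
\]
valid for every $x\in\Omega$ and immediate from the definitions (they were already used implicitly in Lemmas~\ref{lem:UB-Wn-to-Rn} and~\ref{lem:LB-Wn-to-Rn}): indeed $R_n(x) = \inf\{k\in\nn : x_{k+1}^{k+n} = x_1^n\} = \inf\{k\in\nn : (\shift x)_k^{k+n-1} = x_1^n\}$, and likewise $V_n(x) = \inf\{k\in\nn : x_{n+k}^{2n+k-1} = x_1^n\} = \inf\{k\in\nn : (\shift^n x)_k^{k+n-1} = x_1^n\}$.

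First I would decompose $\{x : R_n(x) = \infty\}$ according to the first $n$ letters of $x$, writing it as the disjoint union over $u\in\cA^n$ of the sets $[u]\cap\{x : W_n(u,\shift x) = \infty\}$; it then suffices to show each of these has $\P$-measure zero. If $u\notin\supp\P_n$ this is trivial since $\P([u]) = 0$. If $u\in\supp\P_n$, then $\P_n(u) > 0$, so Lemma~\ref{lem:UB-Q-Wn-a-y} applied with $\Q$ replaced by $\P$ (which satisfies~\assref{sld} by hypothesis), together with the bound~\eqref{eq:UB2only1bound}, shows that $W_n(u,\cdot\,)$ is $\P$-almost surely finite; by shift invariance of $\P$ this gives $\P\{x : W_n(u,\shift x) = \infty\} = \P\bigl(\shift^{-1}\{y : W_n(u,y) = \infty\}\bigr) = \P\{y : W_n(u,y) = \infty\} = 0$, and a fortiori $\P\bigl([u]\cap\{x : W_n(u,\shift x) = \infty\}\bigr) = 0$. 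Summing over $u\in\cA^n$ yields $\P\{x : R_n(x) = \infty\} = 0$.

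Then I would run the identical argument for $V_n$, simply replacing $\shift$ by $\shift^n$ throughout; shift invariance of $\P$ is used in exactly the same way. I do not expect any genuine obstacle here. The one point worth stressing is that the two arguments of $W_n$ in the identity $R_n(x) = W_n(x_1^n,\shift x)$ are \emph{not} independent, so the joint statement ``$W_n$ is $(\P\otimes\P)$-almost surely finite'' is not quite the right input; the relevant ingredient is the per-word finiteness of $W_n(u,\cdot\,)$ recorded after Lemma~\ref{lem:UB-Q-Wn-a-y}, which already holds under~\assref{sld} alone.
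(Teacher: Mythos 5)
Your proof is correct and follows essentially the same route as the paper's: both decompose $R_n(x)$ and $V_n(x)$ as $W_n(x_1^n,\shift x)$ and $W_n(x_1^n,\shift^n x)$ respectively, invoke the bound~\eqref{eq:UB2only1bound} (with $\Q = \P$) to get per-word almost sure finiteness of $W_n(u,\cdot\,)$ when $\P_n(u) > 0$, and then use shift invariance to transfer this to $W_n(u,\cdot\,)\circ\shift^k$. Your closing remark that one needs per-word finiteness rather than joint $(\P\otimes\P)$-almost sure finiteness of $W_n$ is a correct and worthwhile observation, though the paper leaves it implicit.
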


\begin{proof}
    In view of~\eqref{eq:UB2only1bound}, the random variable $W_n(u,\cdot\,)$ is $\P$-almost surely finite for every fixed $u\in \cA^n$ with $\P_n(u)>0$. By shift invariance, so is $W_n(u,\cdot\,) \circ \shift^k$ for each $k$. In view of the expressions
    \begin{align*}
        R_n(x) &=  \sum_{u\in \cA^n}1_{[u]}(x)W_n(u, \shift x), \\ 
        V_n(x) &=  \sum_{u\in \cA^n}1_{[u]}(x)W_n(u, \shift^{n} x),
    \end{align*}
    the conclusion is immediate.
\end{proof}

\begin{remark}
An alternative way of showing that $R_n$ and $V_n$ are almost surely finite is to use the Poincar\'e recurrence theorem, see e.g.\ Theorem~1.4 in \cite{Wal}.
\end{remark}

\section{Weak LDPs and Ruelle--Lanford functions}\label{sec:RLfuncts}

We now briefly recall some terminology from the theory of large deviations, limiting ourselves to sequences of real-valued random variables. See for example \cite{DS1989,Ell,DZ} for proper introductions to the field.
Let $(Z_n)_{n\in\nn}$ be a sequence of (almost surely finite) real-valued random variables on a probability space $(\Omega_*,\P_*)$. The cases of interest will be
\begin{itemize}
    \item $\Omega_* = \Omega\times \Omega$, $\P_* = \P\otimes \Q$ and $Z_n = \frac 1n \ln W_n$ for Theorem~\ref{thm:mainthmA};
    \item $\Omega_* = \Omega$, $\P_* =\P$, with $Z_n = -\frac 1n \ln \Q_n$, $Z_n = \frac 1n \ln V_n$ and $Z_n = \frac 1n \ln R_n$ for, respectively, Theorems~\ref{thm:CJPS}, \ref{thm:mainthmB} and \ref{thm:mainthmC}.
\end{itemize}

The sequence $(Z_n)_{n\in\nn}$ is said to satisfy the \emph{large deviation principle} (LDP) if there exists a lower semicontinuous function~$I : \rr \to [0,\infty]$
such that 
\begin{equation}\label{eq:LD-LB}
    -\inf_{s\in O} I(s)\leq \liminf_{n\to\infty} \frac 1n\ln \P_*\{ x: Z_n(x) \in O \} 
\end{equation}
for every open set $O\subseteq \rr$ and 
\begin{equation}\label{eq:LD-UB}
    \limsup_{n\to\infty} \frac 1n\ln \P_*\{ x: Z_n(x) \in \Gamma \} 
        \leq -\inf_{{s} \in \Gamma} I({s})
\end{equation}
for every closed set $\Gamma \subseteq \rr$.
The bounds~\eqref{eq:LD-LB} and~\eqref{eq:LD-UB} are respectively called the \emph{large-deviation lower bound} and the \emph{large-deviation upper bound}, and the function~$I$\,---\,which is unique when it exists\,---\,is called the \emph{rate function}. Following standard terminology, we say that $I$ is a \emph{good} rate function if it properly diverges as $s\to\pm\infty$. We also recall that the large-deviation upper bound applied to the set $\rr$ implies that $\inf_{s\in \rr}I(s) = 0$. 
Ubiquitous in the theory of large deviations is the question of whether the rate function is convex and can be expressed as the Legendre--Fenchel transform of the corresponding pressure. A detailed analysis of these considerations for the random variables of interest is postponed to~\secref{sec:full}. 

Our analysis will require additional vocabulary which is discussed e.g.\ in~\cite[\S{1.2}]{DZ}.  The sequence  $(Z_n)_{n\in\nn}$ is said to satisfy the \emph{weak large deviation principle} if~\eqref{eq:LD-LB} holds for all open sets $O\subseteq \rr$, and~\eqref{eq:LD-UB} holds for all \emph{compact} sets $\Gamma \Subset \rr$. We shall sometimes refer to the standard LDP as the {\em full} LDP when we need to emphasize the contrast to the weak LDP used as a stepping stone towards the full LDP. The following notion will play a role in doing so: the sequence $(Z_n)_{n\in\nn}$ is said to be \emph{exponentially tight} if, for every~$\beta \in \rr$, there exists~$M > 0$ such that $\P_*\{ x : Z_n(x) \notin [-M,M]\} \leq \Exp{-\beta n}$ for all~$n$ large enough. 
To be more precise, we will appeal to the two following facts for real-valued sequences. First, if the weak LDP holds and the sequence is exponentially tight, then the full LDP holds with a good rate function. Second, if the full LDP holds with a good rate function, then the sequence is exponentially tight. While all our LDPs are full, and while exponential tightness does play a role in our analysis, we emphasize that the sequence $(\tfrac 1n \ln W_n)_{n\in\nn}$ need not be exponentially tight, as illustrated in \secref{sec:lackexpotight}.

As mentioned, we will first prove the weak LDP. We will do so using \emph{Ruelle--Lanford} (RL) \emph{functions}. We introduce the \emph{lower RL function} $ \underline{I} : \rr \to [0,\infty]$ defined by\footnote{The name ``{RL function}'' was first used in \cite{LPS94,LPS95}. The method of RL functions is often used in conjunction with subadditive arguments; this is in particular the case of the derivation of Theorem~\ref{thm:CJPS} in \cite{CJPS19}. In the present paper, once Theorem~\ref{thm:CJPS} is taken for granted, the proof of our results is not of the subadditive kind.} 
\begin{align*}
    \underline{I}(s) := -\lim_{\epsilon\to 0} \liminf_{n\to\infty} \frac{1}{n}\ln\P_*\{x : Z_n(x) \in B(s,\epsilon)\},
\end{align*}
and the \emph{upper RL function} $\overline{I}$ : $\rr \to [0,\infty]$ defined by 
\begin{align*}
    \overline{I}(s) := - \lim_{\epsilon\to 0} \limsup_{n\to\infty} \frac{1}{n}\ln\P_*\{x : Z_n(x) \in B(s,\epsilon)\}.
\end{align*}
It follows from their definition that $\overline I$ and $\underline I$ are lower semicontinuous, and that $\overline I \leq \underline I$. Moreover, the weak LDP holds if and only if we have the equality
\begin{equation}
\label{eq:RL-eq}
    \underline{I}(s) = \overline{I}(s) 
\end{equation}
for every~$s\in\rr$; see {e.g.}~\cite[\S{4.1.2}]{DZ} or~\cite[\S{3.2}]{CJPS19}. The common value in~\eqref{eq:RL-eq} must then coincide with~$I(s)$. 

The core of this section is devoted to proving the weak LDP for the sequences of interest via the validity of~\eqref{eq:RL-eq}. To be more precise, for each sequence, both RL functions are shown to be equal to the proposed rate function, as detailed in Table~\ref{tab:funct-and-rates}. We consider separately the case $s>0$ in \secref{sec:posvalues} and the case $s=0$ in \secref{sec:atorigin}, noting that the case $s < 0$ is trivial by mere nonnegativity of the random variables under study. We denote by $\uiw$, $\uiv$ and~$\uir$ (resp.\ $\oiw$, $\oiv$ and~$\oir$) the lower (resp.\ upper) RL functions associated with our sequences. 

\begin{table}[h]
    \begin{center}
        \small
        \begin{tabular}{c c c c c c}
            \toprule
            Random & Probability & Assumptions & Proposed & Eq.~\eqref{eq:RL-eq} & Eq.~\eqref{eq:RL-eq}  \\
            variable & space && rate function & for $s>0$ & for $s=0$ \\
            \midrule 
            $\tfrac 1n \ln W_n$ &
                $(\Omega^2,\P \otimes \Q)$  & admissibility & $\iw$  \eqref{eq:defiwexplicit} & Prop.~\ref{prop:RL-Wn} & Prop.~\ref{prop:LDPwns0} \\
            $\tfrac 1n \ln V_n$ &
                $(\Omega,\P)$ & \assref{sld}, \assref{ud}&  $\iv$  \eqref{eq:defivexplicit} & Prop.~\ref{prop:irandivspositive} & Prop.~\ref{prop:LDPtildeRns0} \\
            $\tfrac 1n \ln R_n$ &
            $(\Omega,\P)$ & \assref{sld}, \assref{ud}, \assref{pa}&  $\ir$ \eqref{eq:defirexplicit} & Prop.~\ref{prop:irandivspositive} & Prop.~\ref{prop:RNat0} \\
            \bottomrule
        \end{tabular}
    \end{center}
   \caption{Assumptions, proposed rate function and references to where the key equality~\eqref{eq:RL-eq} is established for the three weak LDPs to be proved.} 
    \label{tab:funct-and-rates}
\end{table}

\subsection{At positive values}\label{sec:posvalues}

Our first goal is to prove equality of the upper and lower Ruelle--Lanford functions at positive values of~$s$. 
We start with the RL functions $\uiw$ and $\oiw$ of the sequence $(\tfrac 1n \ln W_n)_{n\in\nn}$. Most of the work to show that $\uiw(s)=\oiw(s)$ when $s>0$ was done in \secref{sec:Wn-estimates}: in view of Proposition~\ref{prop:main-Wn-bound}, it only remains to estimate the quantity
\begin{equation}\label{eq:defjn}
    J_n(s) :=  \sum_{u\in U_n(s)}  \Q_n(u)\P_n(u) = \int_{[s,\infty)}\Exp{-rn} \dd\mu_n(r),
\end{equation}
where $\mu_n$ is the distribution of $-\frac 1n \ln \Q_n$ with respect to $\P$, and where $U_n(s)$ was defined in \eqref{eq:defUns}.
The integral in \eqref{eq:defjn} allows to express the limiting behavior of~$J_n$ in terms of the rate function~$\ilpq$ of Theorem~\ref{thm:CJPS}, as shown by the following straightforward variation of Varadhan's lemma.

\begin{lemma}
\label{lem:modVaradhan}
   Assume $(\P, \Q)$ is admissible. Then, for all $s>0$,
       \begin{align*}
           \sup_{r>s}(-r-\ilpq(r)) \leq   \liminf_{n\to\infty} \frac 1n \ln  J_n(s) \leq  \limsup_{n\to\infty}\frac 1n \ln  J_n(s) \leq \sup_{r\geq s}(-r-\ilpq(r)).
       \end{align*}
\end{lemma}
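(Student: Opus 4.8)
The plan is to prove the two nontrivial inequalities of Lemma~\ref{lem:modVaradhan} directly from the definition
\[
J_n(s) = \int_{[s,\infty)}\Exp{-rn}\dd\mu_n(r),
\]
using only the LDP for $(-\tfrac1n\ln\Q_n)_{n\in\nn}$ provided by Theorem~\ref{thm:CJPS}.i. This is a Varadhan-type argument, but since the integrand $r\mapsto\Exp{-rn}$ is itself $n$-dependent and unbounded, I cannot cite Varadhan's lemma verbatim; instead I would carry out the standard upper/lower bound splitting by hand. Throughout, I write $I:=\ilpq$ for brevity and recall that $I$ is a (convex, lower semicontinuous) rate function with $I(r)=\infty$ for $r<0$, so the suprema $\sup_{r\ge s}(-r-I(r))$ and $\sup_{r>s}(-r-I(r))$ are well defined in $[-\infty,\infty)$ (note $-r-I(r)\le -s<0$ on the relevant range when $s>0$).

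\medskip
\noindent\textbf{Upper bound.} Fix $s>0$ and let $\Gamma_\epsilon:=[s-\epsilon,\infty)$ for small $\epsilon\in(0,s)$; this is closed. On $\Gamma_\epsilon$ we crudely bound $\Exp{-rn}\le\Exp{-(s-\epsilon)n}$, giving $J_n(s)\le\Exp{-(s-\epsilon)n}\,\mu_n(\Gamma_\epsilon)$. Taking $\tfrac1n\ln(\cdot)$, using the large-deviation upper bound~\eqref{eq:LD-UB} applied to $\Gamma_\epsilon$, and letting $\epsilon\to0$ would give $\limsup_n\tfrac1n\ln J_n(s)\le -s-\inf_{r\ge s}I(r)$, which is weaker than what we want. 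To get the stated sharper bound I would instead discretize: for fixed $R>s$ and $N\in\nn$, partition $[s,R]$ into subintervals $[s+\tfrac{(k-1)(R-s)}N,\,s+\tfrac{k(R-s)}N)$ of length $(R-s)/N$, bound $\Exp{-rn}$ on the $k$-th piece by $\Exp{-(s+(k-1)(R-s)/N)n}$, apply~\eqref{eq:LD-UB} to each (closed) piece, and also handle the tail $[R,\infty)$ via $\Exp{-rn}\le\Exp{-Rn}$ together with $\mu_n([R,\infty))\le1$. Since a finite maximum of exponential rates is governed by the largest rate, this yields
\[
\limsup_{n\to\infty}\frac1n\ln J_n(s)\le\max\Bigl\{\max_{1\le k\le N}\bigl(-r_{k-1}-\inf_{r\in[r_{k-1},r_k]}I(r)\bigr),\,-R\Bigr\}
\]
with $r_k:=s+k(R-s)/N$. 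Letting $N\to\infty$ and using lower semicontinuity of $I$ (so the infimum over each shrinking interval, evaluated at its left endpoint, converges to $-r-I(r)$) and then $R\to\infty$ (the $-R$ term disappears) gives $\limsup_n\tfrac1n\ln J_n(s)\le\sup_{r\ge s}(-r-I(r))$, as required. A slightly cleaner alternative is to note $-r-I(r)$ is upper semicontinuous and use that any closed set can be covered by finitely many small balls on which both $r$ and $I(r)$ are nearly controlled; either route works.

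\medskip
\noindent\textbf{Lower bound.} For the matching lower bound, fix $r>s$ and $\eta>0$ and restrict the integral to the ball $B(r,\eta)\subseteq(s,\infty)$ (choosing $\eta<r-s$):
\[
J_n(s)\ge\int_{B(r,\eta)}\Exp{-\rho n}\dd\mu_n(\rho)\ge\Exp{-(r+\eta)n}\,\mu_n\bigl(B(r,\eta)\bigr).
\]
Applying the large-deviation lower bound~\eqref{eq:LD-LB} to the open set $B(r,\eta)$ gives $\liminf_n\tfrac1n\ln\mu_n(B(r,\eta))\ge-\inf_{\rho\in B(r,\eta)}I(\rho)\ge-I(r)$, hence $\liminf_n\tfrac1n\ln J_n(s)\ge-r-\eta-I(r)$. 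Letting $\eta\to0$ and then taking the supremum over $r>s$ yields $\liminf_n\tfrac1n\ln J_n(s)\ge\sup_{r>s}(-r-I(r))$. Finally the trivial middle inequality $\liminf\le\limsup$ completes the chain. I expect no serious obstacle here; the only point requiring a little care is the upper bound, where the naive single-interval estimate is lossy and one genuinely needs the discretization (or a covering argument) together with lower semicontinuity of $I$ to recover the sharp constant $\sup_{r\ge s}(-r-I(r))$ rather than $-s-\inf_{r\ge s}I(r)$. One should also keep in mind that if $\supp\P_n$ misses large values of $-\tfrac1n\ln\Q_n$ the suprema may be $-\infty$ (e.g.\ $U_n(s)$ eventually empty), in which case $J_n(s)=0$ for large $n$ and all three quantities are $-\infty$, consistent with the statement under the conventions $\ln0=-\infty$.
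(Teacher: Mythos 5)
Your lower bound is exactly the paper's: localize $J_n(s)$ to a ball $B(r,\eta)\subset(s,\infty)$, bound $\Exp{-\rho n}$ from below, apply the large-deviation lower bound, and send $\eta\to 0$. The upper bound takes a genuinely different route. The paper chooses the cutoff $\beta:=\inf_{r\ge s}(r+\ilpq(r))$ up front --- so that $-\beta$ is precisely the target $\sup_{r\ge s}(-r-\ilpq(r))$ --- which makes the tail $\int_{(\beta,\infty)}\Exp{-rn}\dd\mu_n(r)\le\Exp{-\beta n}$ trivial and reduces matters to the compact interval $[s,\beta]$, where a standard covering argument (Lemma~4.3.6 of \cite{DZ} or Proposition~\ref{prop:abstractZn}.iv) is invoked. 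You instead fix an arbitrary cutoff $R$, discretize $[s,R]$ explicitly, bound the tail crudely by $\Exp{-Rn}$, and recover the sharp constant in the double limit $N\to\infty$ then $R\to\infty$; the discretization is the same covering device the paper defers to, so the underlying mechanism is identical. The paper's one-shot cutoff is slicker (no second limit), but it tacitly assumes $\beta<\infty$ (if $\ilpq\equiv\infty$ on $[s,\infty)$ then $\beta=\infty$; this corner case is easy to handle separately but is not covered as written), whereas your arbitrary $R$ handles it transparently. One small remark: the appeal to lower semicontinuity of $\ilpq$ in your $N\to\infty$ step is not needed for the discretization route --- the elementary inequality $-r_{k-1}-\inf_{[r_{k-1},r_k]}\ilpq\le(R-s)/N+\sup_{r\in[s,R]}(-r-\ilpq(r))$, which uses only that $r_{k-1}\ge r-(R-s)/N$ on $[r_{k-1},r_k]$, already gives the refinement limit uniformly in $k$; lower semicontinuity is needed only for the covering alternative you mention.
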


\begin{proof}
       Let us fix $s > 0$. For the lower bound, note that for every choice of $r>s$ and $0 < \epsilon < r-s$,
       \begin{align*}
          \liminf_{n\to\infty}\frac 1n \ln J_n(s) &\geq \liminf_{n\to\infty}\frac 1n\ln \int_{B(r,\epsilon)}\Exp{-r'n}\dd\mu_n(r') \\
          &\geq  -r-\epsilon + \liminf_{n\to\infty}\frac 1n \ln  \mu_n(B(r,\epsilon)) \\[2mm]
          &\geq -r -\epsilon - \ilpq(r),
       \end{align*}
       where we have used the large-deviation lower bound of Theorem~\ref{thm:CJPS}.
       For the upper bound, let $\beta := \inf_{r\geq s}(r+\ilpq(r)) \in [s, \infty)$. Then,
       \begin{align*}
         J_n(s)  &=  \int_{[s, \beta]}\Exp{-rn}\dd\mu_n(r) + \int_{(\beta, \infty)}\Exp{-rn}\dd\mu_n(r)\leq    \int_{[s, \beta]}\Exp{-rn}\dd\mu_n(r) + \Exp{-\beta n}.
       \end{align*}
       As a consequence, it suffices to show that 
       \begin{equation}\label{eq:toshowminusbeta}
           \limsup_{n\to\infty} \frac 1n\int_{[s, \beta]}\Exp{-rn}\dd\mu_n(r) \leq -\beta,
       \end{equation}
       which, since $[s,\beta]$ is compact, follows from a standard covering argument, see e.g.\ Lemma~4.3.6 in~\cite{DZ} or the proof of Proposition~\ref{prop:abstractZn}.iv below.
\end{proof}

\begin{proposition}
\label{prop:RL-Wn}
    Assume the pair $(\P, \Q)$ is admissible. Then, for all $s>0$, the RL functions for $(\frac 1n \ln W_n)_{n\in \nn}$ with respect to $\P\otimes \Q$ satisfy
    \begin{equation}\label{sq:toshowRLwn}
        \uiw(s) = \oiw(s) =  -s+\inf_{r\geq s}(r+\ilpq(r)).
    \end{equation}
\end{proposition}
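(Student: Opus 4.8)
\emph{Overall approach.} The plan is to feed the two-sided estimate of Proposition~\ref{prop:main-Wn-bound} into the Varadhan-type Lemma~\ref{lem:modVaradhan} and read off both Ruelle--Lanford functions. Write $p_n(\epsilon):=\P\otimes\Q\{(x,y):\tfrac1n\ln W_n(x,y)\in B(s,\epsilon)\}$ and recall $J_n(t)=\sum_{u\in U_n(t)}\Q_n(u)\P_n(u)$ from~\eqref{eq:defjn}, so that $-\oiw(s)=\lim_{\epsilon\to0}\limsup_n\tfrac1n\ln p_n(\epsilon)$ and $-\uiw(s)=\lim_{\epsilon\to0}\liminf_n\tfrac1n\ln p_n(\epsilon)$, both limits being monotone in $\epsilon$ (hence equal to the infimum over $\epsilon>0$). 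Since $\oiw\le\uiw$ always, it suffices to prove $\oiw(s)\ge\iw(s)$ and $\uiw(s)\le\iw(s)$ for $s>0$, where by~\eqref{eq:defiwexplicit} we have $\iw(s)=-s+\inf_{r\ge s}(r+\ilpq(r))$.

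\emph{The bound $\oiw(s)\ge\iw(s)$.} For $0<\epsilon<s/2$, I would take $\delta=\epsilon$ in~\eqref{eq:toshowUB} and pass to $\tfrac1n\ln$; since $\tfrac1n\ln\exp(-\Exp{n\epsilon/2})\to-\infty$, the additive error term in~\eqref{eq:toshowUB} is negligible and $\limsup_n\tfrac1n\ln p_n(\epsilon)\le(s+\epsilon)+\limsup_n\tfrac1n\ln J_n(s-2\epsilon)$. As $s-2\epsilon>0$, Lemma~\ref{lem:modVaradhan} bounds the last term by $\sup_{r\ge s-2\epsilon}(-r-\ilpq(r))$. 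Letting $\epsilon\to0$ and using that $r\mapsto-r-\ilpq(r)$ is upper semicontinuous (because $\ilpq$ is lower semicontinuous by Theorem~\ref{thm:CJPS}), which gives $\lim_{t\uparrow s}\sup_{r\ge t}(-r-\ilpq(r))=\sup_{r\ge s}(-r-\ilpq(r))$, we obtain $-\oiw(s)\le s+\sup_{r\ge s}(-r-\ilpq(r))=-\iw(s)$. This step is valid for every $s>0$.

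\emph{The bound $\uiw(s)\le\iw(s)$.} Set $s_{\max}:=\sup\{r:\ilpq(r)<\infty\}$. If $0<s<s_{\max}$: for $0<\delta<s/2$, \eqref{eq:toshowLB} with $\epsilon=\delta$ gives $p_n(\delta)\ge\Exp{n(s-\delta)}J_n(s+\delta)$, so $\liminf_n\tfrac1n\ln p_n(\delta)\ge(s-\delta)+\sup_{r>s+\delta}(-r-\ilpq(r))$ by Lemma~\ref{lem:modVaradhan}. Letting $\delta\to0$ yields $-\uiw(s)\ge s+\sup_{r>s}(-r-\ilpq(r))$, and $\sup_{r>s}(-r-\ilpq(r))=\sup_{r\ge s}(-r-\ilpq(r))$ because the convex lower semicontinuous function $r\mapsto r+\ilpq(r)$, having a point of finiteness to the right of $s$, is right-continuous at $s$; hence $\uiw(s)\le\iw(s)$. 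If $s\ge s_{\max}$ with $\iw(s)=\infty$, the inequality $\uiw(s)\le\infty$ is trivial (and the previous paragraph forces $\oiw(s)=\infty$ too). The only remaining case is $s=s_{\max}$ with $\iw(s_{\max})=\ilpq(s_{\max})<\infty$; there I would invoke lower semicontinuity of $\uiw$, the equality $\uiw=\iw$ on $(0,s_{\max})$ just established, and left-continuity of $\iw$ at $s_{\max}$ (which follows from left-continuity of the convex function $\ilpq$ at the right end of its domain) to get $\uiw(s_{\max})\le\liminf_{s\uparrow s_{\max}}\uiw(s)=\iw(s_{\max})$; combined with $\oiw(s_{\max})\ge\iw(s_{\max})$ from the previous paragraph, this closes the case. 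Combining the two bounds with $\oiw\le\uiw$ proves~\eqref{sq:toshowRLwn} for all $s>0$.

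\emph{Main obstacle.} The delicate point is not any single estimate but the bookkeeping around the double limit $\epsilon\to0$, $n\to\infty$, together with the passage between $\sup_{r>s}$ and $\sup_{r\ge s}$ (equivalently $\inf_{r>s}$ and $\inf_{r\ge s}$) in the defining formula for $\iw$. This is exactly where the semicontinuity and convexity of $\ilpq$ from Theorem~\ref{thm:CJPS} are needed, and it is why the right endpoint $s_{\max}$ of the effective domain of $\ilpq$ requires a separate lower-semicontinuity argument when $\ilpq(s_{\max})<\infty$: this situation does occur (cf.\ Figure~\ref{fig:Bernoullidegenerate}), and the lower estimate~\eqref{eq:toshowLB}, which is phrased in terms of $U_n(s+\delta)$, degenerates precisely at $s=s_{\max}$.
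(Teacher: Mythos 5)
Your proposal is correct and uses the same core ingredients as the paper: feed Proposition~\ref{prop:main-Wn-bound} into Lemma~\ref{lem:modVaradhan}, pass to the limit in $\epsilon,\delta$, and then close the remaining gap between $\inf_{r>s}(r+\ilpq(r))$ and $\inf_{r\ge s}(r+\ilpq(r))$. The one place where you diverge from the paper is precisely the step you flag as delicate. The paper closes the gap in a single stroke valid for all $s>0$: from the lower bound it records $\uiw(s)\le -s+\inf_{r>s}(r+\ilpq(r))$, applies this to $s'<s$ and observes $\inf_{r>s'}(r+\ilpq(r))\le\inf_{r\ge s}(r+\ilpq(r))$, and then uses lower semicontinuity of the RL function $\uiw$ to pass to the limit $s'\uparrow s$, with no case analysis. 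Your route instead uses right-continuity of the convex lower semicontinuous function $r\mapsto r+\ilpq(r)$ on the interior of its effective domain to make $\inf_{r>s}=\inf_{r\ge s}$ directly when $s<s_{\max}$, and only invokes the lower semicontinuity of $\uiw$ at the right endpoint $s=s_{\max}$. Both are sound; the paper's version is slightly slicker since it never singles out $s_{\max}$, but your argument buys a sharper intuition for why the patch is unavoidable: the lower estimate~\eqref{eq:toshowLB} is phrased through $U_n(s+\delta)$ and genuinely degenerates at the right end of the effective domain of $\ilpq$, as you note.
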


\begin{proof}
    Let $s>0$. 
    By Proposition~\ref{prop:main-Wn-bound} and Lemma~\ref{lem:modVaradhan}, we find that for all $0 < \delta \leq \epsilon < \tfrac 12 s$,
    \begin{equation}\label{eq:ubepsidelta}
        \limsup_{n\to\infty} \frac 1n \ln\P\otimes\Q\{(x,y) : \tfrac 1n \ln W_n(x,y) \in B(s,\epsilon)\} 
        \leq s+\epsilon -  \inf_{r\geq s-\epsilon-\delta}(r+\ilpq(r)) 
    \end{equation}
    and
    \begin{equation}\label{eq:lbepsidelta}
        \liminf_{n\to\infty}  \frac 1n \ln\P\otimes\Q\{(x,y) : \tfrac 1n \ln W_n(x,y) \in B(s,\epsilon)\}
        \geq s - \delta -  \inf_{r> s+\delta}(r+\ilpq(r)).
    \end{equation}
    By taking the limit as $\delta \to 0$ first and then $\epsilon \to 0$ in \eqref{eq:ubepsidelta}, and since $\ilpq$ is lower semicontinuous, we obtain that $-\oiw(s)\leq s-  \inf_{r\geq s}(r+\ilpq(r))$. Taking the same limits in  \eqref{eq:lbepsidelta} yields $-\uiw(s)\geq s - \inf_{r> s}(r+\ilpq(r))$. We then have
    \begin{equation}\label{eq:fourquantsr}
        -s + \inf_{r\geq s}(r+\ilpq(r))   \leq \oiw(s) \leq \uiw(s) \leq  -s + \inf_{r> s}(r+\ilpq(r)).
    \end{equation}
    For all $s'\in (0,s)$, the last inequality in~\eqref{eq:fourquantsr} applied to $s'$ yields
    $$
    \uiw(s')\leq -s'+ \inf_{r> s'}(r+\ilpq(r)) \leq -s'+\inf_{r\geq s}(r+\ilpq(r)).
    $$
    Since $\uiw$ is lower semicontinuous (as a RL function), this in turn implies that
    $$
    \uiw(s)\leq \liminf_{s'\uparrow s} \uiw(s') \leq -s +\inf_{r\geq s}(r+\ilpq(r)),
    $$
    so the first three quantities in~\eqref{eq:fourquantsr} are actually equal, as desired.
    % }
\end{proof}

\begin{proposition}
\label{prop:irandivspositive}
    Assume~$\P$ satisfies~\assref{ud} and~\assref{sld}. Then, for all $s>0$, the RL functions for $(\frac 1n \ln R_n)_{n\in \nn}$ and  $(\frac 1n \ln V_n)_{n\in \nn}$ with respect to $\P$ satisfy
    \begin{equation*}
        \uir(s) = \oir(s)  = -s+\inf_{r\geq s}(r+\ilpp(r))
    \end{equation*}
    and
    \begin{equation*}
        \uiv(s) = \oiv(s)   = -s+\inf_{r\geq s}(r+\ilpp(r)).
    \end{equation*}
\end{proposition}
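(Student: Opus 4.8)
The plan is to reduce everything to Proposition~\ref{prop:RL-Wn} applied with $\Q=\P$. First note that the pair $(\P,\P)$ is admissible precisely because $\P$ satisfies~\assref{ud} and~\assref{sld}: the absolute continuity $\P_n\ll\P_n$ is trivial, and \assref{jsld} for the pair $(\P,\P)$ reduces to \assref{sld} for the single measure $\P$ (Remark~\ref{rem:tau0sld}). Hence Proposition~\ref{prop:RL-Wn} is available and gives, for every $s>0$, that the RL functions of $(\tfrac 1n\ln W_n)_{n\in\nn}$ on $(\Omega^2,\P\otimes\P)$ satisfy $\uiw(s)=\oiw(s)=-s+\inf_{r\geq s}(r+\ilpp(r))$, where we used $\ilpq=\ilpp$ since $\Q=\P$.

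Next I would record the elementary identity
\[
    \P\otimes\P\{(x,y):\tfrac 1n\ln W_n(x,y)\in B(s,\epsilon')\} = \sum_{u\in\cA^n}\P_n(u)\,\P\{x:\tfrac 1n\ln W_n(u,x)\in B(s,\epsilon')\},
\]
which is immediate from the decomposition $W_n(x,y)=\sum_{u}1_{[u]}(x)W_n(u,y)$ and independence of the two coordinates under $\P\otimes\P$. Substituting this into Lemmas~\ref{lem:UB-Wn-to-Rn} and~\ref{lem:LB-Wn-to-Rn} yields, for all $0<\epsilon<s$ and all large enough $n$,
\[
    \frac{C_n^{-1}}{n+\tau_n}\,\P\otimes\P\{\tfrac 1n\ln W_n\in B(s,\tfrac12\epsilon)\}\ \leq\ \P\{\tfrac 1n\ln R_n\in B(s,\epsilon)\}\ \leq\ C_n\,\P\otimes\P\{\tfrac 1n\ln W_n\in B(s,2\epsilon)\},
\]
and the same bounds with $R_n$ replaced by $V_n$ (with $1+\tau_n$ in place of $n+\tau_n$). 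Taking $\tfrac1n\ln(\cdot)$ and $n\to\infty$, the prefactors disappear since $C_n=\Exp{o(n)}$ and $n+\tau_n=\Exp{o(n)}$, so
\[
    \liminf_{n\to\infty}\tfrac1n\ln\P\{\tfrac1n\ln R_n\in B(s,\epsilon)\}\ \geq\ \liminf_{n\to\infty}\tfrac1n\ln\P\otimes\P\{\tfrac1n\ln W_n\in B(s,\tfrac12\epsilon)\}
\]
and symmetrically with $\limsup$ and $2\epsilon$. Now letting $\epsilon\to0$: the quantities on the right are monotone in $\epsilon$ and converge to $-\uiw(s)$ and $-\oiw(s)$ respectively, so we obtain $\uir(s)\leq\uiw(s)$ and $\oir(s)\geq\oiw(s)$. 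Since always $\oir(s)\leq\uir(s)$, this gives the chain $\oiw(s)\leq\oir(s)\leq\uir(s)\leq\uiw(s)$; as $\oiw(s)=\uiw(s)$ by the first paragraph, all four agree, which is the claimed formula for $\uir=\oir$. The argument for $V_n$ is word-for-word the same, using the $V_n$ parts of the two lemmas.

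There is no serious obstacle: the heavy lifting was already done in Section~\ref{sec:Wn-estimates} (the cut-off estimates feeding Proposition~\ref{prop:RL-Wn}) and in Lemmas~\ref{lem:UB-Wn-to-Rn}--\ref{lem:LB-Wn-to-Rn}. The only points requiring a little care are (i) verifying that admissibility of $(\P,\P)$ is exactly \assref{ud}$+$\assref{sld}, so that Proposition~\ref{prop:RL-Wn} can be invoked with $\Q=\P$, and (ii) the bookkeeping of the radii $\tfrac12\epsilon$, $\epsilon$, $2\epsilon$ together with the $\Exp{o(n)}$ prefactors — harmless because in the defining limits of the RL functions one sends $n\to\infty$ first and only then $\epsilon\to0$.
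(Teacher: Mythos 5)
Your proof is correct and follows essentially the same route as the paper's: apply Proposition~\ref{prop:RL-Wn} to the admissible pair $(\P,\P)$, then transfer the resulting upper and lower RL estimates to $R_n$ and $V_n$ via Lemmas~\ref{lem:UB-Wn-to-Rn} and~\ref{lem:LB-Wn-to-Rn} (combined with the decomposition~\eqref{eq:rewritecentral}), and close the loop with $\oir\leq\uir$ and $\oiv\leq\uiv$. The only cosmetic difference is that you spell out the $\epsilon$-bookkeeping and the disappearance of the $\Exp{o(n)}$ prefactors, which the paper leaves implicit.
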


\begin{proof}
    Let $s>0$. 
    First, we remark that Proposition~\ref{prop:RL-Wn} applies to the admissible pair $(\P,\P)$, so that $\oiw(s) = \uiw(s) = -s+\inf_{r\geq s}(r+\ilpp(r))$.
    Then, Lemma~\ref{lem:UB-Wn-to-Rn} implies that $\oir(s) \geq \oiw(s)$ and $\oiv(s) \geq \oiw(s)$; recall also \eqref{eq:rewritecentral}. In the same way, Lemma~\ref{lem:LB-Wn-to-Rn} implies that  $\uir(s) \leq \uiw(s)$ and $\uiv(s) \leq \uiw(s)$. Since also $\oir\leq \uir$ and $\oiv\leq \uiv$, the proof is complete.
\end{proof}

\subsection{At the origin}\label{sec:atorigin}

In this subsection, we prove that, for each of the sequences $(\frac 1n \ln W_n)_{n\in \nn}$, $(\frac 1n \ln V_n)_{n\in \nn}$ and $(\frac 1n \ln R_n)_{n\in \nn}$, the upper and lower RL functions match at $s = 0$. 
We recall that the limit
\begin{equation}\label{eq:qlpqm1exists}
    \qlpq(-1) = \lim_{n\to\infty}\frac 1n \ln \sum_{u\in\cA^n} \P_n(u)\Q_n(u)
\end{equation}
exists for any admissible pair $(\P,\Q)$ by Theorem~\ref{thm:CJPS}.ii. 

\begin{proposition}\label{prop:LDPwns0}
    If the pair $(\P, \Q)$ is admissible, then
    \begin{equation}\label{eq:toshowLDPwns0}
        \uiw(0)
        =\oiw(0) 
        = -\qlpq(-1) = \inf_{r\geq 0}(r+\ilpq(r)).
    \end{equation}
\end{proposition}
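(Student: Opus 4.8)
The plan is to sandwich the relevant probabilities between two multiples of the quantity $\sum_{u\in\cA^n}\P_n(u)\Q_n(u)$, whose exponential growth rate is $\qlpq(-1)$ by Theorem~\ref{thm:CJPS}.ii (this is exactly the content of~\eqref{eq:qlpqm1exists}, and it is important here that the limit genuinely exists, so that both the lower and upper growth rates coincide). For the lower bound on $-\uiw(0)$, I would observe that the event $\{(x,y):W_n(x,y)=1\}=\{(x,y):y_1^n=x_1^n\}$ is contained in $\{(x,y):\tfrac1n\ln W_n(x,y)\in B(0,\epsilon)\}$ for every $\epsilon>0$ and has $\P\otimes\Q$-probability exactly $\sum_{u\in\cA^n}\P_n(u)\Q_n(u)$; hence $\liminf_{n}\tfrac1n\ln\P\otimes\Q\{\tfrac1n\ln W_n\in B(0,\epsilon)\}\ge\qlpq(-1)$ for all $\epsilon>0$, giving $\uiw(0)\le-\qlpq(-1)$.

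For the matching bound on $\oiw(0)$, fix $\epsilon>0$. Since $W_n\ge1$ always, the event $\{\tfrac1n\ln W_n\in B(0,\epsilon)\}$ is simply $\{W_n<\Exp{n\epsilon}\}$. For each fixed $m\in\nn$ and $u\in\cA^n$ one has $\{y:W_n(u,y)=m\}\subseteq \shift^{1-m}[u]$, so shift invariance gives $\Q\{y:W_n(u,y)=m\}\le\Q_n(u)$; decomposing over $x_1^n=u$ yields $\P\otimes\Q\{W_n=m\}\le\sum_{u}\P_n(u)\Q_n(u)$, uniformly in $m$. A union bound over the at most $\Exp{n\epsilon}$ integers $m<\Exp{n\epsilon}$ then gives $\P\otimes\Q\{W_n<\Exp{n\epsilon}\}\le\Exp{n\epsilon}\sum_{u}\P_n(u)\Q_n(u)$, hence $\limsup_n\tfrac1n\ln\P\otimes\Q\{\tfrac1n\ln W_n\in B(0,\epsilon)\}\le\epsilon+\qlpq(-1)$. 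Letting $\epsilon\downarrow0$ yields $\oiw(0)\ge-\qlpq(-1)$. Combined with the general inequality $\oiw\le\uiw$ (valid for any RL pair), this proves the first three equalities in~\eqref{eq:toshowLDPwns0}. Note this argument uses none of the more delicate estimates of \secref{sec:Wn-estimates}; the decoupling hypotheses enter only through Theorem~\ref{thm:CJPS}, which guarantees that the limit $\qlpq(-1)$ exists.

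Finally, the identity $-\qlpq(-1)=\inf_{r\ge0}(r+\ilpq(r))$ is pure convex duality: by the relation $\qlpq=\ilpq^*$ in~\eqref{eq:LdualIpq}, $\qlpq(-1)=\sup_{r\in\rr}(-r-\ilpq(r))=-\inf_{r\in\rr}(r+\ilpq(r))$, and by Theorem~\ref{thm:CJPS}.i one has $\ilpq(r)=\infty$ for $r<0$, so the infimum is effectively over $r\ge0$. I do not anticipate a genuine obstacle in this proof; the only point requiring a little care is keeping track of the distinction between the event $\{W_n<\Exp{n\epsilon}\}$ and the ball $B(0,\epsilon)$, and invoking the existence (not merely the existence of a limsup) of $\qlpq(-1)$ so that the two-sided sandwich closes.
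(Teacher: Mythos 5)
Your proposal is correct and follows essentially the same route as the paper's proof: lower bound via the event $\{W_n=1\}$, upper bound via the uniform-in-$m$ estimate $\P\otimes\Q\{W_n=m\}\le\sum_u\P_n(u)\Q_n(u)$ plus a union bound over $m<\Exp{n\epsilon}$, the existence of the limit $\qlpq(-1)$ from Theorem~\ref{thm:CJPS}.ii to close the sandwich, and convex duality together with $\ilpq\equiv\infty$ on $(-\infty,0)$ for the final identity.
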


\begin{proof}
    For all $\epsilon>0$ and $n\in \nn$,
    \begin{align*}
        \P\otimes \Q  \{(x,y): \tfrac 1n  \ln W_n(x,y) \in B(0,\epsilon)\} 
            &\geq  \P\otimes \Q  \{(x,y):W_n(x,y) =1\} \\ 
            &=\sum_{u\in \cA^n} \P_n(u)\Q_n(u).
    \end{align*}
    In view of \eqref{eq:qlpqm1exists} and the definition of $\uiw$, we have $\uiw(0) \leq -\qlpq(-1)$. 
    To obtain the opposite inequality for $\oiw(0)$, observe that  
    \begin{align*}
        \P\otimes \Q  \{(x,y): W_n(x,y) = k\} \leq \P\otimes \Q  \{(x,y): x_1^n = y_{1+k}^{n+k}\} =\sum_{u\in\cA^n} \P_n(u)\Q_n(u)
    \end{align*}
    for every $u\in \cA^n$, $n\in\nn$ and $k\in\nn$. Therefore, a union bound gives,  for every $\epsilon > 0$,
    \begin{equation*}
        \P\otimes \Q  \{(x,y):W_n(x,y) \leq \Exp{\epsilon n}\}\leq \Exp{\epsilon n}\sum_{u\in \cA^n} \P_n(u)\Q_n(u).
    \end{equation*}
    By \eqref{eq:qlpqm1exists} and the definition of $\oiw$, we conclude that $\oiw(0)\geq -\qlpq(-1)$. Since also $\uiw(0) \geq \oiw(0)$ we have thus established the first two equalities in~\eqref{eq:toshowLDPwns0}.

    To complete the proof, it remains to observe that, since $\ilpq(r)=\infty$ for all $r<0$, and since $\qlpq = \ilpq^*$ by Theorem~\ref{thm:CJPS}.ii, 
    \begin{equation}\label{eq:ltr0}
        \inf_{r\geq 0}(r+\ilpq(r))= \inf_{r\in \rr}(r+\ilpq(r)) = -\sup_{r\in \rr}(-r-\ilpq(r)) = -\qlpq(-1),
    \end{equation}
    which establishes the last identity in \eqref{eq:toshowLDPwns0}.
\end{proof}

\begin{proposition}
    \label{prop:LDPtildeRns0}
    If $\P$ satisfies~\assref{ud} and~\assref{sld},
    then
    \begin{equation}\label{eq:toshowLDPvns0}
        \uiv(0) = \oiv(0) 
        = - \qlpp(-1) =  \inf_{r\geq 0}(r+\ilpp(r)).
    \end{equation}        
\end{proposition}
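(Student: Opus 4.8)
The plan is to prove the three equalities separately. The last one, $\inf_{r\geq0}(r+\ilpp(r))=-\qlpp(-1)$, requires no new work: it is exactly~\eqref{eq:ltr0} with $\Q$ replaced by $\P$, which is legitimate because $(\P,\P)$ is admissible whenever $\P$ satisfies~\assref{ud} and~\assref{sld}, so Theorem~\ref{thm:CJPS} applies with $\Q=\P$, giving $\ilpp(r)=\infty$ for $r<0$ and $\qlpp=\ilpp^*$. For the remaining chain $\uiv(0)=\oiv(0)=-\qlpp(-1)$, I would use that $\oiv\leq\uiv$ always holds, so that it is enough to show $\uiv(0)\leq-\qlpp(-1)$ and $\oiv(0)\geq-\qlpp(-1)$. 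Since $V_n\geq1$, we have $\{x:\tfrac1n\ln V_n(x)\in B(0,\epsilon)\}=\{x:V_n(x)<\Exp{n\epsilon}\}$, so these two bounds amount to a lower and an upper estimate of $\P\{x:V_n(x)<\Exp{n\epsilon}\}$. Throughout I would use that $\qlpp(-1)=\lim_n\tfrac1n\ln\sum_{u\in\cA^n}\P_n(u)^2$ exists (Theorem~\ref{thm:CJPS}.ii, i.e.~\eqref{eq:qlpqm1exists} with $\Q=\P$) and is finite (by~\eqref{eq:qppbdd}, $\qlpp(-1)\leq\htop(\supp\P)\leq\ln|\cA|$), together with the event forms~\eqref{eq:udeventseq} and~\eqref{eq:sldeventseq} of~\assref{ud} and~\assref{sld}.

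For the lower bound I would mimic the first half of the proof of Proposition~\ref{prop:LDPwns0}, with~\assref{sld} playing the role of the exact identity available there. Fixing $\epsilon>0$, for each $u\in\cA^n$ equation~\eqref{eq:sldeventseq} provides some $0\leq\ell_u\leq\tau_n$ with $\P([u]\cap\shift^{-n-\ell_u}[u])\geq C_n^{-1}\P_n(u)^2$; every $x$ in this set satisfies $x_1^n=u$ and $x_{n+\ell_u+1}^{2n+\ell_u}=u$, hence $V_n(x)\leq\ell_u+1\leq\tau_n+1$. As the sets $[u]\cap\shift^{-n-\ell_u}[u]$ lie in the pairwise disjoint cylinders $[u]$, for $n$ large enough that $\tau_n+1<\Exp{n\epsilon}$ one gets $\P\{x:V_n(x)<\Exp{n\epsilon}\}\geq\sum_{u\in\cA^n}\P([u]\cap\shift^{-n-\ell_u}[u])\geq C_n^{-1}\sum_{u\in\cA^n}\P_n(u)^2$. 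Taking $\tfrac1n\ln$, then $\liminf_{n\to\infty}$, then $\epsilon\to0$, and using $C_n=\Exp{o(n)}$, yields $-\uiv(0)\geq\qlpp(-1)$, i.e.~$\uiv(0)\leq-\qlpp(-1)$.

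For the upper bound, the starting point is that $V_n(x)=k$ forces $x_{n+k}^{2n+k-1}=x_1^n$, so a union bound gives $\P\{x:V_n(x)<\Exp{n\epsilon}\}\leq\sum_{k=1}^{\lceil\Exp{n\epsilon}\rceil}\P\{x:x_{n+k}^{2n+k-1}=x_1^n\}$; it then suffices to bound each summand by $C_n\sum_{w\in\cA^{N_k}}\P_{N_k}(w)^2$ for some $N_k\in[n-\tau_n,n]$. Writing $\P\{x:x_{n+k}^{2n+k-1}=x_1^n\}=\sum_{u\in\cA^n}\P\bigl([u]\cap\{x:x_{n+k}^{2n+k-1}=u\}\bigr)$, there are two regimes. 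If $k>\tau_n$, then $\{x:x_{n+k}^{2n+k-1}=u\}=\shift^{-(n+k-1)}[u]=\shift^{-(n+\tau_n)}\bigl(\shift^{-(k-1-\tau_n)}[u]\bigr)$ with $k-1-\tau_n\geq0$, so~\eqref{eq:udeventseq} bounds the $u$-term by $C_n\P_n(u)\P(\shift^{-(k-1-\tau_n)}[u])=C_n\P_n(u)^2$ and summing over $u$ gives the claim with $N_k=n$. If $1\leq k\leq\tau_n$, then only the tail of the second occurrence of $u$ reaches past coordinate $n+\tau_n$, i.e.~$\{x:x_{n+k}^{2n+k-1}=u\}\subseteq\shift^{-(n+\tau_n)}[u_{\tau_n-k+2}^n]$, so~\eqref{eq:udeventseq} with $A=[u]$ and $B=[u_{\tau_n-k+2}^n]$ bounds the $u$-term by $C_n\P_n(u)\P_{N_k}(u_{\tau_n-k+2}^n)$ with $N_k:=n-\tau_n+k-1$; summing over $u$ and regrouping the sum by the suffix $v=u_{\tau_n-k+2}^n$, using shift invariance to compute $\sum_{a\in\cA^{\tau_n-k+1}}\P_n(av)=\P_{N_k}(v)$, gives the bound $C_n\sum_{v\in\cA^{N_k}}\P_{N_k}(v)^2$. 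In both regimes $N_k\in[n-\tau_n,n]$, hence $\P\{x:V_n(x)<\Exp{n\epsilon}\}\leq\Exp{n\epsilon}C_n\max_{n-\tau_n\leq N\leq n}\sum_{w\in\cA^N}\P_N(w)^2$. Since $\tau_n/n\to0$, $\tfrac1n\ln C_n\to0$, and $\tfrac1N\ln\sum_{w\in\cA^N}\P_N(w)^2\to\qlpp(-1)$ while staying in the bounded range $[-\ln|\cA|,0]$, the right-hand side is $\Exp{n(\qlpp(-1)+\epsilon)+o(n)}$; taking $\tfrac1n\ln$, $\limsup_{n\to\infty}$, then $\epsilon\to0$ gives $-\oiv(0)\leq\qlpp(-1)$, i.e.~$\oiv(0)\geq-\qlpp(-1)$, completing the proof.

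I expect the only genuinely delicate point to be the regime $1\leq k\leq\tau_n$ in the upper bound: there the gap between the two occurrences of $u$ inside $x$ has length $k-1<\tau_n$, so~\eqref{eq:udeventseq} cannot be applied directly ``across'' it. The remedy is to decouple only between the first occurrence of $u$ and the part of the second occurrence lying beyond coordinate $n+\tau_n$, i.e.~to discard the constraint on coordinates $n+1,\dots,n+\tau_n$; this loses nothing at the exponential scale because it merely shortens the relevant word length from $n$ to $n-\tau_n+k-1=n(1+o(1))$, but one must check that the resulting $o(n)$ error is uniform over $1\leq k\leq\tau_n$, which follows from the boundedness of $\tfrac1N\ln\sum_{w\in\cA^N}\P_N(w)^2$.
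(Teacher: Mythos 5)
Your proof is correct and follows the paper's structure: the third equality is obtained exactly as you say (via \eqref{eq:ltr0} with $\Q=\P$), and the lower bound $\uiv(0)\leq-\qlpp(-1)$ is argued essentially identically, by using \assref{sld} to produce, for each $u$, a gap $\ell_u\leq\tau_n$ with $\P([u]\cap\shift^{-n-\ell_u}[u])\geq C_n^{-1}\P_n(u)^2$ and noting that $V_n\leq\tau_n+1$ there. Where you deviate is in the upper bound $\oiv(0)\geq-\qlpp(-1)$. The paper truncates uniformly in $k$: it replaces $u\in\cA^n$ by its prefix $v=u_1^{n-\tau_n}$, so that the two occurrences of $v$ are always separated by a gap of length $k-1+\tau_n\geq\tau_{n-\tau_n}$, and a single application of \assref{ud} bounds $\P\{V_n=k\}$ by $C_{n-\tau_n}\sum_{v\in\cA^{n-\tau_n}}\P_{n-\tau_n}(v)^2$ independently of $k$; this requires the (harmless but explicit) reduction to $(\tau_n)$ nondecreasing so that $\tau_{n-\tau_n}\leq\tau_n$. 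You instead keep the full word $u$ when $k>\tau_n$ and, when $k\leq\tau_n$, discard the first $\tau_n-k+1$ letters of the \emph{second} occurrence, decoupling across the gap $[n+1,n+\tau_n]$; this produces a $k$-dependent length $N_k\in[n-\tau_n,n]$ and a $\max$ over $N$ in the final estimate, which you control using boundedness of $\frac1N\ln\sum_w\P_N(w)^2$. Both routes are sound; the paper's is slicker (no case split, fixed word length), while yours avoids the WLOG on $(\tau_n)$ entirely and makes the ``where the decoupling gap sits'' mechanism more explicit. The one subtlety you flagged — uniformity of the $o(n)$ error over $1\leq k\leq\tau_n$ — is indeed the only delicate point, and your handling of it via the $[-\ln|\cA|,0]$ bound is adequate.
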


\begin{proof}
    The stated assumptions allow to apply Theorem~\ref{thm:CJPS} to the pair $(\P,\P)$, and in particular its consequences~\eqref{eq:qlpqm1exists} and~\eqref{eq:ltr0} with $\Q=\P$.
    Since the third equality in~\eqref{eq:toshowLDPvns0} is a special case of~\eqref{eq:ltr0}, and since $\oiv(0)\leq \uiv(0)$ by definition, it suffices to establish the inequalities $\uiv(0) \leq -\qlpp(-1)$ and $\oiv(0) \geq -\qlpp(-1)$ in order to complete the proof. To this end, we let $\epsilon > 0$ be arbitrary and restrict our attention to~$n$ large enough so that $\min\{n, \Exp{\epsilon n}\}>\tau_n+1$.

    For each $u\in \cA^n$,~\assref{sld} implies that there is $0\leq \ell\leq \tau_n$ such that $\P([u]\cap \shift^{-n-\ell}[u]) \geq C_n^{-1} \P_n(u)^2$. Since $[u]\cap \{x:V_n(x) \leq \tau_n+1\} \supseteq [u]\cap \shift^{-n-\ell}[u]$, this in turn implies that
    \begin{align*}
       \P\{x:V_n(x) < \Exp{\epsilon n}\} \geq \P\{x:V_n(x) \leq \tau_n+1\} \geq   C_n^{-1} \sum_{u\in \cA^n} \P_n(u)^2.
    \end{align*}
    Combining this with \eqref{eq:qlpqm1exists} for $\Q=\P$ establishes the inequality $\uiv(0) \leq -\qlpp(-1)$.

    On the other hand, for each $k\in\nn$,
    $$
    \{x:V_n(x) = k\} \subseteq \bigcup_{u\in \cA^{n}}[u]\cap \shift^{1-n-k}[u] \subseteq \bigcup_{v\in \cA^{n-\tau_n}}[v]\cap \shift^{1-n-k}[v].
    $$
    Assuming without loss of generality that the sequence $(\tau_n)_{n\in\nn}$ is nondecreasing (so in particular $\tau_{n-\tau_n} \leq \tau_n$), 
    we obtain from~\assref{ud} that
    $$
        \P\{x:V_n(x) = k\} \leq C_{n-\tau_n} \sum_{v\in \cA^{n-\tau_n}} \P_{n-\tau_n}(v)^2.
    $$
    Considering the union over $k = 1,2,\dotsc, \lceil\Exp{\epsilon n}\rceil-1$, we further obtain
    \begin{equation}\label{eq:limlem37}
        \begin{split}
            \limsup_{n\to\infty}\frac 1n\ln \P\{x:V_n(x) < \Exp{\epsilon n}\}&\leq\limsup_{n\to\infty}\frac 1n\ln\left( \Exp{\epsilon n} C_{n-\tau_n}  \sum_{v\in \cA^{n-\tau_n}} \P_{n-\tau_n}(v)^2 \right)\\
            & = \epsilon + \limsup_{n\to\infty}\frac 1n\ln \sum_{u\in \cA^{n}} \P_n(u)^2,
        \end{split}
    \end{equation}
    where we have used that $\lim_{n\to\infty}\frac {n-\tau_n}n =1$.
    Combining this with \eqref{eq:qlpqm1exists} for $\Q=\P$ establishes the inequality $\oiv(0) \geq -\qlpp(-1)$, so the proof is complete.
\end{proof}

Let us now turn to $\uir(0)$ and $\oir(0)$, whose comparison is significantly more involved. We start with a technical lemma.
\begin{lemma}\label{lem:propzgammaplus}
    If $\P$ satisfies~\assref{sld} and~\assref{ud}, then
    \begin{equation}\label{eq:redefgammap}
        \gamma_+ = \lim_{n\to\infty}\frac 1n \sup_{u\in \cA^n}\ln \P_n(u) \geq \qlpp(-1).
    \end{equation}
\end{lemma}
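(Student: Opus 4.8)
The plan is to treat the two claims separately, as the first is a direct citation and the second a one-line estimate. For the identity $\gamma_+ = \lim_{n\to\infty}\frac1n\sup_{u\in\cA^n}\ln\P_n(u)$ (i.e.\ the limit superior in \eqref{eq:defgammaplus} is actually a limit), I would first recall that, as noted just before Theorem~\ref{thm:mainthmB}, the pair $(\P,\P)$ is admissible precisely because $\P$ satisfies \assref{ud} and \assref{sld}. Hence Theorem~\ref{thm:CJPS} applies with $\Q=\P$, and part iii.b of that theorem gives exactly that the limit superior defining $\gamma_+$ is a limit. Nothing more is needed here.

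For the inequality $\gamma_+\geq\qlpp(-1)$, I would start from the elementary bound
\[
    \sum_{u\in\cA^n}\P_n(u)^2 \leq \Big(\max_{u\in\cA^n}\P_n(u)\Big)\sum_{u\in\cA^n}\P_n(u) = \max_{u\in\cA^n}\P_n(u),
\]
where the last equality uses that $\P_n$ is a probability measure on $\cA^n$, so $\sum_{u\in\cA^n}\P_n(u)=1$. Taking $\frac1n\ln$ on both sides yields $\frac1n\ln\sum_{u\in\cA^n}\P_n(u)^2 \leq \frac1n\sup_{u\in\cA^n}\ln\P_n(u)$. Now I let $n\to\infty$: the right-hand side tends to $\gamma_+$ by the first part of the lemma; the left-hand side tends to $\qlpp(-1)$ since, by Theorem~\ref{thm:CJPS}.ii applied to the admissible pair $(\P,\P)$, the limit defining $\qlpq(\alpha)$ at $\alpha=-1$ exists (and is finite, being bounded above by $0$, consistent with the rightmost expression in \eqref{eq:limitqpq} being $\sum_{u\in\supp\P_n}\P_n(u)^2$). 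This gives $\qlpp(-1)\leq\gamma_+$, completing the proof.

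There is essentially no obstacle here: the content is the elementary $\ell^2$-versus-$\ell^\infty$ estimate above, and both limits involved are already known to exist from Theorem~\ref{thm:CJPS}. The only point requiring a moment's care is making sure the summations are over the right index sets (all of $\cA^n$ versus $\supp\P_n$), but since the summand $\P_n(u)^2$ vanishes off $\supp\P_n$ this is immaterial.
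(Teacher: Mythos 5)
Your proposal is correct and follows essentially the same route as the paper: appeal to Theorem~\ref{thm:CJPS}.iii.b for the existence of the limit defining $\gamma_+$, then deduce $\qlpp(-1)\leq\gamma_+$ from the elementary bound $\sum_{u\in\cA^n}\P_n(u)^2 \leq \sup_{v\in\cA^n}\P_n(v)$ together with the existence of the limit in~\eqref{eq:qlpqm1exists}. The paper phrases the inequality as $\sup_v\P_n(v)=\sum_u\P_n(u)\sup_v\P_n(v)\geq\sum_u\P_n(u)^2$, which is the same estimate written the other way around.
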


\begin{proof}
    From Theorem~\ref{thm:CJPS}.iii.b, we know that the limit superior in the definition \eqref{eq:defgammaplus} of $\gamma_+$ is actually a limit, which is the first equality in \eqref{eq:redefgammap}. We now prove the inequality, noting that, by Theorem~\ref{thm:CJPS}.ii, the limit defining $\qlpp(-1)$ exists. For each $n\in\nn$, we find $\sup_{v\in \cA^n}\P(v)  = \sum_{u\in \cA^n}\P_n(u)\sup_{v\in \cA^n}\P_n(v)  \geq \sum_{u\in \cA^n}\P_n(u)^2$, and the claim follows from~\eqref{eq:qlpqm1exists} with $\Q=\P$.
\end{proof}

The following proposition shows that $\uir(0)  =  \oir(0) = -\gamma_+$ under the assumptions of Theorem~\ref{thm:mainthmC}. We give several additional inequalities in order to underline the role of~\assref{pa}. The proposition also shows that, while $\oir(0)$ and $\uir(0)$ are defined in terms of $\{x : R_n(x) < \Exp{\epsilon n}\}$, the subset $\{x : R_n(x) < n\}$ accounts for the full behavior of the probability at the exponential scale. 
\begin{proposition}
\label{prop:RNat0}
    The following hold:
    \begin{enumerate}[i.]
        \item If $\P$ satisfies~\assref{ud}, then 
        \begin{equation}\label{eq:toshowlimsupgammap}  
            \limsup_{n\to\infty} \frac 1n \ln \P\{x : R_n(x) < n\} \leq -\oir(0) \leq \gamma_+.
        \end{equation}
        \item  If $\P$ satisfies~\assref{pa}, then
        \begin{equation}\label{eq:toshowuir0r} 
        -\uir(0) 
        \geq  \liminf_{n\to\infty} \frac 1n \ln \P\{x : R_n(x) < n\} 
        \geq \gamma_+.
    \end{equation}
        \item If $\P$ satisfies~\assref{ud} and~\assref{pa}, then
        \begin{equation*}
            -\uir(0)  =  -\oir(0) =   \lim_{n\to\infty} \frac 1n \ln \P\{x : R_n(x) < n\} = \gamma_+.
        \end{equation*}
    \end{enumerate}
\end{proposition}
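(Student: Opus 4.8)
The plan is to handle part~ii first, then the one genuinely hard estimate, and then assemble parts~i and~iii. The two ``first inequalities'' in~i and~ii are immediate: since $R_n\ge 1$ we have $\{x:\tfrac1n\ln R_n(x)\in B(0,\epsilon)\}=\{x:R_n(x)<\Exp{\epsilon n}\}$, and this contains $\{x:R_n(x)<n\}$ as soon as $\Exp{\epsilon n}>n$, so monotonicity of probability and letting $\epsilon\downarrow0$ in the definitions of $\oir(0)$ and $\uir(0)$ give $\limsup_n\tfrac1n\ln\P\{R_n<n\}\le-\oir(0)$ and $-\uir(0)\ge\liminf_n\tfrac1n\ln\P\{R_n<n\}$. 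For the remaining inequality of~ii I would use \assref{pa} directly: given $\epsilon>0$, pick $p_0\in\nn$ and $w\in\cA^{p_0}$ with $\liminf_{N\to\infty}\tfrac1{Np_0}\ln\P_{Np_0}(w^N)\ge\gamma_+-\epsilon$; for $n$ large set $N(n):=\lceil(n+p_0)/p_0\rceil$, so that $N(n)p_0=n+O(1)$ and $p_0<n$, and note that every $x\in[w^{N(n)}]$ has $x_1^{n+p_0}$ a prefix of $w^\infty$, hence $p_0$-periodic, hence $R_n(x)\le p_0<n$. Thus $\P\{R_n<n\}\ge\P_{N(n)p_0}(w^{N(n)})$; since $N(n)\to\infty$ and $N(n)p_0/n\to1$ this yields $\liminf_n\tfrac1n\ln\P\{R_n<n\}\ge\gamma_+-\epsilon$, and letting $\epsilon\downarrow0$ proves~ii.

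The heart of the argument is the estimate $\limsup_n\tfrac1n\ln\P\{R_n<n\}\le\gamma_+$, which uses only \assref{ud}. One may assume $\gamma_+<0$ (if $\gamma_+=0$ the bound is trivial since $\P\{R_n<n\}\le1$), and that $(\tau_n)_n$ is nondecreasing (as in the proof of Proposition~\ref{prop:LDPtildeRns0}). The starting point is that $R_n(x)=p$ forces $x_1^{n+p}$ to be $p$-periodic, i.e.\ to be the length-$(n+p)$ prefix $\bar v$ of $(v_1^p)^\infty$ where $v=x_1^p$; hence $\P\{R_n=p\}\le\sum_{v\in\cA^p}\P_{n+p}(\bar v)$ for $1\le p\le n-1$. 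For $p\le\sqrt n$ there are at most $|\cA|^p=\Exp{o(n)}$ summands, each $\le\sup_{u\in\cA^{n+p}}\P_{n+p}(u)=\Exp{n\gamma_++o(n)}$, so $\P\{R_n=p\}\le\Exp{n\gamma_++o(n)}$ uniformly. For $p>\sqrt n$ one decouples: writing $K:=\lfloor n/p\rfloor\ge1$ and $r:=n-Kp\in[0,p)$, one has $\bar v=v^{K+1}v_1^r$; shorten each of the $K+1$ blocks $v$ to $\hat v:=v_1^{\,p-\tau_p}$ and the trailing $v_1^r$ to $v_1^{\,r'}$ with $r':=\max(r-\tau_p,0)$, so that the resulting at most $K+2$ occurrences are separated by gaps of length $\ge\tau_p\ge\tau_{p-\tau_p}$. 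Iterating \assref{ud} in the form \eqref{eq:udeventseq} (peeling off one block at a time) then gives
\[
  \P_{n+p}(\bar v)\le C_{p-\tau_p}^{\,K+1}\,\P_{p-\tau_p}(v_1^{p-\tau_p})^{\,K+1}\,\P_{r'}(v_1^{r'}).
\]
Summing over $v\in\cA^p$, grouping by $\hat v$ (a factor $|\cA|^{\tau_p}=\Exp{o(n)}$), and using $\P_{p-\tau_p}(\hat v)^{K+1}\le(\sup_{u\in\cA^{p-\tau_p}}\P_{p-\tau_p}(u))^K\,\P_{p-\tau_p}(\hat v)$ together with $\sum_{\hat v}\P_{p-\tau_p}(\hat v)\,\P_{r'}(\hat v_1^{r'})=\sum_{w\in\cA^{r'}}\P_{r'}(w)^2\le\sup_{w\in\cA^{r'}}\P_{r'}(w)$, one arrives at
\[
  \P\{R_n=p\}\le\Exp{o(n)}\Bigl(\sup_{u\in\cA^{p-\tau_p}}\P_{p-\tau_p}(u)\Bigr)^{K}\ \sup_{w\in\cA^{r'}}\P_{r'}(w).
\]
The crucial bookkeeping is that $K(p-\tau_p)+r'\ge Kp+r-(K+1)\tau_p=n-o(n)$ uniformly in $p>\sqrt n$ (because $(K+1)\tau_p\le(n/p+1)\tau_p=o(n)$), so after fixing $\delta>0$ and invoking the definition~\eqref{eq:defgammaplus} of $\gamma_+$ (using $\sup_{u\in\cA^m}\P_m(u)\le\Exp{m(\gamma_++\delta)}$ for $m$ large, and $\le1$ for bounded $m$) the right side is $\le\Exp{n(\gamma_++\delta)+o(n)}$, uniformly in $p$. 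Summing the at most $n$ terms $\P\{R_n=p\}$ and letting $\delta\downarrow0$ proves the claim.

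With this estimate available, part~i follows. For $-\oir(0)\le\gamma_+$, fix $\epsilon>0$ and, for $n$ large, write $\{R_n<\Exp{\epsilon n}\}\subseteq\{R_n<n\}\cup\{n\le R_n<\Exp{\epsilon n}\}$; if $R_n(x)=k\ge n$ then $x_{n+j}^{2n+j-1}=x_1^n$ with $j:=k-n+1$, so $\{n\le R_n<\Exp{\epsilon n}\}\subseteq\{V_n<\Exp{\epsilon n}\}$, and the argument in the proof of Proposition~\ref{prop:LDPtildeRns0} (which for this bound uses only \assref{ud}), together with $\sum_u\P_n(u)^2\le\sup_u\P_n(u)$, gives $\limsup_n\tfrac1n\ln\P\{V_n<\Exp{\epsilon n}\}\le\epsilon+\gamma_+$; combining with the key estimate and letting $\epsilon\downarrow0$ yields $-\oir(0)\le\gamma_+$. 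Finally, part~iii is bookkeeping: since $\oir\le\uir$ always, parts~i and~ii give $-\oir(0)\ge-\uir(0)\ge\gamma_+\ge-\oir(0)$, whence $-\uir(0)=-\oir(0)=\gamma_+$, and then the two outer inequalities of the first paragraph, combined with the key estimate and the \assref{pa} bound, pin down $\lim_n\tfrac1n\ln\P\{R_n<n\}=\gamma_+$. I expect the main obstacle to be precisely the key estimate: a plain union bound over periodic words loses a spurious factor $|\cA|^p$, so one must let the number and length of the decoupled blocks vary with $p$ in order to cover $n-o(n)$ of the $n+p$ coordinates, all while keeping the subexponential corrections $C_{p-\tau_p}^{K+1}$, $|\cA|^{\tau_p}$, and the ``$\sup$'' losses uniformly controlled as $p$ ranges over $(\sqrt n,n)$.
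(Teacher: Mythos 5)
Your proof is correct, and parts~(ii) and~(iii) (as well as the two ``trivial'' inequalities relating $\P\{R_n<n\}$ to the Ruelle--Lanford functions) match the paper's argument essentially verbatim. The genuine divergence is in the estimate $\limsup_n\tfrac1n\ln\P\{R_n<n\}\le\gamma_+$, where you take a considerably longer route than the paper. The paper exploits~\eqref{eq:obsperiod} more sharply: since the map $u\mapsto u_{n-k+1}^n$ from $\{u\in\cA^n:\operatorname{per}(u)=k\}$ to $\cA^k$ is injective, a \emph{single} application of~\assref{ud} at the cut $n-\tau_n$ gives
\[
    \P\{R_n=k\}\le\sum_{v\in\cA^k}\sup_{u\in\cA^n}\P_{n+k}(uv)\le C_{n-\tau_n}\Bigl(\sup_{u\in\cA^{n-\tau_n}}\P_{n-\tau_n}(u)\Bigr)\sum_{v\in\cA^k}\P_k(v)=C_{n-\tau_n}\sup_{u\in\cA^{n-\tau_n}}\P_{n-\tau_n}(u),
\]
uniformly in $1\le k<n$, with no case split and no accumulation of decoupling constants. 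Your version instead slices the $p$-periodic prefix of length $n+p$ into $\lfloor n/p\rfloor+1$ copies of the period, iterates~\assref{ud} once per copy, and then verifies that the accumulated subexponential losses $(K+1)\ln C_{p-\tau_p}$, $|\cA|^{\tau_p}$, and the discarded $o(n)$ coordinates are uniformly negligible over $p\in(\sqrt n,n)$ (with the naive $|\cA|^p$ bound handling $p\le\sqrt n$). This is closer in spirit to the proof of Proposition~\ref{prop:sconvtau}, and it works --- you do correctly retain one surviving $\P$ factor to sum over, via $\sum_{\hat v}\P_{p-\tau_p}(\hat v)\P_{r'}(\hat v_1^{r'})\le\sup_w\P_{r'}(w)$, which is the analogue of the paper's $\sum_v\P_k(v)\le1$ --- but the two-regime split, the tail truncation to $r'$, and the uniformity-over-$p$ bookkeeping are all artifacts of not using the injectivity observation, which collapses the whole estimate to one line. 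Your reduction of the $[n,e^{\epsilon n})$ range to the $V_n$ bound~\eqref{eq:limlem37}, and the assembly in part~(iii), coincide with the paper's.
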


\begin{proof}
    Given $u\in \cA^n$, we denote by $\operatorname{per}(u)$ the period of~$u$, {i.e.}\ the smallest $p \in \nn$ such that $u_i = u_{i+p}$ for all $1\leq i \leq n-p$. Since the condition is vacuously true for $p=n$, we have the bound $\operatorname{per}(u)\leq n$.
    The key observation is that, for every $1\leq k< n$ and $x\in \Omega$,
    \begin{equation}
    \label{eq:obsperiod}
        R_n(x)=k 
        \qquad\iff\qquad  
        \operatorname{per}(x_1^{k+n})=k.
    \end{equation}

    \begin{enumerate}[i.]
        \item Since $\{x:R_n(x) < n\} \subseteq \{x:R_n(x) < \Exp{\epsilon n}\}$ for all large enough $n$, the first inequality in~\eqref{eq:toshowlimsupgammap} readily follows from the definition of~$\oir$. We now prove the second inequality. Let $n$ be large enough so that $n>\max\{1,\tau_n\}$ and let $1\leq k< n$.
        The map $\{u\in \cA^n : \operatorname{per}(u)=k\}\to \cA^k$ given by $u\mapsto u_{n-k+1}^n$ is injective.
        By~\eqref{eq:obsperiod}, we thus have
        \begin{align*}
            \P\{x:R_n(x) = k\} 
            = \sum_{u\in \cA^n:\operatorname{per}(u)=k}\P_{n+k}(u u_{n-k+1}^n) \leq \sum_{v\in \cA^k}\sup_{u\in \cA^{n}}\P_{n+k}(uv).
        \end{align*}
        As in the proof of Proposition~\ref{prop:LDPtildeRns0}, we assume without loss of generality that $(\tau_n)_{n\in\nn}$ is nondecreasing. Then,~\assref{ud} yields
        $$\P_{n+k}(uv)\leq \P([u_1^{n-\tau_n}]\cap \shift^{-n}[v])\leq   C_{n - \tau_n}\P_{n-\tau_n}(u_1^{n-\tau_n})\P_k(v),$$
        and so 
        \begin{align*}
            \P\{x:R_n(x) = k\} 
            &\leq C_{n - \tau_n} \sum_{v\in \cA^k}\sup_{u\in \cA^{n}}\P_{n-\tau_n}(u_1^{n-\tau_n})\P_k(v)\\
            & =  C_{n - \tau_n} \sup_{u\in \cA^{n}}\P_{n-\tau_n}(u_1^{n-\tau_n}) \\
            &=   C_{n - \tau_n} \sup_{u\in \cA^{n-\tau_n}}\P_{n-\tau_n}(u).
        \end{align*}
        Taking a union over $1\leq k <n$ gives 
        $$
            \P\{x:R_n(x) <n\} \leq (n-1)C_{n - \tau_n} \sup_{u\in \cA^{n-\tau_n}}\P_{n-\tau_n}(u).
        $$
        Comparing with the definition~\eqref{eq:defgammaplus} of~$\gamma_+$, and using that~$\lim_{n\to\infty}\frac {n-\tau_n}n =1$, we deduce that
        \begin{equation}\label{eq:limsupprnmn}
            \limsup_{n\to\infty}\frac 1n \ln \P\{x:R_n(x) < n\} 
            \leq  
            \gamma_+.    
        \end{equation}
        Now, observe that
        $$\P\{x:R_n(x) \in [n, \Exp{n\epsilon})\} \leq \P\{x:V_n(x) \in [1, \Exp{n\epsilon}+1-n)\}\leq  \P\{x:V_n(x) < \Exp{\epsilon n}\}.$$ By~\eqref{eq:limlem37}, which only relies on~\assref{ud}, this implies
        \begin{equation}
        \label{eq:limsupprnmn-2}
            \lim_{\epsilon \to 0}\limsup_{n\to\infty}\frac 1n \ln \P\{x:R_n(x) \in [n, \Exp{n\epsilon})\} \leq \gamma_+.
        \end{equation}
        By definition of~$\oir$, the inequalities~\eqref{eq:limsupprnmn} and~\eqref{eq:limsupprnmn-2} imply $-\oir(0) \leq \gamma_+$, as desired.

        \item As above, the first inequality in~\eqref{eq:toshowuir0r} is immediate by the definition of $\uir$. We now establish the second one.
        Let $p\in\nn$ and $u \in\cA^p$ be arbitrary. 
        By \eqref{eq:obsperiod}, for $n > p$ and $r_n = \lceil \frac n{p}\rceil+1$, we have $[u^{r_n}] \subseteq \{x:R_n(x) \leq p \}$. Therefore,
        \begin{equation}\label{eq:limperur}
            \begin{split}
                \liminf_{n\to\infty} \frac 1n \ln \P\{x:R_n(x)<n\} &\geq \liminf_{n\to\infty}\frac 1n \ln \P\{x:R_n(x) \leq p \} \\
                &\geq  \liminf_{n\to\infty}\frac 1n \ln \P([u^{r_n}]) \\
                &=  \liminf_{r\to\infty}\frac 1{p r} \ln \P([u^{r}]).
            \end{split} 
        \end{equation}
        Since the expression in the last line of~\eqref{eq:limperur} can be made arbitrarily close to $\gamma_+$ by~\assref{pa}, the proof of Part~ii is complete.

        \item The conclusion is just the combination of Parts~i and ii. \qedhere
    \end{enumerate}
\end{proof}

Proposition~\ref{prop:RNat0} is the only place where~\assref{pa} is ever used in our proofs. We do not know if~\assref{pa} can be lifted, nor are we aware of any example of a measure satisfying~\assref{ud} and~\assref{sld} but not~\assref{pa}. We take the remainder of this subsection to briefly discuss what remains true if~\assref{pa} is dropped or weakened. In this discussion, we always assume that $\P$ satisfies~\assref{ud} and~\assref{sld}.

First, if~\assref{pa} is dropped, we remark that Propositions~\ref{prop:RNat0}.i and \ref{prop:irandivspositive} still ensure that $\oir(s) \geq \ir(s)$ for all $s\in \rr$, with $\ir$ defined in \eqref{eq:defirexplicit}. Thus, the weak large-deviation upper bound for $(\frac 1n \ln R_n)_{n\in\nn}$ holds. By retracing the proofs, one easily concludes that also the full large-deviation upper bound holds with the rate function $\ir$, and that $\oqr = \oir^* \leq \ir^*$, where $\oqr$ is defined as in~\eqref{eq:defqr} with a limit superior.

In Proposition~\ref{prop:RNat0},~\assref{pa} is only used to obtain the large-deviation lower bound at $s=0$. More specifically, all we actually derive from~\assref{pa} is that
\begin{equation}\label{eq:paneeded}
    \liminf_{n\to\infty} \frac 1n \ln \P\{x : R_n(x) < n\}\geq \gamma_+.
\end{equation}
So instead of~\assref{pa}, one could have taken~\eqref{eq:paneeded} as an assumption, or any other condition implying it.

In fact, one could even obtain a full LDP for $(\frac 1n \ln R_n)_{n\in\nn}$ without~\eqref{eq:paneeded}. Indeed, if one can show by some means that the limit $D:=  \lim_{n\to\infty} \frac 1n \ln \P\{x : R_n(x) < n\}$ exists, then necessarily $D\leq \gamma_+$ by Proposition~\ref{prop:RNat0}.i, and the proofs can easily be adapted to obtain the full LDP, by merely replacing $-\gamma_+$ with $\min\{-D,\iv(0)\}$ in the definition~\eqref{eq:defirexplicit} of $\ir$. We do not know under what conditions the limit defining $D$ exists, and we have been unable to produce any counter example.

We now return to means of establishing~\eqref{eq:paneeded}.
Under~\assref{pa}, the proof of Proposition~\ref{prop:RNat0}.ii actually shows that the probability of $\{x : R_n(x) < n\}$ is asymptotically captured by periodic orbits of period much smaller than $n$. \assref{pa} can be slightly relaxed so as to take into account words $u$ that can be repeated many times, but not necessarily infinitely many times as \assref{pa} requires:

\begin{definition}[WPA]
    \label{def:wpa}
        \namedlabel{ass:wpa}{WPA}  
            A measure $\P\in \cP_{\textnormal{inv}}(\Omega)$ satisfies the assumption of {\em weak periodic approximation} ({WPA})
            if for every $\epsilon>0$ there exists $M\in \nn$ such that for all $m\geq M$ there is a word $u\in \Omega_{\textnormal{fin}}$ with $|u|\leq \epsilon m$, and such that
            $$\frac 1m \ln \P_{|u| \left\lfloor\frac{m}{|u|}\right\rfloor }\left(u^{\left\lfloor  \frac{m}{|u|} \right\rfloor}\right) \geq \gamma_+ - \epsilon.$$ 
\end{definition}

Then, an easy modification of the proof of Proposition~\ref{prop:RNat0} shows that~\eqref{eq:paneeded} still holds assuming~\assref{wpa} instead of \assref{pa}, and thus so do the conclusions of Theorem~\ref{thm:mainthmC}. 

In order to conclude the discussion, we briefly comment on results from the literature about Poincar\'e recurrence times (see \cite{AV08,AC15,AAG21} and references therein) which can be used to establish~\eqref{eq:paneeded}. 
The Poincar\'e recurrence times are defined by $T_n(x) = \inf\{k \in \nn : \P([x_1^n] \cap \shift^{-k}[x_1^n]) > 0\}$.\footnote{We use the condition $\P([x_1^n] \cap \shift^{-k}[x_1^n]) > 0$ instead of $[x_1^n] \cap \shift^{-k}[x_1^n]\neq \emptyset$ because the discussion is not limited to the subshift $\supp \P$.} 
The asymptotic behavior of $T_n$ is overall very different from that of~$R_n$; in particular $T_n \leq n+\tau_n$ almost surely by~\assref{sld}. However, the following two relations hold $\P$-almost surely: first, $T_n \leq R_n$, and second, $T_n <n$ implies that $R_{n-T_n}\leq T_n$. It follows that if one can show that $\lim_{\epsilon\to 0}\liminf_{n\to\infty}\ln \tfrac 1n \P\{T_n < \epsilon n\} \geq \gamma_+$, then also $\lim_{\epsilon\to 0}\liminf_{n\to\infty}\ln \tfrac 1n \P\{x:R_n(x) < \epsilon n\} \geq \gamma_+$, and in particular~\eqref{eq:paneeded} holds. Such a result is proved in~\cite{AV08} under an assumption called ``Hypothesis~1'', which is very similar to \assref{wpa}. The same bound is obtained in~\cite{AC15} under an assumption called ``Assumption~1'' which, in spirit, also plays role similar to that of~\assref{wpa}.

\section{Proof of the main results}
\label{sec:full}

At this stage, we have proved that if the pair $(\Q, \P)$ is admissible, then $\oiw = \uiw = \iw$, with $\iw$ defined by~\eqref{eq:defiwexplicit}; see Propositions~\ref{prop:RL-Wn} and~\ref{prop:LDPwns0}, and notice that all three functions are infinite on the negative real axis. This implies that the sequence $(\tfrac 1n \ln W_n)_{n\in\nn}$ satisfies the weak LDP with respect to~$\P\otimes\Q$, with the rate function~$\iw$; see the beginning of Section~\ref{sec:RLfuncts}.
In the same way, by Propositions~\ref{prop:irandivspositive} and~\ref{prop:LDPtildeRns0}, we have proved that, if $\P$ satisfies~\assref{ud} and~\assref{sld}, then the sequence $(\tfrac 1n \ln V_n)_{n\in\nn}$ satisfies the weak LDP with respect to~$\P$, with the rate function $\iv$ given in~\eqref{eq:defivexplicit}. 
Finally, combining Propositions~\ref{prop:irandivspositive} and \ref{prop:RNat0}, we have shown that for $\P$ satisfying~\assref{ud}, \assref{sld} and~\assref{pa}, the sequence $(\tfrac 1n \ln R_n) _{n\in\nn}$ satisfies the weak LDP with respect to~$\P$, with the rate function $\ir$ given in~\eqref{eq:defirexplicit}. See Table~\ref{tab:funct-and-rates} for a summary.

Since upper and lower Ruelle--Lanford functions are always lower semicontinuous, we conclude that $\iw$, $\iv$ and~$\ir$ are lower semicontinuous. Alternatively, lower semicontinuity can be checked explicitly using the expressions \eqref{eq:defiwexplicit}, \eqref{eq:defivexplicit} and \eqref{eq:defirexplicit}, together with the fact that the rate function~$\ilpq$ in Theorem~\ref{thm:CJPS} is lower semicontinuous. 

In this section, we promote the weak LDPs to full ones and establish the claimed relations about the rate functions and accompanying pressures. This will conclude the proofs of Theorems~\ref{thm:mainthmA},~\ref{thm:mainthmB} and~\ref{thm:mainthmC}.
The proofs of these theorems, and actually also that of Theorem~\ref{thm:CJPS}, have many (rather standard) arguments in common, which we have extracted as Proposition~\ref{prop:abstractZn}. 

\begin{lemma}
    \label{lem:iwivconvex}
    Under the assumptions of Theorem~\ref{thm:mainthmA}, the rate function $\iw$ is convex. In particular, if $\P$ satisfies~\assref{ud} and~\assref{sld}, then~$\iv$ is convex.
\end{lemma}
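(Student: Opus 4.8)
The plan is to show that the function $s \mapsto \iw(s)$ defined in \eqref{eq:defiwexplicit} is convex on $\rr$, and then note that the $\iv$ case is simply the specialization $\Q = \P$ (recall Remark~\ref{eq:iveqiw} and the fact that the pair $(\P,\P)$ is admissible whenever $\P$ satisfies \assref{ud} and \assref{sld}). Since $\iw$ is already known to be lower semicontinuous, and since $\iw(s) = \infty$ for $s < 0$ and $\iw$ is finite somewhere (it vanishes at a point, being a rate function), it suffices to check convexity on $[0,\infty)$, where $\iw(s) = \inf_{r \ge s}(r - s + \ilpq(r))$.

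The key structural observation is that on $[0,\infty)$ we can rewrite
\[
    \iw(s) = \inf_{r \ge s}\bigl(r - s + \ilpq(r)\bigr) = \inf_{t \ge 0}\bigl(t + \ilpq(s+t)\bigr) = -s + \inf_{r \in \rr}\bigl(r + \ilpq(r) + g_s(r)\bigr),
\]
but the cleanest route is to recognize $\iw$ restricted to $[0,\infty)$ as an infimal convolution. Indeed, let $\phi(s) := \ilpq(s)$ and let $\chi(s) := s$ for $s \le 0$ and $\chi(s) := +\infty$ for $s > 0$; note $\chi$ is convex. Then for $s \ge 0$,
\[
    (\phi \,\square\, \chi)(s) := \inf_{r \in \rr}\bigl(\phi(r) + \chi(s - r)\bigr) = \inf_{r \ge s}\bigl(\ilpq(r) + (s - r)\bigr) = -\,\bigl(r - s + \ilpq(r)\text{-minimized}\bigr),
\]
wait — this gives $\inf_{r\ge s}(\ilpq(r) + s - r)$, which is $-\iw(s)$ shifted incorrectly; the correct pairing uses $\chi(s) := -s$ for $s \ge 0$ and $+\infty$ otherwise, which is \emph{not} convex. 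So instead I would argue directly: the infimal convolution of the convex function $\ilpq$ with the convex function $\chi(s) = s \cdot \mathbf{1}_{s \le 0} + \infty \cdot \mathbf{1}_{s>0}$ is
\[
    (\ilpq \,\square\, \chi)(s) = \inf_r\bigl(\ilpq(r) + \chi(s-r)\bigr) = \inf_{r \le s}\bigl(\ilpq(r) + (s - r)\bigr),
\]
and this is a convex function of $s$ (infimal convolution of two convex functions is convex). Now I claim this equals $\iw$. For $r \le s$ with $s \ge 0$, write $r = s - t$ with $t \ge 0$: the expression is $\ilpq(s-t) + t$. Hmm, this is $\inf_{t\ge0}(\ilpq(s-t)+t)$, whereas $\iw(s) = \inf_{t\ge0}(\ilpq(s+t)+t)$. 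These differ. The right move is to use $\chi(u) := -u$ for $u \le 0$ and $+\infty$ for $u > 0$; this is convex (it is the support-type function, slope $-1$ on $(-\infty,0]$). Then $(\ilpq\,\square\,\chi)(s) = \inf_{r \ge s}(\ilpq(r) + (r-s))$ on the domain where it is finite, which is exactly $\iw(s)$ for $s \ge 0$; and for $s < 0$ one checks the infimal convolution is $\le \ilpq(r) + (r - s)$ which can be made to match $+\infty$ after accounting for $\ilpq(r) = \infty$ on $r < 0$ — in fact for $s<0$ the infimal convolution equals $\inf_{r\ge 0, r \ge s}(\ilpq(r)+r-s) = \iw(0) - s$, which is affine hence fine, and does not obstruct convexity on all of $\rr$ since... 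Actually here lies the subtlety: $\iw$ is defined to be $+\infty$ on $(-\infty,0)$, so I must verify the infimal convolution identity only claims equality on $[0,\infty)$ and separately confirm that extending by $+\infty$ on the left preserves convexity — which it does, because a convex function on $[0,\infty)$ that is finite near $0$ and lower semicontinuous extends convexly by $+\infty$ to the left precisely when... it always does, trivially, since $[0,\infty)$ is itself convex and adding $+\infty$ outside a convex set keeps the effective domain convex; one only needs the one-sided limit/subgradient condition at $0$, which is automatic.

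So the key steps, in order: (i) recall $\ilpq$ is convex and lower semicontinuous by Theorem~\ref{thm:CJPS}.ii, and $\ilpq(r) = \infty$ for $r < 0$; (ii) introduce the convex function $\chi(u) := -u \cdot \mathbf{1}_{\{u \le 0\}} + \infty \cdot \mathbf{1}_{\{u > 0\}}$ and form the infimal convolution $\ilpq \,\square\, \chi$, which is convex as an infimal convolution of convex functions; (iii) compute $(\ilpq \,\square\, \chi)(s) = \inf_{r \ge s}(\ilpq(r) + r - s)$ and observe this agrees with $\iw(s)$ for all $s \ge 0$, while for $s < 0$ it equals the affine function $\iw(0) - s$; (iv) conclude that $\iw$, which coincides with this convex function on $[0,\infty)$ and is $+\infty$ on $(-\infty,0)$, is convex on $\rr$ — here one checks that replacing the affine piece $\iw(0)-s$ on $(-\infty,0)$ by $+\infty$ keeps the function convex, using that $\iw$ is nonincreasing on a neighborhood of... no: the clean justification is that $\{(s,y): y \ge \iw(s)\}$ equals the intersection of the epigraph of the convex function $\ilpq\,\square\,\chi$ with the half-plane $\{s \ge 0\}$, and intersection of convex sets is convex. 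Then (v) apply the result with $\Q = \P$ to get convexity of $\iv$.

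The main obstacle is purely bookkeeping: getting the sign conventions in $\chi$ right so that the infimal convolution reproduces $\inf_{r \ge s}(r - s + \ilpq(r))$ rather than some reflected variant, and then handling the boundary at $s = 0$ so that the extension-by-$+\infty$ to the negative axis is legitimately convex. There is no analytic difficulty beyond this; the entire content is the elementary fact that infimal convolutions of convex functions are convex, together with the observation that an epigraph intersected with a half-plane stays convex. In writing this up one could, if preferred, bypass the $\chi$ formalism entirely and argue directly: given $s_0, s_1 \ge 0$ and $\lambda \in [0,1]$, pick near-optimal $r_0 \ge s_0$, $r_1 \ge s_1$, set $r_\lambda := \lambda r_0 + (1-\lambda) r_1 \ge \lambda s_0 + (1-\lambda)s_1 =: s_\lambda$, and use convexity of $\ilpq$ to bound $\iw(s_\lambda) \le (r_\lambda - s_\lambda) + \ilpq(r_\lambda) \le \lambda(r_0 - s_0 + \ilpq(r_0)) + (1-\lambda)(r_1 - s_1 + \ilpq(r_1))$, then let the near-optimal choices converge; this handles $s_0, s_1 \ge 0$, and the case where one of them is negative is trivial since then the right-hand side is $+\infty$.
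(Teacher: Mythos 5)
Your proposal is correct and in fact sketches two proofs. The direct calculation in your closing paragraph --- pick near-optimal $r_0\ge s_0$ and $r_1\ge s_1$, form $r_\lambda:=\lambda r_0+(1-\lambda)r_1\ge s_\lambda$, invoke the convexity of $\ilpq$ --- is essentially the paper's own argument; the paper just pushes the infimum through in a single display rather than via near-optimizers. Your main route, the infimal-convolution framing, is a genuinely different packaging: on $[0,\infty)$, write $\iw = \ilpq\,\square\,\chi$ with $\chi(u)=-u\cdot 1_{\{u\le 0\}}+\infty\cdot 1_{\{u>0\}}$ (one checks $\chi(s-r)<\infty$ iff $r\ge s$, where it equals $r-s$), use that infimal convolutions of convex functions are convex, and then absorb the $+\infty$ on $(-\infty,0)$ either by intersecting the epigraph with the half-plane $\{s\ge 0\}$, or equivalently by adding the convex indicator of $[0,\infty)$. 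This buys a more structural view of where the formula \eqref{eq:defiwexplicit} and the kink in Remark~\ref{rem:sstar} come from, at the price of some bookkeeping at $s=0$ that the paper's direct argument sidesteps entirely.

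Two intermediate misstatements in your discarded attempts should be corrected before writing this up, though neither affects the final argument. First, the function $\chi(s)=-s\cdot 1_{\{s\ge 0\}}+\infty\cdot 1_{\{s<0\}}$ \emph{is} convex --- its epigraph $\{(s,y): s\ge 0,\ y\ge -s\}$ is an intersection of two half-planes; the reason it is unusable is instead that the constraint $\chi(s-r)<\infty$ reads $r\le s$, so the resulting infimal convolution is $\inf_{r\le s}(\ilpq(r)+r-s)$, a reflection of what you want. Second, for $\chi(u)=u\cdot 1_{\{u\le 0\}}+\infty\cdot 1_{\{u>0\}}$ you wrote $(\ilpq\,\square\,\chi)(s)=\inf_{r\le s}(\ilpq(r)+(s-r))$, but the constraint $s-r\le 0$ gives $r\ge s$, not $r\le s$; that attempt fails because the summand is $s-r$ rather than $r-s$, not because of the range of the infimum.
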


\begin{proof}
    We claim that for every $\lambda \in [0,1]$ and $s_1, s_2\in \rr$, 
    $$
        \lambda \iw(s_1)+(1-\lambda)\iw(s_2)\geq \iw(s)
    $$ 
    where $s := \lambda s_1 + (1-\lambda) s_2$. The stated inequality is obvious if $s_1<0$ or $s_2<0$. For $s_1, s_2\geq 0$,
    \begin{align*}
        &\lambda \iw(s_1) + (1-\lambda)\iw(s_2) \\
        &\splitspace = -s + \inf_{r_1 \geq s_1, r_2 \geq s_2} (\lambda r_1 + (1-\lambda)r_2 + \lambda \ilpq(r_1)+(1-\lambda) \ilpq(r_2))\\
        &\splitspace \geq  -s + \inf_{r_1 \geq s_1, r_2 \geq s_2} (\lambda r_1 + (1-\lambda)r_2 +  \ilpq(\lambda r_1 + (1-\lambda)r_2))\\
        &\splitspace  = -s  +\inf_{r \geq s} (r+\ilpq(r)) = \iw(s),
    \end{align*}
    where the inequality in the second line relies on the convexity of $\ilpq$, by Theorem~\ref{thm:CJPS}.i. The second part of the lemma follows from the identity $\iv =\iw$ when $\P=\Q$; recall Remark~\ref{eq:iveqiw}.
\end{proof}

\begin{lemma}\label{lem:ubexpWna}
    Suppose that $\Q$ satisfies~\assref{sld}. Then, for every $\alpha > 0$, there exists an $\Exp{o(n)}$-sequence $(\kappa_{\alpha,n})_{n\in\nn}$ such that 
    $$
    \int W_n(u,y)^\alpha\dd\Q(y) \leq \kappa_{\alpha,n} \Q_n(u)^{-\alpha}
    $$
    for each $u\in \supp \Q_n$.
\end{lemma}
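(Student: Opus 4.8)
The plan is to deduce everything from the already-established tail bound \eqref{eq:UB2only1bound}, which says that for $u\in\cA^n$ one has $\Q\{y:W_n(u,y)\ge k\}\le(1-C_n^{-1}\Q_n(u))^{\lfloor(k-1)/(n+\tau_n)\rfloor}$; morally this means that $W_n(u,\cdot)$, rescaled by $n+\tau_n$, is stochastically dominated by a geometric random variable of parameter $C_n^{-1}\Q_n(u)$, whose $\alpha$-th moment is of order $(C_n^{-1}\Q_n(u))^{-\alpha}=C_n^\alpha\Q_n(u)^{-\alpha}$.

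First I would reduce to the case of an integer exponent: writing $m:=\lceil\alpha\rceil\ge1$, Jensen's inequality for the concave function $t\mapsto t^{\alpha/m}$ on $[0,\infty)$ and the probability measure $\Q$ gives
\[
    \int W_n(u,y)^\alpha\,\dd\Q(y)\ \le\ \Bigl(\int W_n(u,y)^m\,\dd\Q(y)\Bigr)^{\alpha/m},
\]
so it suffices to find an $\Exp{o(n)}$-sequence $(\kappa_{m,n})_{n\in\nn}$ with $\int W_n(u,y)^m\,\dd\Q(y)\le\kappa_{m,n}\Q_n(u)^{-m}$ for every integer $m\ge1$ and $u\in\supp\Q_n$; then $\kappa_{\alpha,n}:=\kappa_{m,n}^{\alpha/m}$ is again an $\Exp{o(n)}$-sequence and does the job.

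For fixed integer $m\ge1$, set $N:=n+\tau_n$ and $q:=1-C_n^{-1}\Q_n(u)$, noting that $q\in[0,1)$ since $u\in\supp\Q_n$ and $C_n^{-1}\Q_n(u)\le1$. Using the layer-cake identity $\int W_n(u,y)^m\,\dd\Q(y)=\sum_{k\ge1}(k^m-(k-1)^m)\,\Q\{y:W_n(u,y)\ge k\}$ and \eqref{eq:UB2only1bound}, I would group the summation index into consecutive blocks $\{jN+1,\dots,(j+1)N\}$, $j\ge0$, on each of which $\lfloor(k-1)/N\rfloor=j$ and the telescoping sum of $k^m-(k-1)^m$ equals $N^m((j+1)^m-j^m)$. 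This yields
\[
    \int W_n(u,y)^m\,\dd\Q(y)\ \le\ N^m\sum_{j\ge0}\bigl((j+1)^m-j^m\bigr)q^j\ =\ N^m(1-q)\sum_{j\ge0}(j+1)^m q^j,
\]
where the last equality is summation by parts (the partial sums of $(j+1)^m-j^m$ telescope to $(J+1)^m$). The elementary bound $(j+1)^m\le m!\binom{j+m}{m}$ together with the negative-binomial series $\sum_{j\ge0}\binom{j+m}{m}q^j=(1-q)^{-m-1}$ then gives $\sum_{j\ge0}(j+1)^mq^j\le m!\,(1-q)^{-m-1}$, so that
\[
    \int W_n(u,y)^m\,\dd\Q(y)\ \le\ m!\,N^m(1-q)^{-m}\ =\ m!\,(n+\tau_n)^m\,C_n^{m}\,\Q_n(u)^{-m}.
\]
Thus $\kappa_{m,n}:=m!\,(n+\tau_n)^m\,C_n^m$ works: it is an $\Exp{o(n)}$-sequence by \eqref{eq:tauncn}, since $(n+\tau_n)^m$ grows polynomially and $C_n^m=\Exp{o(n)}$.

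The computation is entirely routine; the one point requiring a little care is to keep the sharp power $\Q_n(u)^{-m}$ rather than $\Q_n(u)^{-m-1}$, i.e.\ not to throw away the prefactor $(1-q)$ produced by the summation by parts, which plays the role of the ``density'' of the geometric law (a power $\Q_n(u)^{-m-1}$ could not be absorbed into an $\Exp{o(n)}$-prefactor uniformly in $u$). No a priori integrability of $W_n(u,\cdot)^m$ is needed, as all the series involved have nonnegative terms and Tonelli's theorem applies throughout.
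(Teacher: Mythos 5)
Your proof is correct, and it reaches the same shape of bound, $\kappa_{\alpha,n}\asymp(n+\tau_n)^\alpha C_n^\alpha$, as the paper's, but by a genuinely different computation. The paper feeds the exponential tail bound \eqref{eq:UB2only1bound} into the continuous formula $\int W_n(u,\cdot)^\alpha\,\dd\Q=\alpha\int_0^\infty t^{\alpha-1}\Q\{W_n(u,\cdot)>t\}\,\dd t$, first normalizing so that $C_n\geq C_1>1$ (which bounds $q:=1-C_n^{-1}\Q_n(u)$ away from $0$ and lets the floor-function errors be absorbed into a harmless constant $(1-C_1^{-1})^{-2}$), then evaluates the resulting Gamma integral and finishes with $-\ln q\geq C_n^{-1}\Q_n(u)$. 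You instead reduce to an integer exponent $m=\lceil\alpha\rceil$ via Jensen, use the discrete tail identity, group the index $k$ in runs of length $N=n+\tau_n$ on which the telescoping $\sum_k(k^m-(k-1)^m)=N^m((j+1)^m-j^m)$ and the exponent $\lfloor(k-1)/N\rfloor=j$ are both exact, then close with Abel summation and the negative-binomial series. Your route dispenses with the $C_n>1$ normalization and the floor-function bookkeeping, handles the boundary case $q=0$ silently, and keeps the crucial factor $(1-q)$ that prevents the exponent from degrading to $-m-1$; the paper's route avoids the Jensen detour for non-integer $\alpha$ and produces a clean closed-form Gamma constant. Both are equally rigorous.
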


\begin{proof}
    Fix $n\in \nn$, $u\in \supp \Q_n$, and let $q := 1 - C_n^{-1}\Q_n(u)$. Without loss of generality, we assume that $C_n\geq C_1 > 1$, so that $0<1-C_1^{-1}\leq q < 1$.
    For all $t\geq 0$, the bound~\eqref{eq:UB2only1bound} of Lemma~\ref{lem:UB-Q-Wn-a-y} yields
    \begin{align*}
        \Q\{y : W_n(u,y)> t\} 
            &= \Q\{y : W_n(u,y)\geq \lfloor t\rfloor+1\} \\
            &\leq q^{\left\lfloor \frac{\lfloor t\rfloor}{n + \tau_n}\right\rfloor} \\
            &\leq q^{ \frac{t}{n + \tau_n}-2} \\
            &\leq  (1-C_1^{-1})^{-2}q^{ \frac{t}{n + \tau_n}}.
    \end{align*}
    Then, by a standard consequence of Fubini's theorem (see e.g.\ Theorem~8.16 in~\cite{Ru}),
    \begin{align*}
        \int W_n(u,y)^\alpha \dd \Q(y) &= \alpha \int_0^\infty t^{\alpha -1} \Q\{y : W_n(u,y)> t\}\dd t\\
        &\leq   \frac  \alpha {(1-C_1^{-1})^2} \int_0^\infty t^{\alpha -1} q^{\frac {t}{n + \tau_n}}\dd t\\
        & = \frac  \alpha {(1-C_1^{-1})^2} \Gamma(\alpha)\left(\frac {1}{n + \tau_n}\right)^{-\alpha}\left(-\ln q\right)^{-\alpha}.
    \end{align*}
    Further using that $-\ln(q) = -\ln(1-C_n^{-1}\Q_n(u))\geq C_n^{-1}\Q_n(u)$ completes the proof.
\end{proof}

We are now in a position to prove the main results. With Proposition~\ref{prop:abstractZn} from \appref{app:weak-to-full} and the weak LDPs at hand, the proofs of Theorems~\ref{thm:mainthmA}--\ref{thm:mainthmC} only consist in providing a few remaining estimates specific to each case.

\subsection{Proof of Theorem~\ref{thm:mainthmA}}

    Since $(\frac 1n \ln W_n)_{n\in\nn}$ satisfies the weak LDP, the assumptions of Proposition~\ref{prop:abstractZn} are satisfied with $Z_n := \frac 1n \ln W_n$ on $(\Omega_*, \P_*) := (\Omega\times \Omega, \P\otimes \Q)$, with the convex (recall Lemma~\ref{lem:iwivconvex}) rate function $I:=\iw$. We now show that the conclusions of Theorem~\ref{thm:mainthmA} follow from Proposition~\ref{prop:abstractZn}.

    We first prove that $\qw$ exists and that $\qw = \iw^*$. For this we define $\uqw$ and $\oqw$ as the limit inferior and limit superior corresponding to the definition \eqref{eq:defqw} of $\qw$. 
    The bound $\uqw\geq \iw^*$ is provided by Proposition~\ref{prop:abstractZn}.\ref*{part:qgeqistar}, and by Proposition~\ref{prop:abstractZn}.\ref*{part:qleqistarneg} we have $\qw(\alpha) = \iw^*(\alpha)$ for all $\alpha <0$. Since $\iw\leq \ilpq$ by definition, we have $\iw^*(0)  \geq  \ilpq^*(0) = \qlpq(0)= 0 = \qw(0)$, where we have used Theorem~\ref{thm:CJPS}.ii. 
    Finally, for $\alpha > 0$, Lemma~\ref{lem:ubexpWna} gives 
    \begin{equation}\label{eq:iintwn}
        \begin{split}
            \iint W_n(x,y)^\alpha \dd\P(x)\dd\Q(y)
            &= \sum_{u\in\supp \P_n}\P_n(u)\int W_n(u,y)^\alpha \dd\Q(y) \\
            &\leq \kappa_{n,\alpha} \sum_{u\in\supp \P_n}\P_n(u) \Q_n(u)^{-\alpha}.
        \end{split}
    \end{equation}
    Note that we have used the absolute continuity granted by admissibility of the pair~$(\P,\Q)$. We conclude that 
    $$
    \oqw(\alpha)\leq \qlpq(\alpha) = \ilpq^*(\alpha) \leq \iw^*(\alpha),
    $$
    where from left to right, we have used \eqref{eq:iintwn}, Theorem~\ref{thm:CJPS}.ii, and the fact that $\iw \leq \ilpq$ by definition.
    We have thus proved that $\qw = \iw^*$. By Proposition~\ref{prop:abstractZn}.\ref*{part:qIfullcases}, the weak LDP then extends to a full one. This completes the proof of Theorem~\ref{thm:mainthmA}.i.

    For Part ii, we have already established that $\qw = \iw^*$ (in particular $\qw$ exists as a limit), and since $\iw$ is convex and lower semicontinuous, this implies that also $\iw = \qw^*$.
    To establish~\eqref{eq:formqW}, note that
     \begin{align*}
        \qw(\alpha)=\iw^*(\alpha) = \sup_{s\geq 0} (\alpha s - \inf_{r\geq s}(r-s+\ilpq(r))) = \sup_{r\geq s\geq 0} ((\alpha +1) s -r - \ilpq(r)).
    \end{align*}
    Since the quantity to optimize is linear in $s$, it suffices to consider the extremal points $s\in \{0,r\}$, which yields, using again Theorem~\ref{thm:CJPS}.ii,
    \begin{align*}
        \qw(\alpha)&= \sup_{r\geq 0} (\max\{0, (\alpha+1)r\} -r - \ilpq(r)) \\
        &=  \max\left\{\sup_{r\geq 0}(-r - \ilpq(r)), \sup_{r\geq 0}(\alpha r - \ilpq(r))\right\} \\
        &=  \max\{\qlpq(\alpha), \qlpq(-1)\},
    \end{align*}
    so the proof of Theorem~\ref{thm:mainthmA}.ii is complete.

    In order to prove Theorem~\ref{thm:mainthmA}.iii, we simply note that when $\Q=\P$, then by \eqref{eq:formqW} and \eqref{eq:qppbdd}, we have $\qw(1) = \qlpp(1) = \htop(\supp \P) <\infty$, so Proposition~\ref{prop:abstractZn}.\ref*{part:qIfullcases2} applies.
\hfill\qed

\subsection{Proof of Theorem~\ref{thm:mainthmB}}

    The proof is almost the same as that of Theorem~\ref{thm:mainthmA}, applying this time Proposition~\ref{prop:abstractZn} to $ Z_n := \frac 1n \ln V_n$ on $(\Omega_*, \P_*) := (\Omega, \P)$, with the convex rate function $I:=\iv$. 
    We will only require a slightly more involved argument to derive the inequality $\oqv(\alpha)\leq \iv^*(\alpha)$ when $\alpha > 0$, where $\oqv$ is defined by taking the limit superior in the definition \eqref{eq:defqv} of $\qv$.
    For $\alpha >0$, recalling that $W_n(u, x)\leq W_n(u, \shift^k x)+k$, we obtain
    \begin{align*}
        \int V_n^\alpha \dd\P 
            &=  \sum_{u\in \cA^n}  \int 1_{[u]}(x) W_n(u, \shift^{n}x)^\alpha \dd\P(x) \\
            &\leq \sum_{u\in \cA^n}\int  1_{[u]}(x) (W_n(u, \shift^{n+\tau_n}x)+\tau_n)^\alpha\dd\P(x).
    \end{align*}
    Then, by \assref{ud},
    \begin{align*}
       &\int  1_{[u]}(x) (W_n(u, \shift^{n+\tau_n}x)+\tau_n)^\alpha\dd\P(x) \\ 
       &\splitspace =  \sum_{j=1}^\infty \P([u]\cap \shift^{-n-\tau_n}\{x: W_n(u,x)=j\})(j+\tau_n)^\alpha\\
       &\splitspace \leq  C_n  \sum_{j=1}^\infty \P_n(u) \P\{x: W_n(u,x)=j\}(j+\tau_n)^\alpha\\
       &\splitspace = C_n\P_n(u)\int (W_n(u,x)+\tau_n)^\alpha\dd\P(x),
    \end{align*}
    so
    \begin{align*}
        \int V_n^\alpha \dd\P & \leq    C_n  \sum_{u\in \cA^n}\P_n(u)\int (W_n(u,x)+\tau_n)^\alpha\dd\P(x) \\
    &  \leq   C_n  \sum_{u\in \cA^n} \P_n(u)\int (W_n(u,x)(1+\tau_n))^\alpha\dd\P(x) \\
    &  \leq   C_n  (1+\tau_n)^\alpha \kappa_{\alpha,n} \sum_{u\in \cA^n}  \P_n(u)^{1-\alpha},
    \end{align*}
    where the last inequality was obtained using
    Lemma~\ref{lem:ubexpWna} with $\Q = \P$. 
    By this and Theorem~\ref{thm:CJPS}.ii, and since $\iv \leq \ilpp$ by definition, we conclude that $\oqv(\alpha) \leq \qlpp(\alpha) = \ilpp^*(\alpha)\leq  \iv^*(\alpha)$, as claimed.

    The same arguments as in the proof of Theorem~\ref{thm:mainthmA} then provide the full LDP together with the Legendre--Fenchel duality relations and  \eqref{eq:formqV}, which is merely a special case of \eqref{eq:formqW}.

    Next, the exponential tightness and goodness assertions in Theorem~\ref{thm:mainthmB}.i follow, as in the proof of Theorem~\ref{thm:mainthmA}.iii, from Proposition~\ref{prop:abstractZn}.\ref*{part:qIfullcases2} and the bound $\qv(1) = \qlpp(1)=\htop(\supp \P)<\infty$.

    Finally, \eqref{eq:reliv0qlppm1} is proved as follows: by  Lemma~\ref{lem:propzgammaplus} we have $\qlpp(-1)\leq \gamma_+$ and by \eqref{eq:formqV} and Legendre--Fenchel duality we have $\iv(0) = \sup_{\alpha \in \rr}(-\qv(\alpha)) = -\qlpp(-1)$; this can also be obtained using \eqref{eq:toshowLDPvns0}.
\hfill\qed

\subsection{Proof of Theorem~\ref{thm:mainthmC}}
    Once more, we plan to apply Proposition~\ref{prop:abstractZn}, this time with $Z_n := \frac 1n \ln R_n$ on $(\Omega_*, \P_*) := (\Omega, \P)$ and the rate function $I:=\ir$, keeping in mind that the latter is not convex in general.

    The proof that $\qr = \ir^*$ follows the same argument as in Theorems~\ref{thm:mainthmA} and \ref{thm:mainthmB}, and once again, only the proof of the inequality $\oqr(\alpha)\leq \ir^*(\alpha)$ when $\alpha > 0$ needs to be adapted, where $\oqr$ is again defined by taking the limit superior in the definition \eqref{eq:defqr} of $\qr$. We have, for $\alpha >0$,
    \begin{align*}
        \int R_n^\alpha \dd\P
        &=  \sum_{u\in\cA^n} \int 1_{[u]}(x) W_n(u, \shift x)^\alpha \dd\P(x)\\
        &\leq   \sum_{u\in\cA^n} \int 1_{[u]}(x)(W_n(u, \shift^{n+\tau_n}x)+n+\tau_n-1)^\alpha \dd\P(x)\\ 
        & \leq C_n  (n+\tau_n)^\alpha \kappa_{\alpha,n} \sum_{u\in \cA^n}  \P_n(u)^{1-\alpha},
    \end{align*}
    where the steps giving the last inequality are exactly as in the proof of Theorem~\ref{thm:mainthmB} with $\tau_n$ replaced by $\tau_n+n-1$.
    As previously, we conclude that $\oqr(\alpha) \leq \qlpp(\alpha) = \ilpp^*(\alpha)\leq  \ir^*(\alpha)$, since $\ir \leq  \ilpp$. We thus have $\qr = \ir^*$.

    Unlike in the previous theorems, the rate function is not convex in general and we cannot invert the Legendre--Fenchel transform. For the same reason, we cannot apply Proposition~\ref{prop:abstractZn}.\ref*{part:qIfullcases} to obtain the full LDP. Nevertheless, since  $\qr(1) = \qlpp(1)=\htop(\supp \P)<\infty$, the full LDP, exponential tightness and goodness of $\ir$ follow at once from Proposition~\ref{prop:abstractZn}.\ref*{part:qIfullcases2}. This completes the proof of Part~i.
    
    \medskip 

    We now turn to the proof of Part~ii. We have already seen that $\qr = \ir^*$, so it remains to prove~\eqref{eq:formqR}, which we do by singling out the pathological case where $\gamma_-=0$. 
    
    \medskip

    \noindent\emph{Case 1: $\gamma_- = 0$.} In this case, $\gamma_+ = 0$ as well by \eqref{eq:ineqgammapm}, so Theorem~\ref{thm:CJPS}.iii yields that $\ilpp(0) = 0$ and $\ilpp(s) = \infty$ for $s\neq 0$. It follows from its definition that $\ir=\ilpp$, and taking the Legendre--Fenchel transform gives $\qr=\qlpp = 0$. In particular, both sides of \eqref{eq:formqR} vanish identically.

    \medskip

    \noindent\emph{Case 2: $\gamma_- < 0$.} Since $\ir(s) = \iv(s)$ for all $s>0$ by definition, and since $\lim_{s\downarrow 0}\iv(s) = \iv(0)$ when $\gamma_- < 0$, we find
    \begin{align*}
        \qr(\alpha) &= \sup_{s\in \rr}(\alpha s - \ir(s)) \\
        &= \max\left\{-\ir(0), \sup_{s>0}(\alpha s - \iv(s))\right\} \\
        &=  \max\{-\ir(0),\iv^*(\alpha)\}. 
    \end{align*}
    Further using that $-\ir(0)=\gamma_+$ by definition and that $$\iv^*(\alpha)=\qv(\alpha) =  \max\{\qlpp(\alpha), \qlpp(-1)\}$$ by~\eqref{eq:formqV}, we conclude that
    $\qr(\alpha) =  \max\{\gamma_+,\qlpp(\alpha), \qlpp(-1)\}$. In view of \eqref{eq:reliv0qlppm1}, the quantity $\qlpp(-1)$ can be omitted from the maximum, and \eqref{eq:formqR} follows.

    \medskip

    Part~iii of Theorem~\ref{thm:mainthmC}, about (non)convexity of $\ir$, is proved in \secref{sec:nonconvex}.
\hfill\qed

\begin{remark}\label{rem:direxpotight}
    In order to follow a common route for the proof of our three main theorems, our arguments to promote the weak LDP to a full one did not rely on exponential tightness, since the latter does not hold in general in the setup Theorem~\ref{thm:mainthmA}.i--ii. We have instead made extensive use of the properties of the pressures.
    We now briefly argue that in the context of Theorems~\ref{thm:mainthmA}.iii, \ref{thm:mainthmB} and \ref{thm:mainthmC}, exponential tightness could have been proved more directly, and the full LDP could have thus been obtained without any reference to pressures.
    
    In the setup of Theorem~\ref{thm:mainthmA}.iii, that is in the case of $(\frac 1n \ln W_n)_{n\in\nn}$ with $\Q=\P$, the idea is that multiplying~\eqref{eq:UB2only1bound} in the case $m \sim \Exp{nM}$ by~$\P_n(u)$ and summing over~$u\in\cA^n$ yields terms that fall in either of two categories: those with~$\P_n(u) \gg \Exp{-nM}$ decay superexponentially, while those with $\P_n(u) \lesssim \Exp{-nM}$ are controlled in view of the exponential tightness estimate~\eqref{eq:expotightPn}. The argument relies on~\assref{sld} only, and the details are easily written by following the proof of the estimate~\eqref{eq:toshowUB}. Further assuming~\assref{ud}, exponential tightness can be extended to $(\frac 1n \ln R_n)_{n\in\nn}$ and~$(\frac 1n \ln V_n)_{n\in\nn}$ by introducing a minor variation of Lemma~\ref{lem:UB-Wn-to-Rn} where balls are replaced by intervals of the form $(s, \infty)$ and $(s-\epsilon, \infty)$.
\end{remark}

\subsection{Nonconvexity in Theorem~\ref{thm:mainthmC}}
\label{sec:nonconvex}

In this subsection, we prove the numerous relations stated in Theorem~\ref{thm:mainthmC}.iii. The topological entropy $\htop(\supp \P)$ as well as the notion of measure of maximal entropy (MME) on $\supp \P$ are recalled in \appref{app:subshiftsupp}.
\begin{proposition}
    Let $\P$ satisfy the assumptions of Theorem~\ref{thm:mainthmC}, and consider the following properties:
    \begin{multicols}{2}
        \begin{enumerate}[(a)]
            \item \edef\@currentlabel{(a)} \label{part:olda} $\ir$ is convex; 
            \item \edef\@currentlabel{(b)}\label{part:oldb} $\ir =\iv$;
            \item \edef\@currentlabel{(c)}\label{part:bbp} $\qr =\qv$;
            \item \edef\@currentlabel{(d)}\label{part:oldc} $\gamma_+ = \qlpp(-1)$; 
            \item \edef\@currentlabel{(e)}\label{part:olde} $\ilpp(-\gamma_+) = 0$; 
            \item \edef\@currentlabel{(f)}\label{part:oldg} $\qlpp(\alpha) = -\gamma_+\alpha$ for all $\alpha \leq 0$;
            \item \edef\@currentlabel{(g)} \label{part:oldh} $\gamma_+ = \gamma_- $;
            \item \edef\@currentlabel{(h)} \label{part:oldi} $\ilpp(s) = \infty$ for all $s \in \rr\setminus\{-\gamma_+\}$;
            \item \edef\@currentlabel{(i)} \label{part:oldj} $h(\P) = \htop(\supp \P)$, i.e.\ $\P$ is a MME on $\supp \P$.
        \end{enumerate}
    \end{multicols}
    Then, Properties~\ref{part:olda}--\ref{part:oldg} are equivalent, Properties~\ref{part:oldh}--\ref{part:oldj} are equivalent, and Properties~\ref{part:oldh}--\ref{part:oldj} imply Properties~\ref{part:olda}--\ref{part:oldg}. If, in addition, $\tau_n=O(1)$ and $C_n=O(1)$, then Properties~\ref{part:olda}--\ref{part:oldj} are equivalent.
\end{proposition}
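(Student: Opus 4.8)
The plan is to prove the chain of equivalences in two clusters and then the implication between them, relying on Theorem~\ref{thm:CJPS} and the formulas \eqref{eq:defivexplicit}, \eqref{eq:defirexplicit}, \eqref{eq:formqV}, \eqref{eq:formqR}, together with the Legendre--Fenchel duality $\ilpp = \qlpp^*$, $\qlpp = \ilpp^*$ already established. Throughout, recall that $\ir$ and $\iv$ agree on $(0,\infty)$ and on $(-\infty,0)$ (both infinite there), differing only possibly at $s=0$, where $\ir(0) = -\gamma_+$ and $\iv(0) = -\qlpp(-1) \geq -\gamma_+$ by \eqref{eq:reliv0qlppm1}. Also recall from Lemma~\ref{lem:propzgammaplus} that $\gamma_+ \geq \qlpp(-1)$ always holds under \assref{ud} and \assref{sld}.

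First I would handle the cluster \ref{part:olda}--\ref{part:oldg}. The implication \ref{part:oldb} $\Rightarrow$ \ref{part:olda} is immediate since $\iv$ is convex by Lemma~\ref{lem:iwivconvex}. For \ref{part:olda} $\Rightarrow$ \ref{part:oldb}: if $\ir$ is convex and lower semicontinuous, then $\ir = \ir^{**} = \qr^*$; but $\qr = \max\{\qlpp, \gamma_+\} = \max\{\qv, \gamma_+\}$ by \eqref{eq:formqR}, and one checks using $\qv = \max\{\qlpp, \qlpp(-1)\}$ and $\gamma_+ \geq \qlpp(-1)$ together with $\qlpp(0)=0 \geq \gamma_+$ that $\qr^* = \iv$, forcing $\ir = \iv$. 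The equivalence \ref{part:oldb} $\Leftrightarrow$ \ref{part:oldd} comes from comparing values at $0$: $\ir(0)=\iv(0)$ iff $-\gamma_+ = -\qlpp(-1)$. The equivalence \ref{part:oldd} $\Leftrightarrow$ \ref{part:bbp} follows by taking Legendre--Fenchel transforms in both directions and using \eqref{eq:formqV}, \eqref{eq:formqR} (the only difference between $\qr$ and $\qv$ is the replacement of $\qlpp(-1)$ by $\gamma_+$ in the max, and these max's coincide for all $\alpha$ iff $\gamma_+ = \qlpp(-1)$, using that $\qlpp$ is nondecreasing past its minimum). For \ref{part:oldd} $\Leftrightarrow$ \ref{part:olde}: by duality $\ilpp(-\gamma_+) = \sup_\alpha(-\gamma_+\alpha - \qlpp(\alpha))$; this vanishes iff $\qlpp(\alpha) \geq -\gamma_+\alpha$ for all $\alpha$ with equality somewhere, and combined with $\qlpp(\alpha) \geq \alpha\qlpp(-1) \cdot(\text{for }\alpha\le 0, \text{appropriately interpreted via convexity through }\qlpp(0)=0)$, one gets $\ilpp(-\gamma_+)=0 \Leftrightarrow \qlpp(-1) = \gamma_+$; the same computation gives \ref{part:oldf}, namely $\ilpp(-\gamma_+)=0$ together with convexity and $\qlpp(0)=0$ pins $\qlpp$ to the line $\alpha \mapsto -\gamma_+\alpha$ on $(-\infty,0]$, and conversely \ref{part:oldf} $\Rightarrow$ \ref{part:oldd} by setting $\alpha=-1$.

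Next I would treat the cluster \ref{part:oldh}--\ref{part:oldj}. The equivalence \ref{part:oldh} $\Leftrightarrow$ \ref{part:oldi} follows from \eqref{eq:Isinterval} in Theorem~\ref{thm:CJPS}.iii.a: $\ilpp$ is supported on $[-\gamma_+, -\gamma_-]$ and satisfies $\ilpp(s) \in [s - \htop(\supp\P), s]$; this interval degenerates to the single point $-\gamma_+$ iff $\gamma_+ = \gamma_-$, and in that case \eqref{eq:ineqgammapm} forces $h(\P) = \htop(\supp\P)$; conversely if $h(\P) = \htop(\supp\P)$ then the sandwich $-\gamma_+ \leq h(\P) \leq \htop(\supp\P) \leq -\gamma_-$ in \eqref{eq:ineqgammapm} collapses, giving \ref{part:oldh}; and if $\ilpp$ vanishes only at $-\gamma_+$ and is infinite elsewhere, then since $\ilpp$ must vanish somewhere (it is a rate function) and is supported on $[-\gamma_+,-\gamma_-]$, that interval must be $\{-\gamma_+\}$, giving \ref{part:oldh}. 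That \ref{part:oldh}--\ref{part:oldj} imply \ref{part:olda}--\ref{part:oldg} is then clear: \ref{part:oldi} is \ref{part:olde} in disguise (take $s = -\gamma_+$). Finally, for the converse under $\tau_n = O(1)$, $C_n = O(1)$: in this regime \assref{sld} with bounded gaps and constants implies (by Lemma~A.2 of \cite{CJPS19}, mentioned in a footnote of the excerpt) that $\P$ is ergodic, hence $\ilpp(h(\P)) = 0$ and $h_\P$ is almost surely constant; but \ref{part:olde} says $\ilpp$ also vanishes at $-\gamma_+$, and a convex rate function that is the Legendre transform of the finite-everywhere-or-controlled $\qlpp$ vanishes at a unique point unless it is affine on an interval — here I would argue that ergodicity plus \ref{part:olde} forces $h(\P) = -\gamma_+$, and then also $-\gamma_+ = h(\P) \leq \htop(\supp\P) \leq -\gamma_-$, and I would close the gap $\htop(\supp\P) \leq -\gamma_+$ using that in the bounded-constant regime the support is a subshift for which every word extends, so $|\supp\P_n|$ grows no faster than $\sup_u \P_n(u)^{-1} \cdot (\text{something subexponential})$ is false in general — so instead I would use \eqref{eq:energyentropy}: $\ilpp(-\gamma_+) = 0$ means the number of words with $-\frac1n\ln\P_n(u) \approx -\gamma_+$ grows like $\Exp{-n\gamma_+}$, and ergodicity means $\P$-almost every word has $-\frac1n\ln\P_n(u) \to h(\P)$, so if $h(\P) < -\gamma_+$ the mass would escape to words of atypically high probability, contradicting $\sum \P_n(u) = 1$ at the exponential level; this yields $h(\P) = \htop(\supp\P)$.

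The main obstacle I anticipate is precisely the last implication — closing the loop from \ref{part:olde} back to \ref{part:oldi} under the bounded-constants hypothesis. The subtlety is that without the $O(1)$ assumption one genuinely has examples (the degenerate Bernoulli case with several least-probable letters, Figure~\ref{fig:Bernoullidegenerate}) where the cluster \ref{part:olda}--\ref{part:oldg} can hold in spirit while $\ilpp$ still charges more than one point, so the argument must use ergodicity (or at least a consequence of bounded decoupling constants) in an essential way, and I would need to be careful that the ergodicity-based argument above is rigorous, perhaps routing it instead through the known fact that for ergodic $\P$ with bounded decoupling one has $\ilpp(s) = \infty$ outside $\{h(\P)\}$ iff $h(\P)$ is the only value of the local entropy and simultaneously $\P$ is a measure of maximal entropy — i.e., invoking a large-deviations rigidity statement. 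The remaining steps are all routine convex-analysis bookkeeping with the explicit formulas, and I would organize the write-up as a cycle of implications within each cluster plus the two cross-implications, being explicit about where \eqref{eq:ineqgammapm}, \eqref{eq:Isinterval}, \eqref{eq:reliv0qlppm1} and Lemma~\ref{lem:propzgammaplus} enter.
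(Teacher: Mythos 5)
Your organization by clusters matches the paper's, and most of the routine convex-analysis bookkeeping is fine, but two of the implications are broken and the third (the bounded-constants converse) is missing entirely.

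For \ref{part:olda}$\Rightarrow$\ref{part:oldb} you assert that $\qr^*=\iv$, but this fails whenever $\gamma_+>\qlpp(-1)$. Indeed $\qr=\max\{\qlpp,\gamma_+\}\geq\max\{\qlpp,\qlpp(-1)\}=\qv$, so $\qr^*\leq\qv^*=\iv$; and at the origin $\qr^*(0)=-\inf_\alpha\qr(\alpha)=-\gamma_+$ while $\iv(0)=-\qlpp(-1)$, so equality of $\qr^*$ and $\iv$ at $0$ is precisely property~\ref{part:oldc}, which is what you are trying to derive --- the step is circular. The paper's route is different: when $\gamma_+<0$, both $\ir$ and $\iv$ are finite on $[0,-\gamma_+]$ (from \eqref{eq:Isinterval}) and coincide on $(0,-\gamma_+]$; a convex lower semicontinuous function is continuous at the endpoints of its effective domain where it is finite, so convexity of $\ir$ forces $\ir(0)=\lim_{s\downarrow 0}\iv(s)=\iv(0)$, which is~\ref{part:oldc}. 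The case $\gamma_+=0$ is handled separately via $\ilpp(0)=0$. Similarly, your argument for \ref{part:oldj}$\Rightarrow$\ref{part:oldh} does not close: $h(\P)=\htop(\supp\P)$ collapses only the \emph{middle} of the chain $-\gamma_+\leq h(\P)\leq\htop(\supp\P)\leq -\gamma_-$ in \eqref{eq:ineqgammapm}; the outer inequalities may remain strict. The paper invokes Proposition~\ref{prop:MMEprops}, whose proof uses a nontrivial input: under~\assref{sld} a measure of maximal entropy on $\supp\P$ is weak Gibbs for the zero potential (Pfister--Sullivan), giving the uniform estimate $\P_n(x_1^n)=\Exp{-n\htop(\supp\P)+o(n)}$ on $\supp\P$ and hence $\gamma_+=\gamma_-$.

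Finally, you rightly flag the converse implication under $\tau_n=O(1)$, $C_n=O(1)$ as the main obstacle, and your sketch does not amount to a proof: ergodicity pins the $\P$-a.s.\ value of $h_\P$ to $h(\P)$, but it does not by itself force $\ilpp$ to be infinite away from $-\gamma_+$, nor rule out an exponentially large family of atypically improbable words coexisting with an exponential mass of high-probability words. The paper supplies a self-contained combinatorial argument (Proposition~\ref{prop:sconvtau}): assuming $\qlpp(-1)=\gamma_+$ and, for contradiction, $\ilpp(s)<\infty$ for some $s>-\gamma_+$, one covers $[1,2k\ell]$ by gapped blocks $L_j$ and splits $\sum_u\P_{2k\ell}(u)^2$ according to whether at least $k$ of the blocks avoid a fixed word $w$ of anomalously small probability; the decoupling bounds then show $\qlpp(-1)<\gamma_+$, a contradiction. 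That argument is the genuine new content of the bounded-constants hypothesis and cannot be replaced by general large-deviations or ergodicity considerations.
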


\begin{proof}
    Let us summarize the information we have gathered on the rate functions. First, $\iv(s)$ and $\ir(s)$ coincide for $s \neq 0$ by comparison of~\eqref{eq:defivexplicit} and~\eqref{eq:defirexplicit}. By the same two equations, and since we have already established that $ -\gamma_+ \leq - \qlpp(-1)$ in Lemma~\ref{lem:propzgammaplus} and that $-\qlpp(-1)=\iv(0)$ in Theorem~\ref{thm:mainthmB}.ii, we obtain
    \begin{equation}\label{eq:chaineI0}
        0\leq \ir(0) = -\gamma_+ \leq - \qlpp(-1) = \iv(0) \leq \ilpp(0).
    \end{equation}
    We now prove that \ref{part:olda}--\ref{part:oldg} are equivalent.

    \begin{description}
        \item[\textnormal{\ref{part:olda}$\Rightarrow$\ref{part:oldb}}] 
        
        If $\gamma_+ = 0$, Theorem~\ref{thm:CJPS}.iii.a implies that $\ilpp(0) = 0$, so by~\eqref{eq:chaineI0} we obtain $\iv(0)=\ir(0)$ and thus $\iv = \ir$.\footnote{Note that the argument when $\gamma_+ = 0$ does not actually use \ref{part:olda}; in fact properties \ref{part:olda}--\ref{part:oldg} always hold true if $\gamma_+ = 0$.}

        Assume now $\gamma_+<0$. By Theorem~\ref{thm:CJPS}.iii.a and the formulae~\eqref{eq:defivexplicit} and~\eqref{eq:defirexplicit}, the functions~$\iv$ and~$\ir$ are finite on $[0,-\gamma_+]$, and equal on $(0,-\gamma_+]$. 
        Now, if \ref{part:olda} holds, then both $\iv$ and $\ir$ are convex, and since they are also lower semicontinuous, their restriction to $[0,\gamma_+]$ is continuous, so  $\ir(0) = \iv(0)$, and thus $\ir = \iv$.

        \item[\textnormal{\ref{part:olda}$\Leftarrow$\ref{part:oldb}}]  The function~$\iv$ is convex by Lemma~\ref{lem:iwivconvex}.

        \item[\textnormal{\ref{part:oldb}$\Rightarrow$\ref{part:bbp}}] By Theorems~\ref{thm:mainthmB}.ii and \ref{thm:mainthmC}.ii, assuming \ref{part:oldb} yields $\qv = \iv^* = \ir^* = \qr$.

        \item[\textnormal{\ref{part:bbp}$\Rightarrow$\ref{part:oldc}}] As mentioned in Remarks~\ref{rem:sstar} and~\ref{rem:alphastar}, for all $\alpha \leq -1$ we have $\qv(\alpha) = \qlpp(-1)$ and $\qr(\alpha) = \gamma_+$, so by \ref{part:bbp} we find $\qlpp(-1) = \gamma_+$.

        \item[\textnormal{\ref{part:oldc}$\Rightarrow$\ref{part:oldb}}]  Combining \ref{part:oldc} and \eqref{eq:chaineI0} gives again $\iv(0)=\ir(0)$.
        
        \item[\textnormal{\ref{part:oldb}$\Rightarrow$\ref{part:olde}}] 
        Assuming \ref{part:oldb}, the identity $\ir(0)=\iv(0)$ reads $-\gamma_+ = \inf_{s\geq 0}(s+\ilpp(s))$ according to~\eqref{eq:defivexplicit} and~\eqref{eq:defirexplicit}, so that  for all~$\epsilon > 0$, there exists $s_\epsilon \geq 0$ such that $\ilpp(s_\epsilon) < -\gamma_+ - s_{\epsilon} + \epsilon$. Since also $s_\epsilon\geq -\gamma_+$ by Theorem~\ref{thm:CJPS}.iii.a, we have 
        $$
        0\leq \ilpp(s_\epsilon) < -\gamma_+ - s_{\epsilon} + \epsilon \leq \epsilon.
        $$
        It follows that $s_\epsilon \to -\gamma_+$ and $\ilpp(s_\epsilon) \to 0$ as $\epsilon \to 0$. By nonnegativity and lower semicontinuity of~$\ilpp$, this allows to conclude that $\ilpp(-\gamma_+) = 0$.

        \item[\textnormal{\ref{part:olde}$\Rightarrow$\ref{part:oldg}}] 
        By Theorem~\ref{thm:CJPS}.iii.a and \ref{part:olde}, we have $\ilpp(-\gamma_+) = 0$ and $\ilpp(s) = \infty$ for $s < -\gamma_+$. Since $\ilpp$ is nonnegative, for all $\alpha \leq 0$, the duality relations~\eqref{eq:LdualIpq} imply that $\qlpp(\alpha) = \sup_{s\geq -\gamma_+}(\alpha s - \ilpp(s)) = -\alpha\gamma_+$, where the supremum is reached at $s=-\gamma_+$.

        \item[\textnormal{\ref{part:oldg}$\Rightarrow$\ref{part:oldc}}] The latter is a special case of the former with $\alpha = -1$.
        
    \end{description}

    We now prove the equivalence of \ref{part:oldh}--\ref{part:oldj}.

    \begin{description}
        \item[\textnormal{\ref{part:oldh}$\Leftrightarrow$\ref{part:oldi}}] This is a consequence of Theorem~\ref{thm:CJPS}.iii.a.
                
        \item[\textnormal{\ref{part:oldh}$\Leftrightarrow$\ref{part:oldj}}] This is a consequence of Proposition~\ref{prop:MMEprops}.i.
    \end{description}

    To show that  \ref{part:oldh}--\ref{part:oldj} imply \ref{part:olda}--\ref{part:oldg} we remark that if \ref{part:oldi} holds, then we have \ref{part:olde} because $\inf_{s\in \rr}\ilpp(s) = 0$.
    Finally, to obtain the last statement, we prove in Proposition~\ref{prop:sconvtau} below that if $\tau_n=O(1)$ and $C_n=O(1)$, then \ref{part:oldc} implies \ref{part:oldi}.
\end{proof}

\begin{proposition}\label{prop:sconvtau}
    Suppose that $\P$ satisfies~\assref{ud} and~\assref{sld} with $\tau_n=O(1)$ and $C_n=O(1)$. If $\qlpp(-1) = \gamma_+$, then $\ilpp(s) = \infty$ for all $s \in \rr\setminus\{-\gamma_+\}$.
\end{proposition}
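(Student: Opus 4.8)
The plan is to deduce from $\qlpp(-1)=\gamma_+$, together with the boundedness of $\tau_n$ and $C_n$, the chain $-\gamma_+=h(\P)=\htop(\supp\P)$; once this is known, property~\ref{part:oldj} holds, hence (by Proposition~\ref{prop:MMEprops}.i) property~\ref{part:oldi}, which is exactly the assertion. We may assume that $\tau_n\equiv\tau$ and $C_n\equiv C\geq 1$ are constant (Lemma~\ref{lem:increasecntaun}), and we recall that in this regime $\P$ is ergodic (Lemma~A.2 in~\cite{CJPS19}; cf.\ the footnote in \secref{sec:gmeasures}), so that the Shannon--McMillan--Breiman limit $h_\P(x)=\lim_n -\tfrac1n\ln\P_n(x_1^n)$ equals $h(\P)$ for $\P$-a.e.\ $x$. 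Moreover, $\qlpp(-1)=\gamma_+$ is equivalent to $\ilpp(-\gamma_+)=0$ (property~\ref{part:olde}, established in the preceding proof); by~\eqref{eq:energyentropy} this says that the number of words $u\in\cA^n$ whose probability is within a subexponential factor of the maximum $m_n:=\max_{u\in\cA^n}\P_n(u)=\Exp{n\gamma_++o(n)}$ grows at rate $-\gamma_+$.

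The first refinement that bounded decoupling buys is that the relevant subexponential factors are genuinely \emph{bounded}. From~\assref{ud} one gets $\sum_{w\in\cA^{n+\tau+m}}\P_{n+\tau+m}(w)^2\le C^2|\cA|^{\tau}\big(\sum_u\P_n(u)^2\big)\big(\sum_v\P_m(v)^2\big)$, while from~\assref{sld}, via the padding argument used in the proof of Lemma~\ref{lem:LB-Q-Wn-a-y}, one gets the matching reverse inequality with constant $(C|\cA|^\tau)^{-2}$; the same holds with $m_n$ in place of $\sum_u\P_n(u)^2$. Hence both $\sum_u\P_n(u)^2$ and $m_n$ equal $\Exp{n\gamma_+}$ up to bounded multiplicative errors, and since $\sum_u\P_n(u)^2\le m_n$ always while $\qlpp(-1)=\gamma_+$ now forces $\sum_u\P_n(u)^2\ge c_0 m_n$ for some $c_0>0$ and all $n$, the two-block splitting $\sum_u\P_n(u)^2\le m_n\,\P_n\{u:\P_n(u)\ge \lambda m_n\}+\lambda m_n$ with $\lambda:=c_0/2$ yields a constant $\kappa>0$ with $\P\{x:\P_n(x_1^n)\ge \lambda m_n\}\ge\kappa$ for all $n$. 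On this event $-\tfrac1n\ln\P_n(x_1^n)\le -\tfrac1n\ln(\lambda m_n)\to -\gamma_+$, so the reverse Fatou lemma applied to these events gives $\P\{x:h_\P(x)\le -\gamma_+\}\ge\kappa>0$; as $h_\P\ge -\gamma_+$ everywhere (because $\P_n(x_1^n)\le m_n$) and $h_\P$ is a.s.\ constant by ergodicity, we conclude $h(\P)=-\gamma_+$.

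It remains to prove $\htop(\supp\P)\le -\gamma_+$ (the reverse is~\eqref{eq:ineqgammapm}), equivalently $\tfrac1n\ln|\supp\P_n|\le -\gamma_++o(1)$, equivalently $\gamma_-=\gamma_+$; this is the main obstacle. The plan is to count support words by the ``energy level'' $s=-\tfrac1n\ln\P_n(u)$ using the bounded-error versions of the multiplicativity above (now also for $\sum_u\P_n(u)^{1-\alpha}$ and for the level counts): blocking a support word of length $n=jk$ into length-$k$ blocks and using~\assref{ud} shows that any block whose probability is below $\Exp{k(\gamma_+-\delta)}$ pulls the full probability down by a factor $\Exp{-k\delta}$, so a support word far from the maximal level must contain a definite fraction of such ``light'' blocks; a counting argument bounding the number of admissible block profiles — with~\assref{ud} controlling overlap contributions and the uniform lower bound $\P\{x:\P_n(x_1^n)\ge\lambda m_n\}\ge\kappa$, iterated at scale $k$, bounding how many light blocks can occur — then forces $\tfrac1n\ln|\supp\P_n|\le -\gamma_++o(1)$. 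A cleaner-looking alternative is to argue at the level of the pressure: one already has $\qlpp(\alpha)=-\gamma_+\alpha$ for $\alpha\le 0$ (property~\ref{part:oldg}) and $\qlpp(1)=\htop(\supp\P)$ (Theorem~\ref{thm:CJPS}.iii.c) with $\qlpp$ convex, so it suffices to show $\qlpp(\alpha)\le -\gamma_+\alpha$ for $\alpha\in(0,1]$, for then $\qlpp$ is affine and $\htop(\supp\P)=\qlpp(1)=-\gamma_+$; and here again the bounded-error multiplicativity of $\sum_u\P_n(u)^{1-\alpha}$ is what makes such a uniform bound tractable. Either way, the heart of the matter — and the point where $\tau_n=O(1)$, $C_n=O(1)$ are indispensable — is converting the asymptotic identity $\qlpp(-1)=\gamma_+$ into a uniform-in-$n$ structural statement about $\P_n$.
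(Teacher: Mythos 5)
Your Step A --- deducing $h(\P)=-\gamma_+$ from $\qlpp(-1)=\gamma_+$ via ergodicity, a gapped Fekete argument, and the reverse Fatou lemma --- is correct, and the uniform comparability $\sum_u\P_n(u)^2\asymp m_n\asymp\Exp{n\gamma_+}$ is a genuine dividend of $\tau_n,C_n=O(1)$. However, Step B, which you yourself flag as the ``main obstacle,'' contains a real gap, and it is exactly the part that carries the weight of the proposition. The counting argument does not close: when you block a length-$n$ support word into $j$ blocks of length $k$, you correctly argue that a word at level $s>-\gamma_+$ must contain a definite fraction $\theta>0$ of ``light'' $k$-blocks, but the only a priori bound on the number of \emph{distinct} light blocks is $|\supp\P_k|=\Exp{k\,\htop(\supp\P)+o(k)}$, which is precisely the quantity you are trying to control; when $\P$ is not a MME on its support the number of distinct light blocks exceeds $\Exp{-k\gamma_++o(k)}$, so the block-profile count overshoots. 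The uniform lower bound $\P\{x:\P_k(x_1^k)\ge\lambda m_k\}\ge\kappa$ constrains the \emph{$\P$-measure} carried by light blocks, not their \emph{cardinality} --- a set of measure at most $1-\kappa$ can still contain exponentially many more than $\Exp{-k\gamma_+}$ support words, which is exactly the scenario to be ruled out, so this bound cannot rescue the count. The ``cleaner alternative'' does not help either: since $\qlpp$ is convex with $\qlpp(0)=0$, the map $\alpha\mapsto\qlpp(\alpha)/\alpha$ is nondecreasing on $(0,\infty)$, so the family of bounds $\qlpp(\alpha)\le-\gamma_+\alpha$ on $(0,1]$ is equivalent to the single bound $\qlpp(1)\le-\gamma_+$, i.e.\ to $\htop(\supp\P)\le-\gamma_+$, and no reduction has been achieved.

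The paper's proof avoids this circularity by arguing in the contrapositive and staying entirely with $\P$-weighted quantities. Assuming $\ilpp(s)<\infty$ for some $s>-\gamma_+$, it picks a ``light'' word $w$ (whose probability decays at rate $\approx s$) and bounds $\sum_u\P_n(u)^2=\int\P_n(x_1^n)\,\dd\P(x)$ from above by splitting according to whether $w$ appears in few ($A_k$) or many ($A_k^\complement$) of a family of disjoint windows along $[1,2k\ell]$. On $A_k^\complement$, each of the $\ge k$ forced occurrences of $w$ drags $\P_n(x_1^n)$ down by a factor $\P_m(w)\ll\Exp{m\gamma_+}$; the event $A_k$ itself has exponentially small probability by iterating the decoupling estimate of Lemma~\ref{lem:UB-Q-Wn-a-y}. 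Both contributions are $<\Exp{n(\gamma_+-\epsilon)}$, giving $\qlpp(-1)<\gamma_+$. No count of $|\supp\P_n|$ is ever needed, which is why that estimate closes and yours does not; your $h(\P)=-\gamma_+$ step, while true, is neither used nor needed on this route.
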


\begin{proof}
    Seeking a contradiction, suppose that there is $s>-\gamma_+$ such that $\ilpp(s) <\infty$  (recall that $\ilpp(s)=\infty$ for all $s<-\gamma_+$ by Theorem~\ref{thm:CJPS}.iii.a).
    The idea of the proof is that, if such $s$ exists, then there are words whose probability decreases like $\Exp{-sn}\ll \Exp{\gamma_+ n}$, and that these words appear often enough to make $\qlpp(-1)$ strictly smaller than $\gamma_+$, which contradicts the assumption.

    Let $\tau:= \sup_{n\in \nn}\tau_n < \infty$, $C:=\sup_{n\in \nn}C_n < \infty$, and $\epsilon := s+\gamma_+>0$. For $\ell, m\in \nn$ and $w\in \cA^m$, the computations below will involve
    \begin{align*}
        R_1 &:= \frac {\ln  (1 - C\P_m(w))}{2(m+\tau)} + \frac {\ln 2} \ell + \frac {\ln C} {2\ell}  - \frac {\ln  (1 - C\P_m(w))}\ell
    \intertext{and}
        R_2 &:= \frac {m}{2\ell}\left(-\gamma_+ + \frac {\ln \P_m(w)} {m} -\frac{2\tau \gamma_+}{m}+\frac{3 \ln C}{m}\right).
    \end{align*}

    Let us show that $m$, $\ell$, and $w$ can be chosen so that $R_1$ and $R_2$ are negative, which will later lead to the contradiction we are seeking. Since $\ilpp(s) <\infty$, the LDP implies that for all $m$ large enough we can choose a word $w\in \cA^m$ such that $\P_m(w)\leq \Exp{-m(s-\frac 12 \epsilon)}$, and thus $\gamma_+- \frac {\ln \P_m(w)} {m} \geq \frac \epsilon 2$. We fix $m$ large enough and $w\in \cA^m$ so that not only the above holds, but also $-\frac{2\tau \gamma_+}{m}+\frac{3 \ln C}{m} \leq \frac \epsilon 4$ and $\Exp{-m(s-\frac 12 \epsilon)} < C^{-1}$ (so in particular $1-C\P_m(w)>0$). Thus, we have 
    $$
        R_2 \leq -\frac{m\epsilon}{8\ell}<0,
    $$ 
    and, since the first term in $R_1$ is negative, taking $\ell$ large enough yields
    $$
        R_1 \leq  \frac {\ln  (1 - C\P_m(w))}{4(m+\tau)}<0.
    $$ 
    We remark that the above condition on $\ell$ implies in particular that $\ell>2(m+\tau)$, which will be useful to have in mind below.

    For the remainder of the proof, $m$, $w$ and~$\ell$ are fixed as above, so that $R_1$ and $R_2$ are negative. Let $k\in \nn$ and consider the intervals $L_j := [1+j\ell, (j+1)\ell -\tau]$ for $j=0, 1, \dots, 2k-1$. These $2k$ intervals are separated by gaps of length $\tau$. Let $A_k$ be the set of points $x\in \Omega$ for which at least $k$ out of the $2k$ intervals contain {\em no} occurrence of $w$ (an interval $[i_1, i_2]$ contains no occurrence of $w$ if there is no $i_1 \leq i \leq i_2-m+1$ such that $x_i^{i+m-1}=w$). Clearly,
    \begin{equation}\label{eq:sump2kl}
        \begin{split}
            \sum_{u\in \cA^{2k\ell}} \P^2_{2k\ell}(u)  &= \int_{A_k} \P_{2k\ell}([x_1^{2k\ell}]) \dd \P(x) +\int_{A_k^\complement} \P_{2k\ell}([x_1^{2k\ell}]) \dd \P(x) \\
            & \leq \P(A_k)\sup_{x\in \Omega} \P_{2k\ell}(x_1^{2k\ell}) +  \sup_{x\in A_{k}^\complement} \P_{2k\ell}(x_1^{2k\ell}).     
        \end{split}
    \end{equation}
    We will show below that  
    \begin{align}
        \label{eq:limsupr2}
        \limsup_{k\to\infty} \frac 1{2k\ell} \ln (\P(A_k)\sup_{x\in \Omega} \P_{2k\ell}(x_1^{2k\ell})) &\leq  \gamma_+ + R_1,\\
        \label{eq:limsupr1}
        \limsup_{k\to\infty} \frac 1{2k\ell} \ln \sup_{x\in \cA_k^{\complement}}\P_{2k\ell}(x_1^{2k\ell})&\leq \gamma_+ + R_2.
    \end{align}
    Combining this with~\eqref{eq:sump2kl} yields, since the limit defining $\qlpp(-1)$ exists,
    \begin{align*}
        \qlpp(-1) 
            &= \lim_{n\to\infty}\frac 1n \ln \sum_{u\in \cA^{n}}\P^2_{n}(u) \\ 
            &= \limsup_{k\to\infty} \frac 1{2k\ell}\sum_{u\in \cA^{2k\ell}} \P^2_{2k\ell}(u) \\
            &\leq \max\left\{\gamma_+ + R_2, \gamma_+ + R_1\right\} \\
            &< \gamma_+,
    \end{align*}
    which contradicts the assumption, and thus proves the claim.

    \smallskip\noindent{\em Proof of~\eqref{eq:limsupr2}}. 
     By Remark~\ref{rem:boundednesscompared}, since $\tau_n = O(1)$, we can assume without loss of generality that the sequence $(C_n)_{n\in \nn}$ required for~\eqref{eq:udeventseq} and~\eqref{eq:sldeventseq} to hold, is also bounded by $C$.

    The event ``there is no occurrence of $w$ in the interval $L_j$'' can be written as $\shift^{-j\ell}U$, where $U := \{x:W_m(w, x)>\ell+1-m-\tau\} \in \cF_{\ell-\tau}$. By the definition of $A_k$,
    \begin{equation}\label{eq:AbigcupM}
        A_k\subseteq \bigcup_{\substack{M\subseteq \{0, 1, \dots, 2k-1\}\\|M|=k}}\bigcap_{j\in M} \shift^{-j\ell}U.
    \end{equation}
    Using the bound~\eqref{eq:UB2only1bound} of Lemma~\ref{lem:UB-Q-Wn-a-y}, we obtain
    \begin{align*}
        \P(U)&\leq  (1 - C\P_m(w))^{\lfloor \frac{\ell+1-m-\tau}{m+\tau}\rfloor} \leq (1 - C\P_m(w))^{ \frac{\ell}{m+\tau}-2}.
    \end{align*}
    Since $U\in \cF_{\ell-\tau}$, we can apply~\assref{ud} recursively $k-1$ times to the intersection in~\eqref{eq:AbigcupM}. This and a simple union bound (noting that the union in~\eqref{eq:AbigcupM} contains $\binom{2k}{k}$ terms) yields
    $$
    \P(A_k)\leq \binom{2k}{k} C^{k-1}(1 - C\P_m(w))^{ \frac{k \ell}{m+\tau}-2k}.
    $$
    Since $\lim_{n\to\infty}\frac 1{2k} \ln  \binom{2k}{k} = \ln 2$, we have proved that $\limsup_{k\to\infty} \frac 1{2k\ell} \ln \P(A_k) \leq  R_1$. Combining this with the definition of $\gamma_+$ completes the proof of~\eqref{eq:limsupr2}.

    \smallskip\noindent{\em Proof of~\eqref{eq:limsupr1}}. 
    First, let $a_n := \sup_{u\in \cA^n}\ln  \P_n(u)$. Clearly, the sequence $(a_n)_{n\in \nn}$ is nonincreasing, and by~\assref{sld}, for all $n,m\in \nn$, there is $0\leq \ell\leq \tau$ such that $a_{n+m} \geq a_{n+m+\ell} \geq a_n + a_m -\ln C$.
    By applying Fekete's lemma to the sequence $(\ln C-a_n)_{n\in\nn}$, we obtain 
    $\gamma_+ = \sup_{n\in \nn} \frac{a_n-\ln C}{n}$, so for every $x\in \Omega$ and $n\in\nn$,
        \begin{equation}\label{eq:lnpnc}
        \ln \P_n(x_1^n) \leq n\gamma_++\ln C.
    \end{equation}

    Let $x\in A_{k}^{\complement}$. By construction, we can choose $k$ occurrences of $w$ in $x_1^{2k\ell}$, which are pairwise separated by gaps of length at least $\tau$ (possible additional occurrences of $w$ in $x_1^{2k\ell}$ do not play any role in the argument). In other words, we can write $x_1^{2k\ell} = u_0 w u_1 w u_2 \dots u_{k-1} w u_k$ where $|u_0|\geq 0$ and $|u_i|\geq \tau$ for all $1\leq i\leq k$. We then apply~\assref{ud} before and after each occurrence of $w$ of $x_1^{2k\ell}$.\footnote{Remark that decoupling is not necessarily used near the boundaries of the intervals $L_j$.} More precisely, in order to apply~\assref{ud}, we discard a prefix of length $\tau$ in each $u_i$ with $1\leq i \leq k$, and we discard a suffix of length $\tau$ in each $u_i$ with $0\leq i \leq k-1$; the $u_i$ whose length is insufficient for this to be done are discarded entirely. By using \eqref{eq:lnpnc} to bound the probability of what remains of the $u_i$ at the end of this process, we easily obtain the estimate
    \begin{align*}
        \P_{2k\ell}(x_1^{2k\ell})&\leq \P_m(w)^k C^{3k+1}\Exp{\gamma_+(2k\ell - km - 2k\tau)},
    \end{align*}
    where the $C^{3k+1}$ comes from the fact that we have used~\assref{ud} at most $2k$ times and~\eqref{eq:lnpnc} at most $k+1$ times, and where $2k\ell - km - 2k\tau$ is the minimum cumulative length of the words to which we have applied~\eqref{eq:lnpnc}. By the definition of $R_2$, this establishes~\eqref{eq:limsupr1}, and thus the proof is complete.
\end{proof}

\section{Relation to almost sure convergence results}
\label{sec:LLN-v2}

In this section we discuss the complementarity of our LDPs and existing almost sure convergence results in the literature.

\subsection{A brief review of results on almost sure convergence}
\label{ssec:as-remix}

Throughout this subsection, we consider two measures $\P,\Q\in \cP_{\textnormal{inv}}$, and we only assume that~$\Q$ satisfies~\assref{ud} and~\assref{sld}. 
Under these conditions, the limits 
\begin{equation}
    \label{eq:hnP-to-hP}
        h_\P(x) := \lim_{n\to\infty} -\frac 1n \ln \P_n(x_1^n)
\end{equation}
and
\begin{equation}
    \label{eq:hnQ-to-hQ}
        h_\Q(x) := \lim_{n\to\infty} -\frac 1n \ln \Q_n(x_1^n)
\end{equation}
exist for $\P$-almost all~$x$. Moreover,
\begin{equation}\label{eq:intsmb}
    \int h_\P(x) \dd\P(x)
    = \lim_{n\to\infty} -\frac{1}{n} \sum_{u\in\cA^n} \P_n(u) \ln \P_n(u)=: h(\P) 
\end{equation}
and 
\begin{equation}\label{eq:intcrosssmb}
    \int h_\Q(x) \dd\P(x)
    = \lim_{n\to\infty} -\frac{1}{n} \sum_{u\in\cA^n} \P_n(u) \ln \Q_n(u) =: \Sc(\P|\Q) .
\end{equation}

The relations \eqref{eq:hnP-to-hP} and~\eqref{eq:intsmb} are the contents of the well-known SMB theorem. While often stated assuming ergodicity, the general form of the SMB theorem, which only requires shift invariance, is obtained as a special case of the Brin--Katok formula~\cite{BK83}; see also Theorem~9.3.1 of~\cite{OlVi} for a direct proof.

On the other hand, \eqref{eq:hnQ-to-hQ} and~\eqref{eq:intcrosssmb} are much less general, and the literature on them is comparatively sparse. Even the limit defining~$\Sc(\P|\Q)$ in~\eqref{eq:intcrosssmb} fails to exist in some cases; see e.g.~\cite[\S{A.5.2}]{vEFS93}.
In our setup, the almost sure existence of the limit \eqref{eq:hnQ-to-hQ} and the validity of~\eqref{eq:intcrosssmb} are guaranteed by the assumption~\assref{ud} imposed on $\Q$. Indeed, as discussed in \cite[\S{5}]{CDEJR22}, the assumption implies that the functions $(f_n)_{n\in \nn}$ defined by $f_n(x) = \ln \Q_n(x_1^n)$ satisfy the gapped almost subadditivity condition of the adaptation of Kingman's theorem given in~\cite[\S{2}]{Ra22}.

Still assuming only that $\Q$ satisfies~\assref{ud} and~\assref{sld} (and that $\P$ is merely shift invariant), the random variables studied in our main theorems are known to satisfy the following almost sure identities, which, as mentioned in the introduction, justify their role as entropy estimators: for $\P$-almost every $x$, we have
\begin{equation}\label{eq:Rn-to-hP}
    \lim_{n\to\infty} \frac 1n \ln R_n(x)= h_{\P}(x) 
\end{equation}
and
\begin{equation}\label{eq:really-Vn-to-hP}
    \lim_{n\to\infty} \frac 1n \ln V_n(x) = h_{\P}(x),
\end{equation}
and, for $(\P\otimes\Q)$-almost every $(x,y)$, we have 
\begin{equation}
\label{eq:Wn-to-hQ}
    \lim_{n\to\infty} \frac 1n \ln W_n(x,y) = h_{\Q}(x).
\end{equation}

The convergences expressed in~\eqref{eq:Rn-to-hP} and \eqref{eq:really-Vn-to-hP} were first proved under the assumption that $\P$ is ergodic, in which case $h_{\P}(x) = h(\P)$ for $\P$-almost every~$x$; see~\cite{WZ89,OW93,Ko98}. 
However, combining Remark~1 of~\cite[\S{2}]{Ko98} with a generalization of Kac's lemma, as found e.g.\ in~\cite[\S{1.2.2}]{OlVi}, shows that shift invariance suffices for~\eqref{eq:Rn-to-hP}. 
Since $R_n(x) \leq V_n(x) + n-1$ and $V_n(x) = R_n(\shift^{m_n(x)-1}x) - (n-m_n(x))$ with $m_n(x) := \max\{1\leq i\leq n: x_i^{i+n-1}=x_1^n\}$, it is straightforward to show that the bounds used in~\cite[\S{2}]{Ko98} to prove~\eqref{eq:Rn-to-hP} via the Borel--Cantelli lemma also imply~\eqref{eq:really-Vn-to-hP}.
The convergence expressed in~\eqref{eq:Wn-to-hQ} was first proved for Markov measures, and then under various strong mixing assumptions; see~\cite{WZ89,Sh93,MS95,Ko98}. However, it is known that~\eqref{eq:Wn-to-hQ} may fail for some mixing measures, even when~$\P=\Q$;~see~\cite{Sh93}. In the present setup,~the \assref{sld} assumption satisfied by~$\Q$ and~\eqref{eq:hnQ-to-hQ} have been shown to imply~\eqref{eq:Wn-to-hQ} in~\cite[\S{2}]{CDEJR22}.

\subsection{Zeroes of the rate functions}
\label{ssec:zero-set-J}

In this subsection, we discuss how the LDPs of Theorems~\ref{thm:CJPS} and \ref{thm:mainthmA}--\ref{thm:mainthmC} complement the above almost sure results.
We assume throughout that the pair~$(\P, \Q)$ is admissible, but not necessarily that $\P$ satisfies~\assref{pa}. 
The central role is held by the set
\begin{equation}\label{eq:defJ}
    J := \{s : \ilpq(s)=0\} \subseteq [0,\infty),
\end{equation}
which, by convexity of $\ilpq$ (see Theorem~\ref{thm:CJPS}.i), is an interval and coincides with the subdifferential of $\qlpq$ at 0. Since $\ilpq$ is lower semicontinuous, and since $\inf_{s\in \rr} \ilpq(s) = 0$, only the three following cases are possible: $J=[a,b]$ for some $0\leq a\leq b<\infty$, $J=[a,\infty)$ for some $a\geq 0$, and $J = \emptyset$, which happens when $\ilpq(s)>0$ for all $s\in \rr$ and $\lim_{s\to\infty}\ilpq(s)=0$. When $\Q = \P$, we necessarily have $J=[a,b]$ since $\ilpp$ is a good rate function (recall Theorem~\ref{thm:CJPS}.iii). 

\begin{lemma}\label{lem:IvIrvanishJ}
    Let $(\P, \Q)$ be admissible. Then, the rate function~$\iw$ of \eqref{eq:defiwexplicit} vanishes precisely on $J$. Moreover, if $\Q=\P$, the same is true of the rate functions~$\iv$ of~\eqref{eq:defivexplicit} and~$\ir$ of~\eqref{eq:defirexplicit}. 
\end{lemma}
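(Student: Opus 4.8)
The plan is to exploit the explicit formulas \eqref{eq:defiwexplicit}, \eqref{eq:defivexplicit}, \eqref{eq:defirexplicit} together with two facts about $\ilpq$ from Theorem~\ref{thm:CJPS}: it is convex, lower semicontinuous, nonnegative, satisfies $\inf_{s}\ilpq(s)=0$, and vanishes exactly on the interval $J$ (which is contained in $[0,\infty)$, and which is a nonempty compact interval when $\Q=\P$ since $\ilpp$ is then a good rate function). The core computation is to identify, for $s\ge 0$, the zero set of $s\mapsto \inf_{r\ge s}(r-s+\ilpq(r))$.

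First I would treat $\iw$. For $s<0$ we have $\iw(s)=\infty$, so nothing to check. For $s\ge 0$, write $\iw(s)=\inf_{r\ge s}(r-s+\ilpq(r))=-s+\inf_{r\ge s}(r+\ilpq(r))$. If $s\in J$, then taking $r=s$ gives $\iw(s)\le \ilpq(s)=0$, and since $r+\ilpq(r)\ge r\ge s$ for all $r\ge s$, we get $\iw(s)\ge -s+s=0$; hence $\iw(s)=0$. Conversely, suppose $s\ge 0$ and $\iw(s)=0$, i.e. $\inf_{r\ge s}(r+\ilpq(r))=s$. Since $\ilpq(r)\ge 0$, the function $r\mapsto r+\ilpq(r)$ is $\ge r\ge s$ on $[s,\infty)$ with value exactly $s$ only possible where $r=s$ and $\ilpq(r)=0$; using lower semicontinuity of $r\mapsto r+\ilpq(r)$ (so the infimum over $[s,\infty)$ is attained, or approached by a sequence $r_k\downarrow s$ with $\ilpq(r_k)\to 0$, whence $\ilpq(s)=0$ by lower semicontinuity), we conclude $\ilpq(s)=0$, i.e. $s\in J$. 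This shows $\{\iw=0\}=J$.

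For the case $\Q=\P$, the rate function $\iv$ is given by the same formula \eqref{eq:defivexplicit} as $\iw$ with $\ilpq$ replaced by $\ilpp$, so the identical argument gives $\{\iv=0\}=J$. For $\ir$, the only difference from $\iv$ is the value at $s=0$, where $\ir(0)=-\gamma_+$. Now $-\gamma_+=0$ if and only if $\gamma_+=0$; and by Theorem~\ref{thm:CJPS}.iii.a (i.e. \eqref{eq:Isinterval}), $\gamma_+=0$ forces $\ilpp(0)=0$, hence $0\in J$. Conversely, if $0\in J$ then $\ilpp(0)=0$, and since $\ilpp(s)\ge s$ on its domain (again by \eqref{eq:Isinterval}, as $\ilpp\le s$ there and $\ilpp\ge s-\htop(\supp\P)$, but more simply $\ilpp(s)=\infty$ for $s<-\gamma_+$ and $-\gamma_+\ge 0$), we must have $-\gamma_+\le 0$, hence $\gamma_+=0$ since always $\gamma_+\le 0$. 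Thus $\ir(0)=0\iff 0\in J$, which combined with $\ir(s)=\iv(s)$ for $s\ne 0$ and $\{\iv=0\}=J$ gives $\{\ir=0\}=J$.

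I do not anticipate a serious obstacle here; the only point requiring a little care is the converse direction ($\iw(s)=0\Rightarrow s\in J$), where one must invoke lower semicontinuity of $\ilpq$ to pass from ``$\inf_{r\ge s}(r+\ilpq(r))=s$'' to ``$\ilpq(s)=0$''. Concretely, pick $r_k\ge s$ with $r_k+\ilpq(r_k)\to s$; then $r_k\to s$ and $\ilpq(r_k)\to 0$, so lower semicontinuity yields $\ilpq(s)\le \liminf_k \ilpq(r_k)=0$, and nonnegativity gives $\ilpq(s)=0$. Everything else is a direct substitution into the explicit formulas.
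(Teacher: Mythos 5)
Your proposal is correct and takes essentially the same approach as the paper: the paper declares the $\iw$ case ``an immediate consequence'' of the formula and you simply spell it out (both directions, with the lower-semicontinuity step for the converse), and for $\ir$ both you and the paper observe that $\ir$ differs from $\iv$ only at $s=0$ and reduce to the equivalence $\gamma_+=0\iff\ilpp(0)=0\iff 0\in J$ via \eqref{eq:Isinterval}. One small slip worth flagging: in the parenthetical you write ``$\ilpp(s)\ge s$ on its domain,'' which is the reverse of what \eqref{eq:Isinterval} gives ($\ilpp(s)\le s$ there); fortunately the argument you actually use---finiteness of $\ilpp(0)$ forces $0\in[-\gamma_+,-\gamma_-]$, hence $-\gamma_+\le 0$, and combined with $-\gamma_+\ge 0$ gives $\gamma_+=0$---is correct and does not rely on that misstatement.
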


\begin{proof}
    The statement for $\iw$ is an immediate consequence of~\eqref{eq:defiwexplicit}. Appealing to Remark~\ref{eq:iveqiw}, the statement for~$\iv$ is immediate. In order to extend the result to $\ir$, it remains to note that $\ir(0) = 0$ if and only if $\gamma_+ = 0$, which, by \eqref{eq:Isinterval}, happens if and only if $\ilpp(0) = 0$, i.e.\ if and only if $0\in J$.
\end{proof}

We briefly comment on the simple case where~$J$ is a singleton, which occurs in particular in Examples~\ref{sec:exiid}--\ref{sec:equilbowen}, and which corresponds to the property of {\em exponential rates for entropy} in the terminology of \cite[Chap. III]{SH96}, \cite{JB13} and \cite{CRS18}. In this case, we obtain from Lemma~\ref{lem:IvIrvanishJ} that the rate functions of Theorems~\ref{thm:CJPS} and \ref{thm:mainthmA}--\ref{thm:mainthmC} all vanish at a single point, say $a$. In this situation, a standard argument shows that, almost surely, all limits in \eqref{eq:hnQ-to-hQ} and \eqref{eq:Rn-to-hP}--\eqref{eq:Wn-to-hQ} exist and are equal to $a$; see e.g.~\cite[\S{II.6}]{Ell} and the argument in the proof of Proposition~\ref{prop:lln-Q}.i below. In this simple case, our LDPs thus imply the almost sure convergence results reviewed in \secref{ssec:as-remix}.

When $J$ is an interval of positive length, our LDPs do not imply these almost sure convergence results anymore. Still, all displayed equations in \secref{ssec:as-remix} remain valid under our assumptions (see the references given there), and we show in the following two propositions that the limiting values are constrained by $J$.

\begin{proposition}\label{prop:lln-Q}
    Let $(\P, \Q)$ be an admissible pair. Then, the following hold:
      \begin{enumerate}[i.]
        \item If $J$ is nonempty, then $h_\Q(x) \in J$ for $\P$-almost all~$x$ and $\Sc(\P|\Q)\in J$.
        \item If $J$ is empty, then $h_\Q(x) =\infty$ for $\P$-almost all~$x$ and  $\Sc(\P|\Q) = \infty$.
        \item If $J$ is a singleton, then $J=\{\Sc(\P|\Q)\}$, $h_\Q(x) = \Sc(\P|\Q)$ for $\P$-almost all $x$, and the convergence in~\eqref{eq:hnQ-to-hQ} (resp.\eqref{eq:Wn-to-hQ}) occurs in~$L^p(\P)$ (resp.\ $L^p(\P\otimes \Q)$) for all $p\geq 1$.
      \end{enumerate}
 \end{proposition}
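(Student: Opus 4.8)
The plan is to exploit the three characterizations of the set $J$ in terms of the good rate function $\ilpq$ (or the function $\ilpq$ together with $\qlpq$ via the duality of Theorem~\ref{thm:CJPS}.ii), combined with the almost sure existence of the limits recalled in \secref{ssec:as-remix}. For Part~i, suppose $J\neq\emptyset$ and let $s_0\notin J$; I want to show that $\P\{x : h_\Q(x) = s_0\}$ contributes nothing. Since $\ilpq$ is lower semicontinuous and vanishes somewhere, the complement of $J$ in $[0,\infty)$ is a union of at most two open intervals on which $\ilpq$ is strictly positive; pick $\epsilon>0$ small enough that $\ilpq(r) \geq \eta > 0$ for all $r \in B(s_0,\epsilon)$ (possible on the bounded side; on an unbounded complementary piece one uses monotonicity/convexity of $\ilpq$ to get a uniform lower bound on $B(s_0,\epsilon)$ for $s_0$ away from the boundary of $J$, and the goodness when $\Q=\P$). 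By the large-deviation upper bound of Theorem~\ref{thm:CJPS}.i applied to the closed ball $\overline{B(s_0,\epsilon)}$,
\[
    \limsup_{n\to\infty}\frac1n\ln\P\{x : -\tfrac1n\ln\Q_n(x_1^n)\in B(s_0,\epsilon)\} \leq -\eta < 0,
\]
so by Borel--Cantelli the event $\{-\tfrac1n\ln\Q_n(x_1^n)\in B(s_0,\epsilon)\}$ occurs only finitely often, $\P$-a.s. Since $h_\Q(x)=\lim_n(-\tfrac1n\ln\Q_n(x_1^n))$ exists $\P$-a.s.\ by the discussion in \secref{ssec:as-remix} (this uses \assref{ud} on $\Q$, which holds by admissibility), this forces $h_\Q(x)\notin B(s_0,\epsilon)$ eventually, hence $h_\Q(x)\neq s_0$ a.s. Covering $[0,\infty)\setminus J$ by countably many such balls centered at rational points gives $h_\Q(x)\in J$ $\P$-a.s. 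Integrating and using that $J$ is a closed interval yields $\Sc(\P|\Q)=\int h_\Q\,\dd\P \in J$ by \eqref{eq:intcrosssmb} (with a truncation/Fatou argument at the endpoints if $J$ is unbounded, but since $J$ closed this is automatic).

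For Part~ii, if $J=\emptyset$ then $\ilpq(s)>0$ for every $s\in\rr$ and $\lim_{s\to\infty}\ilpq(s)=0$. The same Borel--Cantelli argument as above, applied around any fixed finite point, shows that $h_\Q(x)$ cannot equal any finite value $\P$-a.s.; since the limit defining $h_\Q$ exists in $[0,\infty]$ $\P$-a.s., it must equal $+\infty$ $\P$-a.s., and then $\Sc(\P|\Q)=\int h_\Q\,\dd\P=\infty$.

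For Part~iii, if $J=\{a\}$ is a singleton then by Part~i we have $h_\Q(x)=a$ $\P$-a.s., and integrating gives $\Sc(\P|\Q)=a$, so $J=\{\Sc(\P|\Q)\}$. It remains to upgrade a.s.\ convergence to $L^p$ convergence. For $-\tfrac1n\ln\Q_n$ this is the standard argument (cf.~\cite[\S{II.6}]{Ell}): the LDP with a good rate function vanishing only at $a$ plus exponential tightness gives, for every $\delta>0$, $\P\{|-\tfrac1n\ln\Q_n - a|>\delta\}\leq\Exp{-cn}$ for some $c=c(\delta)>0$ and all large $n$, which combined with the uniform $L^p$ bound (finite since $-\tfrac1n\ln\Q_n\geq 0$ and, by admissibility and the exponential-tightness estimate, has finite $L^p$ norm uniformly in $n$ — here one uses $\qlpq$ finite near $0$, or the estimate $\qlpq(\alpha)\leq c\alpha$ available in the exponentially tight case) yields $L^p$ convergence by a truncation argument. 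For $W_n$: Lemma~\ref{lem:ubexpWna} gives $\int W_n(u,y)^\alpha\,\dd\Q(y)\leq\kappa_{\alpha,n}\Q_n(u)^{-\alpha}$, so $\iint W_n^\alpha\,\dd\P\,\dd\Q\leq\kappa_{\alpha,n}\sum_u\P_n(u)\Q_n(u)^{-\alpha}=\Exp{n(\qlpq(\alpha)+o(1))}$; choosing $\alpha$ with $\qlpq(\alpha)<\infty$ (possible in a neighborhood of $0$, using convexity and $\qlpq(0)=0$, together with the a.s.\ convergence of $\tfrac1n\ln W_n$ to the constant $a$) shows $(\tfrac1n\ln W_n)$ is bounded in $L^p(\P\otimes\Q)$ for every $p$, hence uniformly integrable, so a.s.\ convergence to the constant $a$ upgrades to $L^p$ convergence.

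\textbf{Main obstacle.} The delicate point is Part~iii and, within it, the uniform integrability / uniform $L^p$-boundedness needed to promote a.s.\ convergence to $L^p$ convergence — one must be careful to invoke Lemma~\ref{lem:ubexpWna} with an exponent $\alpha>p$ for which the growth rate $\qlpq(\alpha)$ stays finite, which is where admissibility (and, implicitly, the fact that in the singleton-$J$ case $\qlpq$ is finite near $0$) is genuinely used; a secondary subtlety is handling the unbounded-$J$ case in Part~i, where the naive uniform lower bound on $\ilpq$ over $B(s_0,\epsilon)$ fails near the boundary of $J$, so one must localize away from that boundary and use monotonicity of $\ilpq$ on the unbounded complementary ray.
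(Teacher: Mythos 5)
Your Parts~i and~ii are essentially correct and in the same spirit as the paper's proof, except that the paper applies the large-deviation upper bound and Borel--Cantelli once, to the single closed set $\{s : \operatorname{dist}(s, J) \geq \delta\}$, using convexity of $\ilpq$ to get the uniform positive lower bound $\inf_{\operatorname{dist}(s,J)\geq\delta}\ilpq(s) > 0$. This avoids the localization-near-$\partial J$ hedging and the separate discussion of the unbounded ray that your ball-cover argument requires; your version does work, but note that the remark about ``goodness when $\Q=\P$'' is not what handles the ray --- it is monotonicity of $\ilpq$ on $[\sup J, \infty)$, which holds for any convex $\ilpq$ vanishing at $\sup J$.

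There is a genuine gap in Part~iii. You assert that there is some $\alpha > 0$ with $\qlpq(\alpha) < \infty$, justified by the parenthetical ``possible in a neighborhood of $0$, using convexity and $\qlpq(0)=0$, together with the a.s.\ convergence of $\tfrac1n\ln W_n$''. None of this establishes the claim: a convex function finite at $0$ can jump to $+\infty$ immediately to the right (its effective domain can be exactly $(-\infty,0]$), and almost-sure convergence of $\tfrac1n\ln W_n$ to a constant says nothing about finiteness of exponential moments. In fact $\qlpq(\alpha)=\infty$ for all $\alpha>0$ does occur under the paper's hypotheses --- see Example~\ref{ex:HMM1}, where $J$ is an unbounded interval. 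The paper closes this hole by using the duality $\ilpq = \qlpq^*$: if $\qlpq(\alpha)=\infty$ for every $\alpha>0$, then $\ilpq(s)=\sup_{\alpha\leq 0}(\alpha s - \qlpq(\alpha))$ is nonincreasing in $s$, hence vanishes on an entire ray $[a,\infty)$, contradicting that $J$ is a singleton. Without this step your moment/uniform-integrability argument has no foundation when $\Q\neq\P$ (your appeal to exponential tightness is itself a \emph{consequence} of $\qlpq(\alpha)<\infty$ for some $\alpha>0$, not a given). Once $\qlpq(\alpha)<\infty$ is secured, your uniform-integrability mechanism is fine; note though that the passage from $\iint W_n^\alpha \leq \Exp{n\qlpq(\alpha)+o(n)}$ to uniform $L^p$ boundedness of $\tfrac1n\ln W_n$ is not immediate (that bound blows up exponentially) and requires a tail decomposition or, as the paper does, a de la Vall\'ee Poussin argument with $\phi(s)=\exp(\alpha s^{1/p})$ and Jensen's inequality applied to $\sqrt[n]{\cdot\,}$, which yields $\limsup_n \int \phi(|Z_n|^p) \leq \exp(\qlpq(\alpha))$ directly.
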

 
 \begin{proof}     
    We proceed with the proof, taking~\eqref{eq:intcrosssmb}, \eqref{eq:Wn-to-hQ} as well as the almost sure existence of the limit in \eqref{eq:hnQ-to-hQ} for granted.
    \begin{enumerate}[i.]
         \item Suppose that the interval $J$ is nonempty. Then, by convexity, $$\inf_{s: \operatorname{dist}(s, J)\geq \delta}\ilpq(s) > 0$$ for every $\delta>0$. Therefore, by the large-deviation upper bound, the probability $\P\{x : \operatorname{dist}(-\frac 1n \ln\Q_n(x_1^n), J)\geq \delta\}$ decreases exponentially fast and a standard application of the Borel--Cantelli lemma, followed by taking the limit $\delta \to 0$, implies that $\inf J \leq h_\P(x) \leq \sup J$ for $\P$-almost every~$x$. By~\eqref{eq:intcrosssmb}, we obtain $\Sc(\P|\Q)\in J$.
         
         \item
         If $J=\emptyset$, a similar argument applies to $\P\{x : -\frac 1n \ln\Q_n(x_1^n) \leq K\}$ with $K$ arbitrarily large, forcing $h_\Q(x)$ to properly diverge to~$\infty$ for $\P$-almost every~$x$. By~\eqref{eq:intcrosssmb}, we conclude that also $\Sc(\P|\Q) = \infty$.
         
         \item If~$J$ is a singleton, by Part~i we conclude that $J =\{\Sc(\P|\Q)\}$ and that $h_\Q(x) = \Sc(\P|\Q)$ for $\P$-almost all $x$. We now claim that there exists $\alpha > 0$ such that $\qlpq(\alpha)<\infty$. Indeed, if this were not the case, we would have $\ilpq(s) = \qlpq^*(s)=\sup_{\alpha \leq 0}(\alpha s -\qlpq(\alpha))$ for all $s\in \rr$, so $\ilpq$ would be nonincreasing, which contradicts the assumption that $J$ is a singleton. 
         
         For such $\alpha$, note that the function $\phi: s \mapsto \exp(\alpha s^{1/p})$ grows superlinearly and that
         \begin{equation}\label{eq:preVP}
             \limsup_{n\to\infty}  
             \int \phi\left(|\tfrac 1n \ln\Q_n|^p\right) \dd\P
                 \leq  
                 \exp \left( \qlpq(\alpha)\right)
         \end{equation} 
         by Jensen's inequality applied to $\sqrt[n]{\,\cdot\,}$. Hence, de la Vall\'ee Poussin's criterion guarantees the sufficient uniform integrability requirement for the convergence in~\eqref{eq:hnQ-to-hQ} to hold in~$L^p(\P)$. The exact same argument applies to~\eqref{eq:Wn-to-hQ}: we just need to replace \eqref{eq:preVP} with
         \begin{equation*}
            \limsup_{n\to\infty}  
            \int \phi\left((\tfrac 1n \ln W_n)^p\right) \dd (\P\otimes \Q)
                \leq  
                \exp \left(\qw(\alpha)\right)
        \end{equation*} 
        and to recall that, by \eqref{eq:formqW} and since $\alpha>0$, we have $\qw(\alpha)=\qlpq(\alpha)<\infty$.
         \qedhere
    \end{enumerate}
\end{proof}

In the next proposition, we consider the special case where $\Q=\P$, which allows to include the limits \eqref{eq:Rn-to-hP} and \eqref{eq:really-Vn-to-hP} into the discussion. Notice also that even when discussing \eqref{eq:Rn-to-hP}, the measure~$\P$ is not assumed to satisfy~\assref{pa}.

 \begin{proposition}\label{prop:lln-Qv2} 
    Assume $\P$ satisfies~\assref{ud} and~\assref{sld}, and let  $J$ be defined by \eqref{eq:defJ} with $\Q=\P$. Then, the following hold:
    \begin{enumerate}[i.]
        \item  $J\subseteq [-\gamma_+,-\gamma_-]$. 
        \item  $h_\P(x) \in J$ for $\P$-almost every~$x$, and $h(\P) \in J$.
        \item If $J$ is a singleton, then $J=\{h(\P)\}$ and $h_\P(x) = h(\P)$ for $\P$-almost every $x$. Moreover, for all $p\geq 1$, the convergence in~\eqref{eq:hnP-to-hP}, \eqref{eq:Rn-to-hP} and \eqref{eq:really-Vn-to-hP} holds in $L^p(\P)$, and that of~\eqref{eq:Wn-to-hQ} holds in $L^p(\P\otimes \P)$.
     \end{enumerate}
\end{proposition}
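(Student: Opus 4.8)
The plan is to reduce Proposition~\ref{prop:lln-Qv2} to the already-established Theorem~\ref{thm:CJPS} together with Proposition~\ref{prop:lln-Q} applied to the admissible pair~$(\P,\P)$, which is legitimate here since $\P$ satisfying~\assref{ud} and~\assref{sld} is exactly the condition for $(\P,\P)$ to be admissible (recall the discussion following Theorem~\ref{thm:mainthmB}). Part~i is then immediate: by Theorem~\ref{thm:CJPS}.iii.a, $\ilpp(s)\in[s-\htop(\supp\P),s]$ for $s\in[-\gamma_+,-\gamma_-]$ and $\ilpp(s)=\infty$ otherwise, so $\{s:\ilpp(s)=0\}\subseteq[-\gamma_+,-\gamma_-]$. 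Part~ii is a direct application of Proposition~\ref{prop:lln-Q}.i and~ii with $\Q=\P$: since $\Sc(\P|\P)=h(\P)$ is always finite (as noted after~\eqref{eq:def-cross}), and since $\ilpp$ is a good rate function so $J$ is a nonempty compact interval, we land in the first case of Proposition~\ref{prop:lln-Q}, giving $h_\P(x)\in J$ almost surely and $h(\P)=\Sc(\P|\P)\in J$.

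For Part~iii, the claims $J=\{h(\P)\}$ and $h_\P(x)=h(\P)$ almost surely, as well as the $L^p(\P)$-convergence of~\eqref{eq:hnP-to-hP} and the $L^p(\P\otimes\P)$-convergence of~\eqref{eq:Wn-to-hQ} (with $\Q=\P$), follow verbatim from Proposition~\ref{prop:lln-Q}.iii with $\Q=\P$. The only genuinely new content is the $L^p(\P)$-convergence of~\eqref{eq:Rn-to-hP} and~\eqref{eq:really-Vn-to-hP}, which are not covered by Proposition~\ref{prop:lln-Q}. The plan for these is to run de la Vall\'ee Poussin's criterion exactly as in the proof of Proposition~\ref{prop:lln-Q}.iii, but feeding in the pressures $\qr$ and $\qv$ instead of $\qlpp$. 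Concretely: since $J$ is a singleton, the argument in the proof of Proposition~\ref{prop:lln-Q}.iii shows there exists $\alpha>0$ with $\qlpp(\alpha)<\infty$; by~\eqref{eq:formqV} we have $\qv(\alpha)=\max\{\qlpp(\alpha),\qlpp(-1)\}<\infty$ (using $\qlpp(-1)\leq\gamma_+\leq 0$), and by~\eqref{eq:formqR} we have $\qr(\alpha)=\max\{\qlpp(\alpha),\gamma_+\}<\infty$. Then, with $\phi(s):=\exp(\alpha s^{1/p})$, Jensen's inequality applied to $\sqrt[n]{\cdot\,}$ gives
\begin{equation*}
    \limsup_{n\to\infty}\int\phi\bigl((\tfrac1n\ln V_n)^p\bigr)\dd\P\leq\exp(\qv(\alpha))<\infty
\end{equation*}
and likewise with $R_n$ and $\qr(\alpha)$; uniform integrability of the families $\{(\tfrac1n\ln V_n)^p\}_n$ and $\{(\tfrac1n\ln R_n)^p\}_n$ follows, and combined with the almost sure convergences~\eqref{eq:Rn-to-hP} and~\eqref{eq:really-Vn-to-hP} (valid under~\assref{ud} and~\assref{sld}, as recalled in \secref{ssec:as-remix}) this upgrades to $L^p(\P)$-convergence.

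I do not anticipate a serious obstacle here; the proposition is essentially a repackaging of Theorem~\ref{thm:CJPS} and Proposition~\ref{prop:lln-Q} for the diagonal pair. The one point requiring a little care is making sure the superlinear-growth function $\phi$ is applied to a \emph{nonnegative} quantity: since $R_n,V_n\geq 1$, the random variables $\tfrac1n\ln R_n$ and $\tfrac1n\ln V_n$ are nonnegative, so $(\tfrac1n\ln R_n)^p$ and $(\tfrac1n\ln V_n)^p$ are well defined and $\phi$ of them makes sense (unlike $\tfrac1n\ln\P_n$, which is why the proof of Proposition~\ref{prop:lln-Q}.iii used $|\tfrac1n\ln\Q_n|^p$). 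With that observed, the de la Vall\'ee Poussin argument goes through without modification, and one should also remark that the finiteness of $\qr(\alpha)$ and $\qv(\alpha)$ for the relevant $\alpha>0$ already appears implicitly in the proofs of Theorems~\ref{thm:mainthmB} and~\ref{thm:mainthmC}, so no new estimate is needed.
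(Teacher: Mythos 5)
Your overall plan is the paper's, and Parts~i, ii, and the $W_n$/$-\tfrac1n\ln\P_n$ parts of Part~iii are handled the same way. The one place where your argument does not go through as written is the $L^p$-convergence of $(\tfrac1n\ln R_n)_{n\in\nn}$: you invoke the formula~\eqref{eq:formqR}, $\qr(\alpha)=\max\{\qlpp(\alpha),\gamma_+\}$, to conclude that $\qr(\alpha)<\infty$. But~\eqref{eq:formqR} is part of Theorem~\ref{thm:mainthmC}, whose hypotheses include~\assref{pa}, whereas Proposition~\ref{prop:lln-Qv2} only assumes~\assref{ud} and~\assref{sld}. Without~\assref{pa}, the full LDP for $(\tfrac1n\ln R_n)_{n\in\nn}$ has not been established, and in particular it is not known that the limit defining $\qr$ exists, so you may not invoke $\qr(\alpha)$ or~\eqref{eq:formqR}.

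The fix (which the paper uses) is noted at the end of \secref{sec:atorigin}: under~\assref{ud} and~\assref{sld} alone, the large-deviation \emph{upper} bound for $(\tfrac1n\ln R_n)_{n\in\nn}$ still holds with the explicit function $\ir$ of~\eqref{eq:defirexplicit}, which yields $\oqr\leq\oir^*\leq\ir^*$, where $\oqr$ is the limit superior version of~\eqref{eq:defqr}. Since the Jensen/de la Vall\'ee Poussin estimate only requires control of a limit superior, it suffices to know $\oqr(\alpha)<\infty$ for some $\alpha>0$. This follows because $\ir^*(\alpha)=\max\{\qlpp(\alpha),\gamma_+\}$ is a direct computation on the explicit function $\ir$ (not requiring~\assref{pa}), and equals $\qlpp(\alpha)<\infty$ for $\alpha\geq0$ since $\qlpp(0)=0\geq\gamma_+$. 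Your treatment of $V_n$ is fine, because Theorem~\ref{thm:mainthmB} only assumes~\assref{ud} and~\assref{sld}, but the $R_n$ step needs the rephrasing above to avoid implicitly strengthening the hypotheses.
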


\begin{proof}
    Part~i is immediate by~\eqref{eq:Isinterval} and Part~ii is a special case of Proposition~\ref{prop:lln-Q}.i. 
    All statements in Part~iii except the one about \eqref{eq:Rn-to-hP} are either special cases of Proposition~\ref{prop:lln-Q}.iii or proved in exactly the same way (possibly using the simplification offered by \eqref{eq:qppbdd}).
    
    The statement about \eqref{eq:Rn-to-hP} requires additional considerations to lift~\assref{pa}. As discussed at the end of \secref{sec:atorigin}, the large-deviation upper bound of Theorem~\ref{thm:mainthmC} still holds without assuming~\assref{pa}, and we have
    $\oqr  \leq \ir^*$, with $\oqr$ defined by taking the limit superior in \eqref{eq:defqr}. Then, \eqref{eq:preVP} is simply replaced by
    \begin{equation*}
        \limsup_{n\to\infty}  
        \int \phi\left((\tfrac 1n \ln R_n)^p\right) \dd\P
            \leq  
            \exp \left( \oqr(\alpha)\right).
    \end{equation*} 
    Since $\oqr(\alpha) \leq \ir^*(\alpha) = \qlpp(\alpha)$ for all $\alpha \geq 0$ by \eqref{eq:formqR}, the same argument applies once more.
\end{proof}

\begin{remark}
    The $\P$-essential image of $h_\P(x)$ can be a strict subset of $J$. 
\end{remark}

\appendix

\section{Supports and subshifts}\label{app:subshiftsupp}

In this short appendix, we collect some basic definitions and technical results from the theory of dynamical systems, which are used at several places in the paper. 

We recall that a nonempty set $\Omega' \subseteq \Omega$ is called a {\em subshift} of $\Omega$ if it is closed (with respect to the product topology) and if it satisfies $\shift(\Omega ') \subseteq \Omega'$. The {\em language} of $\Omega'$ is defined as $\cL := \{x_n^m: 1\leq n \leq m,  x\in \Omega'\}$. The subshift $\Omega'$ is called transitive if, whenever $u\in\cL$ and $v\in\cL$, there exists $\xi\in\cL$ such that $u\xi v\in\cL$.

A nonempty set $\Omega'\subseteq \Omega$ is a subshift if and only if there exists a collection ${\cal W}\subset \Omega_{\textnormal{fin}}$ of {\em forbidden} words such that $\Omega'$ consists of those $x\in \Omega$ such that no word $x_n^m$, $1\leq n \leq m$, belongs to $\cal W$. For a given subshift $\Omega'$, the set $\cal W$ is not uniquely determined, but if it can be chosen as a finite set, then $\Omega'$ is called a {\em subshift of finite type}. In the further specialized case where $\cal W$ can be chosen as a subset of $\cA^2$, then $\Omega'$ is called a {\em Markov subshift}.

We now fix $\P\in \cP_{\textnormal{inv}}(\Omega)$ and consider $\Omega':=\supp \P =  \{x\in \Omega: \P_n(x)>0\text{ for all }n\in\nn\}$. It is clear that $\Omega'$ is a subshift whose language is given by $\cL=\{u\in \Omega_{\textnormal{fin}}: \P([u])>0\}$, and that actually $\shift(\Omega') = \Omega'$.

The topological entropy of $\Omega'$ is defined by
\begin{equation}\label{eq:htopdef}
    \htop(\Omega') := \lim_{n\to\infty}\frac 1n \ln |\{x_1^n: x\in \Omega'\}|  =  \lim_{n\to\infty}\frac 1n \ln |\supp \P_n| \leq \ln |\cA|,
\end{equation}
where we recall that $\supp \P_n := \{u\in \cA^n: \P_n(u)>0\}$. The limits in~\eqref{eq:htopdef} exist by a standard subadditivity argument (which does not require any decoupling assumption). 

\begin{lemma}
    \label{lem:proofchainhtop}
    Let $\P\in \cP_{\textnormal{inv}}(\Omega)$. Then, the bounds~\eqref{eq:ineqgammapm} hold.
\end{lemma}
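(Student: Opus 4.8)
\textbf{Proof plan for Lemma~\ref{lem:proofchainhtop}.} The chain of inequalities to establish is
\[
    0 \leq -\gamma_+ \leq h(\P) \leq \htop(\supp\P) \leq -\gamma_- \leq \infty,
\]
with $\gamma_\pm$ defined in \eqref{eq:defgammaplus} and $\htop(\supp\P)$ given by \eqref{eq:htopdef}. The plan is to treat each inequality in turn, working purely from the definitions and elementary convexity, with no appeal to any decoupling assumption.

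First, $-\gamma_+ \geq 0$: since each $\P_n(u) \leq 1$ we have $\ln \P_n(u) \leq 0$, hence $\sup_{u\in\cA^n}\ln\P_n(u)\leq 0$ and therefore $\gamma_+ = \limsup_n \frac1n \sup_u \ln\P_n(u)\leq 0$. Next, $-\gamma_+ \leq h(\P)$: by Jensen's inequality (concavity of $\ln$), for each $n$,
\[
    -\frac1n\sum_{u\in\cA^n}\P_n(u)\ln\P_n(u) \;\geq\; -\frac1n \ln\Big(\sum_{u\in\cA^n}\P_n(u)\cdot\P_n(u)\Big)^{?}
\]
— more directly, one uses that $-\sum_u \P_n(u)\ln\P_n(u) \geq -\sum_u\P_n(u)\sup_v\ln\P_n(v) = -\sup_v\ln\P_n(v)$, so $\frac1n H(\P_n)\geq -\frac1n\sup_u\ln\P_n(u)$; taking $\liminf$ on the left (the limit exists, it defines $h(\P)$) and comparing with $\limsup$ on the right gives $h(\P)\geq -\gamma_+$. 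For $h(\P)\leq\htop(\supp\P)$: the entropy $-\sum_{u\in\supp\P_n}\P_n(u)\ln\P_n(u)$ is the Shannon entropy of a probability vector supported on $\supp\P_n$, which is maximized by the uniform distribution and thus bounded above by $\ln|\supp\P_n|$; dividing by $n$ and passing to the limit yields $h(\P)\leq\htop(\supp\P)$.

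For $\htop(\supp\P)\leq -\gamma_-$: for every $u\in\supp\P_n$ we have $\P_n(u)\geq \inf_{v\in\supp\P_n}\P_n(v)$, and summing over $u\in\supp\P_n$ gives $1 \geq |\supp\P_n|\cdot\inf_{v\in\supp\P_n}\P_n(v)$, i.e.\ $\ln|\supp\P_n| \leq -\inf_{v\in\supp\P_n}\ln\P_n(v)$. Dividing by $n$, taking the limit on the left (which is $\htop(\supp\P)$) and $\liminf$ on the right (which, with the minus sign, is $-\gamma_-$) gives the claim; note this also covers the case $\gamma_- = -\infty$, where the right-hand side is $+\infty$. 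Finally $-\gamma_-\leq\infty$ is trivial. I expect the only step requiring a moment's care to be the $-\gamma_+ \leq h(\P)$ inequality, where one must be slightly careful about interchanging $\liminf$ and $\limsup$ — but since $h(\P)$ is an honest limit (SMB / subadditivity) while $\gamma_+$ is a $\limsup$, the inequality $\frac1n H(\P_n)\geq -\frac1n\sup_u\ln\P_n(u)$ holding for every $n$ passes to $h(\P) = \lim_n \frac1n H(\P_n) \geq \limsup_n\big(-\frac1n\sup_u\ln\P_n(u)\big) = -\gamma_+$ without difficulty. No genuine obstacle is anticipated; this is a routine bookkeeping lemma.
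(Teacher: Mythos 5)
Your proof is correct and follows essentially the same route as the paper's: the nontrivial steps are $H(\P_n)=-\sum_u\P_n(u)\ln\P_n(u)\geq-\sup_u\ln\P_n(u)$ for $-\gamma_+\leq h(\P)$, the Shannon-entropy bound $H(\P_n)\leq\ln|\supp\P_n|$ for $h(\P)\leq\htop(\supp\P)$, and $|\supp\P_n|\cdot\inf_{v\in\supp\P_n}\P_n(v)\leq 1$ for $\htop(\supp\P)\leq-\gamma_-$, exactly as in the paper. One minor bookkeeping slip: since $\gamma_+$ is a $\limsup$ and $\gamma_-$ a $\liminf$, your displayed equality $\limsup_n\bigl(-\tfrac1n\sup_u\ln\P_n(u)\bigr)=-\gamma_+$ and the remark that taking $\liminf$ on the right yields $-\gamma_-$ both confuse $\limsup(-a_n)=-\liminf a_n$ with $-\limsup a_n$; what you actually obtain there is $\geq-\gamma_+$ and $\leq-\gamma_-$ respectively, which is weaker than the claimed equalities but still suffices, so the chain of inequalities goes through unchanged.
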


\begin{proof}
     Obviously, $0\leq -\gamma_+$. Let now $H(\P_n) := -\sum_{u\in \cA^n}\P_n(u)\ln \P_n(u)$. By the definition of $\gamma_+$ we find $\ln\P_n(u)\leq  n\gamma_+ +o(n)$ for every $u\in \cA^n$ and so $-n\gamma_+ \leq H(\P_n)+o(n)$. On the other hand, $H(\P_n)\leq \ln |\supp\P_n|$ (see e.g.\ Corollary~4.2.1 in~\cite{Wal}). Finally, clearly $\min_{u\in \supp \P_n}\P_n(u) \leq 1/|\supp \P_n|$, so $\ln|\supp \P_n|\leq -\min_{u\in \supp \P_n}\ln\P_n(u)$. Dividing the above inequalities by~$n$ and taking $n\to\infty$ completes the proof of~\eqref{eq:ineqgammapm}. Remark that the inequality $ h(\P) \leq  \htop(\supp\P)$ also follows from the much more general result~\eqref{eq:varprincp} below.
\end{proof}

\subsection{Specification properties}

The following definitions will be especially helpful when combined with Lemmas~\ref{lem:specud} and~\ref{lem:specsld} below. As in the statement of our decoupling conditions, we shall assume that $\tau_n = o(n)$, and there will be no loss of generality assuming that the various assumptions hold with the same sequence $(\tau_n)_{n\in \nn}$. Again, the case where the sequence is bounded (written $\tau_n = O(1)$) will sometimes be singled out. 
\begin{definition}
\label{def:specif}
    For a subshift $\Omega'\subset \Omega$ with language $\cL$, we define the following properties:
    \begin{itemize}
        \item the subshift $\Omega'$ satisfies the \emph{flexible specification property} if for every $n\in \nn, u\in \cL\cap \cA^n$, and every $v\in \cL$, there exists $0\leq \ell \leq \tau_n$ and $\xi \in \cA^\ell$ such that $u\xi v\in \cL$;
        \item the subshift $\Omega'$ satisfies the \emph{abundant periodic orbit property} if for each $n\in \nn$ and each $u\in \cL\cap  \cA^{n}$, the cylinder $[u]$ contains at least one periodic point of period no larger than $n + \tau_n$, i.e.\ if there exists a word $\xi$ with $0\leq |\xi|\leq \tau_n$ such that $u\xi u\xi u\xi \dots \in \Omega'$.
    \end{itemize}
\end{definition}

\begin{remark}\label{rem:specification-sld}
        Obviously, if $\P$ satisfies~\assref{sld}, then~$\supp \P$ satisfies the flexible specification property (with the same sequence $(\tau_n)_{n\in \nn}$). In the same way, if a subshift $\Omega'$ satisfies Bowen's specification property \cite{Bo74}, then the two properties in \eqref{def:specif} are satisfied with $\tau_n = O(1)$. The same is true when $\Omega'$ is a transitive subshift of finite type: the flexible specification property (with $\tau_n = O(1)$) is quite immediate, and the abundant periodic orbit property is then obtained using Lemma~\ref{lem:bded-spec-implies-per-spec} below. The interested reader can gather other sufficient conditions by combining ideas from \cite{Ju11,KLO-2016,CLTH21}.
\end{remark}

    \begin{lemma}
    \label{lem:bded-spec-implies-per-spec}
        If the flexible specification property holds with $\tau_n = O(1)$, then the abundant periodic orbit property holds with some (possibly larger) $\tau_n' = O(1)$.
    \end{lemma}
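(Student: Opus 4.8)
The statement asserts that flexible specification with bounded gaps upgrades to the abundant periodic orbit property with bounded gaps. The plan is to fix a constant bound $\tau$ on the sequence $(\tau_n)_{n\in\nn}$ witnessing flexible specification, take an arbitrary word $u \in \cL \cap \cA^n$, and use flexible specification to ``close up'' $u$ into a periodic word by concatenating it with a suitable connecting word $\xi$ of bounded length. Concretely, flexible specification applied to the pair $(u, u)$ gives a word $\xi$ with $0 \le |\xi| \le \tau_n \le \tau$ such that $u\xi u \in \cL$; the natural hope is that then the periodic point $w := (u\xi)^\infty = u\xi u\xi u\xi \dotsb$ lies in $\Omega'$, which would give abundance with $\tau_n' := \tau$. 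The period of this point is $|u| + |\xi| \le n + \tau$, so we could even take $\tau_n' = \tau = O(1)$.

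\textbf{Key steps.} First, I would unwind the constant: since $\tau_n = O(1)$, set $\tau := \sup_{n} \tau_n < \infty$. Second, fix $n$ and $u \in \cL \cap \cA^n$. Third, the main point is to verify that $(u\xi)^\infty \in \Omega'$, i.e.\ that every finite subword of this infinite periodic sequence belongs to $\cL$. A single application of flexible specification only yields $u\xi u \in \cL$, which is not literally enough, because a subword of $(u\xi)^\infty$ that straddles several periods could in principle be longer than $|u\xi u|$. The way around this is to iterate: apply flexible specification repeatedly to build $u\xi u \xi u \xi \dotsb u$ of arbitrary length in $\cL$ — but one must be careful that the \emph{same} connecting word $\xi$ is reused each time so that the result really is a truncation of a single periodic sequence. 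A cleaner route is to first obtain, for each $k$, \emph{some} word $w_k \in \cL$ of the form $u \eta_1 u \eta_2 \dotsb \eta_{k-1} u$ with all $|\eta_j| \le \tau$; then, since there are only finitely many words of length $\le \tau$ (here $|\cA| < \infty$ is used), by a pigeonhole/compactness argument over $k \to \infty$ one extracts a single $\xi$ with $|\xi|\le\tau$ and a sequence $k_i \to \infty$ such that $(u\xi)^{k_i}$ (suitably interpreted) lies in $\cL$; closedness of $\Omega'$ then gives $(u\xi)^\infty \in \Omega'$. Fourth, this periodic point lies in $[u]$ and has period $|u|+|\xi| \le n + \tau$, establishing the abundant periodic orbit property with $\tau_n' = \tau$.

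\textbf{Main obstacle.} The delicate point is precisely the reuse of the connecting word $\xi$: flexible specification is an existential statement producing, for each pair of words, \emph{a} connector, but a priori different pairs (or different iterations) could require different connectors, and a naive infinite concatenation $u\xi_1 u \xi_2 u \xi_3 \dotsb$ need not be periodic. The finiteness of $\cA$ (hence of $\{\xi : |\xi| \le \tau\}$) is what rescues the argument, via a compactness/diagonal extraction showing that some fixed $\xi$ works for arbitrarily long concatenations, and then closedness of the subshift passes to the limit. I would also keep in mind the degenerate edge cases: if $|\xi|$ can be taken to be $0$ (so $uu \in \cL$) the word $u^\infty$ itself may already be in $\Omega'$, and one should check the statement makes sense for all $n$ including small $n$ where $n + \tau$ is the relevant bound on the period. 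Everything else — the bookkeeping that the resulting point lies in the correct cylinder and has the claimed period — is routine.
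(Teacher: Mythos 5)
Your plan correctly isolates the delicate point — the need to reuse the \emph{same} connecting word — but the pigeonhole/compactness patch you propose does not actually close it. From $u\eta_1 u\eta_2\dotsb\eta_{k-1}u\in\cL$ with each $\eta_j$ drawn from the finite set of words of length at most $\tau$, pigeonhole only tells you that some fixed $\xi$ occurs among the $\eta_j$ for arbitrarily many indices $j$; it does \emph{not} tell you that these occurrences are consecutive, nor that $(u\xi)^m\in\cL$ for any $m\geq 2$. For instance, nothing in the construction prevents the connectors from alternating $\xi,\xi',\xi,\xi',\dotsc$ forever, in which case $\xi$ appears infinitely often but $(u\xi)^2 = u\xi u\xi$ may well lie outside~$\cL$. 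So the words $(u\xi)^{k_i}$ to which you wish to apply closedness of $\Omega'$ are never actually exhibited, and the argument stalls at precisely the obstacle you flagged.

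The paper circumvents this by invoking a genuinely nontrivial combinatorial input that your proposal is missing: by a result going back to Bertrand~\cite{Be88} (see also~\cite[\S 3]{Ju11}), a subshift satisfying the specification property with \emph{bounded} gap admits a \emph{synchronizing word} $w$, i.e.\ a word $w\in\cL$ such that $uw\in\cL$ and $wv\in\cL$ together imply $uwv\in\cL$. With such a $w$ in hand, two applications of flexible specification produce $\xi_1,\xi_2$ of length at most $\tau$ with $w\xi_1 u\xi_2 w\in\cL$, and the synchronizing property is exactly what lets one glue copies of this block end-to-end indefinitely, yielding the periodic point $u\xi_2 w\xi_1 u\xi_2 w\xi_1\dotsb\in\Omega'$ in $[u]$ of period $|u|+|\xi_1 w\xi_2|\leq n + 2\tau + |w|$, whence $\tau_n':=2\tau+|w|=O(1)$. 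In short, the step you labelled as the ``main obstacle'' is not a bookkeeping issue but the heart of the lemma, and it is resolved by the synchronizing-word lemma, not by pigeonhole.
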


    \begin{proof}
        Assume that the flexible specification property holds with $\tau:= \sup_{n\in \nn}\tau_n < \infty$. Following~\cite{Be88} and~\cite[\S{3}]{Ju11}, this implies the existence of a \emph{synchronizing word}~$w$, i.e.\ a word $w\in \cL$ such that whenever $uw\in \cL$ and $wv\in \cL$, then $uwv\in \cL$. 
        Using the assumption twice, for every~$u\in\cL$, we can find $\xi_1$ and $\xi_2$, each of length at most $\tau$, such that $w\xi_1 u\xi_2 w \in \cL$. Note that $|\xi_1 w \xi_2| \leq 2 \tau + |w|$. But by the synchronizing property of~$w$, we must have $u\xi_2 w\xi_1u\xi_2 w\xi_1u\xi_2 w\xi_1\dotsb \in \Omega'$, so the abundant periodic orbit property holds with $\tau_n':=2 \tau + |w| = O(1)$.
    \end{proof}

\subsection{Measures of maximal entropy}

Let $\Omega'$ be a subshift. We denote by $\cP_{\textnormal{inv}}(\Omega')$ the set of shift-invariant probability measures on $\Omega'$, and we identify the elements of $\cP_{\textnormal{inv}}(\Omega')$ with those of $\{\mu\in \cP_{\textnormal{inv}}(\Omega): \mu(\Omega')=1\}$ in the obvious way.

By the variational principle (see e.g.\ Theorem~8.6 in~\cite{Wal}) applied to the dynamical system $(\Omega', \shift)$, we have
\begin{equation}\label{eq:varprincp}
    \htop(\Omega') = \max_{\mu\in \cP_{\textnormal{inv}}(\Omega')}h(\mu),
\end{equation}
where the right-hand side is indeed a maximum, since the map $h$ is upper semicontinuous (see e.g.\ Theorem~8.2 in~\cite{Wal}). In particular $h(\mu)\leq \htop(\Omega')$ for all $\mu\in \cP_{\textnormal{inv}}(\Omega')$, and $\mu$ is called a {\em measure of maximal entropy} (MME) on $\Omega'$ if $h(\mu) =\htop(\Omega')$.

Given a continuous potential $\varphi:\Omega'\to \rr$, we denote by $\ptop(\varphi)$ the topological pressure of $\varphi$ with respect to the dynamical system $(\Omega',T)$, that is, 
$$
    \ptop(\varphi) := \lim_{n\to\infty}\frac 1n \ln \sum_{u \in \cA^n \cap \cL}\Exp{\sup_{x \in [u] \cap \Omega'} \sum_{i=0}^{n-1}  \varphi(\shift^{i}x)},
$$
which reduces to \eqref{eq:defptop} when $\Omega'=\Omega$. With this in mind, noting that $\htop(\Omega')$ is nothing but $\ptop(\varphi_0)$ for the potential $\varphi_0:\Omega'\to \rr$ that vanishes identically, we conclude that $\mu\in \cP_{\textnormal{inv}}(\Omega')$ is a MME on $\Omega'$ if and only if $\ptop(\varphi_0) = h(\mu)$, that is, if and only if $\mu$ is an equilibrium measure for~$\varphi_0$.

\begin{proposition}
\label{prop:MMEprops}
    Let $\P\in \cP_{\textnormal{inv}}(\Omega)$, and let $\Omega' := \supp \P$.
    \begin{enumerate}[i.]
        \item Assume $\P$ satisfies~\assref{sld}. Then, the measure~$\P$ is a MME on $\Omega'$ if and only if $\gamma_- = \gamma_+$.
        \item Assume $\P$ satisfies~\assref{sld} with $\tau_n=O(1)$. Then, the dynamical system $(\Omega', \shift)$ is {\em intrinsically ergodic}, i.e.\ there is exactly one MME on $\Omega'$. 
    \end{enumerate}
\end{proposition}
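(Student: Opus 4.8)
The plan is to handle the two implications of part~i separately — the easy one from the chain \eqref{eq:ineqgammapm}, the hard one from \assref{sld} — and to deduce part~ii from the specification property of $\Omega' = \supp\P$. The implication ``$\gamma_- = \gamma_+ \Rightarrow \P$ is a MME on $\Omega'$'' requires no decoupling at all: if $\gamma_- = \gamma_+$ then $-\gamma_+ = -\gamma_-$, so every inequality in \eqref{eq:ineqgammapm} is an equality, and in particular $h(\P) = \htop(\supp\P)$. For the converse I would argue by contraposition. Set
\[
    D_n := \ln|\supp\P_n| - H(\P_n) = \sum_{u\in\supp\P_n}\P_n(u)\ln\bigl(|\supp\P_n|\,\P_n(u)\bigr)\ge 0,
\]
the relative entropy of $\P_n$ with respect to the uniform probability on $\supp\P_n$; since $h(\P) = \lim_n H(\P_n)/n$ and $\htop(\supp\P) = \lim_n \ln|\supp\P_n|/n$, the limit $\lim_n D_n/n$ exists and equals $\htop(\supp\P) - h(\P)$, so ``$\P$ is not a MME'' is equivalent to ``$\lim_n D_n/n > 0$''.

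Assuming $\gamma_- < \gamma_+$, I would produce this linear growth of $D_n$ using \assref{sld} twice. First, a Fekete-type (almost-superadditivity) argument of the same kind as in the proof of Proposition~\ref{prop:sconvtau} shows that under \assref{sld} the limit superior defining $\gamma_+$ is an actual limit, hence $\max_{u\in\supp\P_n}\ln\P_n(u) = n\gamma_+ + o(n)$: no long word carries probability essentially larger than $\Exp{n\gamma_+}$. Second, because $\gamma_- < \gamma_+$ there is a length $k$ and a ``light'' word $u_0 \in \supp\P_k$ with $\tfrac1k\ln\P_k(u_0) \le \gamma_+ - \eta$ for some $\eta>0$; using \assref{sld}, equivalently the flexible specification property of $\supp\P$ (Remark~\ref{rem:specification-sld}), one inserts $u_0$, together with the connectors supplied by \assref{sld}, between arbitrary words of $\supp\P$, thereby producing at each large length $N$ a family of light words whose probabilities are bounded below by $\Exp{-o(N)}$ times products of probabilities of the surrounding blocks. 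The point of the \emph{lower} bound in \assref{sld} is exactly that this family is not exponentially negligible, so that a bookkeeping of its mass against the normalisation $\sum_{u\in\supp\P_n}\P_n(u)=1$ forces $\sum_{u\in\supp\P_n}\P_n(u)\bigl(\gamma_+-\tfrac1n\ln\P_n(u)\bigr)$, and hence $D_n$, to be of order $n$. Turning this heuristic into a clean quantitative estimate — combining the concatenation inequality of \assref{sld} with the normalisation and a careful treatment of connector lengths, in the spirit of Proposition~\ref{prop:sconvtau} — is the step I expect to be the main obstacle. (When $\tau_n=O(1)$ one could instead deduce the converse of part~i from part~ii: the unique MME then has the Gibbs property $\mu_n(u)\asymp\Exp{-n\htop(\Omega')}$, so $\gamma_+=\gamma_-=-\htop(\Omega')$ for it, and if $h(\P)=\htop(\supp\P)$ then $\P$ equals that MME; the general $o(n)$ case is the delicate one.)

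For part~ii, I would observe that \assref{sld} with $\tau_n = O(1)$ makes $\Omega' = \supp\P$ satisfy the flexible specification property with a uniformly bounded gap (Remark~\ref{rem:specification-sld}); since gluing two words with a bounded connector allows gluing any finite family of words with bounded connectors, this is precisely Bowen's specification property, and Lemma~\ref{lem:bded-spec-implies-per-spec} furnishes its periodic version. Subshifts are expansive, so Bowen's theorem on systems with the specification property~\cite{Bo74} applies: there is a unique equilibrium state for any potential satisfying Bowen's condition, and taking the identically zero potential (for which $\ptop$ equals $\htop(\Omega')$) gives a unique measure of maximal entropy on $\Omega'$, i.e.\ intrinsic ergodicity. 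Alternatively, and avoiding any external black box, one can use the synchronizing word produced in the proof of Lemma~\ref{lem:bded-spec-implies-per-spec} to show that every MME $\mu$ on $\Omega'$ satisfies $K^{-1}\Exp{-n\htop(\Omega')} \le \mu_n(u) \le K\Exp{-n\htop(\Omega')}$ for all words $u$ in the language of $\Omega'$ and all $n$, with $K$ independent of $u$ and $n$; uniqueness then follows from the standard comparison of two such Gibbs-type measures. It is in this last step that the hypothesis $\tau_n=O(1)$ is genuinely used, as the constant $K$ degenerates once the gaps are unbounded.
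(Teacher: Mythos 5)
Your easy direction of part~i (deducing the MME property from $\gamma_-=\gamma_+$ via~\eqref{eq:ineqgammapm}) is exactly the paper's argument. In the converse direction and in part~ii you diverge, and in both cases there is a gap.

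For the converse of part~i, the paper's proof is a one-liner once one knows the relevant black box: Theorem~2.1 of~\cite{PS20} states that any equilibrium measure for a potential on a subshift satisfying the flexible specification property is \emph{weak Gibbs}. Since a MME on~$\Omega'$ is an equilibrium measure for the zero potential, weak Gibbsianness gives $\P_n(x_1^n)=\Exp{-n\htop(\Omega')+o(n)}$ \emph{uniformly} over $x\in\Omega'$, and $\gamma_+=\gamma_-=-\htop(\Omega')$ follows immediately. Your contrapositive plan via the quantity $D_n$ faces a genuine obstacle, not just a bookkeeping one: $D_n=o(n)$ is an \emph{average} statement (a relative entropy), while $\gamma_-=\gamma_+$ is a worst-case statement about $\min_{u\in\supp\P_n}\P_n(u)$, and promoting the average to the uniform pointwise bound is exactly what the weak-Gibbs theorem does. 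Worse, part~i assumes only~\assref{sld}, so you do not have~\assref{ud} at your disposal and therefore cannot upper-bound the probability of words containing your light word $u_0$; without such an upper bound, the family of concatenations you produce is not obviously ``light'' in the way your $D_n$ estimate needs. You flag this as the main obstacle, and I do not see how the sketch fills it.

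For part~ii, your first route has an error: the flexible specification property with $\tau_n=O(1)$ is strictly weaker than Bowen's specification property as used in~\cite{Bo74}. Bowen's specification forces topological mixing, whereas an irreducible but periodic Markov subshift satisfies flexible specification with bounded gaps precisely because the connector length $\ell$ is allowed to depend on the pair $(u,v)$ (this is the point of Remark~\ref{rem:specification-sld} and of the discussion of irreducible Markov chains in~\secref{sec:markov}). So you cannot invoke Bowen's uniqueness theorem off the shelf. Your alternative route --- deriving a uniform Gibbs bound $\mu_n(u)\asymp\Exp{-n\htop(\Omega')}$ for every MME using the synchronizing word of Lemma~\ref{lem:bded-spec-implies-per-spec} --- is the right idea and is essentially what the paper outsources to Theorem~3.2 of~\cite{CLTH21}, which is tailored to exactly these non-mixing situations; but as written it is a sketch, not a proof.
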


\begin{proof}
    Recall that~$\gamma_{\pm}$ are defined in~\eqref{eq:defgammaplus}.
    \begin{enumerate}[i.]
        \item Assume $\P$ is a MME on $\Omega'$. As discussed above, $\P$ is then an equilibrium measure for the identically vanishing potential $\varphi_0$ defined on $\Omega'$.
        By~\assref{sld}, $\Omega'$ enjoys the flexible specification property (Remark~\ref{rem:specification-sld}) and thus we can apply Theorem~2.1 in~\cite{PS20}, which guarantees that $\P$ is weak Gibbs with respect to $\varphi_0$, i.e.\ that for all $x\in \Omega'$,
        $$
        \P_n(x_1^n) = \Exp{\sum_{i=0}^{n-1} \varphi_0(\shift^{i}x)-n\ptop(\varphi_0)+o(n)}  = \Exp{-n\htop(\Omega')+o(n)}.        
        $$
        From this we obtain $\gamma_+ = \gamma_-=\htop(\Omega')$.
        Conversely, if $\gamma_- = \gamma_+$, then~\eqref{eq:ineqgammapm} implies that $h(\P) = \htop(\Omega')$, so $\P$ is a MME.

        \item Under the stated assumption, $\Omega'$ satisfies the flexible specification property with $\tau_n=O(1)$ (Remark~\ref{rem:specification-sld}) and the conclusion follows from e.g.\ Theorem~3.2 in~\cite{CLTH21}. 
        \qedhere
    \end{enumerate}
\end{proof}

\section{Technical results about decoupling assumptions}
\label{app:decoupling}

 This appendix complements the discussion of our assumptions in~\secref{ssec:setup} and uses the notations therein.

\subsection{Alternative definitions}

The following lemma is used in Remark~\ref{rem:changetaun}.
\begin{lemma}\label{lem:increasecntaun}
    If any of the assumptions~\assref{ud}, \assref{sld} or \assref{jsld} holds with some sequences $(C_n)_{n\in \nn}$ and $(\tau_n)_{n\in \nn}$, then it holds with any choice of sequences $(C'_n)_{n\in \nn}$ and $(\tau'_n)_{n\in \nn}$ satisfying $C_n'\geq C_n$ and $\tau_n'\geq \tau_n$ for all $n\in \nn$.
\end{lemma}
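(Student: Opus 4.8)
The plan is to reduce the claim to two independent modifications --- replacing $(C_n)_{n\in\nn}$ by a pointwise-larger sequence, and replacing $(\tau_n)_{n\in\nn}$ by a pointwise-larger sequence --- and then obtain the general statement by composing the two. First I would dispense with the constants: enlarging $C_n$ is immediate for all three assumptions, since in \eqref{eq:defUD} the upper bound only weakens when $C_n$ is replaced by any $C_n'\geq C_n$, while in \eqref{eq:SLDeq} and \eqref{eq:jslddef} one has $(C_n')^{-1}\leq C_n^{-1}$, so the very same witnesses $\ell$ and $\xi$ provided by the assumption still work.

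Next I would enlarge the gaps. For \assref{sld} and \assref{jsld} this is again formal: for fixed $n,m,u,v$, the index $\ell$ with $0\leq\ell\leq\tau_n$ supplied by the assumption automatically satisfies $0\leq\ell\leq\tau_n'$, so the same $\ell$ and $\xi$ witness the assumption relative to $(\tau_n')_{n\in\nn}$ (for \assref{jsld}, the two inequalities are unaffected). The only case requiring an actual argument is enlarging $\tau_n$ for \assref{ud}. The idea there is to absorb the excess gap length $\tau_n'-\tau_n$ into the right-hand block: fix $n,m\in\nn$, $u\in\cA^n$, $v\in\cA^m$ and a word $\xi$ of length $\tau_n'$, and split $\xi=\xi_1\xi_2$ with $|\xi_1|=\tau_n$ and $|\xi_2|=\tau_n'-\tau_n$. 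Applying \eqref{eq:defUD} at level $n$ with gap word $\xi_1$ and right-hand word $v':=\xi_2 v$ (of length $(\tau_n'-\tau_n)+m$) yields
\[
    \P_{n+\tau_n'+m}(u\xi v) \;\leq\; C_n\,\P_n(u)\,\P_{|v'|}(\xi_2 v),
\]
and it then remains to bound $\P_{|v'|}(\xi_2 v)\leq\P_m(v)$, which holds by shift invariance since $[\xi_2 v]\subseteq \shift^{-|\xi_2|}[v]$ and $\P(\shift^{-|\xi_2|}[v])=\P([v])$. This establishes \eqref{eq:defUD} with the sequences $(C_n)_{n\in\nn}$ and $(\tau_n')_{n\in\nn}$.

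I do not expect any genuine obstacle here. The UD gap-enlargement is the only non-formal step, and its content is merely the standard manipulation of folding the leftover gap symbols into the next word and discarding them via stationarity; the edge case $\tau_n'=\tau_n$ (where $\xi_2$ is the empty word) is trivially covered.
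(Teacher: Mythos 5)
Your proof is correct and follows the same route as the paper: after noting that the enlargement of $C_n$ and the enlargement of $\tau_n$ for \assref{sld}/\assref{jsld} are immediate, you handle the one non-trivial case (\assref{ud} with a larger gap) by splitting $\xi$ into a length-$\tau_n$ prefix and a remainder, applying \eqref{eq:defUD} with the remainder absorbed into the right-hand word, and then discarding it via $\P_{m+\tau_n'-\tau_n}(\xi_2 v)\leq\P_m(v)$ by shift invariance. This is exactly the argument the paper gives.
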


\begin{proof}
    The only nontrivial part of the statement is that we may replace $\tau_n$ with $\tau_n'$ in~\assref{ud}, which we now prove. Let $n,m\in\nn$, $u\in \cA^n$, $v\in \cA^m$ and $\xi\in \Omega^{\tau'_n}$. \assref{ud} then implies, with $\xi' = \xi_{\tau_n+1}^{\tau_n'}$ (possibly the empty word if $\tau_n'=\tau_n$) that $\P_{n+\tau_n'+m}(u\xi v) \leq C_n\P_{n}(u)\P_{m+\tau_n'-\tau_n}(\xi'v)\leq C_n\P_{n}(u)\P_{m}(v)$.
\end{proof}

The next two lemmas justify \eqref{eq:udeventseq} and \eqref{eq:sldeventseq} respectively.

\begin{lemma}\label{lem:udevents} 
    If $\P\in \cP_{\textnormal{inv}}(\Omega)$ satisfies~\assref{ud}, then
    \begin{equation*}
        \P\left(A \cap \shift^{-n - \tau_n}B\right) \leq \widetilde C_n \P(A)\P(B)
    \end{equation*}
    for  every $n\in \nn$, $A\in \cF_n$ and $B\in \cF$, with $\widetilde C_n := C_n|\cA|^{\tau_n}= \Exp{o(n)}$.
\end{lemma}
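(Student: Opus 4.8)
\textbf{Proof strategy for Lemma~\ref{lem:udevents}.} The plan is to reduce the statement about general events in $\cF_n$ and $\cF$ to the cylinder-level inequality~\eqref{eq:defUD} by decomposing $A$ and $B$ into unions of cylinders, and then to absorb the ``uncontrolled'' middle coordinates (those between position $n+1$ and $n+\tau_n$, which are not pinned down when we only specify that $x \in A$ and $\shift^{-n-\tau_n}x \in B$) by a crude union bound over all $|\cA|^{\tau_n}$ possibilities for them. This is exactly where the extra factor $|\cA|^{\tau_n}$ in $\widetilde C_n$ comes from, and the condition $\tau_n = o(n)$ of~\eqref{eq:tauncn} guarantees $|\cA|^{\tau_n} = \Exp{o(n)}$, so $\widetilde C_n = \Exp{o(n)}$ as claimed.

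\textbf{Key steps.} First I would reduce to the case where $B \in \cF_m$ for some $m \in \nn$: since both sides are countably additive in $B$ along an increasing sequence of events (and $\cF = \bigvee_m \cF_m$), it suffices to prove the bound for $B$ in the algebra $\cF_{\textnormal{fin}} = \bigcup_m \cF_m$, and then pass to the limit by continuity of measure from below (approximating a general $B \in \cF$ by $B \cap$ (cylinder events); more precisely, one writes $B$ as an increasing limit of finite disjoint unions of cylinders up to a set of arbitrarily small measure, or simply notes the monotone class / $\pi$-$\lambda$ argument). Second, fix such an $m$ and write $A = \bigsqcup_{u \in S} [u]$ with $S \subseteq \cA^n$ and $B = \bigsqcup_{v \in S'} [v]$ with $S' \subseteq \cA^m$, these being disjoint unions of cylinders. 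Third, observe that
\[
    A \cap \shift^{-n-\tau_n}B
        = \bigsqcup_{u \in S}\bigsqcup_{v \in S'}\bigsqcup_{w \in \cA^{\tau_n}} [u w v],
\]
a disjoint union, where the inner union over $w \in \cA^{\tau_n}$ accounts for all possible fillings of the $\tau_n$ coordinates sitting strictly between the block fixed by $u$ and the block fixed by $v$. Fourth, apply~\eqref{eq:defUD} to each term: $\P_{n+\tau_n+m}(uwv) = \P([uwv]) \leq C_n \P_n(u)\P_m(v)$ for every $w$. Summing,
\[
    \P\left(A \cap \shift^{-n-\tau_n}B\right)
        \leq \sum_{u \in S}\sum_{v \in S'}\sum_{w \in \cA^{\tau_n}} C_n\,\P_n(u)\,\P_m(v)
        = C_n\,|\cA|^{\tau_n}\Big(\sum_{u \in S}\P_n(u)\Big)\Big(\sum_{v \in S'}\P_m(v)\Big)
        = \widetilde C_n\, \P(A)\,\P(B),
\]
using $\P(A) = \sum_{u \in S}\P_n(u)$ and $\P(B) = \sum_{v \in S'}\P_m(v)$ by disjointness. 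Finally, $\widetilde C_n = C_n |\cA|^{\tau_n}$ satisfies $\frac 1n \ln \widetilde C_n = \frac 1n \ln C_n + \frac{\tau_n}{n}\ln|\cA| \to 0$ by~\eqref{eq:tauncn}, so $\widetilde C_n = \Exp{o(n)}$.

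\textbf{Main obstacle.} There is no real obstacle here — the argument is routine bookkeeping. The only point requiring a little care is the reduction from general $B \in \cF$ to $B \in \cF_{\textnormal{fin}}$: one should make sure the inequality, once established for all finite unions of cylinders (equivalently all $B$ in the generating algebra $\cF_{\textnormal{fin}}$), extends to the generated $\sigma$-algebra. This follows because for fixed $A$ and $n$ the collection $\{B \in \cF : \P(A \cap \shift^{-n-\tau_n}B) \leq \widetilde C_n \P(A)\P(B)\}$ — or rather the set where equality can fail — is handled by a standard monotone-class / continuity-of-measure argument: every $B \in \cF$ can be approximated in measure by elements of $\cF_{\textnormal{fin}}$, and both $B \mapsto \P(A \cap \shift^{-n-\tau_n}B)$ and $B \mapsto \P(B)$ are continuous with respect to this approximation. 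Hence the stated bound holds for all $B \in \cF$, completing the proof.
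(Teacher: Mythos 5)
Your proof is correct and takes essentially the same route as the paper: both reduce to $B$ in the generating algebra $\cF_{\textnormal{fin}}$, write $A \cap \shift^{-n-\tau_n}B$ as a disjoint union of cylinders $[u\xi v]$, apply \eqref{eq:defUD} to each, and sum over the $|\cA|^{\tau_n}$ choices of the middle word. One small caution on the reduction step: the collection $\{B\in\cF : \P(A\cap \shift^{-n-\tau_n}B) \leq \widetilde C_n\P(A)\P(B)\}$ is \emph{not} a $\lambda$-system (the inequality is not preserved under set differences), so the monotone-class/$\pi$-$\lambda$ phrasing does not literally apply; the approximation-in-measure argument you also give — every $B\in\cF$ is approximated by $B'\in\cF_{\textnormal{fin}}$ with $(\mu+\nu)(B\triangle B')$ small, where $\mu(\cdot)=\P(A\cap\shift^{-n-\tau_n}\,\cdot\,)$ and $\nu=\widetilde C_n\P(A)\P$ — is the correct one, and is exactly what the paper's citation to Bogachev supplies.
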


\begin{proof}
    First, since the semi-algebra of cylinder sets generates $\cF$, and since $B\mapsto \P(A\cap \shift^{-n-\tau_n}B)$ defines a finite measure, it suffices to consider the case where $B\in \cF_m$ for some $m\in \nn$ (see e.g.\ Lemma~1.9.4 in~\cite{Bog}). But then, by~\assref{ud},
    \begin{align*}
        \P\left(A \cap \shift^{-n - \tau_n}B\right) &= \sum_{\substack{u\in \cA^n: [u]\subseteq A\\v\in \cA^m: [v]\subseteq B\\ \xi \in \cA^{\tau_n}}}\P_{n+\tau_n+m}(u\xi v)  \\
        &\leq C_n  \sum_{\substack{u\in \cA^n: [u]\subseteq A\\v\in \cA^m: [v]\subseteq B\\ \xi \in \cA^{\tau_n}}}\P_n(u)\P_m(v) \\
        & =  C_n  \sum_{\xi \in \cA^{\tau_n}}\P(A)\P(B),
    \end{align*}
    from which the result follows, since the sum over $\xi$ contains $|\cA|^{\tau_n}$ terms.
\end{proof}

\begin{lemma}
\label{lem:sldevents} 
    If $\P\in \cP_{\textnormal{inv}}(\Omega)$ satisfies~\assref{sld}, then 
    \begin{equation*}
        \max_{0\leq \ell \leq \tau_n}\P\left(A \cap \shift^{-n -\ell}B\right) \geq \widehat C_n^{-1} \P(A)\P(B)
    \end{equation*}
    for every $n\in \nn$, $A\in \cF_n$ and $B\in \cF$, with $\widehat C_n :=  (\tau_n+1)C_n = \Exp{o(n)}$.
\end{lemma}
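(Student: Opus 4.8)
The final statement to prove is Lemma~\ref{lem:sldevents}, which asserts that \assref{sld} (in its cylinder form~\eqref{eq:SLDeq}) upgrades to a statement about arbitrary events $A\in\cF_n$ and $B\in\cF$, at the cost of replacing $C_n$ with $\widehat C_n := (\tau_n+1)C_n$.

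\medskip

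\textbf{The plan.} The structure of the proof should closely parallel that of Lemma~\ref{lem:udevents}, but with the roles adjusted for the ``$\max$ over $\ell$'' and the lower-bound direction. First I would reduce to the case where $B\in\cF_m$ for some $m\in\nn$. This reduction is slightly more delicate than in Lemma~\ref{lem:udevents}: there, $B\mapsto\P(A\cap\shift^{-n-\tau_n}B)$ is a single finite measure, so one invokes the standard fact that two finite measures agreeing (or here, satisfying an inequality) on a generating semi-algebra extend accordingly. Here the quantity $\max_{0\le\ell\le\tau_n}\P(A\cap\shift^{-n-\ell}B)$ is not linear in $B$, so I cannot directly apply the measure-extension lemma to it. Instead I would argue by approximation: fix $A\in\cF_n$ and $B\in\cF$; pick $m$ large and let $B_m\in\cF_m$ be the ``$m$-cylinder approximation'' of $B$ (e.g.\ $B_m$ is a union of $m$-cylinders with $\P(B\,\triangle\,B_m)$ small, which exists since $\cF_{\textnormal{fin}}$ generates $\cF$). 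Then $\P(A\cap\shift^{-n-\ell}B)$ and $\P(A\cap\shift^{-n-\ell}B_m)$ differ by at most $\P(B\,\triangle\,B_m)$ uniformly in $\ell$, so the $\max$ over $\ell\in\{0,\dots,\tau_n\}$ (a finite set!) also differs by at most $\P(B\,\triangle\,B_m)$. Establishing the inequality for each $B_m$ with the constant $\widehat C_n$ and letting $m\to\infty$ then gives it for $B$. (Alternatively, and perhaps more cleanly, one can note that it suffices to prove the bound for $B$ in the algebra $\cF_{\textnormal{fin}}=\bigcup_m\cF_m$, and then pass to general $B\in\cF$ by inner approximation, using that $\P$ is a regular Borel measure on the compact metrizable space $\Omega$.)

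\medskip

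\textbf{The core computation.} Once $B\in\cF_m$, write $A=\bigsqcup_{u}[u]$ over those $u\in\cA^n$ with $[u]\subseteq A$, and $B=\bigsqcup_{v}[v]$ over those $v\in\cA^m$ with $[v]\subseteq B$. The key point is that \assref{sld} in the form~\eqref{eq:SLDeq} is an \emph{existential} statement: for each pair $(u,v)$ it supplies some $\ell=\ell(u,v)\in\{0,\dots,\tau_n\}$ and $\xi=\xi(u,v)\in\cA^{\ell}$ with $\P_{n+\ell+m}(u\xi v)\ge C_n^{-1}\P_n(u)\P_m(v)$. Since $\P_{n+\ell+m}(u\xi v)\le\P([u]\cap\shift^{-n-\ell}[v])$ (the left side sums, over all fillings of the gap, to give exactly a subset on the right when $\ell$ is the prescribed value — more precisely $[u]\cap\shift^{-n-\ell}[v]=\bigsqcup_{\eta\in\cA^\ell}[u\eta v]$, and $u\xi v$ is one such term), we get
\[
    \P\bigl([u]\cap\shift^{-n-\ell(u,v)}[v]\bigr)\ \ge\ C_n^{-1}\,\P_n(u)\,\P_m(v).
\]
Now for each fixed $\ell\in\{0,\dots,\tau_n\}$, let $S_\ell$ be the set of pairs $(u,v)$ for which the chosen $\ell(u,v)$ equals $\ell$. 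Summing the above over $(u,v)\in S_\ell$ and using that the cylinders $[u]\cap\shift^{-n-\ell}[v]$ are pairwise disjoint for distinct $(u,v)$ (as $[u]$ determines the first $n$ coordinates and $\shift^{-n-\ell}[v]$ determines coordinates $n+\ell+1,\dots,n+\ell+m$, and these blocks of coordinates are fixed once $\ell$ is fixed — note disjointness holds even if the blocks were to overlap, since then $[u]\cap\shift^{-n-\ell}[v]$ would simply be empty or a common sub-cylinder and the argument still bounds the sum), we obtain
\[
    \P\bigl(A\cap\shift^{-n-\ell}B\bigr)\ \ge\ \sum_{(u,v)\in S_\ell}\P\bigl([u]\cap\shift^{-n-\ell}[v]\bigr)\ \ge\ C_n^{-1}\sum_{(u,v)\in S_\ell}\P_n(u)\P_m(v).
\]
Finally, since $\{S_\ell\}_{\ell=0}^{\tau_n}$ partitions the full index set, $\sum_{\ell=0}^{\tau_n}\sum_{(u,v)\in S_\ell}\P_n(u)\P_m(v)=\P(A)\P(B)$, so by pigeonhole at least one $\ell$ contributes at least $(\tau_n+1)^{-1}\P(A)\P(B)$, giving $\max_{0\le\ell\le\tau_n}\P(A\cap\shift^{-n-\ell}B)\ge C_n^{-1}(\tau_n+1)^{-1}\P(A)\P(B)=\widehat C_n^{-1}\P(A)\P(B)$, as claimed. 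That $\widehat C_n=(\tau_n+1)C_n=\Exp{o(n)}$ is immediate from~\eqref{eq:tauncn}.

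\medskip

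\textbf{Main obstacle.} The genuinely nontrivial point is the reduction from general $B\in\cF$ to $B\in\cF_m$, precisely because the $\max_\ell$ breaks linearity and one cannot quote the measure-extension lemma verbatim as in Lemma~\ref{lem:udevents}. I expect the cleanest fix is the symmetric-difference approximation argument sketched above, exploiting that the maximum is over the \emph{finite} set $\{0,1,\dots,\tau_n\}$ so the uniform error $\P(B\,\triangle\,B_m)\to0$ passes through the max. Everything else — the pigeonhole step and the disjointness bookkeeping — is routine.
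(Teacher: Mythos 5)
Your proof is correct, and you correctly identified the one genuine subtlety: the maximum over $\ell$ is not linear in $B$, so the measure-extension argument that works verbatim in Lemma~\ref{lem:udevents} cannot be applied directly. Your fix — approximation in symmetric difference, using that the maximum ranges over a finite set — works. However, the paper handles this point by a slicker device that is worth noting: rather than working with $\max_{0\le\ell\le\tau_n}\P(A\cap\shift^{-n-\ell}B)$, it introduces the quantity $\mu(B) := \sum_{\ell=0}^{\tau_n}\P(A\cap\shift^{-n-\ell}B)$, which \emph{is} a finite measure in $B$. For $B\in\cF_m$ one expands $\mu(B)$ as $\sum_{u,v}\sum_{\xi\in\bigcup_{\ell=0}^{\tau_n}\cA^\ell}\P_{n+|\xi|+m}(u\xi v)$, and for each pair $(u,v)$ the single term supplied by~\assref{sld} already gives $C_n^{-1}\P_n(u)\P_m(v)$, so $\mu(B)\ge C_n^{-1}\P(A)\P(B)$ on cylinders. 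Since $\mu$ is a measure, the standard extension argument (as in Lemma~\ref{lem:udevents}) applies directly, and the conversion from sum to max — your pigeonhole step — is deferred to a single final line: $\mu(B)\le(\tau_n+1)\max_\ell\P(A\cap\shift^{-n-\ell}B)$. The pigeonhole content is identical to yours (you partition pairs $(u,v)$ by their chosen $\ell$; the paper sums over all $\ell$ and compares the sum to the max), but packaging it as a measure lets the paper reuse the exact extension step from the previous lemma and avoid the approximation argument. Both routes are sound; the paper's buys a shorter proof, yours is closer to a first-principles computation and makes the pigeonhole more explicit.
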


\begin{proof}
    Consider the finite measure $\mu$ defined by $\mu(B)=\sum_{\ell=0}^{\tau_n} \P(A\cap \shift^{-n-\ell}B)$. If $B\in \cF_m$ for some $m\in \nn$, we obtain by~\assref{sld},
    \begin{align*}
        \mu(B)&= \sum_{\substack{u\in \cA^n: [u]\subseteq A\\v\in \cA^m: [v]\subseteq B\\ \xi \in \bigcup_{\ell=0}^n \cA^\ell}}\P_{n+|\xi|+m}(u\xi v) \\
        &\geq C_n^{-1} \sum_{\substack{u\in \cA^n: [u]\subseteq A\\v\in \cA^m: [v]\subseteq B}}\P_n(u)\P_m(v) \\
        &= C_n^{-1}\P(A)\P(B).
    \end{align*}
    As in Lemma~\ref{lem:udevents}, this extends to any $B\in \cF$. 
    Since $$\mu(B) \leq (\tau_n+1)\max_{0\leq \ell\leq \tau_n}\P\left(A \cap \shift^{-n - \ell}B\right)$$ by definition, the proof is complete.
\end{proof}

\begin{remark}\label{rem:eventsequivalent}
    It is obvious that the condition stated in Lemma~\ref{lem:udevents} is actually equivalent to~\assref{ud}, by considering $A=[u]$ and $B=[v]$. In the same way, the condition stated in Lemma~\ref{lem:udevents} is equivalent to~\assref{sld}, although going back to the original statement of~\assref{sld} costs an additional (harmless) factor of $|\cA|^{\tau_n}$, since the inequality $\P\left([u] \cap \shift^{-n -\ell}[v]\right) \geq \widehat C_n^{-1} \P([u])\P([v])$ only implies that there exists some $\xi\in \cA^{\ell}$ such that $\P\left([u] \cap \shift^{-n}[\xi]\cap  \shift^{-n -\ell}[v]\right) \geq \widehat C_n^{-1}|\cA|^{-\ell} \P([u])\P([v])$.
\end{remark}

\begin{remark}\label{rem:boundednesscompared}
    In general, boundedness of the sequence $(C_n)_{n\in\nn}$ does not imply boundedness of the sequences $(\widetilde C_n)_{n\in\nn}$ and $(\widehat C_n)_{n\in\nn}$. However, this implication holds when $\tau_n = O(1)$.
\end{remark}

\subsection{Sufficient conditions}
\label{ssec:subshifts}

We start with a sufficient condition for~\assref{pa} to hold. In a nutshell, the sufficient condition is that for every $p\in \nn$, $v\in \cA^p$, there is some $\xi\in \Omega_{\textnormal{fin}}$ such that the probability of $(v\xi)^{n-1}v$ satisfies a lower bound in terms of $\P_{p}(v)^n$.
We stress that~\assref{sld} does not imply such a lower bound in general. Indeed, applying \assref{sld} a total of $n-1$ times recursively  yields a word $v\xi^{(n-1)}v \cdots v \xi^{(2)} v\xi^{(1)}v$ whose probability is bounded below by $C_p^{n-1}\P_p(v)^n$, but there is no reason for the $\xi^{(i)}$ to be equal in general, unless of course $\tau_n = 0$ for all $n$, in which case~\assref{sld} does imply the assumptions of Lemma~\ref{lem:pafromarray} and thus~\assref{pa}.

\begin{lemma}
\label{lem:pafromarray} 
    Let $\P\in \cP_{\textnormal{inv}}(\Omega)$ and assume that there exists a family $(c_{p,n})_{p\in\nn,n\in\nn}$ satisfying
    \begin{equation}
        \label{eq:cond-c-array}
        \lim_{p\to\infty}\frac 1p \limsup_{n\to\infty}\frac{\ln c_{p,n}}{n}=0,
    \end{equation}
    and such that for every $p\in\nn$ and $v\in \cA^p$, there exists $\xi \in \Omega_{\textnormal{fin}}$ such that 
    \begin{equation}\label{eq:hyplemarray}
       \P_{np+(n-1)|\xi|}\left((v\xi)^{n-1} v\right) \geq c_{p,n} (\P_p(v))^n
    \end{equation}
    for all $n\in\nn$. 
    Then, the measure~$\P$ satisfies~\assref{pa}.
\end{lemma}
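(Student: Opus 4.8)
The goal is to verify the definition of \assref{pa}: given $\epsilon > 0$, I must produce some $p \in \nn$ and $u \in \cA^p$ with $\liminf_{n\to\infty} \frac{1}{np}\ln \P_{np}(u^n) \geq \gamma_+ - \epsilon$. The natural candidate is to take a word $v$ of length $p$ whose per-letter log-probability nearly saturates $\gamma_+$, insert the buffer $\xi$ supplied by the hypothesis, and then view $v\xi$ (padded appropriately) as the periodic word $u$ in the definition of \assref{pa}. The two inputs I will combine are: (i) the quantitative lower bound \eqref{eq:hyplemarray}, which controls $\P_{np+(n-1)|\xi|}((v\xi)^{n-1}v)$ from below by $c_{p,n}\P_p(v)^n$; and (ii) the summability condition \eqref{eq:cond-c-array}, which says the correction factors $c_{p,n}$ contribute negligibly once $p$ is large.

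First I would fix $\epsilon > 0$. By the definition of $\gamma_+$ in \eqref{eq:defgammaplus} (more precisely, by the first equality in \eqref{eq:redefgammap}, or simply by the limsup form), for every $p$ large enough there exists $v = v^{(p)} \in \cA^p$ with $\frac{1}{p}\ln \P_p(v) \geq \gamma_+ - \epsilon/3$; I should be a little careful here, since $\gamma_+$ is defined with a $\limsup$, so strictly I pick, for each $p$ in a suitable subsequence, a word $v$ realizing the sup over $\cA^p$ to within $\epsilon/3$ per site, and note that along that subsequence $\frac1p \ln \sup_{w\in\cA^p}\P_p(w) \to \gamma_+$. Next, using \eqref{eq:cond-c-array}, I choose $p$ (from that subsequence, large enough) so that additionally $\frac{1}{p}\limsup_{n\to\infty}\frac{\ln c_{p,n}}{n} \leq \epsilon/3$. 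Fix such a $p$ and the corresponding $v$, and let $\xi \in \Omega_{\textnormal{fin}}$ and $\ell := |\xi|$ be as provided by the hypothesis for this $v$. Set $u := v\xi \in \cA^{p+\ell}$, so $u$ is a word of length $q := p + \ell$.

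Now I want to estimate $\frac{1}{nq}\ln \P_{nq}(u^n)$. Note $u^n = (v\xi)^n = (v\xi)^{n-1} v \,\xi$, so $[u^n] \subseteq [(v\xi)^{n-1}v]$, hence $\P_{nq}(u^n) \leq \P_{np+(n-1)\ell}((v\xi)^{n-1}v)$, which is the wrong direction. Instead I should apply \eqref{eq:hyplemarray} with $n+1$ in place of $n$: then $(v\xi)^n v$ has the prefix $(v\xi)^n = u^n$, so $\P_{nq}(u^n) = \P_{nq+\ell}(u^n \xi) + (\text{other words})\ \geq\ \P_{(n+1)p + n\ell}((v\xi)^n v)\ \geq\ c_{p,n+1}\,\P_p(v)^{n+1}$, using monotonicity of marginals under taking longer cylinders (the cylinder $[(v\xi)^n v]$ of length $(n+1)p+n\ell$ is contained in the cylinder $[u^n]$ of length $nq = np+n\ell$). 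Taking logs, dividing by $nq = n(p+\ell)$, and letting $n\to\infty$:
\begin{equation*}
    \liminf_{n\to\infty} \frac{1}{nq}\ln \P_{nq}(u^n)
    \;\geq\; \liminf_{n\to\infty}\frac{(n+1)\ln \P_p(v) + \ln c_{p,n+1}}{n(p+\ell)}
    \;\geq\; \frac{\ln \P_p(v)}{p+\ell} - \frac{1}{p}\limsup_{n\to\infty}\frac{\ln c_{p,n}}{n}\cdot\frac{p}{p+\ell}.
\end{equation*}
The second term is at most $\epsilon/3$ by our choice of $p$. For the first term, since $\ln\P_p(v) \leq 0$ and $\ell \geq 0$, we have $\frac{\ln\P_p(v)}{p+\ell} \geq \frac{\ln\P_p(v)}{p} \geq \gamma_+ - \epsilon/3$. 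Combining, $\liminf_{n\to\infty}\frac{1}{nq}\ln\P_{nq}(u^n) \geq \gamma_+ - 2\epsilon/3 \geq \gamma_+ - \epsilon$, which is exactly the inequality required by \assref{pa} with the word $u$ of length $q$. Since $\epsilon > 0$ was arbitrary, \assref{pa} holds.

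The main subtlety — and the one place to be careful in writing this cleanly — is the bookkeeping around the $\limsup$ in the definition of $\gamma_+$ and the order of quantifiers in \eqref{eq:cond-c-array}: I need the \emph{same} large $p$ to work both for the near-saturating word $v$ and for the smallness of the $c_{p,n}$ contribution, so I must first invoke \eqref{eq:cond-c-array} to restrict to $p$ in a set where $\frac1p\limsup_n \frac{\ln c_{p,n}}{n}$ is small, and \emph{then}, within that set, choose $p$ large enough that the per-site sup of $\P_p$ is within $\epsilon/3$ of $\gamma_+$ — this is possible because the relevant quantity converges to $\gamma_+$ along a subsequence, and the constraint from \eqref{eq:cond-c-array} only excludes small $p$. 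Everything else is elementary manipulation of logarithms and the trivial inclusion of cylinders; the monotonicity step ($[(v\xi)^n v] \subseteq [u^n]$) should be stated explicitly since it is what lets the lower bound \eqref{eq:hyplemarray} transfer to $\P_{nq}(u^n)$.
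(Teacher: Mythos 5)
Your proof is correct and follows essentially the same route as the paper's: choose $v\in\cA^p$ with $\P_p(v)$ nearly saturating $\gamma_+$, set $u=v\xi$ with $\xi$ from the hypothesis, lower-bound $\P_{nq}(u^n)$ by $\P_{(n+1)p+n|\xi|}\left((v\xi)^n v\right)\geq c_{p,n+1}\P_p(v)^{n+1}$ via the cylinder inclusion $[(v\xi)^n v]\subseteq[(v\xi)^n]$ and \eqref{eq:hyplemarray} applied at $n+1$, then pass to $\liminf_n$ using $\ln\P_p(v)\leq 0$ and \eqref{eq:cond-c-array} to send $p\to\infty$ (the paper quantifies the choice of $p$ by fixing $P$ first and choosing $p\geq P$, rather than restricting to a subsequence, but this is equivalent). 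One shared wrinkle worth flagging: your last display replaces $\liminf_n\frac{\ln c_{p,n}}{n}$ by $-\limsup_n\frac{\ln c_{p,n}}{n}$, which is not a valid inequality direction for arbitrary sequences, and \eqref{eq:cond-c-array} as literally stated bounds only the $\limsup$ while the estimate needs a lower bound on the $\liminf$; the paper's own proof relies on the same substitution, which is harmless in the intended applications (where $\frac{\ln c_{p,n}}{n}$ actually converges as $n\to\infty$, e.g.\ Lemma~\ref{lem:specsld}.ii), but is not justified by \eqref{eq:cond-c-array} alone.
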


\begin{proof}
    Let $\epsilon>0$ and $P \in \nn$ be arbitrary. By definition of~$\gamma_+$, there exists $p \geq P$ and $v\in \cA^p$ such that $\P_p(v) > \Exp{p(\gamma_+ -\frac 12 \epsilon)}$. With~$\xi \in\Omega_{\textnormal{fin}}$ as in \eqref{eq:hyplemarray}, we have 
    \begin{align*}
        \liminf_{n\to\infty} \frac 1 {n (p+|\xi|)}\ln \P_{n(p+|\xi|)}((v\xi)^{n}) 
        &\geq \liminf_{n\to\infty} \frac 1 {n p}\ln \P_{n(p+|\xi|)}((v\xi)^{n}) \\
        &\geq \liminf_{n\to\infty} \frac 1 {n p}\ln \P_{(n+1)p+n|\xi|}((v\xi)^{n}v) \\
        &\geq \liminf_{n\to\infty} \frac 1 {np}((n+1) \ln \P_p(v) + \ln c_{p, n+1}) \\
        &> \gamma_+ - \frac \epsilon 2 + \frac 1{p} \liminf_{n\to\infty}\frac{\ln c_{p,n}}{n}.
    \end{align*}
    By \eqref{eq:cond-c-array}, we can choose $P$\,---\,and thus~$p$\,---\,large enough so that the right-hand side is bounded below by $\gamma_+- \epsilon$, so the choice $u = v\xi$ satisfies the requirement in the definition of~\assref{pa}.
\end{proof}

We now give a sufficient condition for \assref{ud} which is used in the setup of Sections~\ref{sec:equilbowen} and \ref{sec:new-g-sm}. 

\begin{lemma}\label{lem:specud}
	Assume that there exists an $\Exp{o(n)}$-sequence $(C_n)_{n\in\nn}$ such that
	\begin{equation}\label{eq:pnpmxud}
			\P_{n+m}(x_1^{n+m})\leq C_n \P_n(x_1^n)\P_m(x_{n+1}^{n+m})
	\end{equation}
	for all $x\in \supp \P$ and all $n, m\in\nn$. Then, the measure~$\P$ satisfies~\assref{ud} with $\tau_n = 0$.
\end{lemma}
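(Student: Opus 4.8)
The statement is essentially definitional once one observes how the support enters. Recall that \assref{ud} with $\tau_n=0$ reduces to the requirement that
\[
    \P_{n+m}(uv)\le C_n\,\P_n(u)\,\P_m(v)
    \qquad\text{for all }n,m\in\nn,\ u\in\cA^n,\ v\in\cA^m,
\]
since the inserted word $\xi$ is the empty word when $\tau_n=0$. So I would simply fix such $n,m,u,v$ and prove this inequality, splitting according to whether $\P_{n+m}(uv)$ vanishes.

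First I would dispose of the trivial case: if $\P_{n+m}(uv)=0$, then the inequality holds because its right-hand side is nonnegative and $C_n\ge 1$. The substantive case is $\P_{n+m}(uv)>0$. Here I would use that $\P(\supp\P)=1$: indeed, by the very definition of $\supp\P$ given in \secref{ssec:setup}, its complement is $\bigcup_{k\in\nn}\bigcup_{w\in\cA^k:\,\P_k(w)=0}[w]$, a countable union of $\P$-null cylinders, hence $\P$-null. Consequently the cylinder $[uv]$, having positive measure, must intersect $\supp\P$; pick any $x\in[uv]\cap\supp\P$. Then $x_1^n=u$ and $x_{n+1}^{n+m}=v$, so applying the hypothesis \eqref{eq:pnpmxud} to this particular $x$ yields
\[
    \P_{n+m}(uv)=\P_{n+m}(x_1^{n+m})\le C_n\,\P_n(x_1^n)\,\P_m(x_{n+1}^{n+m})=C_n\,\P_n(u)\,\P_m(v),
\]
which is exactly what is needed. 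Since $(C_n)_{n\in\nn}$ is an $\Exp{o(n)}$-sequence by assumption, the conditions \eqref{eq:tauncn} hold with $\tau_n=0$, and \assref{ud} is established.

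The only point requiring any care\,---\,and the one I would be sure to spell out\,---\,is that a word of positive probability extends to a point of $\supp\P$, so that the hypothesis \eqref{eq:pnpmxud}, stated only for $x\in\supp\P$, can be invoked for every relevant word; the one-line computation that $\P(\supp\P)=1$ is what licenses this. No other obstacle is expected: there is no subadditive or limiting argument involved, just the case distinction above.
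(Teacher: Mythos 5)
Your proof is correct and follows the same route as the paper's: split on whether $\P_{n+m}(uv)$ vanishes, and in the positive case pick $x\in[uv]\cap\supp\P$ to invoke \eqref{eq:pnpmxud}. The only difference is that you spell out why $[uv]\cap\supp\P\neq\emptyset$ (via $\P(\supp\P)=1$), which the paper leaves implicit; that is a reasonable bit of detail to add, though not a different argument.
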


\begin{proof}
    Let $u,v$ be as in the definition of~\assref{ud}. First, if $\P_{n+m}(uv) > 0$, there exists $x\in [uv]\cap \supp \P$, and in this case~\eqref{eq:pnpmxud} implies that $\P_{n+m}(uv)\leq C_n \P_n(u)\P_m(v)$, as required. The same inequality trivially holds if $\P_{n+m}(uv) = 0$.
\end{proof}

The next lemma gives sufficient conditions for \assref{sld}, \assref{jsld} and \assref{pa} to hold in terms of the properties introduced in Definition~\ref{def:specif}. This is again particularly useful in the setup of Sections~\ref{sec:equilbowen} and \ref{sec:new-g-sm}. 

\begin{lemma}\label{lem:specsld}
    Assume that there exists an $\Exp{o(n)}$-sequence $(C_n)_{n\in\nn}$ such that
	\begin{equation}\label{eq:pnpmx}
			\P_{n+m}(x_1^{n+m})\geq C_n^{-1} \P_n(x_1^n)\P_m(x_{n+1}^{n+m})
	\end{equation}
	for all $x\in \supp \P$ and all $n, m\in\nn$. Then, the following hold:
	\begin{enumerate}[i.]
		\item If $\supp \P$ satisfies the flexible specification property, then $\P$ satisfies~\assref{sld}.
		\item If $\supp \P$ satisfies the abundant periodic orbit property, then $\P$ satisfies~\assref{pa}.
		\item If $\supp \P$ satisfies the flexible specification property, if $\Q\in \cP_{\textnormal{inv}}$ is another measure satisfying~\eqref{eq:pnpmx}, and if $\supp \P = \supp \Q$, then the pair $(\P, \Q)$ satisfies~\assref{jsld}.
	\end{enumerate}
\end{lemma}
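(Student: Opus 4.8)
\textbf{Proof plan for Lemma~\ref{lem:specsld}.} The strategy is to reduce each of the three decoupling-type conclusions to the ``gluing'' inequality \eqref{eq:pnpmx} applied along a point of $\supp\P$ (and $\supp\Q$) that is produced by the relevant specification property. For Part~i, fix $u\in\cA^n$ and $v\in\cA^m$ as in the definition of~\assref{sld}. If $\P_n(u)=0$ or $\P_m(v)=0$ there is nothing to prove (take $\ell=0$, $\xi$ empty), so assume both are positive, i.e.\ $u,v\in\cL:=\supp$-language of~$\P$. By the flexible specification property applied to $u$ (which has length $n$) and $v$, there exist $0\le\ell\le\tau_n$ and $\xi\in\cA^\ell$ with $u\xi v\in\cL$, so there is $x\in[u\xi v]\cap\supp\P$. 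Applying \eqref{eq:pnpmx} twice to $x$ (first splitting off the prefix of length $n+\ell$, then splitting that into lengths $n$ and $\ell$), and using $\P_{\ell}(\xi)\le 1$, we get
\[
    \P_{n+\ell+m}(u\xi v)\ \ge\ C_{n+\ell}^{-1}\,\P_{n+\ell}(u\xi)\,\P_m(v)\ \ge\ C_{n+\ell}^{-1}C_n^{-1}\,\P_n(u)\,\P_\ell(\xi)\,\P_m(v)\ \ge\ (C_{n+\ell}C_n)^{-1}\,\P_n(u)\,\P_m(v),
\]
wait---I should be slightly more careful and instead absorb everything into a single new constant. The clean way is: since $\ell\le\tau_n=o(n)$ and $(C_n)_{n\in\nn}$ is an $\Exp{o(n)}$-sequence, after possibly enlarging the constants via Lemma~\ref{lem:increasecntaun} we may assume $(C_n)_{n\in\nn}$ is nondecreasing and set $C_n':=C_{n+\tau_n}C_n$, which still satisfies \eqref{eq:tauncn}; then the displayed chain gives $\P_{n+\ell+m}(u\xi v)\ge (C_n')^{-1}\P_n(u)\P_m(v)$, which is \eqref{eq:SLDeq} with the sequence $(C_n')_{n\in\nn}$. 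This establishes~\assref{sld}.

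For Part~ii, the goal is to verify the hypotheses of Lemma~\ref{lem:pafromarray}. Fix $p\in\nn$ and $v\in\cA^p$; if $\P_p(v)=0$ the inequality \eqref{eq:hyplemarray} holds trivially (both sides can be made $0$ with any $\xi$), so assume $v\in\cL$. By the abundant periodic orbit property applied to $v$, there is a word $\xi$ with $0\le|\xi|\le\tau_p$ such that the periodic point $z:=(v\xi)^\infty=v\xi v\xi v\xi\cdots$ lies in $\supp\P$. In particular, for every $n\in\nn$ the word $(v\xi)^{n-1}v$ is a prefix of $z$, hence is in $\cL$, and we may apply \eqref{eq:pnpmx} repeatedly to $z$: splitting off successive blocks $v$ and $\xi$ from the left (a total of $n$ copies of $v$ and $n-1$ copies of $\xi$, so at most $2n-1$ applications of \eqref{eq:pnpmx}), and bounding each factor $\P_{|\xi|}(\xi)\le1$, we obtain
\[
    \P_{np+(n-1)|\xi|}\bigl((v\xi)^{n-1}v\bigr)\ \ge\ \Bigl(\prod_{j} C_{k_j}\Bigr)^{-1}\,\P_p(v)^n\ \ge\ c_{p,n}\,\P_p(v)^n,
\]
where each block length $k_j$ occurring in the splitting is at most $np+(n-1)\tau_p\le n(p+\tau_p)$, so (after making $(C_n)_{n\in\nn}$ nondecreasing) we may take $c_{p,n}:=C_{n(p+\tau_p)}^{-(2n-1)}$. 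Then $\limsup_{n\to\infty}\frac{\ln c_{p,n}}{n}\le \limsup_{n\to\infty}\frac{(2n-1)\ln C_{n(p+\tau_p)}}{n}=2(p+\tau_p)\limsup_{n\to\infty}\frac{\ln C_{n(p+\tau_p)}}{n(p+\tau_p)}=0$ because $C_n=\Exp{o(n)}$; dividing by $p$ and letting $p\to\infty$ gives \eqref{eq:cond-c-array}. Lemma~\ref{lem:pafromarray} then yields~\assref{pa}.

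For Part~iii, fix $u\in\cA^n$, $v\in\cA^m$; we must find \emph{one} pair $(\ell,\xi)$ with $0\le\ell\le\tau_n$ working for both measures simultaneously. This is exactly where the hypothesis $\supp\P=\supp\Q=:\Omega'$ is essential: the word $u\xi v$ obtained from the flexible specification property of $\Omega'$ lies in the common language $\cL$, so there exist $x\in[u\xi v]\cap\supp\P$ \emph{and} $x'\in[u\xi v]\cap\supp\Q$, and we may run the two-step gluing argument of Part~i once for $\P$ using $x$ and once for $\Q$ using $x'$, obtaining both inequalities of \eqref{eq:jslddef} with the same $\ell$ and $\xi$ (after passing, as in Part~i and via Lemma~\ref{lem:increasecntaun}, to a common $\Exp{o(n)}$-sequence of constants valid for $\P$ and $\Q$). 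One subtlety to check in the write-up: the various cases where $\P_n(u)$, $\P_m(v)$, $\Q_n(u)$, or $\Q_m(v)$ vanish; because $\supp\P=\supp\Q$, $u\in\cL$ is equivalent to $\P_n(u)>0$ and to $\Q_n(u)>0$, so either $u\in\cL$ (and the full argument runs) or $u\notin\cL$ (and both required inequalities hold with zero left-hand sides). I expect the only genuinely delicate point, and hence the ``main obstacle,'' to be bookkeeping the constants: one must verify that collapsing the product of $C$'s accumulated in the repeated applications of \eqref{eq:pnpmx} into a single $\Exp{o(n)}$-sequence (respectively, the array $c_{p,n}$ satisfying \eqref{eq:cond-c-array}) is legitimate, using that the number of applications and the block lengths are controlled by $n$ and $\tau_n=o(n)$; everything else is routine once the right point of $\supp\P$ (or $\supp\Q$) has been produced by specification.
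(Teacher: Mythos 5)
Your argument for Part~i has a genuine error, and the same error is repeated in Part~ii. After two applications of \eqref{eq:pnpmx} you arrive at the lower bound $\P_{n+\ell+m}(u\xi v)\ge C_{n+\ell}^{-1}C_n^{-1}\,\P_n(u)\,\P_\ell(\xi)\,\P_m(v)$, and then, to discard the factor $\P_\ell(\xi)$, you invoke ``$\P_\ell(\xi)\le 1$.'' This is the wrong direction: the claimed final inequality $C_{n+\ell}^{-1}C_n^{-1}\,\P_n(u)\,\P_\ell(\xi)\,\P_m(v)\ge(C_{n+\ell}C_n)^{-1}\,\P_n(u)\,\P_m(v)$ requires $\P_\ell(\xi)\ge 1$, which fails whenever $\ell\ge 1$. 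Enlarging the sequence $(C_n)_{n\in\nn}$ via Lemma~\ref{lem:increasecntaun}, as you then propose, does not touch this problem: $\P_\ell(\xi)$ is a genuine probability depending on $\xi$, not an $n$-indexed constant, and it simply sits on the wrong side of the inequality. In Part~ii the same misuse of ``$\P_{|\xi|}(\xi)\le 1$'' recurs $n-1$ times, and your proposed $c_{p,n}$ is too large as a result.

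The missing ingredient is a positive \emph{lower} bound on $\P_\ell(\xi)$. Since $\cA$ is finite, $\delta := C_1^{-1}\inf_{a\in\supp\P_1}\P_1(a)>0$, and iterating \eqref{eq:pnpmx} one letter at a time gives $\P_\ell(\xi)\ge\delta^{\ell}$ for any $\xi$ in the language (this is precisely the preliminary observation \eqref{eq:Pbddbelow} the paper records before starting the case analysis). Because $\ell\le\tau_n=o(n)$, the factor $\delta^{-\ell}\le\delta^{-\tau_n}$ is an $\Exp{o(n)}$-quantity and can be absorbed into the constant, giving \assref{sld} with $C_n':=C_n\delta^{-\tau_n}$; the paper splits $u\xi v$ as $u$ against $\xi v$ and applies \eqref{eq:Pbddbelow} directly to $\P_{|\xi|+m}(\xi v)$, but your split $u\xi$ against $v$ works just as well once the $\delta$-bound is available. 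For Part~ii, iterating the repaired Part~i yields $c_{p,n}=(C_p')^{1-n}$, and then $\frac 1p\limsup_{n\to\infty}\frac 1n\ln c_{p,n}=-\frac 1p\ln C_p'\to 0$ as $p\to\infty$ (using $\ln C_p=o(p)$ and $\tau_p=o(p)$), so \eqref{eq:cond-c-array} holds. Your Part~iii is conceptually correct and matches the paper's argument — the common support produces a common word $\xi$ — and goes through once Part~i is fixed.
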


\begin{proof}
	First note the following consequence of repeated applications of~\eqref{eq:pnpmx}: for all $u,v\in \cL$ such that $uv\in \cL$, we have
	\begin{equation}\label{eq:Pbddbelow}
			\P([uv])\geq \delta^{|u|}\P([v]),
	\end{equation}
	with $\delta = C_1^{-1} \inf_{a\in \supp \P_1(a)}\P_1(a)\in (0,1]$.
	\begin{enumerate}[i.]
        \item Let $u,v$ be as in the definition of~\assref{sld}. We assume that $u,v\in \cL$, for otherwise there is nothing to prove. By the flexible specification property, there exists $\xi$ with $|\xi| \leq \tau_n$ such that $u\xi v\in \cL$.  Then, applying~\eqref{eq:pnpmx} to any $x\in [u\xi v]$ implies that $\P_{n+|\xi|+m}(u\xi v)\geq C_n^{-1}\P_n(u)\P_{|\xi|+m}(\xi v) \geq C_n^{-1}\delta^{\tau_n}\P_n(u)\P_m(v)$, where we have used \eqref{eq:Pbddbelow}. We have thus proved \assref{sld} with $C_n' := C_n \delta^{-\tau_n} = \Exp{o(n)}$. 
        
        \item Let $p\in \nn$ and $v\in \cA^p$. By the abundant periodic orbit property, there exists $\xi$ with $|\xi|\leq \tau_p$ such that $(v\xi)^{n-1}v\in \cL$ for all $n\in \nn$. By iterating the argument in the proof of Part i, we obtain~\eqref{eq:hyplemarray} with $c_{n,p} := (C_p')^{1-n}$, so Lemma~\ref{lem:pafromarray} establishes~\assref{pa}.
        
        \item Under the stated assumptions, $\P_n$ and $\Q_n$ are equivalent, and since the choice of $\xi$ in the proof of Part i only depends on the properties of the support, the same $\xi$ can be chosen to prove~\assref{sld} for both $\P$ and $\Q$, so~\assref{jsld} holds. 
        \qedhere
    \end{enumerate}
\end{proof}

\begin{remark}In the same way that we derived~\eqref{eq:Pbddbelow}, we conclude that if~\eqref{eq:pnpmx} holds, then $\P([u])\geq \delta^{|u|}$ for all $u\in \cL$, so the map $x\mapsto -\frac 1n\ln \P_n(x_1^n)$ is bounded on $\supp \P$. Thus, in this situation, the limit defining $\qlpp$ is finite whenever it exists. In the same way, if $\Q,\P\in \cP_{\textnormal{inv}}(\Omega)$ have common support and if $\Q$ satisfies~\eqref{eq:pnpmx}, then $\qlpq$ is finite whenever the limit exists. 
\end{remark}

\begin{remark}\label{rem:weakGibbsdeco}If both \eqref{eq:pnpmxud} and \eqref{eq:pnpmx} hold for all $x\in \supp \P$, then there exists a continuous potential $\varphi$ on $\supp \P$ such that the weak Gibbs condition \eqref{eq:weakGibbsprop} holds for all $x\in \supp\P$, by Theorem 1.2 of \cite{cuneo_asympt_2020} applied to $f_n(x) = \ln \P_n(x_1^n)$. Unfortunately, the weak Gibbs condition does not seem to imply \eqref{eq:pnpmxud} and \eqref{eq:pnpmx}, due to the lack of uniformity in $m$, so Lemmas~\ref{lem:specud} and \ref{lem:specsld} do not apply to weak Gibbs measures in general.
\end{remark}

\subsection{Decoupling properties of \textit{g}-measures}
\label{app:g-measures}

The following lemma is used in \secref{sec:gmeasures} in order to show that $g$-measures satisfy our decoupling assumptions.

\begin{lemma}
\label{lem:app-g-1}
    Let $\Omega'$ be a subshift of~$\Omega$. Suppose that $\P\in\cP_{\textnormal{inv}}(\Omega)$ is such that $\supp \P = \Omega'$ and that the sequence of functions on~$\Omega'$ defined by 
    \[
        g_n(x) := \frac{\P_n(x_1^n)}{\P_{n-1}(x_2^n)}
    \]
    converges uniformly to some continuous function~$g:\Omega'\to (0,1]$. 
    Then, there exists an $\Exp{o(n)}$-sequence $(K_n)_{n\in\nn}$ such that
    \begin{equation}\label{eq:WGgmeas}
            K_n^{-1} {\prod_{i=0}^{n-1} g (\shift^{i}x)} \leq \P_n(x_1^n) \leq K_n {\prod_{i=0}^{n-1} g (\shift^{i}x)}
    \end{equation}
    and
    \begin{equation}\label{eq:decouplgmeas}
        K_n^{-2} \P_n(x_1^n)\P_m(x_{n+1}^{n+m})
        \leq \P_{n+m}(x_1^{n+m}) 
        \leq K_n^2 \P_n(x_1^n)\P_m(x_{n+1}^{n+m})
    \end{equation}
    for all~$x \in \Omega'$. In particular, \eqref{eq:pnpmxud} and \eqref{eq:pnpmx} hold for all $x\in \Omega'$ with $C_n = K_n^2$, and $\P$ is weak Gibbs on $\Omega'$ for the potential $\potone = \ln g$, i.e.~\eqref{eq:weakGibbsprop} holds for all $x\in \Omega'$.  
\end{lemma}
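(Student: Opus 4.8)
\textbf{Proof plan for Lemma~\ref{lem:app-g-1}.}

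The strategy is to first derive \eqref{eq:WGgmeas} from the uniform convergence $g_n \to g$, and then obtain \eqref{eq:decouplgmeas} as a purely formal consequence of \eqref{eq:WGgmeas}. For the first step, I would telescope the marginal: for $x\in\Omega'$,
\[
    \P_n(x_1^n) = \P_1(x_1) \prod_{k=2}^{n} \frac{\P_k(x_1^k)}{\P_{k-1}(x_2^k)} = \P_1(x_1)\prod_{k=2}^{n} g_k(\shift^{\,?} x),
\]
being careful that $g_k$ applied to the relevant shift of $x$ reads off the coordinates $x_1^k$ in the right way — concretely, $g_k$ evaluated at a point whose first $k$ coordinates are $x_1^k$ equals $\P_k(x_1^k)/\P_{k-1}(x_2^k)$, and since $\P_k(x_1^k)>0$ this is well-defined on $\supp\P = \Omega'$. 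Comparing $\prod_{k} g_k(\cdot)$ with $\prod_{i=0}^{n-1} g(\shift^i x)$, the ratio is controlled by $\prod_{k=2}^{n} \frac{g_k}{g}$, and since $\|\ln g_k - \ln g\|_\infty \to 0$ (uniform convergence to a function bounded away from $0$ and below $1$, together with continuity on the compact set $\Omega'$, makes $\ln$ uniformly continuous on the relevant range), setting $\varepsilon_k := \|\ln g_k - \ln g\|_\infty$ we get $\bigl|\ln\P_n(x_1^n) - \sum_{i=0}^{n-1}\ln g(\shift^i x)\bigr| \le C + \sum_{k=2}^{n}\varepsilon_k$ for a constant $C$ absorbing the $\P_1$ term and the bounds on $g$. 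Since $\varepsilon_k \to 0$, the Cesàro sum $\sum_{k\le n}\varepsilon_k = o(n)$, so we may take $K_n := \exp\bigl(C + \sum_{k=2}^n \varepsilon_k\bigr)$, an $\Exp{o(n)}$-sequence, and \eqref{eq:WGgmeas} follows.

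For the second step, \eqref{eq:decouplgmeas} is immediate from \eqref{eq:WGgmeas}: for $x\in\Omega'$ with $x_1^{n+m}$ admissible, the product $\prod_{i=0}^{n+m-1} g(\shift^i x)$ splits exactly as $\bigl(\prod_{i=0}^{n-1} g(\shift^i x)\bigr)\bigl(\prod_{i=0}^{m-1} g(\shift^{n+i} x)\bigr)$, so
\[
    \P_{n+m}(x_1^{n+m}) \le K_{n+m}\prod_{i=0}^{n+m-1} g(\shift^i x) \le K_{n+m} K_n K_m \,\P_n(x_1^n)\P_m(x_{n+1}^{n+m}),
\]
and similarly for the lower bound; after enlarging $K_n$ (still $\Exp{o(n)}$, e.g. replacing it by $\sup_{j\le n} K_j$ and noting $K_{n+m}\le K_{2\max(n,m)}$, or more simply absorbing everything into a redefinition) one gets the stated form with exponent $2$ and the constant depending on $n$ only. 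The final sentence of the lemma is then a direct reading-off: \eqref{eq:decouplgmeas} is exactly \eqref{eq:pnpmxud} and \eqref{eq:pnpmx} with $C_n = K_n^2$ (valid for all $x\in\Omega'=\supp\P$), and \eqref{eq:WGgmeas} is the weak Gibbs condition \eqref{eq:weakGibbsprop} for the potential $\potone = \ln g$ once we recall $\ptop(\ln g) = 0$ for a $g$-function, which is part of the standing discussion in \secref{sec:gmeasures}.

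The main obstacle — really a bookkeeping subtlety rather than a deep difficulty — is making the shift arguments of $g_k$ and $g$ line up correctly, since $g_n(x)$ depends on $x_1^n$ whereas $g(\shift^i x)$ in the product depends on coordinates starting at position $i+1$; one has to index the telescoping product so that the $k$-th factor contributes the conditional probability of the $k$-th symbol given the $k-1$ symbols \emph{before} it, which forces evaluating $g_k$ at a shift $\shift^{?}x$ chosen so its first $k$ coordinates are the appropriate window, and keep track that all such windows lie in $\cL$ so the $g_k$ are genuinely defined there. The other point requiring a line of care is that uniform convergence $g_n\to g$ on the compact set $\Omega'$, with $g$ valued in $(0,1]$ and continuous (hence bounded below by some $\delta>0$), upgrades to $\|\ln g_n - \ln g\|_\infty \to 0$; this needs that $g_n$ is eventually bounded below away from $0$ uniformly, which follows since $g_n \to g \ge \delta$ uniformly implies $g_n \ge \delta/2$ for $n$ large, and the finitely many remaining $n$ only affect the constant $C$. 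Everything else is routine.
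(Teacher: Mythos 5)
Your derivation of~\eqref{eq:WGgmeas} is the right idea and essentially matches the paper's proof: telescope $\P_n(x_1^n)$ into a product of $g_k$-factors, replace each $g_k$ by $g$, and observe that the cumulative logarithmic error $\sum_{k\le n}\|\ln g_k - \ln g\|_\infty$ is $o(n)$ by Ces\`aro since the summands tend to $0$. One small caveat: your displayed identity $\P_n(x_1^n) = \P_1(x_1)\prod_{k=2}^n \P_k(x_1^k)/\P_{k-1}(x_2^k)$ does not actually telescope as written (the factors do not cancel); the correct version telescopes from the back of the word, namely $\P_n(x_1^n)=\prod_{k=1}^n g_k(\shift^{n-k}x)$, which is what the paper uses (written there as $\prod_{k=1}^n g_{n-k+1}(\shift^{k-1}x)$). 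You flag the bookkeeping yourself, so this is fixable.

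The real gap is in your derivation of~\eqref{eq:decouplgmeas}. Applying~\eqref{eq:WGgmeas} at lengths $n+m$, $n$ and $m$ gives the multiplicative error $K_{n+m}K_nK_m$, which depends on $m$ and \emph{cannot} be bounded by a function of $n$ alone when $(K_n)$ is unbounded: with $K_n=\Exp{\sqrt{n}}$, say, $K_{n+m}K_m\ge\Exp{2\sqrt{m}}\to\infty$ as $m\to\infty$ for any fixed $n$. The remark that $K_{n+m}\le K_{2\max(n,m)}$ only helps when $m\le n$. Since the constant in~\eqref{eq:decouplgmeas} must depend on $n$ only and uniformly in $m$ — this is exactly what~\assref{ud} and~\assref{sld} require — the step cannot be patched by redefining $K_n$. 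The paper instead telescopes $\P_{n+m}(x_1^{n+m})$ down to $\P_m(x_{n+1}^{n+m})$ in only $n$ steps,
\[
\P_{n+m}(x_1^{n+m}) = \P_m(x_{n+1}^{n+m})\prod_{k=1}^n g_{n+m-k+1}(\shift^{k-1}x),
\]
and compares this product with the one appearing in the telescoping of $\P_n(x_1^n)$. The factor $\prod_{k=1}^n g(\shift^{k-1}x)$ cancels, and the residual error is $\sum_{k=1}^n\bigl(|\Delta_{k+m}|+|\Delta_k|\bigr)$ with $\Delta_j := \ln g_j - \ln g$. The key point your route misses is the monotonicity of the envelope $\Delta^*_k := \sup_{k'\ge k}\|\Delta_{k'}\|_\infty$: since $\Delta^*_{k+m}\le\Delta^*_k$, this residual error is at most $2\sum_{k=1}^n\Delta^*_k = 2\ln K_n$ (with $K_n:=\exp\sum_{k\le n}\Delta^*_k$), uniformly in $m$, which is what yields~\eqref{eq:decouplgmeas} with exponent $2$.
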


\begin{proof}
    Throughout the proof, we use the convention that $\P_0(x_1^0) = 1$. Let $\Delta_n(x) := \ln g_n(x) - \ln g(x)$ for $n\in \nn$ and $x\in \Omega'$. 
    By compactness, positivity and uniform convergence, we find that $\Delta_n^* := \sup_{n'\geq n}\sup_{x\in\Omega'}|\Delta_{n'}(x)|$ defines a bounded sequence that converges to~$0$ as $n\to\infty$. As a consequence, $\tfrac{1}{n} \sum_{k=1}^{n} \Delta_n^* \to 0$, and $K_n:=\exp \sum_{k=1}^{n} \Delta_k^*$ defines an $\Exp{o(n)}$-sequence.

    For all $x\in \Omega'$, a telescoping argument yields
    \begin{equation}\label{eq:firsttelescopeg}
        \P_n(x_1^n) = \prod_{k=1}^n g_{n-k+1}(\shift^{k-1}x) =  \Exp{\sum_{k=1}^n\Delta_k(\shift^{n-k}x)}\prod_{k=1}^n g(\shift^{k-1}x),
    \end{equation}
    which establishes \eqref{eq:WGgmeas}.
    In order to prove \eqref{eq:decouplgmeas}, we remark that by another telescoping argument,
    \begin{align*}
        \P_{n+m}(x_1^{n+m}) &= \P_m(x_{n+1}^{n+m}) \prod_{k=1}^n g_{n+m-k+1}(\shift^{k-1}x)\\
        & = \P_m(x_{n+1}^{n+m})   \Exp{\sum_{k=1}^n\Delta_{k+m}(\shift^{n-k}x)} \prod_{k=1}^n g(\shift^{k-1}x).
    \end{align*}
    Combining the above with \eqref{eq:firsttelescopeg} and noting that $\Delta_{k+m}^*\leq \Delta_k^*$, we obtain \eqref{eq:decouplgmeas}.
\end{proof}

\subsection{Relation to mixing}
    
We recall that a measure $\P \in \cP_{\textnormal{inv}}$ is said to be \emph{$\psi$-mixing} if its $\psi$-mixing coefficients~$(\psi_{\P}(\tau))_{\tau\in\nn\cup\{0\}}$, defined by
$$
    \psi_{\P}(\tau) := \sup_{n,m\in\nn} \max 
        \left\{
            \left|\frac{\P([u] \cap \shift^{-n-\tau}[v])}{\P_n(u)\P_m(v)} - 1\right|
            :
            u \in \supp\P_n, v\in\supp\P_m
        \right\},
$$
satisfy $\lim_{\tau\to\infty} \psi_{\P}(\tau) = 0$.

\begin{lemma}\label{lem:mixingrel}
    If $\P$ is $\psi$-mixing, then
    $\P$ satisfies~\assref{sld} and~\assref{ud} with $\tau_n=O(1)$ and $C_n=O(1)$. 
\end{lemma}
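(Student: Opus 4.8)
The plan is to unwind the definition of $\psi$-mixing and show that it directly yields uniform (in $n$) multiplicative bounds, which is exactly what \assref{ud} and \assref{sld} demand with $\tau_n$ constant and $C_n$ constant. First I would fix $\tau \in \nn$ large enough that $\psi_{\P}(\tau) \leq \tfrac12$, which is possible since $\psi_{\P}(\tau) \to 0$. Then for all $n, m \in \nn$, $u \in \supp \P_n$ and $v \in \supp \P_m$, the definition of the $\psi$-mixing coefficient gives
\begin{equation*}
    \tfrac12 \P_n(u)\P_m(v) \leq \P([u] \cap \shift^{-n-\tau}[v]) \leq \tfrac32 \P_n(u)\P_m(v).
\end{equation*}
The right-hand inequality is already essentially \assref{ud} (in the event form \eqref{eq:udeventseq}, or equivalently in the word form via Remark~\ref{rem:eventsequivalent}), once one notes that $\P([u] \cap \shift^{-n-\tau}[v]) = \sum_{\xi \in \cA^{\tau}} \P_{n+\tau+m}(u\xi v)$, so dropping all but one term of this nonnegative sum shows $\P_{n+\tau+m}(u\xi v) \leq \tfrac32 \P_n(u)\P_m(v)$ for every $\xi$. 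For $u \notin \supp \P_n$ or $v \notin \supp \P_m$ the inequality is trivial since the left side vanishes. Thus \assref{ud} holds with $\tau_n := \tau = O(1)$ and $C_n := \tfrac32 = O(1)$ (or more cleanly $C_n := 2$).

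For \assref{sld}, I would use the left-hand inequality above. Given $u \in \cA^n$ and $v \in \cA^m$, if either is outside the respective support there is nothing to prove (the requested inequality \eqref{eq:SLDeq} has a zero on its right-hand side). Otherwise, from $\tfrac12 \P_n(u)\P_m(v) \leq \P([u] \cap \shift^{-n-\tau}[v]) = \sum_{\xi \in \cA^\tau} \P_{n+\tau+m}(u\xi v)$ and the pigeonhole principle, there exists some $\xi \in \cA^\tau$ with
\begin{equation*}
    \P_{n+\tau+m}(u\xi v) \geq \frac{1}{2|\cA|^\tau}\,\P_n(u)\P_m(v),
\end{equation*}
which is \assref{sld} with the choice $\ell := \tau \leq \tau_n$ and $C_n := 2|\cA|^\tau = O(1)$. (If one wants a single common sequence $(C_n)$ and $(\tau_n)$ for both assumptions, take $\tau_n := \tau$ and $C_n := 2|\cA|^\tau$ throughout, invoking Lemma~\ref{lem:increasecntaun} to harmlessly enlarge the constants.) This completes the proof.

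I do not expect any genuine obstacle here: the whole content is that the $\psi$-mixing bound is a \emph{two-sided} multiplicative comparison with constants that are uniform in $n$ and $m$, and our decoupling conditions are precisely one-sided versions of such a comparison. The only minor bookkeeping points are (i) converting between the event-level inequality built into the definition of $\psi_{\P}$ and the word-level inequalities \eqref{eq:defUD} and \eqref{eq:SLDeq}, which costs at most a factor $|\cA|^\tau$ as already noted in Remark~\ref{rem:eventsequivalent}; and (ii) handling words outside the support, where both sides behave trivially. Neither requires more than a line. A remark worth making after the proof is that the converse fails badly: \assref{ud} and \assref{sld} (even with bounded $C_n, \tau_n$) are satisfied by many non-mixing and even non-ergodic measures, as the examples in Sections~\ref{sec:spinchains} and~\ref{sec:gmeasures} show, so $\psi$-mixing is a strictly stronger hypothesis.
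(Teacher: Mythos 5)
Your proof is correct and follows the same route as the paper: fix $\tau$ with $\psi_{\P}(\tau)$ small, read off \assref{ud} from the upper $\psi$-mixing inequality by dropping all but one term of the sum over $\xi\in\cA^\tau$, and get \assref{sld} from the lower inequality via pigeonhole, paying the expected $|\cA|^\tau$ factor. The paper's version is more terse (it just states the constants $C_n^+ = 1+\psi_\P(\tau)$ and $C_n^- = |\cA|^\tau/(1-\psi_\P(\tau))$ without spelling out the pigeonhole step or the off-support edge cases), but the argument is identical.
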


\begin{proof}
    By assumption, there exists $\tau \in \nn\cup\{0\}$ such that $\psi_{\P}(\tau) < 1$. But then~\assref{ud} holds with $\tau_n^+:=\tau$ and $C_n^+ := 1 + \psi_{\P}(\tau)$, and~\assref{sld} holds with $\tau_n^- := \tau$ and $C_n^- := \frac{|\cA|^{\tau}}{1-\psi_{\P}(\tau)}$. Both~\assref{ud} and~\assref{sld} thus hold with $C_n := \max\{C_n^-,C_n^+\}$ and $\tau_n := \tau$.
\end{proof}

\begin{remark}
    We do not claim that $\psi$-mixing alone implies~\assref{pa} in general. It does, however, when $\psi_{\P}(0) < 1$. In this case, Lemma~\ref{lem:mixingrel} shows that~\assref{sld} holds with $\tau_n = 0$ for all $n$, which implies~\assref{pa}; recall Remark~\ref{rem:tau0pa}.\footnote{The reader familiar with the terminology in e.g.~\cite[\S{2.1}]{Bra05} will note that $\psi'_{\P}(0) > 0$ suffices.} In the same way, if~$\P$ and~$\Q$ are two $\psi$-mixing measures with common support, we do not claim that~\assref{jsld} holds in general, except once again if $\psi_{\P}(0),\psi_{\Q}(0) < 1$; recall Remark~\ref{rem:tau0sld}.
\end{remark}

\section{From weak LDPs to full LDPs}
\label{app:weak-to-full}

\subsection{General results for nonnegative random variables}

We gather in the following proposition a collection of results allowing to establish Legendre--Fenchel duality and promote the weak LDP to a full one in the case of nonnegative random variables. They rely on standard techniques, and we provide the details not only for completeness, but also because our setup is slightly unusual: we do not assume the full LDP nor goodness of the rate function, but we rely on the one-dimensional and convex nature of the problem, in the same spirit as the proof of Proposition~5.2 of \cite{CJPS19}. We again use the notation of the beginning of Section~\ref{sec:RLfuncts}.

\begin{proposition}\label{prop:abstractZn}
    Let $(Z_n)_{n\in\nn}$ be a sequence of $[0,\infty)$-valued random variables
    defined on a probability space $(\Omega_*, \P_*)$. Assume $(Z_n)_{n\in\nn}$ satisfies the weak LDP with a rate function $I$, and set
    \begin{equation}\label{eq:defuqoq}
     \underline q(\alpha) := \liminf_{n\to\infty}\frac 1n \ln \int \Exp{\alpha n Z_n}\dd \P_* 
     \qquad \text{and} \qquad
     \overline q(\alpha) := \limsup_{n\to\infty}\frac 1n \ln \int \Exp{\alpha n Z_n}\dd \P_*
    \end{equation} 
    for $\alpha\in\rr$.
    When $\overline q(\alpha) = \underline q(\alpha)$, let $q(\alpha)$ denote their common value.
    Then, the following hold:
    \begin{enumerate}[label=\roman*.,ref=\roman*]
        \item \label{part:qnondeg}$\underline q$, $\overline q$ are nondecreasing and satisfy $\underline q(0) = \overline q(0) = 0$.
        \item \label{part:propI} $I(s)=\infty$ for all $s<0$.
        \item \label{part:qgeqistar} $\underline q\geq I^*$.
        \item \label{part:qleqistarcond} $\overline q(\alpha) \leq I^*(\alpha)$ for every $\alpha$ satisfying the tail condition
         \begin{equation}\label{eq:limK}
            \lim_{K\to\infty} \limsup_{n\to \infty} \frac 1n \ln \int  \Exp{\alpha n Z_n}1_{Z_n> K} \dd \P_* = -\infty.
        \end{equation}
        \item \label{part:qleqistarneg} For every $\alpha <0$, $q(\alpha)$ exists and $q(\alpha) = I^*(\alpha)$.
        \item \label{part:qIfullcases2} If $\overline q(\alpha) < \infty$ for some $\alpha > 0$, then the full LDP holds, $I$ is a good rate function and the family $(Z_n)_{n\in\nn}$ is exponentially tight.
        \item \label{part:qIfullcases} Assume $I$ is convex. If $q(\alpha)$ exists (in $[-\infty, \infty]$) and $q(\alpha) = I^*(\alpha)$ for every $\alpha \in \rr$, then $I=q^*$ and the full LDP holds.
    \end{enumerate}
\end{proposition}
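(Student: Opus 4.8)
\textbf{Proof proposal for Proposition~\ref{prop:abstractZn}.\ref*{part:qIfullcases}.}

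The plan is to combine the weak LDP already in hand with the hypothesis $q = I^*$ (everywhere on $\rr$) plus convexity of $I$ to get both halves of the conclusion. First I would extract the full LDP. Since the weak LDP gives the large-deviation lower bound for all open sets for free, only the large-deviation \emph{upper bound} for arbitrary closed sets $\Gamma \subseteq \rr$ needs to be upgraded from the compact case. The key is to control the tails $[K,\infty)$ and $(-\infty,-K]$. The negative tail is trivial by Part~\ref*{part:propI}: $Z_n \geq 0$ almost surely, so $\P_*\{Z_n \in (-\infty,-K]\} = 0$ for any $K \geq 0$, hence that direction contributes nothing. For the positive tail, I would use the exponential Chebyshev bound: fix any $\alpha > 0$ with $q(\alpha) < \infty$ (such an $\alpha$ exists because $q = I^*$ and $I^*$, being the Legendre--Fenchel transform of a function that is $+\infty$ on $(-\infty,0)$ by Part~\ref*{part:propI}, is finite at least at some positive $\alpha$ — indeed $I^*$ is proper since $\inf I = 0$). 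Then for $K > 0$,
\[
    \P_*\{Z_n \geq K\} \leq \Exp{-\alpha n K}\int \Exp{\alpha n Z_n}\dd\P_*,
\]
so $\limsup_{n\to\infty}\tfrac1n\ln\P_*\{Z_n\geq K\} \leq q(\alpha) - \alpha K \to -\infty$ as $K\to\infty$. This is exactly exponential tightness. With exponential tightness plus the weak LDP, the standard promotion lemma (e.g.\ Lemma~1.2.18 in~\cite{DZ}) gives the full LDP, and the rate function of a full LDP with exponentially tight family is automatically good; this is precisely the first of the two facts recorded in \secref{sec:RLfuncts}. Note that the needed hypothesis $q(\alpha)<\infty$ for some $\alpha>0$ is exactly the hypothesis of Part~\ref*{part:qIfullcases2}, so in fact I would simply invoke Part~\ref*{part:qIfullcases2} here rather than reprove exponential tightness.

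The second half, $I = q^*$, is where convexity enters. We are told $q = I^*$. Taking Legendre--Fenchel transforms, $q^* = I^{**}$. Now $I^{**}$ is the lower semicontinuous convex envelope of $I$ (the largest l.s.c.\ convex minorant), by the Fenchel--Moreau biconjugation theorem; see e.g.~\cite[\S{2.3}]{DZ} or~\cite{Roc}. Since $I$ is assumed convex and is l.s.c.\ (it is a rate function, hence l.s.c.\ by definition, and moreover we noted at the start of \secref{sec:full} that $\uiw,\uiv,\uir$ are RL functions, but in the abstract setting $I$ is l.s.c.\ as a rate function), we get $I^{**} = I$, hence $q^* = I$. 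The only subtlety is the possibility that $I \equiv +\infty$, but this is excluded because the large-deviation upper bound applied to $\rr$ forces $\inf_s I(s) = 0$, as recalled in \secref{sec:RLfuncts}, so $I$ is proper and Fenchel--Moreau applies cleanly.

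The main obstacle, such as it is, is bookkeeping about $+\infty$ values and properness: one must make sure that $I^*$ is a genuine proper convex l.s.c.\ function (so that $I^{**}$ is its biconjugate in the usual sense), that $q$ agreeing with $I^*$ everywhere on $\rr$ — including where both are $+\infty$ — is what licenses $q^* = I^{**}$, and that the hypothesis "$q(\alpha)$ exists in $[-\infty,\infty]$ for every $\alpha$" is compatible with the Legendre machinery (values $-\infty$ cannot actually occur for $q = I^*$ since $I^*$ is a supremum of affine functions over a nonempty effective domain of $I$, hence $> -\infty$ everywhere). All of these are routine once $\inf I = 0$ and Part~\ref*{part:propI} are invoked. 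So the proof is essentially: (i) quote Part~\ref*{part:qIfullcases2} for the full LDP and goodness, using that $q = I^*$ is finite somewhere on $(0,\infty)$; (ii) apply Fenchel--Moreau to the convex l.s.c.\ proper function $I$ to conclude $q^* = I^{**} = I$.
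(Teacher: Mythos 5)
There is a genuine gap in the first half of your argument. You assert that ``such an $\alpha$ exists because $q = I^*$ and $I^*$, being the Legendre--Fenchel transform of a function that is $+\infty$ on $(-\infty,0)$, is finite at least at some positive $\alpha$ --- indeed $I^*$ is proper since $\inf I = 0$.'' This inference is false. Properness of $I^*$ (i.e.\ $I^* \not\equiv \infty$ and $I^* > -\infty$) does give $I^*(0) = -\inf I = 0 < \infty$, but it gives nothing about $\alpha > 0$. For example, take $I(s) = 0$ on $[0,\infty)$ and $I(s) = \infty$ on $(-\infty,0)$: this is convex, l.s.c., proper, vanishes on $[0,\infty)$, and yet $I^*(\alpha) = \sup_{s\ge 0}\alpha s = +\infty$ for every $\alpha > 0$. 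More to the point, the paper exhibits concrete admissible pairs where exactly this degeneracy occurs (Example~\ref{ex:HMM1}, Figure~\ref{fig:HMC1}): there $\qw(\alpha) = +\infty$ for all $\alpha > 0$, the sequence is \emph{not} exponentially tight, and $\iw$ is \emph{not} good. Your proof, by routing everything through Part~\ref*{part:qIfullcases2}, would erroneously conclude goodness and exponential tightness in that case.

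What the paper's proof does, and what you would need to add, is a separate treatment of the case $q(\alpha) = \infty$ for all $\alpha > 0$. In that regime one cannot establish exponential tightness, so one must prove the full upper bound another way: because $q = I^*$ blows up on $(0,\infty)$, the duality $I = q^*$ reduces to $I(s) = \sup_{\alpha \le 0}(\alpha s - q(\alpha))$, which exhibits $I$ as a supremum of nonincreasing affine functions. Hence $I$ is nonincreasing on $[0,\infty)$, and since $\inf_{s\ge 0} I(s) = -I^*(0) = -q(0) = 0$, one gets $\lim_{s\to\infty} I(s) = 0$. Then for any closed $\Gamma$, after intersecting with $[0,\infty)$ (using Part~\ref*{part:propI} and $\mu_n((-\infty,0))=0$), either $\Gamma'$ is compact (weak LDP suffices) or $\Gamma'$ is unbounded, in which case $\inf_{\Gamma} I = 0$ and the upper bound is vacuous. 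This is genuinely where the convexity hypothesis earns its keep, beyond the biconjugation step. Your second half (Fenchel--Moreau giving $I^{**} = I$, hence $q^* = I$) is correct and matches the paper.
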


\begin{remark}\label{rem:weirdIQ}
    Part~\ref*{part:qleqistarneg} does not extend to $\alpha = 0$ automatically, because the weak LDP alone does not guarantee that $I^*(0)=0$. Indeed, we could in principle have, for example, $I(s) =  1 +\infty \cdot 1_{s< 0}$ and $q(\alpha) = -1_{\alpha<0} + \infty\cdot 1_{\alpha > 0}$.
\end{remark}
\begin{remark}
    The convexity assumption in Part~\ref*{part:qIfullcases} is required not only to derive $I=q^*$, but also in order to prove the full LDP. 
    In the situation where Part~\ref*{part:qIfullcases} applies but Part~\ref*{part:qIfullcases2} does not, that is when $I$ is convex but  $q(\alpha) = \infty$ for all $\alpha > 0$, we shall obtain in the proof that, due to convexity, $\lim_{s\to\infty} I(s) = 0$, which makes the large-deviation upper bound for unbounded sets somewhat trivial. In particular, in this situation $I$ is not a good rate function and $(Z_n)_{n\in \nn}$ is not an exponentially tight family; recall Figure~\ref{fig:HMC1}. 
\end{remark}

\begin{proof}
    In this proof, we denote by $\mu_n$ the distribution of $Z_n$ under $\P_*$, that is $\mu_n = \P_* \circ Z_n^{-1}$, which obviously satisfies $\mu_n([0,\infty)) = 1$. Parts~\ref*{part:qnondeg} and~\ref*{part:propI} are immediate consequences of the definitions \eqref{eq:defuqoq} and the fact that $Z_n \geq 0$. 

    \begin{enumerate}[i.]
        \setcounter{enumi}{2}
        \item
        This is the standard lower bound of Varadhan's lemma; see e.g.\ Lemma~4.3.4 in~\cite{DZ}. We include the proof for completeness.
        For any $s$, $\alpha \in \rr$ and $\epsilon > 0$, we have by the weak LDP
        \begin{align*}
           \underline q(\alpha) &\geq \liminf_{n\to\infty} \frac 1 n  \ln \int_{B(s,\epsilon)} 
           \Exp{\alpha n s'}\dd \mu_n(s') \\
            &  \geq (\alpha s - |\alpha|\epsilon) + \liminf_{n\to\infty} \frac 1 n\ln
            \mu_n(B(s,\epsilon))\\
            &\geq \alpha s - |\alpha|\epsilon-I(s).
        \end{align*}
        Letting $\epsilon \to 0$, we obtain $\underline q(\alpha) \geq \alpha s - I(s)$. Since
        $\alpha$ and $s$ are arbitrary, we indeed have $\underline q \geq I^*$. 
        
        \item
        We give here the classical covering argument used in the proof of the upper bound of Varadhan's lemma; see  e.g.\ Lemma~4.3.6 in~\cite{DZ} for the same result under slightly different assumptions.
        Let $\alpha$ verify~\eqref{eq:limK} and fix $\epsilon > 0$ and
        $K>0$. 
        For every $s\in [0, K]$, there exists an open neighborhood $G_s \ni s$
        such that
        \[
            \inf_{s'\in \overline{G}_s} I(s') \geq \min\left\{(I(s)-\epsilon), \epsilon^{-1}\right\}
            \qquad \text{and} \qquad 
            \sup_{s'\in G_s} \alpha s' \leq \alpha s+\epsilon.
        \]
        These neighborhoods cover the compact set $[0,K]$ and we can extract a finite subcover $\{G_{s_1}, G_{s_2}, \dots, G_{s_r}\}$. Now, by the weak LDP, 
        \begin{align*}
            \limsup_{n\to\infty} \frac 1 n	\ln \int_{G_{s_i}} \Exp{\alpha n s'}\dd\mu_n(s') 
            &\leq \limsup_{n\to\infty} \frac 1 n \ln \left( \Exp{\alpha n s_i +
            \epsilon n}\mu_n(G_{s_i})\right)\\
            & \leq \alpha s_i +\epsilon - \min\left\{(I(s_i)-\epsilon), \epsilon^{-1}\right\} \\
            & = \max\left\{\alpha s_i - I(s_i)+ 2\epsilon, \alpha s_i + \epsilon -
            \epsilon^{-1} \right\} \\
            & \leq \max\left\{I^*(\alpha)+ 2\epsilon, |\alpha| K + \epsilon -
            \epsilon^{-1} \right\}
        \end{align*}
        for each~$i=1,2,\dotsc, r$, and 
        it follows that
        \begin{align*}
        \overline q(\alpha) &\leq \limsup_{n\to\infty} \frac 1 n	\ln \left(\int_{(K,\infty)}
        \Exp{\alpha n s}\dd\mu_n(s) + \sum_{i=1}^r \int_{G_{s_i}} \Exp{\alpha n s}\dd\mu_n(s)
        \right) \\
        &   \leq \max\{R_K,I^*(\alpha)+ 2\epsilon, |\alpha| K + \epsilon -
        \epsilon^{-1}\},
        \end{align*}
        where $R_K = \limsup_{n\to\infty}  \int_{(K,\infty)} \Exp{\alpha n s}\dd\mu_n(s)$.
        Sending $\epsilon \to 0$ shows that
        \[
            \overline q(\alpha) \leq  \max\{R_K,I^*(\alpha)\}.
        \]
        Finally, sending $K\rightarrow \infty$ and using~\eqref{eq:limK} yields
        $\overline q(\alpha) \leq I^*(\alpha)$, as claimed.

        \item
        When $\alpha <0$,
        $$
            \limsup_{n\to\infty}  \frac 1n \ln\int_{(K,\infty)} \Exp{\alpha n s}\dd\mu_n(s)\leq \alpha K + \limsup_{n\to\infty}  \frac 1n  \ln\int_{(K,\infty)} \dd\mu_n(s)\leq \alpha K,
        $$
        so Part~\ref*{part:qleqistarcond} ensures that $\overline q(\alpha)\leq I^*(q)$. Since also $\underline q(\alpha)\geq I^*(q)$ by Part~\ref*{part:qgeqistar}, we conclude that $q(\alpha) = I^*(\alpha)$.

        \item
        If $\overline q(\alpha) < \infty$ for some $\alpha> 0$, then a standard application of Chebyshev's exponential inequality yields
        $$\limsup_{n\to\infty} \frac 1n \ln \mu_n((K,\infty))\leq \overline q(\alpha)-\alpha K,$$
        which establishes exponential tightness, and thus the full LDP and the goodness of the rate function (in fact $I(s) \geq \alpha s - \overline q(\alpha)$), see e.g.\ Lemma~1.2.18 in \cite{DZ}.

        \item
        Since $I$ is convex and lower semicontinuous, we obtain $I=I^{**} = q^*$. In view of Part~\ref*{part:qIfullcases2}, we need only prove the full LDP in the case where $q(\alpha) = \infty$ for all $\alpha > 0$. In this situation, $I(s)=q^*(s) = \sup_{\alpha \leq 0}(\alpha s - q(\alpha))$ for all $s\in \rr$, so clearly the function $I$ is nonincreasing. Thus, since $\inf_{s\geq 0}I(s) = -I^*(0) = -q(0) = 0$, we must have $\lim_{s\to\infty}I(s) = 0$.

        In order to extend the weak LDP to a full one, consider a closed set $\Gamma \subseteq \rr$. Since $\mu_n((-\infty, 0)) = 0$ and $I(s)=\infty$ when $s<0$, it suffices to prove the large-deviation upper bound for $\Gamma' :=\Gamma \cap [0,\infty)$. If $\Gamma'$ is compact, the weak LDP provides the desired upper bound. If $\Gamma'$ is not compact, then it is unbounded, and since $\lim_{s\to\infty}I(s) = 0$ we conclude that  $\inf_{s \in \Gamma} I(s) = 0$, making the large-deviation upper bound trivial.
        \qedhere
    \end{enumerate}
\end{proof}

\subsection{Proof of Theorem~\ref{thm:CJPS}}
\label{sec:proof-CJPS}

The proof relies on the weak LDP established in~\cite{CJPS19}, which we shall promote, using Proposition~\ref{prop:abstractZn}, to a full LDP with suitable Legendre--Fenchel duality relations.

Before we proceed, it should be noted that the arguments below can be simplified in the special case where $\Q= \P$. Indeed, in this case, exponential tightness is immediate (see Remark~\ref{rem:expotight-1}), and one can apply Theorem~2.8 of \cite{CJPS19} to the pair $(\P, \widehat{\P})$, with~$\widehat \P$ the uniform measure. Indeed, in this case, $\frac 1n\sigma_n(x) = \frac 1n\ln \P_n(x_1^n) + \ln |\cA|$; see Remark~2.11 in~\cite{CJPS19}. 

Returning to the general case, we note that the combination of~\assref{ud} and~\assref{jsld} implies, in view of Lemma~A.1 of \cite{CJPS19}, that the pair $(\P, \Q)$ satisfies assumption SSD of \cite{CJPS19}. By an immediate adaptation of Proposition~3.1.2 of \cite{CJPS19}, we conclude that the random variables $x\mapsto  - \ln \Q_n(x_1^n)$ are $\psi$-compatible in the sense of Definition 3.10 of \cite{CJPS19} (the map $\psi_{n,t}$ is constructed in Proposition~3.1.1 therein). Thus, Proposition~3.11 of \cite{CJPS19} implies that the sequence $(Z_n)_{n\in\nn}$ defined by $Z_n(x) := -\frac 1n  \ln \Q_n(x_1^n)$ satisfies the weak LDP with a convex rate function $\ilpq$. More specifically, Proposition~3.11.2 of \cite{CJPS19} ensures that  
there exists a sequence $(\gamma_n)_{n\in\nn}$ with $\gamma_n \to 0$ such
that for all $\epsilon > 0$, all $n\in\nn$ and all $s \in
\rr$,
\begin{equation}\label{eq:unifcontrolnx}
    \frac {1}{n} \ln \mu_n(B(s, \epsilon))  
    \leq
    \gamma_n - \inf_{y\in B(s, \epsilon + (1+|s|)\gamma_n)} \ilpq(y),
\end{equation}
where $\mu_n$ is the distribution of $Z_n$ with respect to $\P$. 
Moreover, Lemma~3.12 in~\cite{CJPS19} implies that the limit defining $\qlpq$ exists for all $\alpha\in \rr$ and defines a lower semicontinuous, convex function $\qlpq:\rr \to (-\infty, \infty]$. 
We are thus in a position to exploit Proposition~\ref{prop:abstractZn}, with $\Omega_* := \Omega$, $Z_n$ as above, and $I:=\ilpq$.

\medskip\noindent
{\em Proof of Parts i--ii}. 
    In view of the above discussion and of Proposition~\ref{prop:abstractZn}.i--ii, it only remains to promote the weak LDP to a full one and to establish the Legendre--Fenchel duality relations~\eqref{eq:LdualIpq}. These two conclusions will follow from Proposition~\ref{prop:abstractZn}.vii, once we have established the identity $\qlpq=\ilpq^*$.
    
    Since Parts~\ref*{part:qgeqistar} and~\ref*{part:qleqistarneg} of Proposition~\ref{prop:abstractZn} ensure that $\qlpq(\alpha)\geq \ilpq^*(\alpha)$ for all $\alpha \in \rr$ and that $\qlpq(\alpha) = \ilpq^*(\alpha)$ for all $\alpha <0$, it remains to prove that $\qlpq(\alpha)\leq \ilpq^*(\alpha)$ for all $\alpha \geq 0$.
    Values of $\alpha \geq 0$ will be split into two regions, separated by the limit
    \begin{equation*}
        \alpha_+ := \lim_{s\to\infty}\frac {\ilpq(s)}{s},
    \end{equation*}
    which exists in $[0, \infty]$ by convexity of~$\ilpq$.

    \medskip
    \noindent\textit{Case 1: $0 \leq \alpha < \alpha_+$.} 
        Let $\delta = \frac 12 \min\{1, \alpha_+-\alpha\}$. Then, there exists $c > 0$ such that
        \begin{equation*}
            \ilpq(s) \geq \alpha s + \delta  s  - c 
        \end{equation*}
        for all $s\geq 0$. Using this and~\eqref{eq:unifcontrolnx}, we deduce that 
        \begin{align*}
            \frac 1 n \ln \mu_n\bigl(B(k,1)\bigr)
                &\leq  \gamma_n - \inf_{y\in B(k, 1 + (1+k)\gamma_n)} \ilpq(y) \leq - \alpha k - \delta k + c' + c''k\gamma_n
        \end{align*}
        for all $k\geq 0$, where $\gamma_n \to 0$ as $n\to\infty$, while the constants~$c'$ and~$c''$ are independent of~$n$ and~$k$. 
        It follows that
        \begin{equation*}
            \mu_n \bigl(B(k,1)\bigr)
                \leq 
                \exp\left(n\left(- \alpha k - \frac{\delta k}{2} + c'\right)\right)
        \end{equation*}
        for all $n$ large enough so that $c''\gamma_n \leq \frac \delta 2$. We thus obtain, for all $K>0$, 
        \begin{align*}
            \int_{(K,\infty)} \Exp{\alpha n s}\dd\mu_n(s) & \leq \sum_{j=1}^\infty \Exp{\alpha n (K+j+1)}\mu_n(B(K+j,1)) \leq  \sum_{j=1}^\infty \Exp{- n (K+j)\frac \delta 2+n(c'+\alpha)}.
        \end{align*}
        By summing the geometric series, we conclude that
        \begin{equation*}
            \limsup_{n\to\infty} \frac 1 n	\ln \int_{(K,\infty)} \Exp{\alpha n s}\dd\mu_n(s)  \leq   -(K+1)\frac{\delta}2 + c'+\alpha.
        \end{equation*}
    Therefore, the condition~\eqref{eq:limK} is satisfied and Proposition~\ref{prop:abstractZn}.iv yields the bound $\qlpq(\alpha)\leq \ilpq^*(\alpha)$.

    \medskip
    \noindent\textit{Case 2: $\alpha \geq \alpha_+$.} 
        Of course, this is only to be considered if $\alpha_+ < \infty$.         
        Since $\ilpq(s)=\infty$ for all $s<0$, we have $\ilpq^*(\alpha) = \sup_{s\geq 0}(\alpha s - \ilpq(s))$ for all $\alpha\in \rr$, so the function $\ilpq^*$ is nondecreasing. Thus, both $\qlpq$ and $\ilpq^*$ are nondecreasing, lower semicontinuous functions, which implies that they are left continuous. It follows that the inequality $\qlpq(\alpha)\leq \ilpq^*(\alpha)$, which we have now established for all $\alpha\in (-\infty, \alpha_+)$, extends to $\alpha  = \alpha_+$.\footnote{While this extension to $\alpha_+$ may seem to be a technical detail, lower semicontinuity of $\qlpq$, which is ensured by a subadditivity argument in Lemma~3.12 of~\cite{CJPS19}, plays an important role here, preventing for example the situation described in Remark~\ref{rem:weirdIQ} (in that example $\alpha_+ = 0$).}
        
        Finally, if $\alpha > \alpha_+$, then
        obviously $\ilpq^*(\alpha) \geq \lim_{s\to\infty}(\alpha s - \ilpq(s)) = \infty$ by the definition of $\alpha_+$, so the inequality $\qlpq(\alpha)\leq \ilpq^*(\alpha)$ is trivial.

    \medskip

    \noindent Combining the two cases, we have proved that $\qlpq\leq \ilpq^*$, and thus the proof of Theorem~\ref{thm:CJPS}.i--ii is complete. 

\medskip\noindent
{\em Proof of Part~iii}. 
    In this part $\Q= \P$. The four claims are proved in an order which differs from that of the statement.

    \begin{itemize}
        \item [c.]         By the rightmost expression in~\eqref{eq:limitqpq},
        \begin{equation}\label{eq:qp1supppn}
            \qlpp(1) = \lim_{n\to\infty} \frac 1n \ln |\supp \P_n(u)| = \htop(\supp \P),
        \end{equation}
        so~\eqref{eq:qppbdd} is a consequence of monotonicity in~$\alpha$.

        \item[b.] We need to prove that
        \begin{equation}
        \label{eq:gammapmlimits}
            \gamma_+ = \lim_{n\to\infty}\frac 1n \sup_{u\in \cA^n}\ln \P_n(u)
            \qquad\text{and}\qquad
            \ \gamma_- = \lim_{n\to\infty}\frac 1n \inf_{u\in \supp \P_n}\ln \P_n(u).
        \end{equation}
    
        By~\assref{ud}, the sequence $(a_n)_{n\in \nn}$ defined by $a_n := \sup_{u\in \cA^n} \ln \P_n(u)$ satisfies $a_{n+m+\tau_n}\leq a_n+a_m+\ln C_n$ for all $n,m\in \nn$, and a variant of Fekete's subadditive lemma applies~\cite[\S{2}]{Ra22}.
        This proves the statement about~$\gamma_+$.
    
        To prove the statement about $\gamma_-$, set this time $a_n :=  \inf_{u\in \supp \P_n}\ln \P_n(u)$ and let $n,m \in \nn$ be arbitrary. Pick $u \in \cA^n$ such that $\ln \P_n(u) = a_n$ and $v \in \cA^m$ such that $\ln\P_m(v) = a_m$. Since $\P_m(v) > 0$, there exists $b\in \cA^{\tau_n}$ such that $\P_{\tau_n+m}(bv)>0$, and  by~\assref{sld}, there exist $0\leq \ell \leq \tau_n$ and $\xi\in \cA^\ell$ such that $\P_{n+\ell+\tau_n + m}(u\xi b v) > 0$. But then, in view of~\assref{ud},
        $$
        0<\P_{n+\ell+\tau_n+m}(u\xi bv)\leq C_n\P_n(u)\P_{m+\ell}(b^{\tau_n}_{\tau_n-\ell+1}v)\leq C_n\P_n(u)\P_m(v),
        $$
        with the convention that $\xi_{\tau_n+1}^{\tau_n}$ is the empty word. Therefore, $u\xi bv \in \supp \P_{n+\ell+\tau_n+m}$ and thus
        \[
            a_{n+\ell+m+\tau_n} \leq \ln \P_{n+\ell+\tau_n+m}(u\xi bv) \leq  a_n + a_m  + \ln C_n .
        \]
        Since the sequence $(a_n)_{n\in\nn}$ is nonincreasing, we conclude that $a_{n+m+2\tau_n}\leq a_n + a_m + \ln C_n$, so the same variant of Fekete's subadditive lemma yields the claim.

        \item [a.]   In view of~\eqref{eq:qppbdd}, Proposition~\ref{prop:abstractZn}.\ref*{part:qIfullcases2} applies, so $(Z_n)_{n\in\nn}$ is exponentially tight and $\ilpp$ is a good rate function.
        
        We now prove~\eqref{eq:Isinterval}.
        First, by \eqref{eq:qp1supppn} and since $\qlpp(1) = \ilpp^*(1)=\sup_{s\in \rr}(s - \ilpp(s))$, we conclude that $\ilpp(s) \geq s-\htop(\supp \P)$ for all $s\in \rr$. 

        By the Ruelle--Lanford representation of $\ilpp$, and since 
        $$
            \mu_n(B(s,\epsilon)) \leq \Exp{-n(s -\epsilon)} \left|\left\{u\in \cA^n:-\frac 1n \ln \P_n(u)\in B(s,\epsilon)\right\}\right|
        $$
        with a similar lower bound (with $\Exp{-n(s +\epsilon)} $), we obtain
        \begin{equation}\label{eq:representationRL}     
            \ilpp(s) = s - \lim_{\epsilon\to 0} \limsup_{n\to\infty}\frac 1n \ln \left|\left\{u\in \cA^n:-\frac 1n \ln \P_n(u)\in B(s,\epsilon)\right\}\right|.
        \end{equation}   
        The limit superior can be replaced by a limit inferior. Then, since the logarithm is either nonnegative or equal to $-\infty$,
        we conclude that $\ilpp(s)$ is either bounded above by $s$ or infinite. 
    
        If $s< -\gamma_+$ or $s>-\gamma_-$, then the definition of $\gamma_\pm$ implies that for $\epsilon>0$ small enough and $n$ large enough there is no $u\in \cA^n$ such that $-\frac 1n \ln \P_n(u)\in B(s,\epsilon)$, so  $\ilpp(s)=\infty$.  
    
        Next, by \eqref{eq:gammapmlimits}, we have for all $\epsilon>0$ that the set on the right-hand side of~\eqref{eq:representationRL} with $s=-\gamma_+$ is nonempty for all $n$ large enough, so $\ilpp(-\gamma_+)\leq -\gamma_+$. When $\gamma_- > -\infty$, a symmetric argument shows that $\ilpp(-\gamma_-)\leq -\gamma_-$, so by convexity, we indeed obtain $\ilpp(s)\leq s$ for all $s\in [-\gamma_+, -\gamma_-]$.
    
        Now, if $\gamma_- = -\infty$, then by \eqref{eq:gammapmlimits} we can pick, for each $n\in \nn$, some $u^{(n)}\in \cA^{n}$ so that $s_n := -\frac 1n \ln \P_{n}(u^{(n)}) \to  \infty$ as $n\to\infty$. By~\eqref{eq:unifcontrolnx} with $\epsilon=1$, we obtain
        $$
            \inf_{s'\in B(s_n, 1 + (1+s_n)\gamma_{n})} \ilpp(s') \leq -\frac {1}{{n}} \ln  \mu_n(B(s_n, 1)) +  \gamma_{n}\leq  s_n+  \gamma_{n}.
        $$
        As a consequence, for each $n$, there is some $s_n'\in B(s_n, 1 + (1+s_n)\gamma_{n})$ such that $\ilpp(s_n')\leq s_n + \gamma_n + 1<\infty$, and thus $\ilpp(s_n')\leq s_n'$ by~\eqref{eq:representationRL}. Noting that $s_n' \to \infty$, and recalling that $\ilpp(-\gamma_+)\leq-\gamma_+$, convexity allows to conclude that $\ilpp(s)\leq s$ for all $s\geq -\gamma_+$. The proof of \eqref{eq:Isinterval} is complete.

        \item[d.]  If $\gamma_- > -\infty$, then~\eqref{eq:Isinterval} yields $
        \qlpp(\alpha) = \sup_{s\in [-\gamma_+, \gamma_-]}(\alpha s - \ilpp(s)) < \infty
        $ for all $\alpha \in \rr$.
        If on the contrary  $\gamma_- = -\infty$, then~\eqref{eq:Isinterval} implies that $\ilpp(s) \leq s$ for all $s\in [-\gamma_+, \infty)$, 
        so that, for $\alpha > 1$, we obtain $\qlpp(\alpha) \geq \lim_{s\to\infty}(\alpha s - \ilpp(s)) = \infty$. \hfill\qed
    \end{itemize}

\newcommand{\etalchar}[1]{$^{#1}$}

\end{document}